\documentclass{amsart}

\usepackage{indentfirst}
\usepackage{hyperref}
\usepackage{color}
\usepackage{cite}
\usepackage{graphicx,float}
\usepackage[sectionbib]{chapterbib}
\usepackage{enumerate}
\usepackage{ulem}

\usepackage{amsmath}
\usepackage{amsfonts}
\usepackage{mathdots}
\usepackage{mathrsfs}
\usepackage{amssymb}
\usepackage{amsthm}
\usepackage{extarrows}
\usepackage{cleveref}

\usepackage{amscd}
\usepackage[all]{xy}
\usepackage{tikz-cd}

\hypersetup{
    colorlinks=true,
    linkcolor=blue,   % \Cref, \ref 等
    citecolor=blue,   % \cite
    urlcolor=blue     % 可选：网址
}

\theoremstyle{plain}
\newtheorem{theorem}{Theorem}[section]
\newtheorem*{theorem*}{Theorem}
\newtheorem{corollary}[theorem]{Corollary}
\newtheorem{lemma}[theorem]{Lemma}
\newtheorem{proposition}[theorem]{Proposition}

\theoremstyle{definition}

\newtheorem{example}[theorem]{Example}

\theoremstyle{remark}
\newtheorem{remark}[theorem]{Remark}

\title{On the equivariant $KU_G$-local sphere for finite abelian groups}
\author{Yingxin Li}
\address{School of Mathematical Sciences, Nankai University, Tianjin, China}
\email{yingxinli@mail.bnu.edu.cn}

\begin{document}

\begin{abstract}
We study $KU_G$-localization for finite groups. For a finite nilpotent group $G$, we identify the $KU_G/p$-local sphere as the fiber of an Adams operation, and reduce the computation of its homotopy Mackey functors to the corresponding computation for the Sylow $p$-subgroup of $G$. At the prime $2$, we compute $\underline{\pi}_* L_{KU_G/2}S_G$ for abelian $2$-groups, resolve the resulting extension problems, and determine the degree-zero Hurewicz image. As a consequence, we completely determine the $\mathbb Z$-graded homotopy Mackey functors of $L_{KU_G}S_G$ for all finite abelian groups. Finally, for an arbitrary finite group $G$, we prove that the $KU_G/p$-localization of $G$-spectra splits as a wedge of height-one equivariant Morava $K$-localizations, indexed by conjugacy classes of cyclic subgroups of order prime to $p$.
\end{abstract}

\maketitle
\tableofcontents

%%------------------------------------------------------------------------------------------------------------------------------------------------------------------------------------

\section{Introduction}
Non-equivariantly, chromatic homotopy theory provides a systematic framework for studying periodic phenomena in stable homotopy theory, organized by chromatic height. Equivariantly, chromatic homotopy theory admits a natural equivariant refinement. This viewpoint suggests that one can understand equivariant stable homotopy groups by studying the chromatic filtration in the category of equivariant spectra. The foundations of equivariant chromatic homotopy theory were developed in \cite{cole2000equivariant,strickland2011multicurves,hanke2018equivariant,barthel2020balmer,hu2021equivariant,hausmann2022global,hausmann2023invariant,wisdom2024properties}; see also the survey by Behrens and Carlisle\cite{behrens2024periodic}.

At chromatic height $1$, non-equivariant $v_1$-periodicity is reflected in the Bott periodicity of topological $KO$-theory and can be studied through the $KU$-local sphere $L_{KU}S$. For a finite group $G$, both $KO$ and $L_{KU}S$ admit natural $G$-equivariant refinements, and several computations of the homotopy groups of the $KU_G$-local sphere $L_{KU_G}S_G$ have been established in recent years. Balderrama\cite{balderrama2021c_2} computed the $RO(C_2)$-graded homotopy $C_2$-Green functor of $L_{KU_{C_2}/2}S_{C_2}$. For a finite $p$-group $G$ with $p$ odd, Carawan et al.\cite{carawan2023homotopy} computed the $\mathbb{Z}$-graded homotopy Mackey functors of $L_{KU_G}S_G$, and Balderrama\cite{balderrama2024total} investigated the norm maps in $\underline{\pi}_0L_{KU_G}S_G$. 
Related height-one phenomena have also been studied through the equivariant $J$-homomorphism\cite{french2009equivariant,balderrama2023equivalences} and periodic self-maps for $G=C_2$\cite{quigley2021real, bhattacharya2022} and $G=C_{p^n}$\cite{balderrama2024cpn}. These results leave open, in particular, the computation for finite nilpotent groups of mixed prime order, as well as the extension problems that arise at the prime $2$. The present paper addresses both of these issues.

It is worth noting that, for a finite group $G$, knowledge of the homotopy groups of the $G$-equivariant sphere spectrum remains very limited. For $G=C_2$, the $RO(C_2)$-graded ring $\pi_{\star}^{C_2}S_{\mathbb{Q}}$ was computed by Belmont-Xu-Zhang\cite{belmont2024reduced}, and $\pi_\star^{C_2} S_{C_2}$ has been computed in a range of degrees in \cite{araki1982equivariant, iriye1982equivariant, dugger2017Z, belmont2022r, belmont2021C2, ma2022borel, guillou2024c_2}. For $G=C_3$, Hou-Zhang\cite{hou2025c_3} carried out partial computations of $\pi_\star^{C_3} S_{C_3}$. Beyond these cases, very few computations are known. This makes chromatic localizations of the equivariant sphere, such as $L_{KU_G}S_G$, natural and more tractable to study.

In this paper, we study $L_{KU_G}S_G$ for finite groups. For finite nilpotent groups $G$, we reduce the computation of the homotopy Mackey functors of $L_{KU_G/p}S_G$ to the corresponding computation for the Sylow $p$-subgroup of $G$. Combining this reduction with the known odd-primary computations and our analysis at the prime $2$, we compute the $\mathbb{Z}$-graded homotopy Mackey functors of $L_{KU_G}S_G$ for finite abelian groups $G$. For an arbitrary finite group $G$, we further identify the Bousfield class of $KU_G/p$ in terms of height-one equivariant Morava $K$-theories and prove that, for every $G$-spectrum $X$, the $KU_G/p$-localization of $X$ splits as a wedge of height-one equivariant Morava $K$-localizations.

\subsection{Statement of main results}
When $G$ is a $p$-group for an odd prime $p$, Carawan-Field-Guillou-Mehrle-Stapleton\cite{carawan2023homotopy} show that, for a generator $g$ of $(\mathbb{Z}_p^\wedge)^\times$, there is a fiber sequence
\[
L_{KU_G/p}S_G \longrightarrow {(KU_{G})}_{p}^{\wedge}\overset{\psi^g-1}{\longrightarrow} {(KU_{G})}_p^{\wedge},
\]
and they use this fiber sequence to compute the homotopy groups of $L_{KU_G/p}S_G$. However, if $G$ is not a $p$-group, this sequence is no longer a fiber sequence since $KU_G/p$ has nontrivial geometric fixed points at nontrivial cyclic subgroups of order prime to $p$. For a finite nilpotent group $G$, the product decomposition into Sylow subgroups allows us to isolate the $p$-primary part, which leads to the fiber sequence in the following proposition.

\begin{proposition}[\Cref{thm:KUG local spectrum first case}]\label{IntroThm:KU_G/p local sphere I}
    Let $G$ be a finite nilpotent group, and let $Cyc$ be the family of all cyclic subgroups of $G$. For any prime $p$, let $N_p$ be the Sylow $p$-subgroup of $G$, and let $g$ be a topological generator of $\mathbb{Z}_p^{\times}/\{\pm 1\}$. Then for any finite $G$-spectrum $X$, there is a fiber sequence 
    \[
    L_{KU_G/p}X \longrightarrow (ECyc_+\wedge\ \operatorname{Inf}_{N_p}^G KO_{N_p}\wedge X)_p^{\wedge}\overset{\psi^g-1}{\longrightarrow} (ECyc_+\wedge\ \operatorname{Inf}_{N_p}^G KO_{N_p}\wedge X)_p^{\wedge} .
    \]
\end{proposition}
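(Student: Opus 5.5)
The plan is to check the fiber sequence on geometric fixed points. Let $F$ be the fiber of $\psi^g-1$ on $E:=(ECyc_+\wedge \operatorname{Inf}_{N_p}^G KO_{N_p}\wedge X)_p^\wedge$. We need two things: that $F$ is $KU_G/p$-local, and that the natural map $X\to F$ is a $KU_G/p$-equivalence. First we need the map itself. The unit $S_{N_p}\to KO_{N_p}$ is fixed by $\psi^g$, so it lifts to a map $S_{N_p}\to F$. Inflating, smashing with $ECyc_+\wedge X$, and composing with $X\simeq S_G\wedge X \leftarrow ECyc_+\wedge X$ does not quite give a map out of $X$. So we use instead that $KU_G/p$ has trivial geometric fixed points at non-cyclic subgroups: $KU_G$ is $ECyc$-local (a character is determined by its restrictions to cyclic subgroups). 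Hence $L_{KU_G/p}X\simeq L_{KU_G/p}(ECyc_+\wedge X)$, and it suffices to compare with $ECyc_+\wedge X$.

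Next we compute geometric fixed points. Since $G=N_p\times N_{p'}$ is nilpotent, a cyclic subgroup is $C=C_p\times C_{p'}$ with $C_p\le N_p$ and $C_{p'}\le N_{p'}$. Then
\[
\Phi^C(KU_G/p)\simeq \Phi^{C_p}KU_{N_p}/p\ \text{ twisted by } \Phi^{C_{p'}}.
\]
Concretely, $\Phi^{C}KU_G\simeq KU[\zeta_{|C|},1/|C|]$, which after mod $p$ reduction is $KU/p\otimes\mathbb{Z}[\zeta_{m}]$ with $m=|C_{p'}|$ when $C_p=1$, and vanishes when $C_p\neq 1$. For the inflated theory, $\Phi^C \operatorname{Inf}_{N_p}^G KO_{N_p}$ only sees the image of $C$ in $N_p$, that is $C_p$. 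So on $\Phi^C$ with $C_p=1$ we get the underlying $KO$ with trivial $C_{p'}$ action, twisted appropriately. Both sides therefore have geometric fixed points supported on the cyclic subgroups of order prime to $p$.

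Because $L_{KU_G/p}$ and $p$-completion are both controlled by geometric fixed points on the family $Cyc$, we check on each $\Phi^C$ with $C\le N_{p'}$ cyclic. There, $\Phi^C$ of the sequence should become $p$-completed $L_{KU/p}$ of $\Phi^C X$, a suitably twisted copy in each case. The key input is the nonequivariant fact that $L_{KU/p}Y\simeq \mathrm{fib}(\psi^g-1: (KO\wedge Y)_p^\wedge\to (KO\wedge Y)_p^\wedge)$ for finite $Y$. This uses $g$ a generator of $\mathbb{Z}_p^\times/\{\pm1\}$, and at $p=2$ it is the usual $KO$ description. We also use that $\Phi^C(KU_G/p)$ is Bousfield equivalent to a sum of copies of $KU/p$, since $\mathbb{Z}[\zeta_m]/p$ is étale over $\mathbb{F}_p$. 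Since geometric fixed points detect both equivalences and (for these families) locality, the two claims follow on each $C$. One must also check that $ECyc_+\wedge$ commutes with $p$-completion and with fibers for finite $X$, which is routine.

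The main obstacle is the locality statement. We must show $F$ is $KU_G/p$-local, which means checking the Mackey structure and not only the geometric fixed points. That is, we need a $KU_G/p$-acyclic $A$ to satisfy $[A,F]^G=0$. For this I would induct over the isotropy separation filtration of $ECyc$. Each stratum has the form $G/C_+\wedge_{W}\widetilde{E}\mathcal{P}\wedge(\cdots)$. On it, maps into $F$ reduce to $W_GC$-equivariant maps into $\Phi^C F$, which is local by the nonequivariant result. A finite induction over conjugacy classes of cyclic subgroups, using that local objects are closed under fibers and limits, then finishes the argument.
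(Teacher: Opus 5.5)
Your comparison map (built on $ECyc_+\wedge X$, which is $KU_G/p$-equivalent to $X$ because $\Phi^H KU_G\simeq\ast$ for non-cyclic $H$) is fine. So is your check on $\Phi^C$ for cyclic $C$ with $p\nmid |C|$; this agrees with the second half of the paper's argument.

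The gap is the locality of $F$, which you flag but do not establish. Geometric fixed points do not detect locality. Your isotropy-separation step (``maps into $F$ reduce to $W_GC$-equivariant maps into $\Phi^CF$, which is local by the nonequivariant result'') does not work. What one actually gets is $[\Phi^C A,(EW_GC_+\wedge \Phi^C Y)^\wedge_p]^{W_GC}$, where $Y=\operatorname{Inf}_{N_p}^G L_{KU_{N_p}/p}S_{N_p}\wedge X$. Here $\Phi^CA$ is only known to be $KU/p$-acyclic on the underlying spectrum (and at cyclic $p'$-overgroups of $C$). Since $W_GC\cong N_p\times W_NC$ contains $N_p$, knowing that the target's underlying spectrum is $K(1)$-local says nothing about $W_GC$-maps out of $\Phi^CA$. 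The difference between the free spectrum $EW_+\wedge T$ and the cofree $F(EW_+,T)$ is a Tate construction, which is nonzero in general (already $KU^{tC_p}\neq 0$). Here it vanishes only after $p$-completion, because the relevant Tate constructions are rational (height-one Tate vanishing), and none of this is in your sketch.

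In the simplest case $G=N_p$, your reduction is tautological. The strata at $C\neq e$ die after $p$-completion, and the stratum at $C=e$ is $F$ itself with $W_GC=G$. So you are left proving exactly that $F$ is Borel complete.

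Two side claims are also false as stated:
\begin{itemize}
\item $ECyc_+\wedge(-)$ does not commute with $p$-completion in general.
\item $F$ need not have geometric fixed points supported on $p'$-cyclic subgroups, since $\Phi^C$ of a $p$-complete spectrum with $p\mid|C|$ can be a nonzero Tate construction.
\end{itemize}
Neither affects your equivalence check, because the factor $KU_G/p$ kills those subgroups.

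The missing idea, which is how the paper proves locality in one line, is to make $I^\wedge_p$ a ring spectrum, where $I=ECyc_+\wedge\operatorname{Inf}_{N_p}^GKO_{N_p}$.
\begin{itemize}
\item Since $p\nmid|N|$, $(ECyc^N_+)^\wedge_p$ is an idempotent summand of $(S_N)^\wedge_p$.
\item $ECyc^{N_p}_+$ is absorbed $p$-adically by $KO_{N_p}$.
\item Hence $I^\wedge_p\simeq(\operatorname{Inf}_N^G ECyc^N_+\wedge\operatorname{Inf}_{N_p}^G KO_{N_p})^\wedge_p$ is a commutative ring spectrum, and $\psi^g$ is a ring map.
\end{itemize}
For any ring spectrum $R$ and finite $X$, $(R\wedge X)^\wedge_p$ is $R/p$-local. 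Indeed, if $M$ is $R/p$-acyclic then $R\wedge M$ is $S/p$-acyclic. Then $[M,(R\wedge X)^\wedge_p]$ identifies with $R$-module maps $R\wedge M\to(R\wedge X)^\wedge_p$, which inject into $[R\wedge M,(R\wedge X)^\wedge_p]=0$.

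Combined with $\langle I/p\rangle=\langle KU_G/p\rangle$, this shows both terms of the sequence are $KU_G/p$-local, hence so is the fiber, with no isotropy induction. The ring structure also supplies a unit $S_G\to I^\wedge_p$ killed by $\psi^g-1$. So the comparison map can be defined directly on $X^\wedge_p$ rather than on $ECyc_+\wedge X$.
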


As a corollary, we prove that for any finite nilpotent group $G$,
    \[    
    L_{KU_G/p}S_G\simeq (ECyc_+\wedge \operatorname{Inf}_{N_p}^G L_{KU_{N_p}/p}S_{N_p})_p^{\wedge}. 
    \] 
For any subgroup $H\subset G$, let $\underline{A}_H$ denote the $H$-Mackey functor given by the Burnside ring, and let $\underline{J}_H$ denote the subfunctor of $\underline{A}_H$ given by the Brauer relations (see \Cref{Sec:fixed point of KU/p local sphere}). Define
\[ \underline{A/J}_H:=\underline{A}_H/\underline{J}_H .\]
When $H=G$, we omit the subscript and write $\underline{A/J}$ for $\underline{A/J}_G$. In order to compute the homotopy groups of $L_{KU_G/p}S_G$, we study the fixed points of $G$-spectra of $(ECyc_+\wedge \operatorname{Inf}_{N_p}^G E)_p^\wedge$ for an $N_p$-spectrum $E$, and prove the following proposition.

\begin{proposition}[\Cref{lem:Z graded homotopy Mackey functor of ECyc inf}]\label{IntroProp: Z graded homotopy Mackey functor of ECyc inf}
    Let $G$ be a finite nilpotent group with Sylow $p$-subgroup $N_p$, and let $N$ be the subgroup of $G$ such that $G\cong N_p\times N$. Then for any $N_p$-spectrum $E$ such that $E$ is $S/p$-equivalent to $ECyc_+ \wedge E$, we have an isomorphism between $G$-Mackey functors
    \[ \underline{\pi}_*(ECyc_+\wedge \operatorname{Inf}_{N_p}^G E)_p^\wedge\cong (\underline{\pi}_* E \otimes \underline{A/J}_N)_p^\wedge. \] 
    Here the symbol $\otimes$ denotes the external tensor product of Mackey functors defined in \Cref{lem:tensor product}.
\end{proposition}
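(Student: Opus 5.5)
We will compute the value of both sides at each subgroup $K=K_p\times K'$ of $G=N_p\times N$, with $K_p\subseteq N_p$ and $K'\subseteq N$. The claim is that
\[
\pi_*^{K}(ECyc_+\wedge \operatorname{Inf}_{N_p}^G E)_p^\wedge\cong \bigl(\pi_*^{K_p}E\otimes (A/J)(K')\bigr)_p^\wedge ,
\]
and that these isomorphisms are compatible with restrictions, transfers and conjugations, so that they assemble into the external tensor product of \Cref{lem:tensor product}.

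The first step is to decompose the family. A subgroup $C_p\times C'$ of $N_p\times N$ is cyclic exactly when both factors are cyclic, since the orders are coprime. Hence $Cyc_G$ is the product family of $Cyc_{N_p}$ and $Cyc_N$, and
\[
ECyc_{G+}\simeq \operatorname{Inf}_{N_p}^G ECyc_{N_p+}\wedge \operatorname{Inf}_N^G ECyc_{N+}.
\]
After $p$-completion, the hypothesis that $E\to$ is an $S/p$-equivalence with $ECyc_{N_p+}\wedge E$ lets us absorb the $N_p$-factor. This leaves
\[
(ECyc_+\wedge \operatorname{Inf}_{N_p}^G E)_p^\wedge\simeq \bigl(\operatorname{Inf}_{N_p}^G E\wedge \operatorname{Inf}_N^G ECyc_{N+}\bigr)_p^\wedge,
\]
which is an external smash product of an $N_p$-spectrum with an $N$-spectrum.

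The second step computes the $N$-factor. We use the standard splitting of $p$-local Burnside-ring idempotents for the group $N$ of order prime to $p$. After $p$-completion the $p$-local sphere $S_N$ splits according to the idempotents of $A(N)_{(p)}$, indexed by conjugacy classes of subgroups of $N$ (all $p$-perfect, since $p\nmid |N|$). Smashing with $ECyc_{N+}$ keeps exactly the summands indexed by cyclic subgroups. So $\underline{\pi}_0(ECyc_{N+})_p^\wedge$ is the quotient of $\underline{A}_N\otimes\mathbb{Z}_p$ by the ideal killing the non-cyclic idempotents. One then checks that this ideal is the $p$-completion of $\underline{J}_N$; the fact to use is that the kernel of $A(K')\to \prod_{C\ \mathrm{cyclic}}\mathbb{Z}$ given by marks is the Brauer relations. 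Moreover, $\underline{\pi}_*(ECyc_{N+}\wedge Y)_p^\wedge$ is computed by the idempotent summands $e_C\,\underline{\pi}_*Y$, which are flat over $\underline{A}_N$. This gives the identification $\underline{\pi}_*(ECyc_{N+})_p^\wedge\cong (\underline{\pi}_*S_N\otimes \underline{A/J}_N)_p^\wedge$.

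The third step is a Künneth argument for external smash products. Fixed points satisfy
\[
(\operatorname{Inf}E\wedge \operatorname{Inf}F)^{K_p\times K'}\simeq E^{K_p}\wedge F^{K'}
\]
whenever one factor is a retract of suspension spectra of finite orbits in a suitable sense. Here the $N$-factor, $p$-completed, is a retract of $S_N$ via the idempotent. So the $K$-fixed points of our spectrum are the $p$-completion of a retract of $E^{K_p}\wedge (S_N)^{K'}$, cut out by an idempotent acting only through $A(K')$. The homotopy is therefore $\pi_*^{K_p}E\otimes_{\mathbb{Z}} e\,A(K')$, and the idempotent piece $e\,A(K')_p$ is identified with $(A/J)(K')_p^\wedge$. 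This is where flatness, or freeness of $A/J(K')_p$ over $\mathbb{Z}_p$, removes the Tor terms. Compatibility with restrictions and transfers in each variable follows from naturality of these fixed-point identifications.

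The main obstacle is the second step. One has to show that the idempotent summand cut out by $ECyc$ is exactly $\underline{A/J}_N$ after $p$-completion, as Mackey functors and not merely levelwise. I would try first to compare marks: the map $\underline{A}_N\to\underline{\pi}_0(ECyc_{N+})$ is surjective, and its kernel consists of the elements whose marks vanish on cyclic subgroups, which is $\underline{J}_N$ by the paper's definition.
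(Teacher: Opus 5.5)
Your route works, but it differs from the paper's, and the real difficulty is in a different step from the one you flag.

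\textbf{Comparison with the paper.} The paper never uses idempotents. It proves \Cref{lem:fixed point of suspension spectra}, which is a coprime tom Dieck splitting. From this it deduces in \Cref{lem:fixed point of ECyc and inflation} that
$(ECyc_+\wedge \operatorname{Inf}_{N_p}^G E)^{P\times L}\simeq (ECyc^P_+\wedge E)^P\wedge \bigvee_{(T)\in Cyc^L/L}\Sigma^\infty BW_LT_+$.
It then $p$-completes, using that $BW_LT$ is $p$-adically trivial. The Mackey structure in the $N$-direction is quoted from the literature.

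Your description of $(ECyc^N_+)_{(p)}$ as an idempotent summand of $S_{N,(p)}$ gives you something the paper outsources. The projection $S_{N,(p)}\to (ECyc^N_+)_{(p)}$ is an $N$-map, so it induces a surjection of Mackey functors $\underline{A}_{N,(p)}\to\underline{\pi}_0(ECyc^N_+)_{(p)}$. By marks, its kernel is $\underline{J}_{N,(p)}$, because the character of $\mathbb{Q}[S]$ at $g$ is $|S^{\langle g\rangle}|$. So your second step, which you call the main obstacle, is actually the easy part, and your sketch of it is fine.

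\textbf{Where the real work is.} The step that needs an argument is the K\"unneth identification in your third step, and the reason you give for it is not sufficient. Being a retract of suspension spectra of orbits is not enough. Take $E=S_{C_2}$ and $F=S_{C_2}$: then $\pi_0$ of $(\operatorname{Inf}E\wedge\operatorname{Inf}F)^{C_2\times C_2}$ is $A(C_2\times C_2)$, of rank $5$. But $\pi_0(E^{C_2}\wedge F^{C_2})=A(C_2)\otimes A(C_2)$ has rank $4$.

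What actually makes $(\operatorname{Inf}_{N_p}^G E)^{K_p\times K'}\simeq E^{K_p}\wedge (S_N)^{K'}$ hold is the coprimality $(|N_p|,|N|)=1$. The argument runs as follows.
\begin{enumerate}
\item[(a)] Every subgroup of $K_p\times K'$ is a product $P\times L$, and $W(P\times L)=W_{K_p}P\times W_{K'}L$.
\item[(b)] Hence the tom Dieck splitting factors, and $(S_{K_p\times K'})^{K'}\simeq \operatorname{Inf}_e^{K_p}(S_{K'})^{K'}$ as $K_p$-spectra.
\item[(c)] The projection formula $(\operatorname{Inf}Y\wedge Z)^{K'}\simeq Y\wedge Z^{K'}$ then gives the claim.
\end{enumerate}
This is exactly \Cref{lem:fixed point of suspension spectra}, and you need to supply it. You could also apply it directly to $F=\Sigma^\infty_N ECyc^N_+$, as the paper does; the idempotent would then be needed only to identify $\underline{\pi}_0$ as a Mackey functor. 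With that lemma in place, the rest of your argument goes through.
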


    When $E=L_{KU_{N_p}/p}S_{N_p}$, we have
    \[\underline{\pi}_*(L_{KU_G/p}S_G)\cong \underline{\pi}_*L_{KU_{N_p}/p}S_{N_p}\otimes_{\mathbb{Z}_p} (\underline{A/J}_N)_p^\wedge.\]
    
    \begin{remark}
    The preceding result reduces the computation for a finite nilpotent group to the computation for its Sylow $p$-subgroup. For odd primes, this computation was carried out in \cite{carawan2023homotopy}. At the prime $2$, however, the long exact sequence associated to the Adams-operation fiber determines the homotopy Mackey functors only up to extension in certain degrees. Thus, even after computing the kernel and cokernel of $\psi^g-1$, additional information is required to recover the Mackey functor structure. Resolving these extension problems is the main difficulty in our $2$-primary computation.
    \end{remark}

    It therefore remains to determine $\underline{\pi}_*L_{KU_{N_2}/2}S_{N_2}$. When $N_2$ is abelian, we can compute it via the short exact sequence
    \[0\longrightarrow \underline{\mathrm{coker}}_2\{k+1\}\longrightarrow \underline{\pi}_k L_{KU_{N_2}/2}S_{N_2} \longrightarrow \underline{\ker}_2\{k\}\longrightarrow 0,\] 
    where   
    \[\underline{\ker}_2\{k\}:=\ker(\underline{\pi}_k(KO_{N_2})_2^{\wedge}\overset{\psi^g-1}{\longrightarrow}\underline{\pi}_k (KO_{N_2})_2^{\wedge}),\]
    \[ \underline{\mathrm{coker}}_2\{k\}:=\mathrm{coker}(\underline{\pi}_k (KO_{N_2})_2^{\wedge}\overset{\psi^g-1}{\longrightarrow}\underline{\pi}_k (KO_{N_2})_2^{\wedge}).\]
    Extension problems occur in degrees $0$ and $8d+1$, $d\in \mathbb Z$. Although these extensions split after evaluation at each orbit as extensions of abelian groups, the resulting splittings need not be compatible with the Mackey functor structure. Let $\underline{RO(-;\mathbb{R})}$ be the Mackey functor whose value at $N_2/H$ is the free abelian group generated by the irreducible real $H$-representations of real type. %whose endomorphism rings are isomorphic to $\mathbb{R}$.  
    The degree-zero extension is
    \[
    0\longrightarrow
    \underline{RO(-;\mathbb{R})}_{N_2}/2
    \longrightarrow
    \underline{\pi}_0L_{KU_{N_2}/2}S_{N_2}
    \longrightarrow
    (\underline{A/J}_{N_2})_2^\wedge
    \longrightarrow 0.
    \]
    The key point is that although an element of $\underline{J}_{N_2}$ vanishes in $\underline{A/J}_{N_2}$, the Hurewicz map
    \[
    \underline{A}_{N_2}
    \cong \underline{\pi}_0S_{N_2}
    \longrightarrow
    \underline{\pi}_0L_{KU_{N_2}/2}S_{N_2}
    \]
    can still detect it as a $2$-torsion class corresponding to an element in \(\underline{RO(-;\mathbb{R})}_{N_2}/2\). The case of $N_2=C_2\times C_2$ studied by Szymik \cite[Example 5.1]{szymik2013chromatic} gives a basic example. This phenomenon is captured by a map of Mackey functors
    \[
    \theta_{N_2}\colon \underline{J}_{N_2} \longrightarrow \underline{\mathrm{coker}}_2\{1\}\cong \underline{RO(-;\mathbb{R})}_{N_2}/2
    \]
    which is induced by the Hurewicz map. We compute $\theta_{N_2}$ for all finite abelian $2$-groups $N_2$ via the Brauer relations, which resolves the degree-zero extension problem. Equivalently, this determines the degree-zero Hurewicz image of $L_{KU_{N_2}/2}S_{N_2}$.

    In degree $8d+1$, the remaining extension data are similarly encoded by a morphism
    \[
    \theta_{8d+1}\colon
    \underline I_{8d+1}\longrightarrow
    \underline{\operatorname{coker}}_2\{8d+2\},
    \]
    where $\underline I_{8d+1}$ is the kernel of the mod-$2$ linearization map $\underline{A}_{N_2}/2\to \underline{RO(-;\mathbb R)}_{N_2}/2$. We determine $\theta_{8d+1}$ by combining the mod-$2$ Brauer relations with an explicit computation of $\underline{\pi}_1L_{KU_{C_4}/2}S_{C_4}$, thereby resolving the extension problem in degrees $8d+1$.

    Our computation of $\underline{\pi}_* L_{KU_G/p}S_G$ is summarized in \Cref{lem:extension of 2 group KU local sphere,thm: Z graded homotopy of KU_G/p local sphere}. Then we compute $\underline{\pi}_\ast L_{KU_G}S_G$ for finite abelian groups $G$ via the arithmetic fracture square. 

    \begin{theorem}[\Cref{thm:integer graded homotopy Mackey functor of ku local sphere}]\label{IntroThm:integer graded homotopy Mackey functor of ku local sphere}
	Let $G$ be a finite abelian group. For each prime $p$, let $N_p\subset G$ be its Sylow $p$-subgroup, and identify $G/N_p \cong \prod_{q\neq p}N_q$. Then for every $k\in \mathbb Z$, there is an isomorphism of $G$-Mackey functors
    \[
	\underline{\pi}_kL_{KU_G}S_G\cong \begin{cases}
		\underline{A/J}_{G/N_2} \otimes  \frac{\underline{A}_{N_2} \oplus \underline{RO(-;\mathbb{R})}_{N_2}\{\eta\}/2}{\{j-\theta_{N_2}(j):j\in \underline{J}_{N_2}\}} & \quad k=0\\
		0 & \quad k=-1\\
		\mathbb{Q}/\mathbb{Z}\otimes (\prod_p \underline{\operatorname{coker}}_p\{0\}\otimes \underline{A/J}_{G/N_p}) & \quad k=-2\\
		\prod_p \underline{\pi}_kL_{KU_G/p}S_G & \quad \text{otherwise}
	\end{cases}.
	\]
    The computation of $\theta_{N_2}$ is carried out in \Cref{prop:extension of 2 group KU local sphere-degree-zero}.
    The functors $\underline{\operatorname{coker}}_p\{0\}$ are listed in \Cref{lem:psi-1,lem:psi-1 for odd p}.
    \end{theorem}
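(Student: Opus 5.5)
We will use the arithmetic fracture square for $L_{KU_G}S_G$. Since $KU_G$ is a ring $G$-spectrum, $L_{KU_G}$ is smashing on finite spectra up to the usual caveats. We have the pullback
\[
L_{KU_G}S_G \longrightarrow \prod_p L_{KU_G/p}S_G,\qquad L_{\mathbb Q}L_{KU_G}S_G \longrightarrow L_{\mathbb Q}\Big(\prod_p L_{KU_G/p}S_G\Big),
\]
and $L_{\mathbb Q}L_{KU_G}S_G\simeq L_{KU_G\mathbb Q}S_G$. The first step is to identify the rational corner. Rationally $G$-spectra split by geometric fixed points, and $\Phi^H KU_G\otimes\mathbb Q\neq0$ exactly when $H$ is cyclic. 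We therefore expect $\underline{\pi}_*L_{KU_G}S_G\otimes\mathbb Q$ to be concentrated in degree $0$, with value $\underline{A}\otimes\mathbb Q$ localized at the cyclic subgroups. By Artin induction this is $\underline{A/J}\otimes\mathbb Q$: the Brauer relations are exactly the kernel of $A(H)_{\mathbb Q}\to\prod_{C\ \mathrm{cyclic}}\mathbb Q$.

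The second step feeds in the $p$-complete pieces. By the nilpotent reduction (\Cref{lem:Z graded homotopy Mackey functor of ECyc inf} and its corollary), $\underline{\pi}_*L_{KU_G/p}S_G\cong \underline{\pi}_*L_{KU_{N_p}/p}S_{N_p}\otimes(\underline{A/J}_{G/N_p})_p^\wedge$. The $N_p$-factors are known from \cite{carawan2023homotopy} for odd $p$ and from \Cref{lem:extension of 2 group KU local sphere,thm: Z graded homotopy of KU_G/p local sphere} for $p=2$. In degree $k\neq 0,-1,-2$ these groups are torsion of bounded exponent, or the degreewise-finite $\underline{\ker}$ parts vanish. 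Indeed $\underline{\ker}_p\{k\}=0$ for $k\neq0,-1$, since $\psi^g-1$ is injective on $\underline{\pi}_{2j}KU$ for $j\neq0$. Hence their rationalization vanishes, and the Mayer--Vietoris sequence gives $\underline{\pi}_k L_{KU_G}S_G\cong\prod_p\underline{\pi}_kL_{KU_G/p}S_G$ there.

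The third step is the low degrees. In degree $-1$, $\underline{\pi}_{-1}L_{KU_G/p}S_G\cong \underline{\ker}_p\{0\}$-type contribution from $\underline{\pi}_0$ of $KO$, i.e.\ $(\underline{A/J}_{G})_p^\wedge$ placed in degree $-1$ (the ``$\zeta$'' class). We will check that the connecting map
\[
\underline{A/J}\otimes\mathbb Q\;\oplus\;\prod_p \underline{\pi}_0L_{KU_G/p}S_G \longrightarrow \Big(\prod_p\underline{\pi}_0L_{KU_G/p}S_G\Big)\otimes\mathbb Q
\]
is surjective, because $\prod_p(\underline{A/J})_p^\wedge+\underline{A/J}\otimes\mathbb Q=\underline{A/J}\otimes\mathbb A_f$. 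We will also check that the map on degree $-1$ rationalized is an isomorphism onto $\prod_p(\underline{A/J})^\wedge_p\otimes\mathbb Q$ with nothing from the rational corner. This forces $\underline{\pi}_{-1}=0$ and makes $\underline{\pi}_{-2}$ the cokernel $\mathbb Q/\mathbb Z\otimes\prod_p(\underline{\mathrm{coker}}_p\{0\}\otimes \underline{A/J}_{G/N_p})$. Here $\underline{\pi}_{-2}L_{KU_G/p}$ has its rationalized part given by $\underline{\mathrm{coker}}_p\{0\}$-related torsion-free part shifted, and the standard computation $\mathbb Z_p\otimes\mathbb Q/\mathbb Z_p$-quotient yields $\mathbb Q_p/\mathbb Z_p$. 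In degree $0$ the kernel of the same map is the pullback of $\underline{A/J}\otimes\mathbb Q$ and $\prod_p\underline{\pi}_0L_{KU_G/p}S_G$. For odd $p$ this pullback is integral $\underline{A/J}$ restricted to the $p$-parts. At $p=2$ one gets the extension of $\underline{A/J}$ by $\underline{RO(-;\mathbb R)}/2\{\eta\}$. Using \Cref{prop:extension of 2 group KU local sphere-degree-zero}, this extension is presented as $\underline{A}_{N_2}\oplus\underline{RO(-;\mathbb R)}_{N_2}\{\eta\}/2$ modulo $j-\theta_{N_2}(j)$. Tensoring with $\underline{A/J}_{G/N_2}$, and noting that the odd primes contribute exactly the prime-to-$2$ Brauer quotient already absorbed in $\underline{A/J}_{G/N_2}$, gives the stated formula.

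The main obstacle is the degree-$0$ pullback. We must verify that the integral lattice cut out by the fracture square is exactly this quotient, and not a larger lattice, compatibly with restrictions and transfers between subgroups of mixed order. For this we will use that $\underline{A/J}_G\cong \underline{A/J}_{N_2}\otimes\underline{A/J}_{G/N_2}$ for abelian $G$, together with the fact that the Hurewicz map $\underline{A}_G\to\underline{\pi}_0L_{KU_G}S_G$ is compatible with all transfers. The Hurewicz image then generates the pullback, and identifying its kernel reduces to $\theta_{N_2}$.
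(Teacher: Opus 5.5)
Your route is the paper's: the fracture square, the rational corner $\underline{A/J}\otimes\mathbb Q$ in degree $0$, torsion $p$-complete pieces away from degrees $0,-1$, surjectivity of $f$ in degree $0$, injectivity of $g$ in degree $-1$ by torsion-freeness, and $\underline\pi_{-2}=\operatorname{coker} g$. Two of your steps differ from the paper but are fine. You derive the rational corner from geometric fixed points and Artin induction, where the paper cites \cite[Lemma 9.1]{bonventre2022ku_g}. You get $\operatorname{coker} f=0$ from $\mathbb A_f=\mathbb Q+\hat{\mathbb Z}$, where the paper cites \cite[Lemma 4.14]{carawan2023homotopy}.

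\textbf{The gap is in degree $0$.} You correctly flag it as the main point, but then settle it with a false claim. The Hurewicz map $\underline A_G\to\underline\pi_0L_{KU_G}S_G$ is already a morphism of Mackey functors, so what it generates is just its image. That image misses every $\eta$-class in $\underline{RO(-;\mathbb R)}_{N_2}\{\eta\}/2$ not of the form $\theta_{N_2}(j)$. Already for $G=e$, $\pi_0L_{KU}S\cong\mathbb Z\oplus\mathbb Z/2$ while the Hurewicz image is $\mathbb Z$. So the image and kernel of the Hurewicz map cannot recover $\ker f$. Also, \Cref{prop:extension of 2 group KU local sphere-degree-zero} only gives the $2$-complete presentation, with relations from $(\underline J_{N_2})^\wedge_2$, so you still need a step that descends to an integral statement.

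The missing argument is the paper's:
\begin{enumerate}
\item Put $M=(\underline A_{N_2}\oplus\underline{RO(-;\mathbb R)}_{N_2}\{\eta\}/2)/\{j-\theta_{N_2}(j):j\in\underline J_{N_2}\}$, which is finitely generated at each orbit.
\item The $\eta$-part is $2$-torsion and completion is exact on finitely generated modules. Hence $M^\wedge_2\cong\underline\pi_0L_{KU_{N_2}/2}S_{N_2}$, $M^\wedge_p\cong(\underline{A/J}_{N_2})^\wedge_p$ for odd $p$, and $M\otimes\mathbb Q\cong\underline{A/J}_{N_2}\otimes\mathbb Q$.
\item By \Cref{lem:Z graded homotopy Mackey functor of ECyc inf} and \Cref{lem:tensor of A/J}, the three degree-$0$ corners are the $p$-completions and rationalization of the single functor $M\otimes\underline{A/J}_{G/N_2}$, and $f$ is its arithmetic-square map.
\item A finitely generated abelian group is the pullback of its completions and its rationalization, so $\ker f\cong M\otimes\underline{A/J}_{G/N_2}$.
\end{enumerate}
Your Hurewicz idea can be salvaged, but only if you also adjoin the $2$-torsion $T=\underline{RO(-;\mathbb R)}_{N_2}\{\eta\}/2\otimes\underline{A/J}_{G/N_2}$ as generators. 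The relations are then read off in the $2$-adic corner, into which $T$ injects.

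\textbf{Intermediate claims to correct.}
\begin{itemize}
\item In degree $-1$ the $p$-complete piece is $\underline{\mathrm{coker}}_p\{0\}\otimes\underline{A/J}_{G/N_p}$, the $\psi^g$-coinvariants, not a ``$\underline{\ker}_p\{0\}$-type'' term. It is not isomorphic to $(\underline{A/J}_G)^\wedge_p$ as a Mackey functor in general. For $G=C_{p^2}$, restriction to $C_p$ is surjective on coinvariants but has cokernel $\mathbb Z/p$ on $(\underline{R\mathbb Q})^\wedge_p$. Only torsion-freeness is needed for $k=-1$, but this is why the $k=-2$ answer must use $\underline{\mathrm{coker}}_p\{0\}$.
\item $\underline\pi_{-2}L_{KU_G/p}S_G=0$. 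The degree-$(-2)$ group is $\operatorname{coker} g$ in degree $-1$, which is what you end up using.
\item $\underline{\ker}_p\{k\}=0$ fails at $p=2$, since $\underline{\ker}_2\{8d+1\}$ and $\underline{\ker}_2\{8d+2\}$ are nonzero. This is harmless because they are $2$-torsion.
\item ``Bounded exponent'' is the right condition for $\prod_p\underline\pi_kL_{KU_G/p}S_G$ to vanish rationally, but you need to justify it. For fixed $k=2d-1$ and $p\nmid|G|$, the group is a sum of copies of $\mathbb Z/p^{\nu_p(g^d-1)}$, which vanishes unless $(p-1)\mid d$. So only finitely many primes contribute.
\end{itemize}
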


    Finally, fix a prime $p$, we study $KU_G/p$ from the viewpoint of equivariant chromatic homotopy theory for an arbitrary finite group $G$. Extending Strickland's construction of equivariant Morava $K$-theory for finite abelian groups \cite{strickland2011multicurves}, we define $K(H,n)$ for each subgroup $H\subset G$ and integer $n\geq 0$, and give an explicit construction of the $K(H,n)$-localization of $G$-spectra. The comparison of the Bousfield classes of $KU_G/p$ and those of the height-one $K(H,1)$, together with a construction of $K(H,1)$-localization of $G$-spectra, leads to the following splitting of $KU_G/p$-localization. In contrast to the preceding computations, this splitting holds for arbitrary finite groups.
    \begin{theorem}[\Cref{thm:KUG local spectrum second case}]\label{IntroThm:KUG local spectrum second case}
        Let $G$ be a finite group, and let $Cyc$ be the family of cyclic subgroups of $G$. For any prime $p$ and any $G$-spectrum $X$, there is an equivalence of $G$-spectra
    \[
    L_{KU_G/p}X \simeq L_{\bigvee_{(H)\in Cyc/G, p\nmid |H|}K(H,1)}X \simeq \bigvee_{(H)\in Cyc/G,\, p\nmid |H|} L_{K(H,1)}X.
    \]
    Here $H$ ranges over the conjugacy classes of cyclic subgroups of $G$ such that $p\nmid |H|$.
    \end{theorem}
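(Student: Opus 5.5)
We reduce the statement to a comparison of Bousfield classes. The second equivalence is formal once the relevant localizations are smashing-compatible and mutually orthogonal. So the main content is the Bousfield class identity
\[
\langle KU_G/p\rangle=\Big\langle \bigvee_{(H)\in Cyc/G,\ p\nmid |H|}K(H,1)\Big\rangle .
\]
A $G$-spectrum $Z$ is $E$-acyclic iff $\Phi^L(E\wedge Z)\simeq 0$ for all $L\subseteq G$. Both sides are detected subgroup by subgroup on geometric fixed points, so we compare the geometric fixed points of $KU_G/p$ with those of the $K(H,1)$.

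\textbf{Step 1: geometric fixed points of $KU_G/p$.} For $L\subseteq G$, $\Phi^L KU_G$ vanishes unless $L$ is cyclic. For cyclic $L=C_n$ it is $KU[1/n,\zeta_n]$, a sum of copies of $KU[1/n]$ indexed over the primitive characters. Hence $\Phi^L(KU_G/p)\neq 0$ iff $L$ is cyclic with $p\nmid |L|$, and then it is Bousfield equivalent to $K(1)$ at $p$.

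\textbf{Step 2: geometric fixed points of $K(H,1)$.} By the construction generalizing Strickland, $K(H,1)$ is built so that $\Phi^L K(H,1)$ is Bousfield equivalent to $K(1)$ for $L$ conjugate to $H$ and vanishes otherwise. The natural model is $\widetilde{E}\mathcal{F}[\subsetneq H]\wedge G_+\wedge_{N H}(E\mathcal{W}_+\wedge K(1))$, i.e.\ an $E[\le H]$-type idempotent smashed with inflated $K(1)$. Comparing with Step 1, the two sides have the same geometric-fixed-point support and agree there up to Bousfield equivalence. Using $\Phi^L(E\wedge Z)\simeq \Phi^L E\wedge \Phi^L Z$ and the Bousfield class of $\Phi^L E$ as a module over $\Phi^L KU_G$ (a $K(1)$-equivalence relation in $KU[1/n]$-modules), we get the same acyclics. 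This gives the first equivalence $L_{KU_G/p}X\simeq L_{\bigvee K(H,1)}X$.

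\textbf{Step 3: splitting.} For non-conjugate $H,H'$ in the index set, we need $L_{K(H,1)}X$ to be $K(H',1)$-acyclic. Geometric fixed points of $L_{K(H,1)}X$ at a subgroup $L$ not conjugate to $H$ need not vanish, but after smashing with $\Phi^{L}K(H',1)$ they do. Concretely, we will show $L_{K(H,1)}X\simeq \widetilde{E}\mathcal{F}_H\wedge F(E\mathcal{G}_{H,+},\dots)$ is concentrated, up to $K(H',1)$-equivalence, at $H$. Then the localization of a wedge of pairwise orthogonal Bousfield classes is the wedge of the localizations. One uses the standard argument that the canonical map $X\to\bigvee_H L_{K(H,1)}X$ is a $\bigvee K(H,1)$-equivalence, which is checked one summand at a time. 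Its target is $\bigvee K(H,1)$-local because finite wedges equal products and each factor is local for the wedge once it is acyclic for the others.

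\textbf{Main obstacle.} The hard part is Step 3: showing that $L_{K(H,1)}X$ is $K(H',1)$-acyclic. Localization need not preserve geometric-fixed-point support, since it involves function spectra $F(E\mathcal{F}_+,-)$ that can spread to larger subgroups. I would first try an explicit formula for $L_{K(H,1)}$ as $F\big(E[\subseteq H]_+,\ \widetilde{E}[\subsetneq H]\wedge L_{K(1)}\operatorname{Inf}(\cdot)\big)$. Then I would compute its geometric fixed points at $H'$ using that $K(1)$-local spectra have vanishing Tate constructions for groups of order prime to $p$. Here the hypothesis $p\nmid|H|$ enters: it makes the relevant Tate and homotopy-orbit comparisons trivial, so the support stays at $(H)$.
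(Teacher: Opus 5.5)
Your Steps 1--2 match the paper's \Cref{lem:bousfield class via morava k} (together with tom Dieck's computation of $\Phi^L KU_G$). The formal part of Step~3 is correct, and a bit more direct than the paper's induction via \Cref{lem:step by step}. Suppose $K(H',1)\wedge L_{K(H,1)}X\simeq\ast$ for all non-conjugate $H,H'$ in the index set. Then $X\to\prod_H L_{K(H,1)}X$ is a $K(H',1)$-equivalence for each $H'$. The target is local for the wedge, since each factor is automatically so; the acyclicity is used only for the equivalence. No ordering of subgroups is needed. What you need is acyclicity of the \emph{localized} objects, not orthogonality $K(H,1)\wedge K(H',1)\simeq\ast$ of the Bousfield classes, which is immediate and insufficient in general (compare $S\mathbb{Q}$ and $S/p$).

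The gap is that this acyclicity, which is the entire content of the second equivalence, is only announced as a plan. The mechanism you sketch is also not right as stated. The geometric-fixed-point support of $L_{K(H,1)}X$ does not stay at $(H)$, and $p\nmid |H|$ is not what controls it. For $G=C_p$ and $H=e$ one has $L_{K(e,1)}S_G\simeq F((EC_p)_+,L_{K(1)}S)$. Its $C_p$-geometric fixed points are $(L_{K(1)}S)^{tC_p}$, which is nonzero: otherwise the $C_p$-fixed points would agree with $(L_{K(1)}S)_{hC_p}$, whose $\pi_0$ has $\mathbb{Z}_p$-rank $1$, whereas $\pi_0^{C_p}$ has rank $2$. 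What is true, and all that is needed, is $\Phi^{H'}L_{K(H,1)}X\simeq\ast$ for $H'\not\sim H$ with $p\nmid|H'|$.

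The missing ingredient is the fact recorded in \Cref{Sec:Preliminaries}. It makes both an explicit model of $L_{K(H,1)}$ and any Tate computation unnecessary. For $p\nmid|H'|$, the functor $\Phi^{H'}$ commutes with Bousfield localization of $p$-local $G$-spectra, $\Phi^{H'}L_EY\simeq L_{\Phi^{H'}E}\Phi^{H'}Y$, by the splitting of $p$-local $H'$-spectra. Since $K(H,1)$ is $p$-local, $L_{K(H,1)}X\simeq L_{K(H,1)}X_{(p)}$. Hence $\Phi^{H'}L_{K(H,1)}X\simeq L_{\Phi^{H'}K(H,1)}\Phi^{H'}X_{(p)}=L_{\ast}\Phi^{H'}X_{(p)}\simeq\ast$. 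The paper runs the same argument through three ingredients: the model $L_{K(H,1)}X\simeq L_{T(H)}L_{K(1)}\Phi^HX$ of \Cref{lem:morava k local}, the $p$-completeness of $K(H,1)$-local objects, and \Cref{prop:q local geometric fixed points}. Because $\Phi^LK(H',1)\simeq\ast$ for $L\not\sim H'$, every geometric fixed point spectrum of $K(H',1)\wedge L_{K(H,1)}X$ then vanishes. This is exactly the input your wedge argument requires. Your model-based route could also be completed along these lines. As written, however, your two formulas for $L_{K(H,1)}X$ disagree on the order of $\widetilde{E\mathcal{F}}\wedge-$ and the function spectrum. They also inflate from the trivial group, which loses the $W_GH$-action on $\Phi^HX$ that the paper's $\operatorname{CoInd}_{N_GH}^G\operatorname{Inf}_{W_GH}^{N_GH}$ formula keeps track of.
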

    As an application, when $G$ is a finite nilpotent group with Sylow $p$-subgroup $N_p$, we show that the computation of $\underline{\pi}_V L_{KU_G/p}S_G$ for $V\in RO(G)$ reduces to that of $\underline{\pi}_{V^H}L_{KU_{N_p}/p}S_{N_p}$ for any cyclic subgroup $H$ such that $p\nmid |H|$; see \Cref{cor:ROG graded KU_G/p local sphere}. More broadly, we expect that the construction of $K(H,n)$-localization will be useful for investigating the localization with respect to other global spectra.

    \begin{remark}
        The reduction of the $KU_G/p$-local sphere from a finite nilpotent group $G$ to its Sylow $p$-subgroup $N_p$ is compatible with the $p$-primary behavior of the equivariant image of $J$ studied by French \cite{french2009equivariant}, which suggests a close relationship between the equivariant image of $J$ and the $KU_G/p$-local sphere. Moreover,  our computation determines the degree-zero Hurewicz image of $L_{KU_G}S_G$ for finite abelian group $G$, which shows that the Hurewicz image in the $KU_G/2$-local sphere can detect torsion arising from Brauer relations in the Burnside ring that is invisible in the corresponding rational representation ring. It would be interesting to understand how this phenomenon is reflected in the equivariant image of $J$. 
    \end{remark}

    \subsection{Outline}
    In \Cref{Sec:Preliminaries}, we recall background on equivariant stable homotopy theory and Bousfield localization.
    In \Cref{Sec: KU/p local sphere}, we study the Bousfield classes of $KU_G/p$ and $KO_G/p$ for a finite nilpotent $G$, and prove \Cref{IntroThm:KU_G/p local sphere I}. 
    In \Cref{Sec:fixed point of KU/p local sphere}, for a finite nilpotent group $G$ and its Sylow subgroup $N_p$, we study the fixed points of $ECyc_+\wedge \operatorname{Inf}_{N_p}^G E$ for an $N_p$-spectrum $E$, and prove \Cref{IntroProp: Z graded homotopy Mackey functor of ECyc inf}. 
    In \Cref{Sec:computation for 2-group}, we compute $\underline{\pi}_\ast L_{KU_{N_2}/2}S_{N_2}$ for a finite abelian $2$-group $N_2$, with particular attention to the extension problems in degrees $0$ and $8d+1$. 
    In \Cref{Sec:KU_G local sphere}, by combining \Cref{IntroProp: Z graded homotopy Mackey functor of ECyc inf} with the computation in \Cref{Sec:computation for 2-group}, we obtain $\underline{\pi}_\ast L_{KU_G/2}S_G$ for finite abelian groups $G$. Then we prove \Cref{IntroThm:integer graded homotopy Mackey functor of ku local sphere}. 
    In \Cref{Sec:equivariant chromatic}, we introduce and study the spectra $K(H,n)$, compare their Bousfield classes with that of $KU_G/p$, prove the splitting in \Cref{IntroThm:KUG local spectrum second case} for arbitrary finite groups, and apply it to the $RO(G)$-graded homotopy Mackey functors of the $KU_G/p$-local sphere for finite nilpotent groups. 
    In \Cref{sec:C4}, we compute the $C_4$-Mackey functor $\underline{\pi}_1 L_{KU_{C_4}/2}S_{C_4}$, which is used to resolve the extension problem in degree $8d+1$.

    \subsection*{Acknowledgements}

    This paper is based on part of my PhD thesis. I would like to thank my advisor, Xu-an Zhao, for his guidance throughout this project and for carefully proofreading the manuscript. I am grateful to Markus Hausmann for inspiring my initial interest in equivariant chromatic homotopy theory. I would also like to thank William Balderrama for his valuable suggestions regarding the extension problems for Mackey functors. I also thank Tao Gong, Guchuan Li, Xuecai Ma, Zezhou Zhang, Yifei Zhu and Foling Zou for helpful discussions and comments.

%%-------------------------------------------------------------------------------------------------------------------------------------------------------------------------------------
\section{Preliminaries}\label{Sec:Preliminaries}
In this section, we briefly review some results that will be used throughout the paper. We refer the reader to \cite{may1996equivariant,schwede2019lectures} for background on equivariant homotopy theory, and to \cite{bousfield1979localization,hill2019equivariant,carrick2022smashing} for background on Bousfield localization.

\subsection{Equivariant stable homotopy theory}
 For a finite group $G$, let $\mathcal{S}^G$ be the category of $G$-spaces, and let $\mathrm{Sp}^G$ be the category of genuine $G$-spectra. For a $G$-space $X$, we write $\Sigma_G^\infty X$ for the suspension $G$-spectrum of $X$. Let $S_G$ be the $G$-equivariant sphere spectrum. For any $G$-spectrum $X$, let $\underline{\pi}_* X$ be the homotopy Mackey functors of $X$, where
\[
\underline{\pi}_n X (G/H)=\pi_n^H(X) \cong [G/H_+\wedge S^n, X]_G
\]
for any $G$-orbit $G/H$.

Let $\alpha\colon H\to G$ be a homomorphism of finite groups. Any $G$-spectrum can be regarded as an $H$-spectrum via $\alpha$, yielding a symmetric monoidal functor $\alpha^*\colon \mathrm{Sp}^G \to \mathrm{Sp}^H$.
In particular:
\begin{itemize}
\item[(1)] If $\alpha:H\hookrightarrow G$ is inclusion, we denote $\alpha^*$ by $\operatorname{Res}_H^G$. The restriction functor $\operatorname{Res}_H^G$ admits both a left adjoint $\operatorname{Ind}_H^G$ and a right adjoint $\operatorname{CoInd}_H^G$.
\item[(2)] If $N\trianglelefteq G$ is a normal subgroup and $\alpha\colon G\to G/N$ is the quotient map, we denote $\alpha^*$ by $\operatorname{Inf}_{G/N}^G$. The inflation functor $\operatorname{Inf}_{G/N}^G$ is left adjoint to the $N$-fixed point functor
$(-)^N\colon \mathrm{Sp}^G \to \mathrm{Sp}^{G/N}$. 
For $X\in \mathrm{Sp}^{G/N}$ and $Y\in \mathrm{Sp}^G$, there is an equivalence
\[(\operatorname{Inf}_{G/N}^G(X)\wedge Y)^N\simeq X\otimes Y^N.\]
\end{itemize}

There is also a commonly used functor called the geometric fixed point functor. Let $\mathcal{F}$ be a family of subgroups of $G$, there is an unbased $G$-space $E\mathcal{F}$ such that 
    \[
    (E\mathcal{F})^H=\begin{cases}
        pt &  \text{if }\ H\in\mathcal{F}\\
        \emptyset &  \text{if }\ H\not\in\mathcal{F},
    \end{cases}
    \]
and let $\widetilde{E\mathcal{F}}$ be the cofiber of $E\mathcal{F}_+\to S^0_G$. For any subgroup $H\subset G$, let $\mathcal{F}_{H\not\subset}$ denote the family of subgroups of $G$ that contain no $G$-conjugates of $H$. The $H$-geometric fixed point of a $G$-spectrum $X$ is given by 
\[
\Phi^H(X):=(X\wedge \widetilde{E\mathcal{F}_{H\not\subset}})^H.
\]
If $H$ is a normal subgroup of $G$, $\Phi^H(X)$ has a residual action of $G/H$, one can regard the $H$-geometric fixed point as a functor $\Phi^H:\mathrm{Sp}^G\to \mathrm{Sp}^{G/H}$. Moreover, there is an equivalence of functors $\Phi^H\circ \operatorname{Inf}_{G/H}^G\simeq \operatorname{Id}_{\mathrm{Sp}^{G/H}}$.

For any finite group $G$, topological $K$-theory admits a natural $G$-equivariant refinement. Let $KU_G$ (resp., $KO_G$) be the $G$-equivariant complex (resp., real) $K$-theory. Let $RU(G)$ (resp., $RO(G)$) be the complex (resp., real) representation ring of $G$, and let $\underline{RU}$ be the $G$-Green functor such that $\underline{RU}(G/H):=RU(H)$. By \cite[Proposition 2.2]{segal1968equivariant}, we have
\[
\underline{\pi}_* KU_G\cong \underline{RU}[\beta^{\pm}].
\]
We denote by $RO(G;\mathbb{R})$ (resp., $RO(G;\mathbb{C})$, $RO(G;\mathbb{H})$) the free abelian group generated by irreducible real $G$-representations whose endomorphism ring is isomorphic to $\mathbb{R}$ (resp., $\mathbb{C}$, $\mathbb{H}$). Then 
\[
\pi_* KO_G \cong \bigoplus_{\mathbb{F}=\mathbb{R},\mathbb{C},\mathbb{H}}\pi_*K\mathbb{F}\otimes RO(G;\mathbb{F}),
\]
where
\[
K\mathbb{F}=\begin{cases}
    KO & \mathbb{F}=\mathbb{R},\\
    KU & \mathbb{F}=\mathbb{C},\\
    KSp & \mathbb{F}=\mathbb{H}.
\end{cases}
\]
The restriction and transfer homomorphisms in $\underline{\pi}_* KO_G$ can be computed via the complexification map $KO_G\to KU_G$; see \cite[Section 9.1]{mathew2017nilpotence} for details.

    \begin{proposition}\cite[Proposition 7.7.7]{dieck2006transformation}\label{prop:geometric fixed KU}
    For any finite group $G$, 
    $$\Phi^{G} KU_{G}\;\simeq\;
        \begin{cases}
            KU\otimes \mathbb{Z}\left[\frac{1}{n},\zeta_{n}\right], & \text{if } G \cong C_{n}\\[6pt]
            \ast, & \text{otherwise}
        \end{cases},$$
        where $\zeta_n$ is a primitive $n$-th root of unity.
    \end{proposition}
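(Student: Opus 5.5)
Because this proposition is stated with a citation to tom Dieck, the plan is to recover the classical argument rather than build something new. The first reduction is to write $\Phi^G KU_G = (KU_G\wedge \widetilde{E\mathcal{P}})^G$, where $\mathcal{P}$ is the family of proper subgroups. By Segal's computation $\pi_*^G KU_G\cong RU(G)[\beta^{\pm}]$, and $\pi_*^G$ of the smash with $\widetilde{E\mathcal{P}}$ is a localization of this coefficient ring. Concretely, $\widetilde{E\mathcal{P}}$ is a colimit of representation spheres $S^{\infty V}$, with $V$ ranging over representations with $V^G=0$. Since $KU_G$ is complex oriented in the equivariant sense (Bott periodicity and the Thom isomorphism for complex representations), $S^{\infty V}\wedge KU_G$ is the localization of $KU_G$ at the Euler classes $e(V)=\lambda_{-1}(V)=\sum(-1)^i\Lambda^iV\in RU(G)$. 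Restricting to complex representations is harmless, since every real $V$ with $V^G=0$ embeds in a complex one with no fixed vectors. This gives
\[
\pi_*\Phi^G KU_G\cong RU(G)[\beta^{\pm}]\bigl[e(V)^{-1} : V^G=0\bigr].
\]
It also identifies $\Phi^G KU_G$ as a $KU$-module: it is a colimit of $KU$-modules $KU_G^G$ whose homotopy is concentrated in even degrees. The problem is therefore purely algebraic: compute $RU(G)[e(V)^{-1}]$.

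The next step is to evaluate characters. Under the character embedding $RU(G)\hookrightarrow \prod_{(g)}\mathbb{Z}[\zeta_{|G|}]$, the Euler class is $e(V)(g)=\det(1-g\mid V)=\prod_\lambda(1-\lambda)$ over the eigenvalues $\lambda$ of $g$ on $V$. This vanishes exactly when $g$ has a fixed vector in $V$. Take $V=\bigoplus_{K\subsetneq G}\,\mathbb{C}[G/K]-\mathbb{C}$, which has no $G$-fixed vectors. If $G$ is not cyclic, every $g$ lies in the proper subgroup $\langle g\rangle$, so $g$ fixes the vector in $\mathbb{C}[G/\langle g\rangle]$ complementary to the trivial line. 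Hence $e(V)$ has character identically zero, so it is nilpotent (indeed zero, since the character map is injective), and the localization vanishes. This proves $\Phi^GKU_G\simeq\ast$ in the non-cyclic case.

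For $G=C_n$ with generator $g$, the idea is to pass to the ring $\mathbb{Z}[x]/(x^n-1)$. Inverting $e(\rho)$ for the faithful character $\rho(g)=\zeta_n$ inverts $1-x$. Together with the Euler classes of other characters $x^k$ with $k\not\equiv 0$, this kills all factors $\Phi_d(x)$ with $d<n$. The mechanism is that in the quotient by $\Phi_d$, the element $1-x^{d}$ vanishes yet must be a unit, forcing that factor to zero; this uses $V=x^d$ for $d$ a proper divisor. What remains is
\[
\mathbb{Z}[x]/\Phi_n(x)\,[(1-\zeta_n^k)^{-1}: n\nmid k]=\mathbb{Z}[\zeta_n]\bigl[\tfrac{1}{1-\zeta_n^k}\bigr],
\]
and because $\prod_{k=1}^{n-1}(1-\zeta_n^k)=n$, this ring is $\mathbb{Z}[1/n,\zeta_n]$. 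The step where I expect the main care to be needed is making this localization precise. The ring $\mathbb{Z}[x]/(x^n-1)$ is not a product of the $\mathbb{Z}[x]/\Phi_d$, only a subring, so I would argue prime by prime on $\operatorname{Spec}$. The localization equals the ring of functions on the open subset where all $1-x^k$ ($n\nmid k$) are invertible. That open subset lies in $V(\Phi_n)$ because $\prod_{d\mid n}\Phi_d=x^n-1$, and on $V(\Phi_n)$ it is $\operatorname{Spec}\mathbb{Z}[1/n,\zeta_n]$.

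Finally, one lifts the algebra back to spectra. $\Phi^{C_n}KU_{C_n}$ is a $KU$-module with homotopy $\mathbb{Z}[1/n,\zeta_n][\beta^{\pm}]$, which is flat over $\pi_*KU$. A flat-module (Landweber-type) argument then identifies it with $KU\otimes\mathbb{Z}[1/n,\zeta_n]$. Alternatively, one can write it directly as the filtered colimit of copies of $KU$ along multiplication by the Euler classes.
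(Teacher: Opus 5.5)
Your argument is correct and is the standard one behind the cited result; the paper gives no proof of its own, only the reference to tom Dieck. That argument identifies $\pi_*\Phi^G KU_G$ with $RU(G)[\beta^{\pm}]$ localized at the Euler classes $\lambda_{-1}(V)$ with $V^G=0$, and computes this localization via characters. Two repairs are needed, both minor. In the non-cyclic case the representation should be $\bigoplus_{K\subsetneq G}(\mathbb{C}[G/K]-\mathbb{C})$ (or simply the reduced regular representation): as you wrote it, only one trivial line is removed, so $V^G\neq 0$ and $e(V)$ is not among the classes being inverted. In the cyclic case the passage to $\mathbb{Z}[x]/\Phi_n$ is cleanest as follows: each $\Phi_d$ with $d\mid n$, $d<n$, divides the unit $x^d-1$, so it is itself a unit, and this forces $\Phi_n=0$ after localization.
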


%%-----------------------------------------------------------------------------------------
\subsection{Bousfield localization} 
Let $G$ be a finite group. For $E,X\in \mathrm{Sp}^G$, the Bousfield localization of $X$ with respect to $E$ is an $E$-equivalence $f:X\to L_EX$ such that $L_EX$ is $E$-local.  For $E,F\in \mathrm{Sp}^G$, we say that $E$ and $F$ are Bousfield equivalent, and write $\langle E\rangle=\langle F\rangle$, if
\[
\forall X\in \mathrm{Sp}^G, \quad X\wedge E\simeq \ast \ \Longleftrightarrow \  X\wedge F\simeq \ast.
\]
If $\langle E\rangle=\langle F\rangle$, then $L_EX\simeq L_FX$ for every $X\in \mathrm{Sp}^G$. For any prime $p$, a $G$-spectrum $X$ is called $p$-local if $X\simeq L_{S_G\otimes\mathbb{Z}{(p)}}X$, and $p$-complete if $X\simeq L_{S_G/p}X$.

    By \cite[Proposition 3.2]{carrick2022smashing}, for any subgroup $H\subset G$ and any $X\in \mathrm{Sp}^G$, there is an equivalence
    \[
        \operatorname{Res}_H^G L_EX\simeq L_{\operatorname{Res}_H^G E} \operatorname{Res}_H^G X. 
    \]
    When $p\nmid |H|$, by the splitting of the category of $p$-local $H$-spectra, it follows from \cite[Proposition 8.5]{bonventre2022ku_g} that, for any $X,E\in \mathrm{Sp}^H_{(p)}$,
    \[
        \Phi^H L_E X\simeq L_{\Phi^H E} \Phi^H X
    \]
    as non-equivariant spectra.
    As a result, for any $H\subset G$ such that $p\nmid |H|$ and for any $X, E\in \mathrm{Sp}^G_{(p)}$, we have 
    \[\Phi^H L_EX\simeq \Phi^H \operatorname{Res}_H^GL_EX\simeq  \Phi^H L_{\operatorname{Res}_H^G E}\operatorname{Res}_H^G X\simeq L_{\Phi^H E}\Phi^H X \in \mathrm{Sp}_{(p)}.\]
In particular, we can show that $\Phi^H: \mathrm{Sp}^G\to \mathrm{Sp}$ preserves $p$-completion if $p\nmid |H|$.
\begin{proposition}\label{prop:q local geometric fixed points}
    For any $X\in \mathrm{Sp}^G$ and any subgroup $H\subset G$ with $p\nmid |H|$, there is an equivalence $\Phi^H (X_p^\wedge)\simeq (\Phi^H X)_p^\wedge$ of non-equivariant spectra. 
\end{proposition}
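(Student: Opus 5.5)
We will deduce the statement from the displayed equivalence $\Phi^H L_E X\simeq L_{\Phi^H E}\Phi^H X$, valid for $p\nmid |H|$ and $p$-local $E,X$, by taking $E=S_G/p$. The plan has three steps.

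First we reduce to $p$-local input. The $p$-completion $X_p^\wedge=L_{S_G/p}X$ depends only on $X_{(p)}=X\wedge S_G\otimes\mathbb{Z}_{(p)}$, because the map $X\to X_{(p)}$ is an $S_G/p$-equivalence: smashing with $S_G/p$ already inverts every prime other than $p$. Hence $L_{S_G/p}X\simeq L_{S_G/p}X_{(p)}$.

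Next we pass the geometric fixed points through the completion. Both $S_G/p$ and $X_{(p)}$ lie in $\mathrm{Sp}^G_{(p)}$, since $S_G/p$ is $p$-local because multiplication by primes $q\neq p$ is invertible on it. The displayed equivalence therefore gives
\[
\Phi^H (X_p^\wedge)\simeq \Phi^H L_{S_G/p}X_{(p)}\simeq L_{\Phi^H(S_G/p)}\Phi^H(X_{(p)}).
\]
We then identify the two inputs on the right. The functor $\Phi^H$ is exact and symmetric monoidal with $\Phi^H S_G\simeq S$, so $\Phi^H(S_G/p)\simeq S/p$. It also commutes with smashing with the Moore spectrum $S\mathbb{Z}_{(p)}$, because $\Phi^H$ preserves colimits and $S_G\otimes\mathbb{Z}_{(p)}$ is a colimit of copies of $S_G$; this gives $\Phi^H(X_{(p)})\simeq (\Phi^H X)_{(p)}$. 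Combining these, the right-hand side is $L_{S/p}((\Phi^H X)_{(p)})\simeq L_{S/p}\Phi^H X=(\Phi^H X)_p^\wedge$, again using that $p$-localization is an $S/p$-equivalence.

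The only delicate point is checking the hypotheses of the cited equivalence from \cite{bonventre2022ku_g}: it must apply to the $p$-local objects after restriction to $H$, and this is exactly how the paper stated it, via $\operatorname{Res}^G_H$. Restriction commutes with $p$-completion by \cite[Proposition 3.2]{carrick2022smashing}, and $\Phi^H$ on $G$-spectra agrees with $\Phi^H\circ\operatorname{Res}^G_H$, so the argument goes through. We also need the equivalence to be natural, so that the composite is induced by the canonical comparison map $\Phi^H X\to\Phi^H(X_p^\wedge)$. For this we note that $\Phi^H(X_p^\wedge)$ is $S/p$-local, as shown above, and that this map is an $S/p$-equivalence: smashing with $S/p$ commutes with $\Phi^H$, and $X\to X_p^\wedge$ is an $S_G/p$-equivalence.
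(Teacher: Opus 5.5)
Your proof is correct and follows the paper's argument: you replace $X$ by $X_{(p)}$ using that $p$-localization is an $S/p$-equivalence, apply $\Phi^H L_E X\simeq L_{\Phi^H E}\Phi^H X$ with $E=S_G/p$, and commute $\Phi^H$ past the smashing $p$-localization. Your extra check, that the equivalence is induced by the canonical map $\Phi^H X\to \Phi^H(X_p^\wedge)$, is a small addition the paper leaves implicit.
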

\begin{proof}
    Since every $S\mathbb{Z}_{(p)}$-acyclic $G$-spectrum is $S/p$-acyclic, we have $L_{S/p}L_{S\mathbb{Z}_{(p)}}\simeq L_{S/p}$. If $p\nmid |H|$, we  have 
    \[
    \begin{aligned}
        \Phi^H L_{S/p}X & \simeq \Phi^H L_{S/p} L_{S\mathbb{Z}_{(p)}}X_{(p)} \simeq L_{\Phi^H S/p} \Phi^H(L_{S\mathbb{Z}_{(p)}}X)\\
        & \simeq L_{S/p} L_{S\mathbb{Z}_{(p)}}\Phi^H X\simeq L_{S/p} \Phi^H X.
    \end{aligned} 
    \]
    Here $\Phi^H L_{S\mathbb{Z}_{(p)}}X\simeq L_{S\mathbb{Z}_{(p)}}\Phi^H X$ since $L_{S\mathbb{Z}_{(p)}}$ is smashing.
\end{proof}

\begin{proposition}\cite{bauer2011bousfield}\label{prop:arithmetic square}
    Let $E,F,X\in \mathrm{Sp}^G$. If $E\wedge L_F X\simeq \ast$, the following diagram is a pullback.
    \[
        \begin{tikzcd}
        L_{E\vee F}X \arrow[r] \arrow[d] & L_E X \arrow[d] \\
        L_F X \arrow[r] & L_F L_E X.
        \end{tikzcd}
    \]
\end{proposition}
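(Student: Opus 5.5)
The plan is the standard one: show that the pullback has both properties that characterize $L_{E\vee F}X$. Let $P$ be the homotopy pullback of $L_EX\to L_FL_EX\leftarrow L_FX$, and let $X\to P$ be the map induced by the units $X\to L_EX$ and $X\to L_FX$, which are compatible through $L_FX\to L_FL_EX$. By uniqueness of Bousfield localization it suffices to prove two things: that $P$ is $(E\vee F)$-local, and that $X\to P$ is an $(E\vee F)$-equivalence, i.e.\ an $E$-equivalence and an $F$-equivalence.

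\emph{Locality.} A $G$-spectrum $A$ is $(E\vee F)$-acyclic exactly when it is both $E$-acyclic and $F$-acyclic. Hence every $E$-local or $F$-local spectrum is $(E\vee F)$-local. So $L_EX$, $L_FX$ and $L_FL_EX$ are all $(E\vee F)$-local. Local objects are closed under homotopy limits, in particular under fibers and pullbacks, so $P$ is $(E\vee F)$-local.

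\emph{$F$-equivalence.} Smash the square with $F$. The maps $X\to L_FX$ and $L_EX\to L_FL_EX$ become equivalences. Pullbacks are preserved by $F\wedge-$ in the stable setting, so $F\wedge P\simeq F\wedge L_FX\simeq F\wedge X$, compatibly with the map from $F\wedge X$.

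\emph{$E$-equivalence.} Smash the square with $E$. By hypothesis $E\wedge L_FX\simeq\ast$, so
\[
E\wedge P\simeq \operatorname{fib}\bigl(E\wedge L_EX\to E\wedge L_FL_EX\bigr).
\]
Since $E\wedge X\simeq E\wedge L_EX$, the $E$-equivalence $X\to P$ follows once we know $E\wedge L_FL_EX\simeq\ast$.

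This vanishing is the step I expect to be the main obstacle, since the hypothesis only concerns $L_FX$ and not $L_F$ of an arbitrary spectrum. I would first try to transport the hypothesis along the fiber sequence $C\to X\to L_EX$, where $C$ is $E$-acyclic. This reduces the claim to showing that $E\wedge L_FC\simeq\ast$.

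Equivalently, I would compare horizontal fibers. Write $Y=L_{E\vee F}X$ and $Z=\operatorname{fib}(Y\to L_FY)=\operatorname{fib}(Y\to L_FX)$. The hypothesis gives $L_E L_F X\simeq\ast$, hence $L_EZ\simeq L_EY\simeq L_EX$. The square is then a pullback exactly when the induced map $Z\to \operatorname{fib}(L_EX\to L_FL_EX)$ is an equivalence. Both source and target are $F$-acyclic and $(E\vee F)$-local, so it suffices to check that this map is an $E$-equivalence. I would try to deduce this from $L_EZ\simeq L_EX$ by showing that $Z$ is $E$-local, for instance by testing against $E$-acyclic $A$ via the fiber sequence $\operatorname{fib}(A\to L_FA)\to A\to L_FA$.

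If this direct argument does not close, I would follow the argument in \cite{bauer2011bousfield} for this point, since the statement is quoted from there. With the vanishing in hand, $X\to P$ is an $(E\vee F)$-equivalence into an $(E\vee F)$-local object, so $P\simeq L_{E\vee F}X$ and the square is a pullback.
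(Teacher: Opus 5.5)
Your reduction is correct. $P$ is $(E\vee F)$-local, and $X\to P$ is always an $F$-equivalence. Given $E\wedge L_FX\simeq *$, the map $X\to P$ is an $E$-equivalence if and only if $E\wedge L_FL_EX\simeq *$. So under the stated hypothesis the square is a pullback \emph{exactly} when $E\wedge L_FL_EX\simeq *$, and that is the step you leave open.

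It cannot be closed. With the hypothesis imposed only on $X$, the statement is false; the paper offers no argument beyond the citation. Here is a counterexample:
\begin{itemize}
\item Take $G=e$, $E=H\mathbb{Z}$, $F=K(1)$ at a prime $p$, and $X=\operatorname{fib}(S\to L_{K(1)}S)$. Then $L_{K(1)}X\simeq *$, so the hypothesis holds.
\item The sphere is connective, hence $H\mathbb{Z}$-local. Also $H\mathbb{F}_\ell\wedge L_{K(1)}S\simeq *$ for every prime $\ell$: for $\ell\neq p$ because $\ell$ acts invertibly, and for $\ell=p$ because $H\mathbb{F}_p$ is $E(1)$-acyclic and $L_1$ is smashing.
\item Hence $H\mathbb{Z}\wedge L_{K(1)}S$ is rational and $L_{H\mathbb{Z}}L_{K(1)}S\simeq (L_{K(1)}S)_{\mathbb Q}$.
\item It follows that $L_EX\simeq \operatorname{fib}(S\to (L_{K(1)}S)_{\mathbb Q})$ and $L_FL_EX\simeq L_{K(1)}S$.
\item Therefore $E\wedge L_FL_EX\simeq H\mathbb{Q}\wedge L_{K(1)}S$, whose $\pi_0\cong\mathbb{Q}_p$ is nonzero, so the square is not a pullback.
\end{itemize}
Both of your routes fail accordingly. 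In the first, $E\wedge L_FC\simeq \Sigma^{-1}E\wedge L_FL_EX\neq 0$ for $C=\operatorname{fib}(X\to L_EX)$. The second aims at a claim, that $Z$ is $E$-local, which is false even in the arithmetic square. For $E=\bigvee_p S/p$, $F=S\mathbb{Q}$, $X=S$ one gets $Z\simeq \Sigma^{-1}S\mathbb{Q}/\mathbb{Z}$. This has $\pi_{-1}Z\cong\mathbb{Q}/\mathbb{Z}$, whereas $L_EZ\simeq\prod_p S^\wedge_p$ has $\pi_{-1}=0$.

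The missing input is the vanishing for $L_EX$ itself. The standard formulation of this fracture square assumes $E\wedge L_FY\simeq *$ for all $Y$; it suffices to have it for $Y=X$ and $Y=L_EX$. With that hypothesis your argument is complete as written, since the case $Y=L_EX$ is precisely the step you need.

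This stronger hypothesis holds in both places the paper uses the proposition:
\begin{itemize}
\item In the arithmetic square, $S_G/p\wedge Y_{\mathbb Q}\simeq *$ for every $Y$.
\item In \Cref{lem:step by step}, the geometric fixed point argument shows $\bigl(\bigvee_{T\in I}K(T,n)\bigr)\wedge L_{K(H,n)}Y\simeq *$ for arbitrary $Y$.
\end{itemize}
So the proposition should be restated with the vanishing required for all $Y$ (or at least also for $L_EX$), and then your proof goes through.
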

In particular, let $E=\bigvee_p S_G/p$, $F=S_G\otimes\mathbb{Q}$, we have the arithmetic fracture square
    \[
        \begin{tikzcd}
        X \arrow[r] \arrow[d] & \prod_{p} X_{p}^{\wedge} \arrow[d] \\
        X_{\mathbb{Q}} \arrow[r] & (\prod_{p} X_{p}^{\wedge})_{\mathbb{Q}} .
        \end{tikzcd}
    \]
For any $E,X\in \mathrm{Sp}^G$, we have $L_{E/p}X\simeq (L_E X)_p^\wedge$ and $L_{E\otimes \mathbb{Q}}X\simeq (L_E X)_{\mathbb{Q}}$.
Therefore, $L_E X$ can be recovered from $L_{E/p}X$ and $L_{E\otimes \mathbb{Q}}X$ via the arithmetic fracture square.
  
\begin{proposition}\label{prop:mod p local}\cite[Lemma 6.2]{bonventre2022ku_g}
    Let $R$ be a $G$-equivariant ring spectrum. Then for any finite $G$-spectrum $X$, $(R\wedge X)_p^{\wedge}$ is $R/p$-local.
\end{proposition}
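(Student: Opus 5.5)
The plan is to prove that $(R\wedge X)_p^\wedge$ is $R/p$-local, meaning that $[Z,(R\wedge X)_p^\wedge]_G=0$ for every $R/p$-acyclic $G$-spectrum $Z$ (so $Z\wedge R/p\simeq\ast$). There are two reductions. First, reduce to the case $X=S_G$, using the finiteness of $X$. Second, use a standard fact: an $R$-module spectrum that is $p$-complete is $R/p$-local.

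For the first reduction, $X$ is finite and hence dualizable, with dual $DX$. This gives
\[
[Z,(R\wedge X)_p^\wedge]_G\cong[Z\wedge DX,(R)_p^\wedge\wedge X]_G .
\]
Here we use that $p$-completion commutes with smashing with a finite spectrum, since $L_{S/p}(-)\simeq F(\Sigma^{-1}S/p^\infty,-)$. We also have $(Z\wedge DX)\wedge R/p\simeq DX\wedge(Z\wedge R/p)\simeq\ast$, so $Z\wedge DX$ is again $R/p$-acyclic. Alternatively, the class of $X$ for which the claim holds is thick, since $R/p$-local objects are closed under fibers, cofibers and retracts and $(-)_p^\wedge$ is exact. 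It contains the orbits $G/H_+$, because $R\wedge G/H_+\simeq \operatorname{Ind}_H^G\operatorname{Res}^G_HR$ is again a module over a ring spectrum. Either way we may take $X=S_G$, or more generally replace $R\wedge X$ by any $R$-module $M$.

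For the main step, let $M$ be an $R$-module and $Z$ an $R/p$-acyclic $G$-spectrum. We use
\[
M_p^\wedge\simeq F(\Sigma^{-1}S/p^\infty,M)\simeq \lim_n F(\Sigma^{-1}S/p^n,M)\simeq\lim_n \Sigma\, M\wedge D(S/p^n)\ \text{up to shift},
\]
so it suffices to show that each $M\wedge S/p^n$ is $R/p$-local. Local objects are closed under limits, and the Milnor sequence handles the $\lim^1$ term. By induction on $n$, using the cofiber sequences $S/p\to S/p^{n}\to S/p^{n-1}$, it suffices to treat $M\wedge S/p$. Now $M\wedge S/p$ is a module over $R$. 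Any map $Z\to M\wedge S/p$ factors through $R\wedge Z\wedge S/p$ via the unit followed by the action map. This uses the module structure $R\wedge M\wedge S/p\to M\wedge S/p$, which requires only a homotopy unital multiplication on $R$. Since $R\wedge Z\wedge S/p\simeq Z\wedge R/p\simeq \ast$, every such map is null.

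The main obstacle is making the factorization precise. The argument must work with $G$-equivariant ring spectra that are only homotopy ring spectra, and it must handle the completion as a limit rather than a colimit. I would first try the cleanest formulation: $Z\wedge R/p\simeq\ast$ implies $R\wedge Z\wedge S/p^n\simeq\ast$ for all $n$. Then $[Z, M\wedge S/p^n]$ is a retract of $[Z,R\wedge M\wedge S/p^n]$, which equals $[R\wedge Z, \ldots]$-type groups and vanishes. The retraction comes from the unit–multiplication identity, so only homotopy associativity and unitality are needed. The final step passes to the limit through the Milnor sequence, using the same vanishing for $\Sigma Z$, which is still $R/p$-acyclic.
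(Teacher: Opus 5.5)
Your proof is correct in substance and uses the same mechanism as the paper. For an $R/p$-acyclic $Z$ and a homotopy $R$-module $N$, every map $f\colon Z\to N$ equals $a\circ(1_R\wedge f)\circ(\eta\wedge 1_Z)$. So $f$ factors through $R\wedge Z$, which is $S/p$-acyclic since $R\wedge Z\wedge S/p\simeq Z\wedge R/p\simeq\ast$. The paper applies this once, directly to $N=(R\wedge X)^\wedge_p$. That spectrum is $S/p$-local and is itself an $R$-module, via $R\wedge F(W,R\wedge X)\to F(W,R\wedge R\wedge X)\to F(W,R\wedge X)$ with $W=\Sigma^{-1}S/p^\infty$. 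Hence $[Z,N]_G$ injects into $[R\wedge Z,N]_G=0$. You instead reduce to $X=S_G$, write the completion as $\lim_n M\wedge S/p^n$, induct on $n$, and pass to the limit via the Milnor sequence. All of this is valid but unnecessary. In fact neither argument uses that $X$ is finite, because $R\wedge X$ is an $R$-module for every $X$ and $p$-completion preserves this.

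Your main step needs to be stated more carefully. With only an $R$-action, the unit--action factorization of $f\colon Z\to M\wedge S/p$ passes through $R\wedge Z$, not through $R\wedge Z\wedge S/p$. Factoring through the latter would need a unital multiplication on $S/p$, which does not exist at $p=2$. So you still have to show $[R\wedge Z,M\wedge S/p]_G=0$. This holds because $M\wedge S/p\simeq F(\Sigma^{-1}S/p,M)$ is $S/p$-local. Similarly, your closing remark that $[Z,M\wedge S/p^n]_G$ is a retract of $[Z,R\wedge M\wedge S/p^n]_G$ does not by itself give vanishing. What gives vanishing is the factorization of $f$ itself through $R\wedge Z$. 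Finally, the duality step should read $[Z,(R\wedge X)^\wedge_p]_G\cong[Z\wedge DX,R^\wedge_p]_G$, without the extra factor of $X$ in the target.
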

\begin{proof}
    The case $X=S_G$ is treated in \cite[Lemma 6.2]{bonventre2022ku_g}, and the same proof applies to general finite $G$-spectra. For any $R/p$-acyclic $G$-spectrum $M$, $M \wedge R$ is $S/p$-acyclic, hence
    \[ [M,(R\wedge X)_p^{\wedge}]_G\cong [M\wedge R,(R\wedge X)_p^{\wedge}]_{G}^{R-\mathrm{Mod}}\subset [M\wedge R,(R\wedge X)_p^{\wedge}]_G=0.\]
\end{proof}

Finally, we give two examples of Bousfield localization required for this paper.

\begin{example}
Let $G$ be a finite group, and let $H\subset G$ be a subgroup. Let $\mathcal{F}_H$ denote the smallest family of subgroups of $G$ that contains $H$ as an element. Then 
\[L_{G/H_+}X \simeq F((E\mathcal{F}_H)_+, X)\simeq F((EG/H)_+, X).\]
Indeed, $F((E\mathcal{F}_H)_+, X) \to F((E\mathcal{F}_H)_+, Y)$ is a $G$-equivalence if and only if $X \to Y$ is an $H$-equivalence. Thus, a $G$-spectrum $X$ satisfies $X \simeq F((E\mathcal{F}_H)_+, X)$ if and only if $X$ is $G/H_+$-local, and $F((E\mathcal{F}_H)_+, X) \simeq L_{G/H_+} X$. 
%A $G$-spectrum $X$ is said to be $H$-cofree if $X \simeq F((E\mathcal{F}_H)_+, X)$.
\end{example}

\begin{example}
    Let $\mathcal{F}$ be a family of subgroups of $G$, and let $X \in \mathrm{Sp}^G$. We have 
    \[
    L_{\widetilde{E\mathcal{F}}}X\simeq X \wedge \widetilde{E\mathcal{F}}.
    \]
    Indeed, define
$X[\mathcal{F}^{-1}] := X \wedge \widetilde{E\mathcal{F}}$.
Then the natural map $X\to X[\mathcal{F}^{-1}]$ is a $\widetilde{E\mathcal{F}}$-equivalence, and $X[\mathcal{F}^{-1}]$ is $\widetilde{E\mathcal{F}}$-local, since $\widetilde{E\mathcal{F}}\wedge \widetilde{E\mathcal{F}}\simeq \widetilde{E\mathcal{F}}$. 
\end{example}

\begin{proposition}\label{prop:pushout type}
    Let $G$ be a finite group and let $H\triangleleft G$ be a normal subgroup. Let $E$ be a $G/H$-equivariant ring spectrum, and define $E_G := \bigl(\operatorname{Inf}^{G}_{G/H} E\bigr)\bigl[\mathcal{F}_{H\not\subset}^{-1}\bigr]$. Then for every $G$-spectrum $X$, there are natural isomorphisms
    \[
    E_G^{*}(X)\cong E^{*}(\Phi^{H}X), \qquad (E_G)_{*}(X)\cong E_{*}(\Phi^{H}X).
    \]
\end{proposition}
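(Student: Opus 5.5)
The idea is to reduce both statements to the adjunction between inflation and $H$-fixed points, together with the standard fact that smashing with $\widetilde{E\mathcal{F}}_{H\not\subset}$ and taking $H$-fixed points computes $\Phi^H$.

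\textbf{Homology.} I will treat this case first.
\begin{enumerate}
\item For $X\in \mathrm{Sp}^G$, write
\[
(E_G)_*(X)=\pi_*^G\bigl(\operatorname{Inf}_{G/H}^G E\wedge \widetilde{E\mathcal{F}}_{H\not\subset}\wedge X\bigr).
\]
Here $\pi_*^G$ means $\pi_*$ of the $G$-fixed points, which equals $\pi_*$ of the $G/H$-fixed points of the $H$-fixed points.
\item Apply the projection formula recalled in the preliminaries, $(\operatorname{Inf}_{G/H}^G(E)\wedge Y)^H\simeq E\wedge Y^H$, with $Y=\widetilde{E\mathcal{F}}_{H\not\subset}\wedge X$.
\item Since $H$ is normal, $(\widetilde{E\mathcal{F}}_{H\not\subset}\wedge X)^H=\Phi^H X$ as a $G/H$-spectrum.
\item Taking $G/H$-fixed points gives $(E_G)_*(X)\cong \pi_*^{G/H}(E\wedge\Phi^H X)=E_*(\Phi^H X)$, where the right-hand side is read as $G/H$-equivariant homology.
\end{enumerate}
Naturality in $X$ is clear, because every equivalence used is natural.

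\textbf{Cohomology.} Here I use adjunctions.
\begin{enumerate}
\item Write $(E_G)^n(X)=[X,\Sigma^n\operatorname{Inf} E\wedge\widetilde{E\mathcal{F}}]^G$.
\item Since $\widetilde{E\mathcal{F}}$ is idempotent and $L_{\widetilde{E\mathcal{F}}}$ is smashing (by the second example), $\operatorname{Inf}E\wedge\widetilde{E\mathcal{F}}$ is $\widetilde{E\mathcal{F}}$-local. Hence maps into it from $X$ agree with maps from $X\wedge\widetilde{E\mathcal{F}}$.
\item The key identification is
\[
[X\wedge\widetilde{E\mathcal{F}},\,\operatorname{Inf}E\wedge\widetilde{E\mathcal{F}}]^G\cong[\Phi^H X,\,E]^{G/H}.
\]
This comes from the standard fact that $\Phi^H$ restricts to an equivalence between $\widetilde{E\mathcal{F}}_{H\not\subset}$-local $G$-spectra and $\mathrm{Sp}^{G/H}$.
\item That equivalence has inverse $Z\mapsto\operatorname{Inf}Z\wedge\widetilde{E\mathcal{F}}$. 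This uses $\Phi^H\circ\operatorname{Inf}\simeq\mathrm{Id}$ from the preliminaries, and the fact that $\widetilde{E\mathcal{F}}$-local objects are detected by $\Phi^H$: their geometric fixed points at subgroups not containing a conjugate of $H$ vanish, and for subgroups $K\supseteq H$ one reduces to $K/H$.
\item Fully faithfulness can also be checked directly. The unit $Z\to(\operatorname{Inf}Z\wedge\widetilde{E\mathcal{F}})^H=\Phi^H\operatorname{Inf}Z\simeq Z$ is an equivalence. Combined with the adjunction $\operatorname{Inf}\dashv(-)^H$ and idempotence of $\widetilde{E\mathcal{F}}$, this gives
\[
[X\wedge\widetilde{E\mathcal{F}},\,\operatorname{Inf}E\wedge\widetilde{E\mathcal{F}}]^G\cong[(X\wedge\widetilde{E\mathcal{F}})^H,\,(\operatorname{Inf}E\wedge\widetilde{E\mathcal{F}})^H]^{G/H}.
\]
\end{enumerate}

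\textbf{Main obstacle.} The hard step is this equivalence between $\widetilde{E\mathcal{F}}$-local $G$-spectra and $G/H$-spectra. In particular, one must show that the counit $\operatorname{Inf}(\Phi^H W)\wedge\widetilde{E\mathcal{F}}\to W$, for $\widetilde{E\mathcal{F}}$-local $W$, is an equivalence. I would check this on geometric fixed points $\Phi^K$ for all subgroups $K$:
\begin{itemize}
\item for $K\not\supseteq H$ (up to conjugacy), both sides vanish;
\item for $K\supseteq H$, one has $\Phi^K=\Phi^{K/H}\Phi^H$, which reduces the claim to $\Phi^H\operatorname{Inf}\simeq\mathrm{Id}$.
\end{itemize}
The ring structure on $E$ is not needed for the isomorphisms themselves; it only makes them multiplicative.
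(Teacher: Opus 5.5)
Your proposal is correct and is essentially the paper's argument: the paper cites Strickland's proof together with the equivalences $(F(X,Y[\mathcal{F}_{H\not\subset}^{-1}]))^H\simeq (F(X[\mathcal{F}_{H\not\subset}^{-1}],Y[\mathcal{F}_{H\not\subset}^{-1}]))^H\simeq F(\Phi^HX,\Phi^HY)$, which are your steps 2--3 at the level of mapping spectra (applied to $Y=\operatorname{Inf}_{G/H}^G E$, using $\Phi^H\operatorname{Inf}_{G/H}^G E\simeq E$), and your projection-formula argument for homology is the direct version the paper leaves implicit. The one point to tighten is that the unit being an equivalence does not by itself give the displayed isomorphism in your step 5; you need the counit, as you note, and your $\Phi^K$-check of it goes through once you observe that $\Phi^H$ of the counit is an equivalence by the triangle identity.
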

\begin{proof}
    The case where $G$ is a finite abelian group is proved by Strickland \cite[Theorem 10.3]{strickland2011multicurves}, and the same argument applies to an arbitrary finite group, using the fact that for $X,Y \in \mathrm{Sp}^G$,
    \[
    (F(X, Y[\mathcal{F}_{H\not\subset}^{-1}\bigr]))^H \simeq (F(X[\mathcal{F}_{H\not\subset}^{-1}\bigr],Y[\mathcal{F}_{H\not\subset}^{-1}\bigr]))^H \simeq F(\Phi^H X, \Phi^H Y).
    \]
\end{proof}

%%-------------------------------------------------------------------------------------------------------------------------------------------------------------------------------------

%%-------------------------------------------------------------------------------------------------------------------------------------------------------------------------------------
\section{$L_{KU_G/p}S_G$ as a homotopy fiber}\label{Sec: KU/p local sphere}
In this section, let $G$ be a finite nilpotent group. For each prime $p$, let $N_p$ be the Sylow $p$-subgroup of $G$. Then $$G\cong \prod_p N_p,$$ and the projection onto the $p$-factor gives a canonical quotient map $\alpha_p:G\to N_p$. This induces the inflation functor $\operatorname{Inf}_{N_p}^G=\alpha_p^*:\mathrm{Sp}^{N_p}\to \mathrm{Sp}^G$. We identify $L_{KU_G/p}S_G$ as a homotopy fiber in \Cref{thm:KUG local spectrum first case}, which is related to the Adams operation $\psi^g$ on $KO_{N_p}$.

\begin{lemma}\label{lem:bousfield class of KU/p 1}
    Let $Cyc$ be the family of all cyclic subgroups of $G$. For any prime $p$, $KU_G/p$ is Bousfield equivalent to $ECyc_+ \wedge \operatorname{Inf}_{N_p}^G KU_{N_p}/p$.
\end{lemma}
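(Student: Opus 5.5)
We detect Bousfield classes through geometric fixed points: for $G$-spectra, $X\wedge E\simeq\ast$ if and only if $\Phi^K(X\wedge E)\simeq \Phi^KX\wedge\Phi^KE\simeq\ast$ for every subgroup $K\subset G$. We therefore compare $\Phi^K(KU_G/p)$ with $\Phi^K\bigl(ECyc_+\wedge\operatorname{Inf}_{N_p}^G KU_{N_p}/p\bigr)$ for each $K$. If we show that for every $K$ either both vanish or they are Bousfield equivalent as non-equivariant spectra (indeed both equivalent to $KU/p$, or sums thereof), then $X\wedge KU_G/p\simeq\ast \iff X\wedge ECyc_+\wedge\operatorname{Inf}KU_{N_p}/p\simeq \ast$. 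Strictly, $\Phi^K$ of a $G$-spectrum should be read with its Weyl group action forgotten, but the vanishing criterion only uses the underlying spectra.

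\textbf{Left side.} By \Cref{prop:geometric fixed KU} applied to $\operatorname{Res}^G_K KU_G=KU_K$, we have $\Phi^K(KU_G/p)\simeq KU\otimes\mathbb{Z}[1/n,\zeta_n]/p$ if $K\cong C_n$, and $\ast$ otherwise. This vanishes exactly when $K$ is not cyclic or $p\mid n$. When $K$ is cyclic of order prime to $p$, the ring $\mathbb{Z}[1/n,\zeta_n]/p$ is a nonzero finite étale $\mathbb{F}_p$-algebra, so $KU\otimes \mathbb{Z}[1/n,\zeta_n]/p$ is a nonzero finite sum of copies of $KU/p$ and is Bousfield equivalent to $KU/p$.

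\textbf{Right side.} First, $\Phi^K(ECyc_+)$ is $S^0$ if $K$ is cyclic and $\ast$ otherwise, so that factor already produces the cyclic condition. For the inflation factor, nilpotency gives $K=K_p\times K'$ with $K_p=K\cap N_p$ and $K'$ of order prime to $p$. The quotient $\alpha_p$ maps $K$ onto $\alpha_p(K)=K_p$ with kernel $K'$, which lies in $\ker\alpha_p=\prod_{q\ne p}N_q$. Since geometric fixed points of an inflation compose as $\Phi^K\operatorname{Inf}^G_{N_p}E\simeq\Phi^{\alpha_p(K)}E$ (this follows from $\Phi^H\circ\operatorname{Inf}\simeq\mathrm{Id}$ for normal $H$, together with restriction to $K$ and inflation along $K\to K_p$), we get $\Phi^K\operatorname{Inf}KU_{N_p}/p\simeq\Phi^{K_p}KU_{N_p}/p$. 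By \Cref{prop:geometric fixed KU}, this is $KU\otimes\mathbb{Z}[1/p^m,\zeta_{p^m}]/p$ when $K_p\cong C_{p^m}$, which is $\ast$ if $m\ge1$ (since $p$ is inverted and killed) and $KU/p$ if $m=0$. So the right side is nonzero precisely when $K$ is cyclic with $K_p=1$, that is, cyclic of order prime to $p$, and in that case it is $KU/p$. This matches the left side.

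\textbf{Main obstacle.} The key step is justifying $\Phi^K\circ\operatorname{Inf}^G_{N_p}\simeq\Phi^{\alpha_p(K)}$ for non-normal $K$ and a non-injective $\alpha_p|_K$. I would argue via the splitting $G=N_p\times N$: restrict to $K=K_p\times K'$, note that $\operatorname{Res}^G_K\operatorname{Inf}^G_{N_p}=\operatorname{Inf}^{K}_{K_p}\operatorname{Res}^{N_p}_{K_p}$, and compute $\Phi^K=\Phi^{K_p}\circ\Phi^{K'}$, where $\Phi^{K'}\operatorname{Inf}^K_{K_p}\simeq \mathrm{Id}$ by the stated identity for the normal subgroup $K'$. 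The rest is bookkeeping with \Cref{prop:geometric fixed KU}.
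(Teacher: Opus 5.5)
Your proof is correct and follows the same route as the paper: you detect Bousfield classes by geometric fixed points (the paper cites \cite[Proposition 3.2]{hill2019equivariant} for this), compute $\Phi^K KU_G/p$ from \Cref{prop:geometric fixed KU}, and reduce the inflated factor along $K=K_p\times K'\to K_p$. The only small difference is in the case $K_p\neq e$: the paper gets $\Phi^K\operatorname{Inf}^K_{K_p}KU_{K_p}/p\simeq\ast$ from a ring map out of the contractible ring $\Phi^{K_p}KU_{K_p}/p$, whereas you identify $\Phi^K\operatorname{Inf}^G_{N_p}E\simeq\Phi^{K_p}\operatorname{Res}^{N_p}_{K_p}E$ directly via $\Phi^K\simeq\Phi^{K_p}\Phi^{K'}$ and $\Phi^{K'}\circ\operatorname{Inf}^K_{K_p}\simeq\mathrm{Id}$, which is equally valid and slightly sharper.
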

\begin{proof}
    By \cite[Proposition 3.2]{hill2019equivariant}, it suffices to show that for any subgroup $H\subset G$, $\langle \Phi^H KU_G/p\rangle = \langle \Phi^H (ECyc_+\wedge \operatorname{Inf}_{N_p}^G KU_{N_p}/p)\rangle$. By \Cref{prop:geometric fixed KU} we have 
    \[
        \langle \Phi^H KU_G/p\rangle=\begin{cases}
            \langle KU/p\rangle  & H\in Cyc, \text{ and } p\nmid |H|.\\
            \langle \ast \rangle & \text{otherwise}.
        \end{cases}
    \]
    On the other hand, since $G$ is nilpotent, there is a subgroup $N\subset G$ such that $G\cong N_p\times N$. For any subgroup $H\subset G$, we have the following diagram of groups
    \[
    \begin{tikzcd}
        H\arrow[r, hook] & N\times (H\cap N_p) \arrow[r, twoheadrightarrow]\arrow[d, hook] & H\cap N_p\arrow[d, hook]\\
                &   G\arrow[r, twoheadrightarrow] & N_p   .
    \end{tikzcd}
    \]
    Therefore, 
    \[
    \begin{aligned}
        \Phi^H \operatorname{Inf}_{N_p}^G KU_{N_p}/p & \simeq \Phi^H \operatorname{Inf}_{H\cap N_p}^{N\times (H\cap N_p)}\operatorname{Res}_{H\cap N_p}^{N_p}KU_{N_p}/p\\
        &\simeq \Phi^H \operatorname{Inf}_{H\cap N_p}^{H}KU_{H\cap N_p}/p.
    \end{aligned}
    \]

    If $H\cap N_p\neq \{e\}$, $\Phi^{H\cap N_p} KU_{H\cap N_p}/p\simeq \ast$ since $p$ is invertible in $\Phi^{H\cap N_p}KU_{H\cap N_p}/p$. The group homomorphism $H\cong (H\cap N)\times (H\cap N_p)\to H\cap N_p$ induces a map of ring spectra
    \[\ast \simeq \Phi^{H\cap N_p} KU_{H\cap N_p}/p\to \Phi^H \operatorname{Inf}_{H\cap N_p}^H KU_{H\cap N_p}/p,\] 
    thus $\Phi^H\operatorname{Inf}_{H\cap N_p}^H KU_{H\cap N_p}/p\simeq \ast$.
    
    If $H\cap N_p=\{e\}$, then $\Phi^H\operatorname{Inf}_{N_p}^G KU_{N_p}/p\simeq \Phi^H\operatorname{Inf}_e^H KU/p\simeq KU/p$, and
    \[
    \Phi^H(ECyc_+\wedge \operatorname{Inf}_{N_p}^G KU_{N_p}/p)\simeq \begin{cases}
        KU/p & H\in Cyc, \text{ and } H\cap N_p=\{e\}.\\
        \ast & \text{otherwise.}
    \end{cases}
    \]
    Note that for the Sylow $p$-subgroup $N_p$, $H\cap N_p=e$ if and only if $p\nmid |H|$. Therefore, $\langle KU_G/p\rangle=\langle  ECyc_+\wedge \operatorname{Inf}_{N_p}^G KU_{N_p}/p \rangle$.
\end{proof}

\begin{remark}
     Let $N$ be a normal subgroup of $G$. The $N$-fixed point $(\widetilde{E\mathcal{F}_{G\not\subset}})^{N}\in \mathcal{S}^{G/N}$ can be regarded as a model for $\widetilde{E\mathcal{F}_{G/N \not\subset}}$. This gives a canonical map of $G$-spaces 
    $$i:\operatorname{Inf}_{G/N}^G \widetilde{E\mathcal{F}_{G/N \not\subset}} \to \widetilde{E \mathcal{F}_{G\not\subset}}$$
    adjoint to the identity of $\widetilde{E\mathcal{F}_{G/N \not\subset}}$.
    For any $G/N$-ring spectrum $E$, we can define the map of ring spectra $\Phi^{G/N}E\to \Phi^{G}\operatorname{Inf}_{G/N}^G E$ as the composition
    \[
    \begin{aligned}
        \Phi^{G/N}E&=(\widetilde{E\mathcal{F}_{G/N\not\subset}}\wedge E)^{G/N}\to (\widetilde{E\mathcal{F}_{G/N\not\subset}}\wedge (\operatorname{Inf}_{G/N}^G E)^N)^{G/N}\\
        &\simeq (\operatorname{Inf}_{G/N}^G \widetilde{E\mathcal{F}_{G/N\not\subset}} \wedge \operatorname{Inf}_{G/N}^G E)^G\\
        &\overset{i}{\to} (\widetilde{E \mathcal{F}_{G\not\subset}}\wedge \operatorname{Inf}_{G/N}^G E)^G=\Phi^G(\operatorname{Inf}_{G/N}^G E).
    \end{aligned}
    \]
    This construction gives the map 
    \[\Phi^{H\cap N_p} KU_{H\cap N_p}/p\to \Phi^H \operatorname{Inf}_{H\cap N_p}^H KU_{H\cap N_p}/p, \]
    which is used in the proof of \Cref{lem:bousfield class of KU/p 1}.
\end{remark}

\begin{lemma}\label{lem:bousfield class of ko}
    $KU_G$ is Bousfield equivalent to $KO_G$. 
\end{lemma}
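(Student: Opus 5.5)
We follow the strategy of \Cref{lem:bousfield class of KU/p 1}: by \cite[Proposition 3.2]{hill2019equivariant}, it suffices to show that $\langle \Phi^H KU_G\rangle=\langle \Phi^H KO_G\rangle$ as non-equivariant spectra for every subgroup $H\subset G$. Since $\Phi^H\operatorname{Res}^G_H\simeq\Phi^H$ and restriction sends $KU_G,KO_G$ to $KU_H,KO_H$, we may assume $H=G$. The computation of $\Phi^G KU_G$ is given by \Cref{prop:geometric fixed KU}; what remains is to compare it with $\Phi^G KO_G$.

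The key tool is the Wood-type cofiber sequence. There are $G$-equivariant maps $c\colon KO_G\to KU_G$ (complexification) and $r\colon KU_G\to KO_G$ (realification), with $r\circ c=2$. Moreover, Wood's theorem holds equivariantly: $KO_G\wedge C\eta\simeq KU_G$, where $\eta\colon S^1\to S^0$ is the nonequivariant Hopf map regarded as a map of $G$-spectra. Applying the symmetric monoidal functor $\Phi^G$ gives $\Phi^GKU_G\simeq \Phi^GKO_G\wedge C\eta$, since $\Phi^G(C\eta)=C\eta$ for a map with trivial action. This yields one inclusion of Bousfield classes, $\langle \Phi^GKU_G\rangle\le\langle\Phi^GKO_G\rangle$.

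For the reverse inclusion, we use the standard argument: $\eta$ is nilpotent, since $\eta^4=0$ in $\pi_*S$. If $X\wedge \Phi^GKO_G\wedge C\eta\simeq *$, then multiplication by $\eta$ is an equivalence on $X\wedge\Phi^GKO_G$. Nilpotence of $\eta$ then forces $X\wedge\Phi^GKO_G\simeq *$. Hence $\langle\Phi^GKO_G\rangle=\langle\Phi^GKO_G\wedge C\eta\rangle=\langle\Phi^GKU_G\rangle$. In fact, the argument can be run directly at the level of $G$-spectra: $\langle KO_G\rangle=\langle KO_G\wedge C\eta\rangle=\langle KU_G\rangle$, because $\eta$ (inflated to $S_G$) is still nilpotent. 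This bypasses geometric fixed points entirely.

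The main obstacle is justifying the equivariant Wood equivalence $KO_G\wedge C\eta\simeq KU_G$. One approach is to cite Atiyah's Real $K$-theory or the equivariant Bott/Wood sequence $\Sigma KO_G\xrightarrow{\eta}KO_G\xrightarrow{c}KU_G$, which is classical: equivariant Bott periodicity gives a $(1,1)$-periodicity for $KR$ and Wood's argument applies fiberwise. If citing it is uncomfortable, a fallback avoids the cofiber sequence. Since $r c=2$ and $c r=1+\psi^{-1}$, we have $\langle KU_G\rangle\le\langle KO_G\rangle$ because $KU_G$ is a $KO_G$-module. For the other direction, work separately after inverting $2$, where $KO_G[1/2]$ is a retract of $KU_G[1/2]$ via $rc=2$, and $2$-locally, where one still needs $\eta$-nilpotence. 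So the $2$-primary part is the genuine difficulty in either route.
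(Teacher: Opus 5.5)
Your proposal is correct, and the direct $G$-level argument at the end of your third paragraph is exactly the paper's proof. The equivariant Wood cofiber sequence $\Sigma KO_G\xrightarrow{\eta}KO_G\to KU_G$ gives $\langle KU_G\rangle\le\langle KO_G\rangle$, and nilpotence of $\eta$ gives the reverse inclusion. The preliminary reduction to geometric fixed points and the fallback via $rc=2$ are unnecessary detours.
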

\begin{proof}
   The proof follows the proof of the nonequivariant case in \cite[Theorem 8.4]{Ravenel1984LocalizationWR}. The equivariant Wood theorem \cite[Theorem 9.8]{mathew2017nilpotence} says that there is a cofibration 
    \[
    \Sigma KO_G \overset{\eta}{\longrightarrow} KO_G \longrightarrow KU_G
    \]
    where $\eta\in \pi_1(S)$ is the Hopf element. Then $KO_G\wedge X\simeq \ast$ implies $KU_G\wedge X\simeq \ast$. Conversely, if $KU_G\wedge X\simeq \ast$, then multiplication by $\eta$ induces an isomorphism on $(KO_G)_*(X)$. Since $\eta$ is nilpotent, it follows that $(KO_G)_*(X)=0$. 
\end{proof}

\begin{lemma}\label{lem:geometric fixed KO}
    For any subgroup $H\subset G$, $\Phi^H KU_G/p\simeq \ast$ if and only if $\Phi^H KO_G/p\simeq \ast$. Furthermore, 
    \[
    \langle KU_G/p\rangle=\langle  ECyc_+\wedge \operatorname{Inf}_{N_p}^G KO_{N_p}/p \rangle.
    \]
\end{lemma}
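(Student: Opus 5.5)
The plan is to reduce both assertions to \Cref{lem:bousfield class of ko} and \Cref{lem:bousfield class of KU/p 1}, using that Bousfield equivalence is preserved by smashing and by strong symmetric monoidal functors.

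\textbf{First assertion.} By \Cref{lem:bousfield class of ko} we have $\langle KU_G\rangle=\langle KO_G\rangle$. Since $E/p = E\wedge S_G/p$, smashing with $S_G/p$ gives $\langle KU_G/p\rangle=\langle KO_G/p\rangle$. Alternatively, the Wood cofibration $\Sigma KO_G\xrightarrow{\eta}KO_G\to KU_G$ smashed with $S/p$ gives the same conclusion, because $\eta$ is nilpotent. Now apply $\Phi^H$, which is strong symmetric monoidal. It sends $\langle E\rangle=\langle F\rangle$ to $\langle\Phi^H E\rangle=\langle\Phi^H F\rangle$ nonequivariantly. The cleanest route is to apply $\Phi^H$ to the Wood cofibration, giving a cofiber sequence
\[
\Sigma\Phi^H KO_G/p\xrightarrow{\eta}\Phi^H KO_G/p\to\Phi^H KU_G/p.
\]
If $\Phi^H KU_G/p\simeq\ast$, then $\eta$ acts invertibly on the ring spectrum $\Phi^H KO_G/p$. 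Since $\eta$ is nilpotent, this ring spectrum is contractible. The converse holds because $\Phi^H KU_G/p$ is a cofiber of self-maps of $\Phi^H KO_G/p$.

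\textbf{Second assertion.} I will show $\langle ECyc_+\wedge\operatorname{Inf}_{N_p}^G KU_{N_p}/p\rangle=\langle ECyc_+\wedge\operatorname{Inf}_{N_p}^G KO_{N_p}/p\rangle$ and then invoke \Cref{lem:bousfield class of KU/p 1}. Inflation is a symmetric monoidal, colimit-preserving functor that sends the Wood cofibration for $N_p$ to the cofibration $\Sigma\operatorname{Inf}KO_{N_p}\xrightarrow{\eta}\operatorname{Inf}KO_{N_p}\to\operatorname{Inf}KU_{N_p}$. Smashing with $ECyc_+\wedge S/p$ and repeating the nilpotence argument of \Cref{lem:bousfield class of ko} for $X\wedge(-)$ gives the Bousfield equality directly.

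An alternative route runs through geometric fixed points via \cite[Proposition 3.2]{hill2019equivariant}. It is enough to check that $\Phi^H$ of the two spectra vanish simultaneously, which follows from the same cofiber argument after applying $\Phi^H$. In that version the ring structure needed for the nilpotence step comes from $\Phi^H$ of a ring spectrum, using the remark's map $\Phi^{H\cap N_p}\to\Phi^H\operatorname{Inf}$.

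\textbf{Main obstacle.} The step that needs care is the nilpotence argument: "$\eta$ acts invertibly on $E_*(X)$ and $\eta^4=0$ in $\pi_*S$, hence $E_*(X)=0$." Here $\eta$ is understood as the image of the nonequivariant Hopf map under $S\to S_G$, and $\eta^4=0$ already holds in $\pi_*S$. This makes the argument uniform and avoids any subtlety about equivariant $\eta$. With this in hand, the rest is formal.
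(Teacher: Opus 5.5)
Your proof is correct. For the first assertion you follow the paper exactly: apply $\Phi^H$ to the Wood cofibration and use that $\eta$ is nilpotent.

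For the second assertion your main route differs from the paper's. The paper computes $\Phi^H(ECyc_+\wedge\operatorname{Inf}_{N_p}^G KO_{N_p}/p)$ explicitly: it is $KO/p$ if $H$ is cyclic with $H\cap N_p=\{e\}$, and contractible otherwise. It then compares this with $\Phi^H KU_G/p$ via \cite[Proposition 3.2]{hill2019equivariant} and $\langle KU/p\rangle=\langle KO/p\rangle$, which is close to your ``alternative route''. You instead obtain $\langle ECyc_+\wedge\operatorname{Inf}_{N_p}^G KU_{N_p}/p\rangle=\langle ECyc_+\wedge\operatorname{Inf}_{N_p}^G KO_{N_p}/p\rangle$ formally. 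You smash the inflated Wood cofibration with $X\wedge ECyc_+\wedge S_G/p$ (inflation is exact, symmetric monoidal, and sends the Hopf map to the Hopf map), and then chain with \Cref{lem:bousfield class of KU/p 1}. This avoids rerunning the case analysis on $H\cap N_p$ with $KO$ in place of $KU$, which the paper's ``therefore'' does tacitly. In exchange, the paper's route records the explicit geometric fixed points, and that identification is what gets cited later in the proof of \Cref{thm:KUG local spectrum first case}.

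Two minor points.
\begin{enumerate}
\item For $p=2$, $\Phi^H KO_G/p$ is not a ring spectrum in general. Already $\pi_2(KO/2)\cong\mathbb{Z}/4$ obstructs a unital multiplication, since $2=0$ in $\pi_0(KO/2)$ would force $2\cdot\mathrm{id}=0$. So drop that phrase. As you note yourself, the nilpotence step only needs the map to be $\eta\wedge 1$ with $\eta^4=0$ in $\pi_*S$; no ring structure is used. Likewise, the remark in your alternative route about where ``the ring structure needed for the nilpotence step'' comes from is misplaced.
\item Strong monoidality alone does not show that $\Phi^H$ preserves Bousfield classes. It does preserve them, but this needs test objects such as $G/H_+\wedge\widetilde{E\mathcal{F}_{H\not\subset}}\wedge\operatorname{Inf}_e^G Y$. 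Since you use the cofiber route instead, nothing depends on that claim.
\end{enumerate}
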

\begin{proof}
    Apply the geometric fixed point functor $\Phi^H$, we have a cofibration
    \[
    \Phi^H KO_G/p\wedge S^1\overset{1\wedge\eta}{\longrightarrow} \Phi^H KO_G/p \wedge S^0\longrightarrow \Phi^H KU_G/p.
    \]
    By the same argument in \Cref{lem:bousfield class of ko}, since $\eta$ is nilpotent, $\Phi^H KU_G/p\simeq \ast$ if and only if $\Phi^H KO_G/p\simeq \ast$. 

    Therefore, 
    \[
    \Phi^H(ECyc_+\wedge \operatorname{Inf}_{N_p}^G KO_{N_p}/p)\simeq \begin{cases}
        KO/p & H\in Cyc, \text{ and } H\cap N_p=\{e\}.\\
        \ast & \text{otherwise.}
    \end{cases}
    \]
    Since $\langle KU/p\rangle=\langle  KO/p \rangle$, we have $\langle KU_G/p\rangle=\langle  ECyc_+\wedge \operatorname{Inf}_{N_p}^G KO_{N_p}/p \rangle$.
\end{proof}

\begin{proposition}\label{thm:KUG local spectrum first case}
    Let $G$ be a finite nilpotent group, and let $Cyc$ be the family of all cyclic subgroups of $G$. For any prime $p$, let $N_p$ be the Sylow $p$-subgroup of $G$, and let $g$ be a topological generator of $\mathbb{Z}_p^{\times}/\{\pm 1\}$. Then for any finite $G$-spectrum $X$, there is a fiber sequence 
    \[
    L_{KU_G/p}X \longrightarrow (ECyc_+\wedge\ \operatorname{Inf}_{N_p}^G KO_{N_p}\wedge X)_p^{\wedge}\overset{\psi^g-1}{\longrightarrow} (ECyc_+\wedge\ \operatorname{Inf}_{N_p}^G KO_{N_p}\wedge X)_p^{\wedge} .
    \]
    
    When $p$ is odd, the $KO_{N_p}$ appearing in the fiber sequence above can be replaced by $KU_{N_p}$. In this case, $g=(\zeta_{p-1},p+1)$ is a topological generator of $\mathbb{Z}_p^{\times}$, where $\zeta_{p-1}$ is a primitive $(p-1)$-th root of unity.
\end{proposition}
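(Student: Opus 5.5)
Write $E:=ECyc_+\wedge \operatorname{Inf}_{N_p}^G KO_{N_p}$ and let $F$ denote the fiber of $\psi^g-1\colon (E\wedge X)_p^\wedge\to (E\wedge X)_p^\wedge$. The plan is the standard characterization of a Bousfield localization: produce a map $X\to F$, then show that $F$ is $KU_G/p$-local and that $X\to F$ is a $KU_G/p$-equivalence. By \Cref{lem:geometric fixed KO}, $KU_G/p$ may be replaced throughout by $E/p$. The Adams operation $\psi^g$ is a ring map of $KO_{N_p}$, so it induces an endomorphism of $E$ after inflation and smashing with $ECyc_+$. Since $(\psi^g-1)$ kills the unit, the composite $X\to E\wedge X\to (E\wedge X)_p^\wedge$ factors through $F$.

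\emph{Locality.} The spectrum $E$ is a $G$-ring spectrum: $ECyc_+$ is an idempotent ring up to homotopy, and $\operatorname{Inf}$ is symmetric monoidal. Hence \Cref{prop:mod p local} shows that $(E\wedge X)_p^\wedge$ is $E/p$-local for finite $X$. Local objects are closed under fibers, so $F$ is $E/p$-local.

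\emph{Equivalence.} We must show that $E/p\wedge X\to E/p\wedge F$ is an equivalence; it suffices to take $X=S_G$ and smash with the finite $X$ afterwards. Smashing with $S/p$ commutes with the fiber, and $E/p\wedge(E\wedge X)_p^\wedge\simeq E/p\wedge E\wedge X$ because the target is already $p$-complete. Equivalences of $G$-spectra are detected on geometric fixed points $\Phi^H$ for all $H$. By the computation in \Cref{lem:bousfield class of KU/p 1,lem:geometric fixed KO}, $\Phi^H(E/p)\simeq *$ unless $H$ is cyclic of order prime to $p$, in which case $H\subset N$ and $\Phi^H E\simeq \Phi^H\operatorname{Inf}_e^H KO\simeq KO$. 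In that case $\Phi^H$ commutes with $p$-completion by \Cref{prop:q local geometric fixed points}, and the action of $\psi^g$ on $\Phi^H E\simeq KO$ is the nonequivariant Adams operation. Therefore $\Phi^H F\simeq \mathrm{fib}(\psi^g-1\colon KO_p^\wedge\to KO_p^\wedge)\simeq L_{KO/p}S$, and $\Phi^H$ applied to the map $E/p\to E/p\wedge F$ becomes the classical equivalence $KO/p\to KO/p\wedge L_{K(1)}S$. For $H$ outside this class both sides vanish.

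The step I expect to be the main obstacle is identifying $\Phi^H(\psi^g)$ with the nonequivariant $\psi^g$. The tool is naturality of the comparison map $\Phi^{e}KO\to \Phi^H\operatorname{Inf}_e^H KO$ from the remark following \Cref{lem:bousfield class of KU/p 1}. One must also check that $\Phi^H$ commutes with smashing by $E$ in the relevant product $\Phi^H(E/p\wedge E)\simeq \Phi^H(E/p)\wedge\Phi^H E$, which holds because $\Phi^H$ is symmetric monoidal.

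For odd $p$, the argument is identical with $KU$ in place of $KO$, using \Cref{lem:bousfield class of KU/p 1} and the classical fiber sequence $L_{K(1)}S\to KU_p^\wedge\xrightarrow{\psi^g-1}KU_p^\wedge$ for a topological generator $g$ of $\mathbb{Z}_p^\times$. The element $(\zeta_{p-1},p+1)\in\mu_{p-1}\times(1+p\mathbb{Z}_p)$ is such a generator, since $p+1$ topologically generates $1+p\mathbb{Z}_p$.
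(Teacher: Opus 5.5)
You follow the paper's route: a comparison map into the fiber $F$, locality of $F$ from \Cref{prop:mod p local}, and the $E/p$-equivalence checked on $\Phi^H$ via \Cref{prop:q local geometric fixed points}. Your geometric fixed point step is fine. The gap is the ring structure on $E$, which you use twice. You use it to produce the map $X\to E\wedge X$, which requires a unit $S_G\to E$. You use it again to invoke \Cref{prop:mod p local}, whose proof rests on the free-module adjunction $[M,N]\cong[M\wedge R,N]^{R\text{-Mod}}$ and so needs a unit.

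Your justification, that $ECyc_+$ is an idempotent ring, is not correct. In the triangle $ECyc_+\to S_G\to\widetilde{ECyc}$ the idempotent ring is $\widetilde{ECyc}$; $ECyc_+$ has a counit $ECyc_+\to S_G$ but no unit. Moreover, $E$ has no unit in general. Take $G=C_3\times C_3$ and $p=2$, so $E=ECyc_+\wedge\operatorname{Inf}_e^G KO$. A unit of $E$ followed by the counit would give a class in $\pi_0^G(\operatorname{Inf}_e^G KO)$ with geometric fixed points $\pm1$ at every cyclic subgroup and $0$ at $G$. Modulo torsion this group is $A(G)$ (tom Dieck splitting), with geometric fixed points given by marks. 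The congruence $\phi^C\equiv\phi^G \pmod 3$ for $|C|=3$ rules such a class out.

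The missing idea is that the unit exists only after $p$-completion, for a coprimality reason; this is the separate claim in the paper's proof. One has $ECyc_+\simeq \operatorname{Inf}ECyc^{N_p}_+\wedge\operatorname{Inf}ECyc^{N}_+$, and $ECyc^{N_p}_+\wedge KO_{N_p}$ agrees with $KO_{N_p}$ after $p$-completion. Hence $E^\wedge_p\simeq(\operatorname{Inf}_N^G ECyc^N_+\wedge\operatorname{Inf}_{N_p}^G KO_{N_p})^\wedge_p$. Because $p\nmid|N|$, $(ECyc^N_+)_{(p)}$ is the summand of $S_{N,(p)}$ cut out by the idempotent of $A(N)_{(p)}$ supported on cyclic subgroups. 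It is therefore a unital commutative ring. This gives the unit $S_G\to E^\wedge_p$, hence the comparison map $X^\wedge_p\to F$ (not $X\to E\wedge X$), and makes $E^\wedge_p$ an admissible ring in \Cref{prop:mod p local}.

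Relatedly, $\psi^g$ for $g\in\mathbb{Z}_p^\times$ is not an endomorphism of $KO_{N_p}$ or of $E$. By Hirata--Kono, $\psi^k$ is a stable operation on equivariant $K$-theory only after inverting $k$ and only when $k$ is prime to the group order. This is exactly why one inflates from $N_p$ instead of using $KO_G$. One then gets a ring map $\widetilde{\psi^g}$ on $(KO_{N_p})^\wedge_p$ and sets $\psi^g=(\operatorname{Inf}_N^G ECyc^N_+\wedge\operatorname{Inf}_{N_p}^G\widetilde{\psi^g})^\wedge_p$ on $E^\wedge_p$. With these two inputs supplied, the rest of your argument goes through as written.
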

\begin{proof}
    Let $I=ECyc_+\wedge \operatorname{Inf}_{N_p}^G KO_{N_p}$. We first claim that $\psi^g:I_p^\wedge \to I_p^\wedge$ is a map of ring spectra and let $F_G$ be the fiber of $((\psi^g - 1)\wedge X)_p^\wedge$. Then by \Cref{prop:mod p local}, $(I\wedge X)_p^{\wedge}$ is $I/p$-local; hence it is $KU_G/p$-local by \Cref{lem:bousfield class of ko}. It follows that $F_G$ is $KU_G/p$ local. 
    
    Note that 
    \[
    S_G\longrightarrow I_p^\wedge \overset{\psi^g-1}{\longrightarrow} I_p^\wedge
    \]
    is trivial. Smashing with $X$ and $p$-completing, we see that the composite 
    \[
    X_p^\wedge \longrightarrow (I\wedge X)_p^\wedge \overset{\psi^g-1}{\longrightarrow} (I\wedge X)_p^\wedge
    \]
    is also trivial, which induces a map $\iota: X_p^\wedge \to F_G$. It suffices to show that $\iota$ is a $I/p$-equivalence, or equivalently, 
    \[ 
    f_H:\Phi^H(I/p\wedge \iota): \Phi^H (I/p\wedge X_p^\wedge)\to \Phi^H (I/p\wedge F_G)
    \] 
    is an equivalence for every subgroup $H\subset G$. 
    By \Cref{prop:geometric fixed KU} and \Cref{lem:geometric fixed KO}, $\Phi^H \operatorname{Inf}_{N_p}^G KO_{N_p}/p\simeq \ast$ if $H$ is not cyclic or $H\cap N_p$ is nontrivial, and $f_H$ is an equivalence between trivial spectra. If $H$ is cyclic and $H\cap N_p=\{e\}$, then we have
    \[
    \Phi^H I/p\simeq \Phi^H\operatorname{Inf}_e^H KO/p\simeq KO/p.
    \]
    By \Cref{prop:q local geometric fixed points}, 
    \[
    \begin{aligned}
        \Phi^H (I/p\wedge F_G) & \simeq \Phi^H I/p \wedge \operatorname{fib}(\Phi^H(\psi^g-1))\\
        &\simeq KO/p \wedge \operatorname{fib}((\Phi^H (I\wedge X))_p^\wedge\longrightarrow (\Phi^H(I\wedge X))_p^\wedge)\\
        &\simeq KO/p \wedge \operatorname{fib}((KO\wedge \Phi^H X)_p^\wedge\overset{\psi^g-1}{\longrightarrow} (KO\wedge \Phi^H X)_p^\wedge)\\
        &\simeq KO/p \wedge L_{KU/p}\Phi^H X \simeq \Phi^H(I/p\wedge X).
    \end{aligned}
    \]
    Here the first equivalence follows from the fact that $\Phi^H$ preserves fiber sequences and smash products.  

    It remains to prove the claim. For any prime $q\neq p$, let $N_q$ denote the Sylow $q$-subgroup of $G$, and set $N:=\prod_{q\neq p}N_q$.
    Let $Cyc^{N}$ denote the family of cyclic subgroups of $N$, and let $ECyc^N$ be the associated $N$-space. Then $G\cong N\times N_p$, and
    \[
    I_p^\wedge \simeq
    \bigl(\operatorname{Inf}_N^G ECyc^N_+
    \wedge
    \operatorname{Inf}_{N_p}^G KO_{N_p}\bigr)_p^\wedge.
    \]
    Since $p\nmid |N|$, the $N$-spectrum $(ECyc^N_+)_p^\wedge$ is a direct summand of $(S_N)_p^\wedge$. In particular, it is an $N$-equivariant commutative ring spectrum.
    
    Recall that for any $k\in\mathbb{Z}$, there is an Adams operation $\psi^k$ on equivariant $K$-theory of a $G$-space defined in \cite{atiyah1969group}. Hirata-Kono \cite[Theorem3.1]{hirata1982bott} shows that the Adams operation $\psi^k$ induces a stable operation after inverting $k$ if and only if $(k,|G|)=1$. Then for a chosen generator $g\in \mathbb Z_p^\times$, the Adams operation induces a map of ring spectra 
    \[
    \widetilde{\psi^k}: (KO_{N_p})_p^\wedge\to (KO_{N_p})_{p}^\wedge
    \]
    for any $p$-group $N_p$ and $k\in \mathbb{Z}_p^\times/\{\pm 1\}$, and 
    $$\psi^g=(\operatorname{Inf}_N^G ECyc^N_+\wedge\ \operatorname{Inf}_{N_p}^G \widetilde{\psi^g})_p^{\wedge}: I_p^\wedge \to I_p^\wedge $$ 
    is a map of ring spectra since $(\operatorname{Inf}_N^G ECyc^N_+\wedge\ \operatorname{Inf}_{N_p}^G -)_p^{\wedge}$ is a monoidal functor.
\end{proof}

\begin{remark}\label{rmk:p group}
    When $G=N_p$ is a $p$-group, $ECyc_+\wedge KO_{N_p}$ is $S_{N_p}/p$-equivalent to $KO_{N_p}$, so we have 
    \[
    (ECyc_+\wedge KO_{N_p})_p^{\wedge}\simeq (KO_{N_p})_p^\wedge.
    \]
    In this case, the fiber sequence in \Cref{thm:KUG local spectrum first case} agrees with the fiber sequences in \cite[A.4.13]{balderrama2024total} and \cite[Proposition 6.3]{bonventre2022ku_g}.
\end{remark}

\begin{corollary}\label{cor:from Np equivariant to G equivariant}
    For any prime $p$,   
    $ L_{KU_G/p}S_G\simeq (ECyc_+\wedge \operatorname{Inf}_{N_p}^G L_{KU_{N_p}/p}S_{N_p})_p^\wedge. $
\end{corollary}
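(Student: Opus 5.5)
The plan is to apply \Cref{thm:KUG local spectrum first case} twice, once for $G$ and once for the $p$-group $N_p$, and then compare the two fiber sequences. Take $X=S_G$. The proposition gives
\[
L_{KU_G/p}S_G\simeq \operatorname{fib}\bigl((ECyc_+\wedge \operatorname{Inf}_{N_p}^G KO_{N_p})_p^\wedge\xrightarrow{\psi^g-1}(ECyc_+\wedge \operatorname{Inf}_{N_p}^G KO_{N_p})_p^\wedge\bigr).
\]
Apply it again with $G$ replaced by $N_p$ and $X=S_{N_p}$. By \Cref{rmk:p group} this gives
\[
L_{KU_{N_p}/p}S_{N_p}\simeq \operatorname{fib}\bigl((KO_{N_p})_p^\wedge\xrightarrow{\psi^g-1}(KO_{N_p})_p^\wedge\bigr).
\]

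Now apply the functor $F(-):=(ECyc_+\wedge \operatorname{Inf}_{N_p}^G(-))_p^\wedge$ to the second fiber sequence. This functor is exact: inflation is exact, smashing with a space is exact, and $p$-completion is a Bousfield localization on stable categories, hence exact. So $F(L_{KU_{N_p}/p}S_{N_p})$ is the fiber of $F(\psi^g-1)$ on $F((KO_{N_p})_p^\wedge)$.

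It remains to identify $F((KO_{N_p})_p^\wedge)$ with $(ECyc_+\wedge \operatorname{Inf}_{N_p}^G KO_{N_p})_p^\wedge$, compatibly with $\psi^g$. The map $KO_{N_p}\to (KO_{N_p})_p^\wedge$ is an $S/p$-equivalence. Inflation and smashing with $ECyc_+$ preserve $S/p$-equivalences, since $\operatorname{Inf}(S_{N_p}/p)=S_G/p$. Hence after $p$-completion the map becomes an equivalence
\[
(ECyc_+\wedge \operatorname{Inf}_{N_p}^G KO_{N_p})_p^\wedge\xrightarrow{\ \simeq\ } F((KO_{N_p})_p^\wedge).
\]
By construction in the proof of \Cref{thm:KUG local spectrum first case}, the operation $\psi^g$ on $I_p^\wedge$ is $(\operatorname{Inf}ECyc^N_+\wedge \operatorname{Inf}\widetilde{\psi^g})_p^\wedge$, which is exactly $F$ applied to $\widetilde{\psi^g}$. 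Note also that $ECyc_+\simeq \operatorname{Inf}_N^G ECyc^N_+$ after $p$-completion smashed with an inflation from $N_p$, as in that proof. So the two fiber sequences agree, and the fibers are equivalent.

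The main obstacle is this compatibility check: we must make sure that the map $\psi^g-1$ in the $G$-level sequence really is $F$ applied to the $N_p$-level map, and not merely some map with the same source and target. Since the paper defines $\psi^g$ on $I_p^\wedge$ precisely through this functorial formula, I would cite that construction directly, while keeping track of the identification of $ECyc$ for $G$ with the inflated $ECyc^N$ smashed with the $N_p$-part.
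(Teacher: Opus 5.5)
Your proposal is correct and follows the paper's argument. You identify $(ECyc_+\wedge \operatorname{Inf}_{N_p}^G KO_{N_p})_p^\wedge$ with $(ECyc_+\wedge \operatorname{Inf}_{N_p}^G (KO_{N_p})_p^\wedge)_p^\wedge$ using that $KO_{N_p}\to (KO_{N_p})_p^\wedge$ is an $S/p$-equivalence, and you then obtain the fiber sequence of \Cref{thm:KUG local spectrum first case} by applying the exact functor $(ECyc_+\wedge \operatorname{Inf}_{N_p}^G(-))_p^\wedge$ to the sequence $L_{KU_{N_p}/p}S_{N_p}\to (KO_{N_p})_p^\wedge\xrightarrow{\psi^g-1}(KO_{N_p})_p^\wedge$; your explicit check that $\psi^g$ on $I_p^\wedge$ is this functor applied to $\widetilde{\psi^g}$, with the $ECyc^{N_p}_+$ factor absorbed via \Cref{rmk:p group}, is a detail the paper leaves implicit.
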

\begin{proof}
    Since $\operatorname{Inf}_{N_p}^G KO_{N_p}/p\simeq \operatorname{Inf}_{N_p}^G (KO_{N_p})_p^\wedge/p$, there is an equivalence of $G$-spectra 
    \[ 
    f:(ECyc_+\wedge \operatorname{Inf}_{N_p}^G KO_{N_p})_p^\wedge \simeq (ECyc_+\wedge \operatorname{Inf}_{N_p}^G (KO_{N_p})_p^\wedge)_p^\wedge.
    \]
    So we can rewrite the fiber sequence in \Cref{thm:KUG local spectrum first case} by applying the functor
    $(ECyc_+\wedge \operatorname{Inf}_{N_p}^G(-))_p^\wedge$
    to the fiber sequence
    \[
    L_{KU_{N_p}/p}S_{N_p}\to (KO_{N_p})_p^\wedge \to (KO_{N_p})_p^\wedge .
    \]
\end{proof}
\begin{remark}
    When $N_p=e$ for some prime $p$, let 
    \[
    F_G=ECyc_+\wedge \operatorname{Inf}_{e}^G L_{KU/p}S.
    \]
    For any subgroup $H\subset G$, $\Phi^H F_G$ is either trivial or $L_{KU/p}S$, thus $\Phi^H F_G$ is $p$-complete for all $H\subset G$. Since $p\nmid |G|$, by \Cref{prop:q local geometric fixed points}, there is an equivalence of non-equivariant spectra
    \[
    \Phi^H F_G\simeq (\Phi^H F_G)_p^\wedge \simeq \Phi^H (F_G)_p^\wedge,
    \] 
    i.e. $F_G\simeq (F_G)_p^\wedge$ is $p$-complete. 
    It follows that for $p\nmid |G|$,
    \[
    L_{KU_G/p}S_G\simeq (F_G)_p^\wedge\simeq ECyc_+\wedge \operatorname{Inf}_{e}^G L_{KU/p}S,
    \]
    which agrees with the result of \cite[Proposition 8.5]{bonventre2022ku_g}.
\end{remark}

%%-------------------------------------------------------------------------------------------------------------------------------------------------------------------------------------
\section{The fixed points of $L_{KU_G/p }S_G$}\label{Sec:fixed point of KU/p local sphere}
In order to compute $\underline{\pi}_* L_{KU_G/p }S_G$, we need to study the fixed points of $(ECyc_+\wedge \operatorname{Inf}_{N_p}^G E)_p^\wedge$ for an $N_p$-spectrum $E$. The main result of this section is \Cref{lem:Z graded homotopy Mackey functor of ECyc inf}, which allows us to compute $\underline{\pi}_* L_{KU_G/p}S_G$ for a finite nilpotent group $G$. 

\begin{lemma}\label{lem:fixed point of suspension spectra}
    Let $H$ and $K$ be finite groups with coprime orders, let $G=H\times K$, and let $X\in \mathcal{S}^K_*$ be a pointed $K$-space. Regard $X$ as a $G$-space via the quotient map $G\to K$. Then, as an $H$-spectrum, $(\Sigma^\infty_G X)^K\simeq \operatorname{Inf}_e^H (\Sigma_K^\infty X)^K$. 
\end{lemma}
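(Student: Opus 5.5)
The plan is to write down a natural comparison map $\operatorname{Inf}_e^H(\Sigma^\infty_K X)^K \to (\Sigma^\infty_G X)^K$ of $H$-spectra. Then I would check that it induces an equivalence on categorical $L$-fixed points for every subgroup $L\subset H$, using the tom Dieck splitting on both sides. The coprimality of $|H|$ and $|K|$ enters only through the fact that every subgroup of $L\times K$ is a product $A\times B$ with $A\subset L$ and $B\subset K$.

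\emph{Construction of the map.} Write $q\colon G\to K$ for the projection. Inflation is symmetric monoidal, preserves colimits and commutes with suspension spectra, so $\Sigma^\infty_G X\simeq q^*\Sigma^\infty_K X$. Set $Y:=(\Sigma^\infty_K X)^K$, a non-equivariant spectrum. Inflating the counit $\operatorname{Inf}_e^K Y\to \Sigma^\infty_K X$ along $q$ gives a map of $G$-spectra
\[
\operatorname{Inf}_{H}^G\operatorname{Inf}_e^H Y\simeq q^*\operatorname{Inf}_e^K Y\to q^*\Sigma^\infty_K X\simeq \Sigma^\infty_G X ,
\]
where $H\cong G/K$. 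Under the adjunction $\operatorname{Inf}_{G/K}^G\dashv(-)^K$ recalled in \Cref{Sec:Preliminaries}, this map corresponds to $c\colon \operatorname{Inf}_e^H Y\to (\Sigma^\infty_G X)^K$.

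\emph{Checking $c$ on fixed points.} It suffices to show that $c^L$ is an equivalence for every $L\subset H$. On the source, the formula $(\operatorname{Inf}(Y)\wedge Z)^N\simeq Y\otimes Z^N$ applied with $Z=S_H$ gives $(\operatorname{Inf}_e^H Y)^L\simeq (S_H)^L\wedge Y$. By tom Dieck this is $\bigvee_{(A)\subset L}\Sigma^\infty_+ BW_LA\wedge Y$. On the target, $(\Sigma^\infty_G X)^{L\times K}$ splits over conjugacy classes of subgroups $M\subset L\times K$. Since $\gcd(|L|,|K|)=1$, each such $M$ is $A\times B$, and the conjugacy classes are pairs $((A),(B))$. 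Moreover $X^{A\times B}=X^B$ and $W_{L\times K}(A\times B)=W_LA\times W_KB$, with $W_LA$ acting trivially on $X^B$. Hence the $(A\times B)$-summand is
\[
\Sigma^\infty\bigl(E(W_LA\times W_KB)_+\wedge_{W_LA\times W_KB}X^B\bigr)\simeq \Sigma^\infty_+BW_LA\wedge \Sigma^\infty\bigl(EW_KB_+\wedge_{W_KB}X^B\bigr).
\]
Summing over $(B)$ and applying tom Dieck for $K$ identifies the target with $\bigvee_{(A)}\Sigma^\infty_+BW_LA\wedge Y$, the same as the source.

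\emph{Main obstacle.} The hardest step is showing that $c^L$ actually realizes this abstract identification, rather than merely that source and target are abstractly equivalent. I would first reduce to cells: both sides preserve colimits in $X$, since inflation, genuine fixed points and suspension spectra do. So it suffices to treat $X=K/B_+\wedge S^n$. For these, I would track the summand indexed by $(A\times B)$ through the naturality of the tom Dieck splitting with respect to the inflation map $q$ and the counit. The key point is that the splitting maps for $L\times K$ restricted to product subgroups are external smash products of the splitting maps for $L$ and for $K$. If this bookkeeping becomes unwieldy, the fallback is to check $c$ on geometric fixed points instead. Since $\Phi^{A\times B}\operatorname{Inf}_K^G=\Phi^B$, a computation of $\Phi^A(\Sigma^\infty_G X)^K$ via an isotropy separation over $K$ should give the same comparison more directly.
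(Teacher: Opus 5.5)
Your proposal is correct and is essentially the paper's argument. Your map $c$ coincides with the paper's $f$: both are adjoint to the inflation map $(\Sigma^\infty_K X)^K\to(\Sigma^\infty_G X)^G$, which is the paper's ``canonical inclusion'' $g$ onto the tom Dieck summands indexed by $H\times B$. Both proofs then check the map on $L$-fixed points via the tom Dieck splitting, using that subgroups of $L\times K$ are products with product Weyl groups.

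The step you single out as the main obstacle, that $c^L$ actually realizes the abstract splitting, is only asserted in the paper. Your plan to justify it through the compatibility of the tom Dieck splitting with external products is sound.
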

\begin{proof}
    Since $(|H|,|K|)=1$, for any subgroup $N\subset G$, the Weyl group satisfies $W_G N\cong W_K L_1\times W_H L_2$, where $L_1=N\cap K$ and $L_2=N\cap H$. By tom-Dieck splitting, 
    \[
    \begin{aligned}
        (\Sigma^\infty_{G} X)^{G} & \simeq \bigoplus_{N\subset G}\Sigma^\infty EW_GN_+\wedge_{W_G N} X^N \\
        & \simeq \bigoplus_{L_1\subset K, L_2\subset H} \Sigma^\infty ((E W_H L_2)_+\wedge (EW_K L_1)_+)\wedge_{W_H L_2\times W_K L_1} X^{L_1} \\
        & \simeq \bigoplus_{L_1\subset K, L_2\subset H} \Sigma^\infty (B W_H L_2)_+\wedge ((EW_K L_1)_+\wedge_{W_K L_1} X^{L_1}) \\
        & \simeq \bigoplus_{L_2\subset H}\Sigma^\infty ((BW_H L_2)_+\wedge (\bigoplus_{L_1\subset K} (EW_K L_1)_+\wedge_{W_K L_1} X^{L_1}))\\
        & \simeq \bigoplus_{L_2\subset H}(BW_H L_2)_+\wedge (\Sigma_K^\infty X)^K\simeq (S_H)^H\wedge (\Sigma_K^\infty X)^K.
    \end{aligned}
    \]
    There is a canonical inclusion
    \[
    g: (\Sigma^\infty_K X)^K \hookrightarrow (S_H)^H\wedge (\Sigma_K^\infty X)^K \simeq (\Sigma^\infty_G X)^G.
    \]
    Let 
    \[ 
    f:\operatorname{Inf}_e^H (\Sigma_K^\infty X)^K\to (\Sigma^\infty_G X)^K
    \]
    be the map of $H$-spectra adjoint to $g$. We can show that $f$ is an $H$-equivalence. Indeed, since $S_H=\operatorname{Inf}_e^H \Sigma S^0$, the $H$-fixed point $f^H$ is precisely the equivalence 
    \[
    \begin{aligned}
        (\operatorname{Inf}_e^H (\Sigma_K^\infty X)^K)^{H}&\simeq (S_{H}\wedge  \operatorname{Inf}_e^H (\Sigma_K^\infty X)^K)^{H} \\
        & \simeq (S_{H})^{H}\wedge (\Sigma^\infty_K X)^K \simeq (\Sigma^\infty_{H\times K} X)^{H\times K},
    \end{aligned}
    \]
    and the same argument shows that $f^L\in \mathrm{Sp}$ is an equivalence for every subgroup $L\subset H$.
\end{proof}

Throughout the rest of this section, let $G$ be a finite nilpotent group with Sylow $p$-subgroup $N_p$, and let $N$ denote the product of the Sylow $q$-subgroups of $G$ for $q\neq p$. Then we have $G=N_p\times N$, and $(|N_p|, |N|)=1$.
\begin{lemma}\label{lem:fixed point of ECyc and inflation}
    For any subgroup $H\subset G$, let $P=H\cap N_p$ and $L=H\cap N$, then for any $N_p$-spectrum $E$,
    \[ 
    (ECyc_+\wedge \operatorname{Inf}_{N_p}^G E)^H\simeq (ECyc^P_+ \wedge \operatorname{Res}_P^{N_p} E)^P \wedge (\bigvee_{(T)\in Cyc^L/L}BW_L T_+).
    \]
    Here $Cyc^P$ (resp., $Cyc^L$) on the right-hand side of the equivalence is the family of all cyclic subgroups of $P$ (resp., $L$), $(T)$ is the conjugacy class of $T$.
\end{lemma}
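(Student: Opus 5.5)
First I reduce to the case $H=G$. Since $H\cong P\times L$ with $P=H\cap N_p$ and $L=H\cap N$, and restriction commutes with inflation through the projections, $\operatorname{Res}^G_H\operatorname{Inf}_{N_p}^G E\simeq \operatorname{Inf}_P^H\operatorname{Res}^{N_p}_P E$. The family $Cyc$ restricts to the family of cyclic subgroups of $H$. So it suffices to treat $G=P\times L$ with $(|P|,|L|)=1$, the $P$-spectrum $E'=\operatorname{Res}^{N_p}_P E$, and $G$-fixed points.

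\textbf{Step 1: split $ECyc$.} Because the orders are coprime, a subgroup $K\cong (K\cap P)\times(K\cap L)$ of $P\times L$ is cyclic if and only if both factors are cyclic. Hence the product $ECyc^P\times ECyc^L$, with the action pulled back along the two projections, has the correct fixed points and models $ECyc$. This gives
\[
ECyc_+\simeq \operatorname{Inf}_P^G ECyc^P_+\wedge \operatorname{Inf}_L^G ECyc^L_+.
\]

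\textbf{Step 2: take $L$-fixed points, then $P$-fixed points.} Write $(-)^G=((-)^L)^P$, where $P\cong G/L$. The factor $\operatorname{Inf}_P^G(ECyc^P_+\wedge E')$ is inflated from $G/L$. The projection formula from the Preliminaries, $(\operatorname{Inf}X\wedge Y)^L\simeq X\wedge Y^L$, then gives
\[
(ECyc_+\wedge\operatorname{Inf}_P^G E')^L\simeq (ECyc^P_+\wedge E')\wedge (\Sigma^\infty_G \operatorname{Inf}_L^G ECyc^L_+)^L .
\]
Here $\Sigma^\infty_G \operatorname{Inf}_L^G ECyc^L_+$ is the suspension spectrum of an $L$-space regarded as a $G$-space. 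Applying \Cref{lem:fixed point of suspension spectra} with the roles $H=P$ and $K=L$, its $L$-fixed points are the $P$-spectrum $\operatorname{Inf}_e^P(\Sigma^\infty_L ECyc^L_+)^L$. Taking $P$-fixed points and using the same projection formula for $\operatorname{Inf}_e^P$ then yields
\[
(ECyc^P_+\wedge E')^P\wedge (\Sigma^\infty_L ECyc^L_+)^L .
\]

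\textbf{Step 3: tom Dieck splitting.} It remains to compute
\[
(\Sigma^\infty_L ECyc^L_+)^L\simeq \bigvee_{(T)}\Sigma^\infty EW_LT_+\wedge_{W_LT}(ECyc^L)^T_+ .
\]
The space $(ECyc^L)^T$ is contractible when $T$ is cyclic and empty otherwise. It carries a free $W_LT$-action, since $EW_LT$ is free and contractible, so the summand for each cyclic $T$ is $\Sigma^\infty_+ BW_LT$. This recovers the wedge over $(T)\in Cyc^L/L$ in the statement.

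The main obstacle I expect is Step 2. One must check that the equivalence of \Cref{lem:fixed point of suspension spectra}, as constructed there, is compatible with the smash product decomposition. One must also check that the projection formula applies to the $L$-fixed points of the smash of an inflated spectrum with a spectrum that is not inflated. Naturality in $X$ of the lemma's map $f$ should suffice for both.
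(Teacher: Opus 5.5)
Your proposal is correct and follows the paper's proof essentially step for step. Both arguments use the coprime splitting $ECyc_+\simeq ECyc^{N_p}_+\wedge ECyc^N_+$, take $L$-fixed points via the projection formula, apply the suspension-spectrum lemma to obtain $\operatorname{Inf}_e^P(\Sigma^\infty_L ECyc^L_+)^L$, and finish with $P$-fixed points and the tom Dieck splitting. The obstacles you flag are not real issues: the suspension-spectrum lemma only needs to supply some equivalence of $P$-spectra, which smashing with $ECyc^P_+\wedge E'$ preserves, and the projection formula in the Preliminaries is already stated for an arbitrary, non-inflated second factor.
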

\begin{proof}
   For any $K\subset G$, let $Cyc^K$ be the family of all cyclic subgroups of $K$, then we have 
   \[
   \operatorname{Res}_K^G ECyc_+\simeq ECyc_+^K.
   \] 
   Since $p\nmid |N|$, for any cyclic subgroup $P_0\subset N_p$ and $L_0\subset N$, $P_0\times L_0$ is also a cyclic subgroup. By the definition of $ECyc$, we have
   \[
   ECyc_+\simeq ECyc^{N_p}_+\wedge ECyc^N_+.
   \]
    Since $H=P\times L$, there is an equivalence of spectra $X^H\simeq (X^L)^P$ for any $G$-spectrum $X$. 
    
    Consider the $L$ fixed point of $ECyc_+\wedge \operatorname{Inf}_{N_p}^G E$ as a $P$-spectrum, we have
    \[
    \begin{aligned}
        (ECyc_+\wedge \operatorname{Inf}_{N_p}^G E)^L & \simeq (ECyc^L_+\wedge \operatorname{Inf}_{P}^{H} (ECyc^P_+\wedge \operatorname{Res}_{P}^{N_p}E))^L \\
        & \simeq (ECyc^P_+\wedge \operatorname{Res}_{P}^{N_p}E) \wedge (\Sigma_H^\infty ECyc_+^L)^L.
    \end{aligned}
    \] 
    By \Cref{lem:fixed point of suspension spectra}, 
    \[
    (\Sigma_H^\infty ECyc_+^L)^L\simeq \operatorname{Inf}_e^{P}(\Sigma_L^\infty ECyc_+^L)^L,
    \] 
    thus
    \[
    \begin{aligned}
        (ECyc_+\wedge \operatorname{Inf}_{N_p}^G E)^H&\simeq ((ECyc_+\wedge \operatorname{Inf}_{N_p}^G E)^L)^P \\
        &\simeq ((ECyc^P_+\wedge \operatorname{Res}_{P}^{N_p}E) \wedge (\Sigma_H^\infty ECyc_+^L)^L)^P\\
        &\simeq ((ECyc^P_+\wedge \operatorname{Res}_{P}^{N_p}E) \wedge \operatorname{Inf}_e^{P}(\Sigma_L^\infty ECyc_+^L)^L)^P\\
        &\simeq (ECyc^P_+\wedge \operatorname{Res}_{P}^{N_p}E)^P \wedge (\Sigma_L^\infty ECyc_+^L)^L,
    \end{aligned}
    \]
    where the last equivalence follows from the formula 
    \[
    (\operatorname{Inf}_e^P x\cdot y)^P=x\cdot y^P, \quad \forall \ x\in \mathrm{Sp}, \ y\in \mathrm{Sp}^P.
    \]
    By tom-Dieck splitting formula, 
    \[
    (\Sigma_L^\infty ECyc_+^L)^L\simeq \bigvee_{(T)\subset L}\Sigma^\infty EW_L T_+\wedge_{W_L T} (ECyc^L_+)^T.
    \] 
    If $T\subset L$ is not cyclic, $(ECyc^L_+)^T$ is $W_L T$-equivariant contractible, and 
    \[ EW_L T_+\wedge_{W_L T} (ECyc^L_+)^T\simeq \ast.\] 
    If $T\subset L$ is cyclic, then for any subgroup $K \subset W_L T$,
    \[
    (EW_L T_+\wedge (ECyc_+^L)^T)^{K}\simeq\begin{cases}
        S^0 & K=\{e\},\\
        \ast & K\neq \{e\},
    \end{cases}
    \]
    which implies that there is an equivalence of $W_L T$-spaces $EW_L T_+\wedge (ECyc_+^L)^T\simeq EW_L T_+$, so 
    \[
    EW_L T_+\wedge_{W_L T} (ECyc^L_+)^T\simeq BW_L T_+.
    \] 
    Therefore, we have $(\Sigma_L^\infty ECyc_+^L)^L\simeq \bigvee_{(T)\in Cyc^L/L}BW_L T_+$, and 
    \[  
    (ECyc_+\wedge \operatorname{Inf}_{N_p}^G E)^H \simeq (\operatorname{Res}_P^{N_p} E\wedge ECyc_+)^P \wedge (\bigvee_{(T)\in Cyc^L/L}BW_L T_+).
    \]
\end{proof}

To state our results, we first fix some notation. Let $A(G)$ be the Burnside ring of $G$, let $R\mathbb{Q}(G)$ (resp., $RU(G)$) be the rational (resp., complex) representation ring of $G$. For any subgroup $H\subset G$, we denote by $\underline{A}_H$ the $H$-Green functor with $\underline{A}_H(H/K)=A(K)$ for each orbit $H/K$, and define $\underline{R\mathbb{Q}}_H$ and $\underline{RU}_H$ similarly. When $H=G$, we omit the subscript and write $\underline{A}$ for $\underline{A}_G$. There is a natural homomorphism of $H$-Green functors 
\[
\mathcal{R}_H: \underline{A}_H\longrightarrow \underline{R\mathbb{Q}}_H\longrightarrow \underline{RU}_H,
\] 
which sends a finite $H$-set to the free rational (complex) vector space on the underlying set. Let $\underline{J}_H=\ker \mathcal{R}_H$, which is called the Brauer relations. Let $\underline{A/J}_H := \underline{A}_H/\underline{J}_H$.

By \cite[Proposition 3.8]{szymik2013chromatic}, the ideal $J(G)$ is generated by those elements $S\in A(G)$ such that $|S^C|=0$ for every cyclic subgroup $C\subset G$, hence the number of additive generators of $A(G)/J(G)$ equals the number of conjugacy classes of cyclic subgroups of $G$. If $G$ is a $p$-group, the map $A(G)\to R\mathbb{Q}(G)$ is surjective, and $\underline{A/J}\cong \underline{R\mathbb{Q}}$. If $p\nmid |G|$, then after $p$-completion, for any subgroup $K\subset G$,
\[(\underline{A/J})_p^\wedge (G/K) \cong \bigoplus_{(H)\leq K, H \text{ is cyclic}}\mathbb{Z}_p^\wedge.\]

\begin{lemma}\label{lem:tensor product}
Let $H$ and $K$ be finite groups of relatively prime orders. Let $\underline X$ be an $H$-Mackey functor and $\underline Y$ a $K$-Mackey functor. Then for any $P\subset H$ and $Q\subset K$, the assignment
\[
(\underline X\otimes\underline Y)\bigl((H\times K)/(P\times Q)\bigr)
:=
\underline X(H/P)\otimes\underline Y(K/Q)
\]
determines canonically an $(H\times K)$-Mackey functor. Its restriction, transfer, and conjugation maps are induced factorwise from those of $\underline X$ and $\underline Y$.
\end{lemma}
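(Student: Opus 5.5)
Since $|H|$ and $|K|$ are coprime, every subgroup $U\subset H\times K$ splits uniquely as $U=P\times Q$ with $P=U\cap H$ and $Q=U\cap K$. The reason is that the projections of $U$ to $H$ and to $K$ have coprime-order images. By Goursat's lemma, $U$ is then the graph of an isomorphism between a quotient of $\pi_H(U)$ and a quotient of $\pi_K(U)$, and coprimality forces that common quotient to be trivial. So the orbits of $H\times K$ are exactly $(H\times K)/(P\times Q)$, and the assignment in the statement is defined on every orbit. Conjugation by $(h,k)$ sends $P\times Q$ to $hPh^{-1}\times kQk^{-1}$, so conjugations are also determined factorwise.

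I will define the structure maps as follows. For $P'\subset P$ and $Q'\subset Q$, set
\[
\operatorname{res}^{P\times Q}_{P'\times Q'}=\operatorname{res}^P_{P'}\otimes \operatorname{res}^Q_{Q'},\qquad \operatorname{tr}^{P\times Q}_{P'\times Q'}=\operatorname{tr}^P_{P'}\otimes \operatorname{tr}^Q_{Q'},\qquad c_{(h,k)}=c_h\otimes c_k .
\]
A cleaner formulation is available. The Burnside category of $H\times K$ restricted to orbits is the tensor product of the Burnside categories of $H$ and $K$. On objects this holds because $(H\times K)/(P\times Q)=H/P\times K/Q$. On morphisms, a span $X\leftarrow S\rightarrow Y$ decomposes into orbits $(H\times K)/U$, and every $U$ splits as a product, so each orbit is a product span. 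The product map
\[
A_H(H/P,H/P')\otimes A_K(K/Q,K/Q')\to A_{H\times K}\bigl((H\times K)/(P\times Q),(H\times K)/(P'\times Q')\bigr)
\]
is therefore surjective on basis elements, and it is injective by counting double-coset and subgroup data. Given this, $\underline X\otimes\underline Y$ is the external tensor product of two additive functors, and functoriality is automatic.

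If I work by hand instead, I must check three things: functoriality of restriction and transfer, compatibility with conjugation, and the double coset formula. The first two hold factorwise. The main obstacle is the double coset formula
\[
\operatorname{res}^{P\times Q}_{P_1\times Q_1}\operatorname{tr}^{P\times Q}_{P_2\times Q_2}=\sum_{(h,k)}\operatorname{tr}\circ c_{(h,k)}\circ\operatorname{res}.
\]
To handle it, I will use that the double coset space $(P_1\times Q_1)\backslash(P\times Q)/(P_2\times Q_2)$ is the product of $P_1\backslash P/P_2$ and $Q_1\backslash Q/Q_2$. The intersections $(P_1\times Q_1)\cap {}^{(h,k)}(P_2\times Q_2)$ are products as well. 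Hence the right-hand side is the product of the two separate double coset sums, namely the tensor product of the $H$-formula and the $K$-formula, and so the formula follows from the corresponding formulas for $\underline X$ and $\underline Y$.

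The word "canonically" will follow because every choice made above (the subgroup splitting and the orbit identifications) is forced by the coprimality.
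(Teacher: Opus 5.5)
Your proposal is correct and follows essentially the same route as the paper: coprimality forces every subgroup of $H\times K$ to be a product $P\times Q$, so transitive spans between orbits are exactly products of transitive $H$- and $K$-spans. This identifies the Burnside category of $H\times K$ on orbits with $\mathcal B_H\otimes\mathcal B_K$, and the external tensor product of the two additive functors then extends by additivity. The only point you leave implicit is that the legs of such a span are themselves products of maps, which holds since $\bigl(H/P\times K/Q\bigr)^{R\times S}=(H/P)^R\times(K/Q)^S$; your alternative by-hand check of the double coset formula is also valid.
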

\begin{proof}
    Let $Z$ be an $H\times K$-orbit. Then $Z = (H \times K)/U$ for some subgroup $U \subset H \times K$. Let $P$ and $Q$ denote the images of $U$ under the projections onto $H$ and $K$, respectively. Since $|H|$ and $|K|$ are coprime, $|U| = |P| \times |Q|$, which implies $U \cong P \times Q$ and $Z \cong H/P \times K/Q$. 
    Since $H$ acts trivially on the $K$-orbits and $K$ acts trivially on the $H$-orbits, for $P_i\subset H$ and $Q_i\le K$, $i=1,2$, there is a natural bijection 
    \[
    \hom_{H\times K} \bigl(H/P_1\times K/Q_1,H/P_2\times K/Q_2\bigr)
    \cong 
    \hom_{H}(H/P_1,H/P_2) \times \hom_{K}(K/Q_1,K/Q_2). 
    \]
    Let $\mathcal B_{H\times K}$ be the $H\times K$-Burnside category. A transitive span
    $$H/P_1\times K/Q_1 \longleftarrow U \longrightarrow H/P_2\times K/Q_2$$
    has $U\cong H/R\times K/S$ for some $R\subset H$ and $S\subset K$, and therefore determines a pair of spans
    $$H/P_1\longleftarrow H/R\longrightarrow H/P_2,\quad K/Q_1\longleftarrow K/S\longrightarrow K/Q_2.$$
    Conversely, every such pair determines a transitive $H\times K$-span. Since an arbitrary finite $H\times K$-set is a finite disjoint union of orbits, this gives a natural isomorphism
    $$\mathcal B_{H\times K}
    \bigl(H\times K /(P_1\times Q_1),H\times K/(P_2\times Q_2)\bigr)
    \cong
    \mathcal B_H(H/P_1,H/P_2)
    \otimes_{\mathbb Z}
    \mathcal B_K(K/Q_1,K/Q_2),$$
    which is compatible with composition. Consequently, the assignment
    \[
    H/P\times K/Q
    \longmapsto
    \underline X(H/P)\otimes_{\mathbb Z}\underline Y(K/Q)
    \]
    defines an additive functor on the full subcategory of $\mathcal B_{H\times K}$ spanned by the transitive $H\times K$-sets. Since every finite $H\times K$-set is a finite disjoint union of transitive $H\times K$-sets, it extends uniquely by additivity to a $H\times K$-Mackey functor.
\end{proof}

We refer to the Mackey functor $\underline{X}\otimes \underline Y$ in \Cref{lem:tensor product} as the external tensor product of $\underline X$ and $\underline Y$. Now we can compute the coefficients of $(ECyc_+ \wedge \operatorname{Inf}_{N_p}^G E)_p^\wedge$ for an $N_p$-spectrum $E$ satisfying $E/p\simeq ECyc^{N_p}_+ \wedge E/p$. 

\begin{proposition}\label{lem:Z graded homotopy Mackey functor of ECyc inf}
    Let $G=N_p\times N$ be a finite nilpotent group with Sylow $p$-subgroup $N_p$, and let $E$ be an $N_p$-spectrum such that  $E/p\simeq ECyc^{N_p}_+ \wedge E/p$. There is an isomorphism of $G$-Mackey functors
    \[ \underline{\pi}_*(ECyc_+\wedge \operatorname{Inf}_{N_p}^G E)_p^\wedge\cong \underline{\pi}_* E_p^\wedge \otimes_{\mathbb Z_{(p)}} (\underline{A/J}_N)_p^\wedge. \] 

    In particular, when $E=L_{KU_{N_p}/p}S_{N_p}$, there is an isomorphism
    \[\underline{\pi}_*(L_{KU_G/p}S_G)\cong \underline{\pi}_*L_{KU_{N_p}/p}S_{N_p}\otimes_{\mathbb{Z}_p} (\underline{A/J}_N)_p^\wedge.\]
\end{proposition}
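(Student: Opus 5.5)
We compute the value of the Mackey functor at each orbit using \Cref{lem:fixed point of ECyc and inflation}, and then check that the restriction, transfer and conjugation maps agree with those of the external tensor product in \Cref{lem:tensor product}. Every subgroup of $G=N_p\times N$ has the form $H=P\times L$ with $P\subset N_p$ and $L\subset N$. By \Cref{lem:fixed point of ECyc and inflation},
\[
(ECyc_+\wedge \operatorname{Inf}_{N_p}^G E)^H\simeq (ECyc^P_+\wedge \operatorname{Res}^{N_p}_P E)^P\wedge \Bigl(\bigvee_{(T)\in Cyc^L/L}BW_LT_+\Bigr).
\]
After $p$-completion each summand simplifies in two ways.

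First, the hypothesis $E/p\simeq ECyc^{N_p}_+\wedge E/p$ restricts to $P$. Since $ECyc^P$ is the restriction of $ECyc^{N_p}$, the map $ECyc^P_+\wedge\operatorname{Res}E\to \operatorname{Res}E$ is an $S/p$-equivalence. Taking $P$-fixed points commutes with smashing with $S/p$, so after $p$-completion the first factor becomes $(E_p^\wedge)^P$.

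Second, $p\nmid |W_LT|$, so $BW_LT_+\to S^0$ is a $p$-adic equivalence (its transfer splitting shows that $\Sigma^\infty BW_LT$ has trivial $p$-completion). Hence, after smashing with the first factor and $p$-completing, we obtain
\[
\pi_*\bigl((ECyc_+\wedge\operatorname{Inf}E)_p^\wedge\bigr)^H\cong \pi_*^P E_p^\wedge\otimes \bigoplus_{(T)\in Cyc^L/L}\mathbb Z_p .
\]
By the description of $(\underline{A/J})_p^\wedge$ recalled just before \Cref{lem:tensor product}, the second tensor factor is exactly $(\underline{A/J}_N)_p^\wedge(N/L)$, with basis the conjugacy classes of cyclic subgroups of $L$. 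Some care is needed about $\lim^1$ and Tor terms when $p$-completing a smash product with a finite wedge; here the second factor is just a finite sum of copies of $S_p^\wedge$, so this is harmless.

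The main obstacle is functoriality. We must identify the restriction, transfer and conjugation maps of $\underline{\pi}_*$ with the factorwise maps. I would not read them off the abstract equivalence. Instead I would write
\[
ECyc_+\wedge\operatorname{Inf}_{N_p}^GE\simeq \operatorname{Inf}_{N_p}^G(ECyc^{N_p}_+\wedge E)\wedge \operatorname{Inf}_N^G ECyc^N_+,
\]
using $ECyc_+\simeq ECyc^{N_p}_+\wedge ECyc^N_+$ as in the proof of \Cref{lem:fixed point of ECyc and inflation}. The external smash product then induces a pairing of Mackey functors
\[
\underline{\pi}_*(\cdots)\otimes\underline{\pi}_0(\Sigma^\infty_N ECyc^N_+)\to \underline{\pi}_*(\text{smash}),
\]
and by construction this pairing is compatible with restrictions and transfers factorwise; this is where the double-coset decomposition of the Burnside category in \Cref{lem:tensor product} enters. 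The map out of the external tensor product is therefore a morphism of $G$-Mackey functors, and the orbitwise computation above shows that it is an isomorphism at each orbit.

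It remains to identify $\underline{\pi}_0(\Sigma^\infty_NECyc^N_+)_p^\wedge$ with $(\underline{A/J}_N)_p^\wedge$. I would do this via the map $\underline A_N=\underline{\pi}_0S_N\to\underline{\pi}_0\Sigma^\infty ECyc^N_+$ dual to the collapse map. This map is surjective after $p$-completion, because a summand indexed by $T$ is the transfer of the unit from $T$. Its kernel is detected by the marks $|S^C|$ at cyclic subgroups $C$, and by \cite[Proposition 3.8]{szymik2013chromatic} this kernel is $J$. So the map factors through an isomorphism from $(\underline{A/J}_N)_p^\wedge$.

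For the final claim, we apply \Cref{cor:from Np equivariant to G equivariant} with $E=L_{KU_{N_p}/p}S_{N_p}$. This $E$ is $p$-complete, and it satisfies the hypothesis: for a $p$-group $N_p$, the map $ECyc^{N_p}_+\wedge E/p\to E/p$ induces an equivalence on $\Phi^H$ for all $H$, since $\Phi^H E/p$ vanishes when $H$ is not cyclic (compare \Cref{rmk:p group}).
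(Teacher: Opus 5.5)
Your argument is correct. The orbitwise computation matches the paper's: apply \Cref{lem:fixed point of ECyc and inflation}, note that $p$-completion kills $\Sigma^\infty BW_LT$, and use the hypothesis to replace $(ECyc^P_+\wedge E)^P$ by $(E^P)_p^\wedge$.

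Where you differ is the Mackey structure. The paper handles it with two ``without loss of generality'' reductions. It takes $N=e$ to read off restrictions and transfers along $P_1\subset P_2\subset N_p$ from $\underline{\pi}_*E$. It takes $N_p=e$ and cites \cite[Corollary 7.12]{bonventre2022ku_g} for $N_1\subset N_2\subset N$.

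You instead write the spectrum as the external smash product $\operatorname{Inf}_{N_p}^G(ECyc^{N_p}_+\wedge E)\wedge\operatorname{Inf}_N^G ECyc^N_+$. The induced pairing, after $p$-completing both factors, gives an actual morphism out of the external tensor product of \Cref{lem:tensor product}. You then identify $\underline{\pi}_0(\Sigma^\infty_N ECyc^N_+)_p^\wedge$ with $(\underline{A/J}_N)_p^\wedge$ directly, using marks and Szymik's description of $J$.

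For the orbitwise isomorphism, the pairing must be the map realizing the equivalence of \Cref{lem:fixed point of ECyc and inflation}. An abstract isomorphism is not enough, since $\pi_*^PE$ need not be finitely generated. It is that map, because the map $f$ in \Cref{lem:fixed point of suspension spectra} is built from the external product $(S_H)^H\wedge(\Sigma^\infty_KX)^K\to(\Sigma^\infty_{H\times K}X)^{H\times K}$.

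Your route costs a bit more. In exchange, compatibility with mixed restriction/transfer composites and with conjugation maps comes automatically. It also justifies the factorwise reduction the paper takes for granted, without the external citation.

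One imprecision: the map $\underline{A}_N\to\underline{\pi}_0\Sigma^\infty_N ECyc^N_+$ you use is not ``dual to the collapse map''. In general no integral retraction of the collapse map exists. For $N=C_3\times C_3$, the mark of such a retraction at an order-$3$ subgroup would force $3a=1$.

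Since $p\nmid|N|$, the map does exist after $p$-localization, as the projection onto the idempotent summand $(\Sigma^\infty ECyc^N_+)_{(p)}$ of $S_{N,(p)}$. Your surjectivity and kernel arguments go through for that map.

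Alternatively, use the collapse map itself followed by $\underline{A}_N\to\underline{A/J}_N$. This is an isomorphism after $p$-completion, because its matrix of marks at cyclic subgroups is triangular with diagonal entries $|W_NT|$.
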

\begin{proof}
    For any subgroup $H\subset G$, let $P=H\cap N_p$ and $L=H\cap N$, then $H=P\times L$. By \Cref{lem:fixed point of ECyc and inflation}, there is an equivalence
    \[
    (ECyc_+\wedge \operatorname{Inf}_{N_p}^G E)^H \simeq (ECyc^P_+\wedge E)^P \wedge (\bigvee_{T\in Cyc^L}BW_L T_+).
    \]
    After $p$-completion, $(BW_L T)_p^\wedge\simeq \ast$ since $p\nmid |L|$. Moreover, since $E/p\simeq E/p\wedge ECyc^{N_p}_+$, we have $E_p^\wedge\simeq (E\wedge ECyc^P_+)_p^\wedge$. Thus
    \[
    L_{S/p}(ECyc_+\wedge \operatorname{Inf}_{N_p}^G E)^H \simeq \bigvee_{T\in Cyc^L} (E^P)_p^\wedge ,
    \]
    and 
    \[
    \pi_*^H (ECyc_+\wedge \operatorname{Inf}_{N_p}^G E)_p^\wedge\cong ((A/J)(L) \otimes \pi_*^P E)_p^\wedge.
    \]

    We also need to determine the restriction and transfer homomorphism. For any $P_1\subset P_2\subset N_p$, without loss of generality, we can assume that $N=\{e\}$, then $\operatorname{Res}_{P_1}^{P_2}$ and $\operatorname{Tr}_{P_1}^{P_2}$ are inherited from those in $\underline{\pi}_* E$. For $N_1\subset N_2\subset N$, assume that $N_p=\{e\}$. Since $p\nmid |N|$, it follows by \cite[Corollary 7.12]{bonventre2022ku_g} that the restriction and transfer maps are induced by those in $(\underline{A/J}_N)_p^\wedge$ Thus 
    \[ \underline{\pi}_*(ECyc_+\wedge \operatorname{Inf}_{N_p}^G E)_p^\wedge\cong (\underline{\pi}_* E \otimes \underline{A/J}_N)_p^\wedge. \] 

    In particular, when $E=L_{KU_{N_p}/p}S_{N_p}$, \Cref{cor:from Np equivariant to G equivariant} implies that
    $L_{KU_{N_p}/p}S_{N_p}\simeq (ECyc^{N_p}_+\wedge L_{KU_{N_p}/p}S_{N_p})_p^\wedge$, and hence $L_{KU_{N_p}/p}S_{N_p}$    satisfies the assumption of this proposition. Therefore, 
    \[
    \begin{aligned}
        \underline{\pi}_*(L_{KU_G/p}S_G) & \cong \underline{\pi}_*(ECyc_+\wedge \operatorname{Inf}_{N_p}^G L_{KU_{N_p}/p}S_{N_p})_p^\wedge \\
        & \cong \underline{\pi}_*L_{KU_{N_p}/p}S_{N_p}\otimes_{\mathbb{Z}_p} (\underline{A/J}_N)_p^\wedge .
    \end{aligned}
    \]
\end{proof}

    By \Cref{rmk:p group}, $KO_{N_p}$ and $KU_{N_p}$ satisfy the assumption of \Cref{lem:Z graded homotopy Mackey functor of ECyc inf}, which leads to the following isomorphism
    \[ \underline{\pi}_*(ECyc_+\wedge \operatorname{Inf}_{N_p}^G KU_{N_p})_p^\wedge\cong (KU_*\otimes \underline{RU}_{N_p}\otimes \underline{A/J}_N)_p^\wedge. \]

\begin{remark}
    If $G$ is a finite cyclic group, $L_{KU_G/p}S_G\simeq (\operatorname{Inf}_{N_p}^G L_{KU_{N_p}/p}S_{N_p})_p^\wedge$. For any $N_p$-spectrum $E$, we can compute the $\mathbb{Z}$-graded homotopy Mackey functor of $(\operatorname{Inf}_{N_p}^G E)_p^\wedge$ by \Cref{lem:fixed point of suspension spectra} in the same way. In this case, 
    \[ (\operatorname{Inf}_{N_p}^G E)^{P\oplus L}\simeq E^P \wedge (\Sigma_L^\infty S)^L. \]
    After $p$-completion, $\underline{\pi}_* (\operatorname{Inf}_{N_p}^G E)_p^\wedge\cong \underline{\pi}_* E_p^\wedge \otimes \underline{A}_N$. This is compatible with \Cref{lem:Z graded homotopy Mackey functor of ECyc inf}, since $J(G)=0$ for every cyclic group $G$.
\end{remark}

%%-------------------------------------------------------------------------------------------------------------------------------------------------------------------------------------
\section{A computation for finite abelian $2$-groups}\label{Sec:computation for 2-group}
By \Cref{lem:Z graded homotopy Mackey functor of ECyc inf}, we need to study $\underline{\pi}_* L_{KU_{N_p}/p}S_{N_p}$ for all primes $p$. When $p$ is odd, $\underline{\pi}_*L_{KU_{N_p}/p}S_{N_p}$ is computed in \cite{carawan2023homotopy}. In this section, we compute $\underline{\pi}_*L_{KU_{N_2}/2}S_{N_2}$ via the fiber sequence
\[
L_{KU_{N_2}/2}S_{N_2} \to (KO_{N_2})_2^\wedge \xrightarrow{\ \psi^g-1\ } (KO_{N_2})_2^\wedge
\]
when $N_2$ is an abelian $2$-group. We first compute the kernel and cokernel Mackey functors of \(\psi^g-1\), and then resolve the resulting extension problems, which are determined by the maps \(\theta_{N_2}\) in \Cref{lem:theta-general} and \(\theta_{8d+1}\) in \Cref{lem:theta-8d-plus-1-type-one,lem:theta-8d-plus-1-type-two}.

Throughout this section, $N_2$ is abelian. Let $g$ be a generator of $\mathbb{Z}_2^\times/\{\pm 1\}$. Building on the study of the $\psi^g$-action on $\underline{RU}$ in \cite{carawan2023homotopy}, we obtain the following lemma.
\begin{lemma}\label{lem: the action of psi on RO}
$\psi^g$ acts on $\underline{RO}_{N_2}$ as a homomorphism of $N_2$-Green functor. For any $K\subset N_2$, $\psi^g$ acts trivially on $RO(K;\mathbb{R})$ and permutes the generators of $RO(K;\mathbb{C})$. 
\end{lemma}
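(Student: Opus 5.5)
The plan is to reduce everything to the complex case through the complexification map $c\colon RO(K)\to RU(K)$, which is injective. It commutes with $\psi^g$, because Adams operations commute with complexification. It is also compatible with restriction and transfer; the paper notes that the restrictions and transfers of $\underline{RO}$ are computed via complexification.

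\textbf{Green functor structure.} First I will check that $\psi^g$ is a map of $N_2$-Green functors on $\underline{RO}_{N_2}$. Since $g$ is odd, we have $(g,|N_2|)=1$, so the Hirata--Kono result gives a stable ring operation $\widetilde{\psi^g}$ on $(KO_{N_2})_2^\wedge$. A map of $N_2$-ring spectra induces a map of Green functors on $\underline{\pi}_0$. So $\psi^g$ commutes with restriction, transfer and conjugation and is multiplicative.

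Alternatively one can argue directly. Restriction commutes with $\psi^g$ because $\psi^g$ is defined by the character formula $\chi_{\psi^g V}(x)=\chi_V(x^g)$. For transfers, the induced-character formula together with the fact that $x\mapsto x^g$ is a bijection on the finite abelian $2$-group $N_2$ gives $\psi^g\operatorname{Ind}=\operatorname{Ind}\psi^g$. Injectivity of $c$ then transfers all of this from $\underline{RU}$ (treated in \cite{carawan2023homotopy}) to $\underline{RO}$.

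\textbf{Action on generators.} Fix $K\subset N_2$. Since $K$ is abelian, every irreducible complex representation is a character $\chi\colon K\to\mu_{2^n}$, and $\psi^g\chi=\chi^g$. The irreducible real representations come in two types.

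If $\chi$ is real-valued, i.e.\ takes values in $\{\pm1\}$, it is of real type. Then $\chi^g=\chi$ because $g$ is odd, so $\psi^g$ fixes $RO(K;\mathbb R)$.

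Otherwise $\chi\neq\bar\chi$, and the realification $r(\chi)$ is an irreducible real representation of complex type, with $c\,r(\chi)=\chi+\bar\chi$. Here $\psi^g r(\chi)=r(\chi^g)$, since realification commutes with $\psi^g$; this can again be checked after applying $c$. Because $g$ is a unit mod $2^n$, $\chi^g$ is again a non-real character, so $r(\chi^g)$ is again a generator of $RO(K;\mathbb C)$. The map $\chi\mapsto\chi^g$ is a bijection on characters commuting with conjugation, so $\psi^g$ permutes the generators $\{r(\chi)\}=\{\chi,\bar\chi\}$ of $RO(K;\mathbb C)$.

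Quaternionic-type irreducibles do not occur, since all complex irreducibles of an abelian group are one-dimensional. So $RO(K)=RO(K;\mathbb R)\oplus RO(K;\mathbb C)$, and this completes the description.

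\textbf{Main obstacle.} The only delicate point is the well-definedness of $\psi^g$ as a stable ring map for the topological generator $g\in\mathbb Z_2^\times$, rather than for an integer. I will handle it by continuity: approximate $g$ by odd integers $k$ and use the $2$-adic completion, as in \cite{carawan2023homotopy}. Modulo $\pm1$ this choice is harmless, since $\psi^{-1}$ is the identity on $KO$.
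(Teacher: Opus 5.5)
Your proposal is correct and follows essentially the same route as the paper: the Green functor property is transferred from $\underline{RU}$ via the injective, $\psi^g$-equivariant complexification map, real-type generators are sign characters fixed by the odd exponent $g$, and complex-type generators are realifications $r(\chi)$ with $\psi^g r(\chi)=r(\chi^g)$. The only difference is that you check directly that $\chi^g$ is again non-real, where the paper cites \cite[Lemma 3.1]{carawan2023homotopy} for the irreducibility of $\psi^g(X)$. Your Hirata--Kono and continuity discussion is not actually needed, since on $RU(K)$ the operation $\psi^g$ agrees with $\psi^k$ for any integer $k\equiv g$ modulo the exponent of $K$.
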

\begin{proof}
    The first statement follows from the facts that $\psi^g$ acts on $\underline{RU}$ as a homomorphism of $N_2$-Green functors \cite[Lemma 3.2]{carawan2023homotopy} and that $\psi^g$ commutes with the complexification map $c\colon \underline{RO}\to \underline{RU}$.

    For any $K\subset N_2$, every generator $\tau\in RO(K;\mathbb{R})$ is $1$-dimensional since $K$ is abelian, which is a sign representation, thus $\psi^g(\tau)=\tau^g=\tau$.

    For any irreducible $x\in RO(K;\mathbb{C})$, let $X\in RU(K)$ be the irreducible complex $K$-representation whose underlying real representation is $x$, then $\psi^g(x)$ is the underlying real representation of $\psi^g(X)$, which is irreducible by \cite[Lemma 3.1]{carawan2023homotopy}.
\end{proof}

For any $K\subset N_2$, let 
$M_K=\hom(K,O(1))$
be the group of irreducible real $K$-representation of real type, and let $C_K$ be the set of all irreducible real $K$-representation of complex type.
Then 
\[ RO(K;\mathbb{R})\cong \mathbb{Z}[M_K],\quad RO(K;\mathbb{C})\cong \mathbb{Z}\{C_K\},\]
and 
\[
\pi_* KO_{K} \cong \mathbb{Z}[\eta,\alpha,u^{\pm}]/(2\eta, \eta^3, \eta\alpha, \alpha^2-4u)\{\tau:\tau\in M_K\}\bigoplus \mathbb{Z}[\beta^{\pm}]\{x:x\in C_K\}.
\]
The Adams operation $\psi^g$ on $\underline{\pi}_* KO_{K}$ is given by 
\begin{itemize}
    \item[(1)] $\psi^g(\eta)=\eta$, $\psi^g(\alpha)=g^2\alpha$, and $\psi^g(u)=g^4u$.
	\item[(2)] for any $\tau\in M_K$, $\psi^g \tau=\tau$.
	\item[(3)] for any $x\in C_K$, $\psi^g(x\beta^i)=g^i\psi^g(x)\beta^i$.
\end{itemize}
For any $k\in \mathbb{Z}$, define
    \[\underline{\ker}_2\{k\}:=\ker(\underline{\pi}_k(KO_{N_2})_2^{\wedge}\overset{\psi^g-1}{\longrightarrow}\underline{\pi}_k (KO_{N_2})_2^{\wedge}),\]
    \[ \underline{\mathrm{coker}}_2\{k\}:=\mathrm{coker}(\underline{\pi}_k (KO_{N_2})_2^{\wedge}\overset{\psi^g-1}{\longrightarrow}\underline{\pi}_k (KO_{N_2})_2^{\wedge}).\]
These Mackey functors can be computed by following the method of \cite[Proposition 3.7]{carawan2023homotopy}.  

\begin{lemma}\label{lem:psi-1}
    As $N_2$-Mackey functors, $\underline{\ker}_2\{k\}$ and $\underline{\mathrm{coker}}_2\{k\}$ are determined by the following values on orbits.
    
    (1) For $k=0$, and for any $K\subset N_2$,
        \[\underline{\ker}_2\{0\}\cong \underline{R\mathbb{Q}}_2^\wedge, \quad \underline{\mathrm{coker}}_2\{0\}(N_2/K)\cong \bigoplus_{\text{cyclic } T\subset K}\mathbb{Z}_2^\wedge.\]

    (2) For $k=8d$ and $d\neq 0$, $\underline{\ker}_2\{8d\}\cong 0$, and \[\underline{\mathrm{coker}}_2\{8d\}(N_2/K)\cong RO(K;\mathbb{R})\otimes \mathbb{Z}/2^{4+\nu_2(d)} \oplus (\bigoplus\limits_{C_{2^t}\subset K, t\geq 2} \mathbb{Z}/2^{2+t+\nu_2(d)}),\]
        where $\nu_2$ is the $2$-adic valuation.%, that is, for any $d\in \mathbb{Z}$, $d=2^{\nu_2(d)}t$ such that $2\nmid t$. 

    (3) For $k=8d+1$,
        \[
        \begin{aligned}
           \underline{\ker}_2\{8d+1\} & \cong \underline{RO(-;\mathbb{R})}\otimes \mathbb{Z}/2\{\eta u^d\},\\ 
           \underline{\mathrm{coker}}_2\{8d+1\} & \cong \underline{RO(-;\mathbb{R})}\otimes \mathbb{Z}/2\{[\eta u^d]\}.
        \end{aligned}
        \]
        Here $[x]\in \underline{\mathrm{coker}}_2\{8d+1\}$ is the equivalence class of the corresponding element $x\in \underline{\pi}_{8d+1}KO_{N_2}$.  For any $K\subset N_2$, $\underline{RO(-;\mathbb{R})}(N_2/K)=RO(K;\mathbb{R})$, the restriction and transfer maps are those in $\underline{RO}$, after quotienting out all elements in $RO(-;\mathbb{C})$.
    
    (4) For $k=8d+2$,
    \[\begin{aligned}
        \underline{\ker}_2\{8d+2\}(N_2/K) & \cong RO(K;\mathbb{R})\otimes \mathbb{Z}/2\{\eta^2 u^d\},\\
        \underline{\mathrm{coker}}_2\{8d+2\}(N_2/K) & \cong (RO(K;\mathbb{R})\otimes \mathbb{Z}/2\{\eta^2 u^d\}) \oplus (\bigoplus\limits_{C_{2^t}\subset K, t\geq 2} \mathbb{Z}/2^{t}).
    \end{aligned}
    \]

    (5) For $k=8d+4$,  $\underline{\ker}_2\{8d+4\}(N_2/K) \cong 0$, and 
    \[
       \underline{\mathrm{coker}}_2\{8d+4\}(N_2/K) \cong RO(K;\mathbb{R})/2^3 \bigoplus (\bigoplus_{C_{2^t}\subset K, t\geq 2} \mathbb{Z}/2^{t+1}).
    \]

    (6) For $k=8d+6$, $\underline{\ker}_2\{8d+6\}(N_2/K) \cong 0$, and 
    \[
        \underline{\mathrm{coker}}_2\{8d+6\}(N_2/K) \cong \bigoplus\limits_{C_{2^t}\subset K, t\geq 2} \mathbb{Z}/2^{t}.
    \] 
\end{lemma}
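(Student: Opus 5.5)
The computation proceeds orbitwise and then checks the Mackey structure. For each $K\subset N_2$, the explicit description of $\pi_*KO_K$ splits $\underline{\pi}_k(KO_{N_2})_2^\wedge(N_2/K)$ into a real-type summand $\pi_kKO\otimes RO(K;\mathbb{R})$ and a complex-type summand $\pi_kKU\otimes RO(K;\mathbb{C})$. By \Cref{lem: the action of psi on RO}, $\psi^g-1$ preserves this splitting, so kernel and cokernel are computed separately on each summand.

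\emph{Real-type part.} Here $\psi^g$ fixes every $\tau\in M_K$, so $\psi^g-1$ is multiplication by the scalar by which $\psi^g$ acts on $\pi_kKO$:
\begin{itemize}
\item $g^{4d}-1$ on $\mathbb{Z}_2^\wedge\{u^d\}$;
\item $0$ on $\eta u^d$ and $\eta^2u^d$;
\item $g^{4d+2}-1$ on $\alpha u^d$.
\end{itemize}
Since $g$ generates $\mathbb{Z}_2^\times/\{\pm1\}$, we may take $g=3$. Then $\nu_2(3^{4d}-1)=4+\nu_2(d)$ for $d\neq 0$, and $\nu_2(3^{4d+2}-1)=3$. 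This gives the real-type contributions $RO(K;\mathbb{R})\otimes\mathbb{Z}/2^{4+\nu_2(d)}$ in degree $8d$, $RO(K;\mathbb{R})/2^3$ in degree $8d+4$, and both kernel and cokernel equal to $RO(K;\mathbb{R})/2$ in degrees $8d+1$ and $8d+2$. In degree $0$ the real-type part contributes a kernel of $RO(K;\mathbb{R})$ and a cokernel of $RO(K;\mathbb{R})\otimes\mathbb{Z}_2^\wedge$.

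\emph{Complex-type part.} In degree $2i$, $\psi^g-1$ acts on $\mathbb{Z}_2^\wedge\{C_K\}\beta^i$ by $x\mapsto g^i\psi^g(x)-x$, where $\psi^g$ permutes $C_K$ by $x\mapsto x^g$. Following \cite[Proposition 3.7]{carawan2023homotopy}, I would decompose $C_K$ into $\psi^g$-orbits.
\begin{itemize}
\item The complex-type irreducibles with kernel $\ker(\chi)$ and image $C_{2^t}$, $t\ge2$, correspond to conjugate pairs of faithful characters of $C_{2^t}$. Since $K$ is abelian, these are indexed by cyclic quotients, equivalently by cyclic subgroups $C_{2^t}\subset K$ via duality.
\item The $2^{t-2}$ such pairs form a single orbit under $\langle g\rangle\cong(\mathbb{Z}/2^t)^\times/\{\pm1\}$, with cyclic stabilizer of order $2^{t-2}$.
\item On one orbit of length $m=2^{t-2}$, the map $\psi^g-1$ is a twisted cyclic shift. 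Its kernel is zero unless $g^{im}=1$, which happens only for $i=0$, and its cokernel is $\mathbb{Z}_2^\wedge/(g^{im}-1)$.
\item Computing $\nu_2(3^{2^{t-2}i}-1)$ in the cases $i=0$, $i$ odd, $i\equiv 2\pmod 4$ and $i\equiv0\pmod4$ with $i=4d$ produces respectively $\mathbb{Z}_2^\wedge$, $\mathbb{Z}/2^t$ (degrees $8d+2$ and $8d+6$), $\mathbb{Z}/2^{t+1}$ (degree $8d+4$), and $\mathbb{Z}/2^{2+t+\nu_2(d)}$ (degree $8d$).
\end{itemize}
In degree $0$ the kernel consists of the orbit sums, so the real-type and complex-type parts together give exactly $R\mathbb{Q}(K)_2^\wedge$. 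The total degree-$0$ cokernel has one $\mathbb{Z}_2^\wedge$ per cyclic subgroup: the real-type summands correspond via duality to the cyclic subgroups of order $\le 2$, and the complex-type orbits to those of order $\ge 4$. Odd degrees are handled by $\pi_{odd}KU=0$ together with $\pi_{8d+1}KO$, $\pi_{8d+3}KO=0$, $\pi_{8d+5}KO=0$, $\pi_{8d+7}KO=0$.

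\emph{Mackey structure.} Restriction, transfer and conjugation on kernels and cokernels are induced from $\underline{\pi}_*KO_{N_2}$, because $\psi^g-1$ is a map of Mackey functors by \Cref{lem: the action of psi on RO}.
\begin{itemize}
\item In degree $8d+1$, $\eta$ kills the complex-type part, since multiplying by $\eta$ factors through realification. Hence the Mackey structure is that of $\underline{RO}$ modulo $RO(-;\mathbb{C})$ tensored with $\mathbb{Z}/2$.
\item In degree $0$ the kernel is $\underline{R\mathbb{Q}}$ as a sub-Mackey functor of $\underline{RO}$.
\end{itemize}
The main obstacle is the orbit analysis in the complex-type part: identifying the $\langle g\rangle$-orbits on $C_K$ with cyclic subgroups of order $\ge4$, and getting the $2$-adic valuations exactly right in each residue class of degree. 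I would verify these first for $K=C_{2^t}$ and then reduce to that case, since each orbit factors through a cyclic quotient.
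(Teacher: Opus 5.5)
Your route is the paper's: compute orbitwise, split off the real-type part on which $\psi^g$ acts by scalars, and decompose $C_K$ into $\psi^g$-orbits indexed by cyclic subgroups $C_{2^t}\subset K$ with $t\ge 2$. You then reduce each twisted cyclic shift to $\mathrm{diag}(1,\dots,1,\det)$ and inherit the Mackey structure from $\underline{\pi}_*KO_{N_2}$. But the complex-type valuation step fails as written, because of your normalization $g=3$.

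For $t=2$ the orbit has length $m=2^{t-2}=1$, so your recipe gives the cokernel $\mathbb{Z}_2^\wedge/(3^i-1)$. For $i$ odd (degrees $8d+2$ and $8d+6$) one has $\nu_2(3^i-1)=1$, so you would obtain $\mathbb{Z}/2$ rather than the claimed $\mathbb{Z}/2^t=\mathbb{Z}/4$. Your assertion that $\nu_2(3^{2^{t-2}i}-1)=t$ for odd $i$ holds only for $t\ge 3$.

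The source of the discrepancy is that the model ``$\psi^g$ permutes $C_K$ and $\psi^g-1$ acts by $x\mapsto g^i\psi^g(x)-x$'' is only correct up to sign. The summand of $\pi_{2i}KO_K$ indexed by a complex-type irreducible $x$ is generated by $r(\chi\beta^i)$ for a chosen character $\chi$ with $r(\chi)=x$, and $r(\bar\chi\beta^i)=(-1)^i r(\chi\beta^i)$. If going once around the orbit sends $\chi$ to $\bar\chi$ rather than to $\chi$, the corner entry of the shift picks up a factor $(-1)^i$. For $g=3$ and the faithful character of $C_4$ this is exactly what happens, since $3\equiv -1 \pmod 4$. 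Thus $\psi^3$ acts on the $\lambda$-summand by $(-3)^i$, and the cokernel is $\mathbb{Z}_2^\wedge/(3^i+1)\cong\mathbb{Z}/4$ for $i$ odd. So the stated answer is right, but your computation does not produce it.

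To repair this, use the paper's normalization. The operation $\psi^{-1}$ acts as the identity on $\underline{\pi}_*KO_K$, because it is trivial on $\pi_*KO$ and realification is invariant under conjugation. Hence $\psi^3=\psi^{-3}$ there, and you may take the representative $g\in 1+4\mathbb{Z}_2^\wedge$ (e.g.\ $g=5$ or $g=-3$). This is what the paper does when it identifies $\mathbb{Z}_2^\times/\{\pm1\}$ with $1+4\mathbb{Z}_2^\wedge$.

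Such a $g$ has order exactly $2^{t-2}$ modulo $2^t$, and $-1\notin\langle g\rangle$ modulo $2^t$. So every orbit returns $\chi$ to itself, the shift is exactly $g^iM-I$, and $\nu_2(g^{2^{t-2}i}-1)=t+\nu_2(i)$ uniformly for all $t\ge 2$. This gives $\mathbb{Z}/2^t$, $\mathbb{Z}/2^{t+1}$ and $\mathbb{Z}/2^{2+t+\nu_2(d)}$ for $i$ odd, $i\equiv 2\pmod 4$ and $i=4d$ respectively. The real-type valuations $\nu_2(g^{4d}-1)=4+\nu_2(d)$ and $\nu_2(g^{4d+2}-1)=3$ are unchanged.

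With this correction the rest of your argument goes through. Your description of the degree-$0$ kernel as orbit sums is a direct form of the paper's argument via rational-valued characters and Schur index $1$.
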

\begin{proof}
    Since $\psi^g$ is a homomorphism of Green functors, the restriction and transfer homomorphisms are inherited from those in $\underline{\pi}_* (KO_{N_2})_2^\wedge$, and it suffices to compute for any orbit $N_2/K$. 
    
    (1) and (2): When $k=0$, $\underline{\ker}_2\{0\}=(\underline{RO}^{\psi^g})_2^\wedge$. For any $V\in KU(K)$ such that $\psi^g V=V$, by \cite[Proposition 6.7]{bonventre2022ku_g}, the character $\chi_V$ takes values in $\mathbb{Q}$, i.e., $V$ is in the image of $R\mathbb{Q}(K)\to RU(K)$, which factors through $RO(K)$. Then 
    \[ 
    \underline{RO}^{\psi^g}\cong \underline{RU}^{\psi^g}\cong \underline{R\mathbb{Q}_\chi},
    \]
    where 
    $$\underline{R\mathbb{Q}_\chi} (N_2/K):=R\mathbb{Q}_\chi(K)=\{V\in RU(K):\chi_V \text{ takes values in } \mathbb{Q}\}.$$
    By \cite[Proposition 35]{serre1977linear}, the Schur indices for $N_2$ equal to $1$ since $N_2$ is abelian. So $\underline{R\mathbb{Q}_\chi}\cong \underline{R\mathbb{Q}}$, 
    and $\underline{\ker}_2\{0\}\cong \underline{R\mathbb{Q}}_2^\wedge$.

    By \Cref{lem: the action of psi on RO}, we can decompose $C_K$ into orbits under $\psi^g$.  Since irreducible rational $K$-representations are in bijection with the cyclic subgroups of $K$, the orbits in $C_K$ are in bijection with $\{ C_{2^t}\subset K:t\geq 2\}$, and elements in $M_K$ are  in bijection with the subgroups $C_{2^t}\subset K$ for $t=0,1$. Let $[C_{2^t}\subset K]\subset C_K$ denote the orbit corresponds to the subgroup $C_{2^t}\subset K$. Since the degree of the $2^t$-th cyclotomic polynomial 
    \[
    \Phi_{2^t}(x)=\prod_{\gcd(k,2^t)=1,\;1\leq k<2^t}\left(x-e^{2\pi i k/2^t}\right)
    \]
    is $2^{t-1}$, the orbit $[C_{2^t}\subset K]$ has $2^{t-2}$ elements.

    On every orbit $[C_{2^t}\subset K]$ with $t\geq 2$, we can choose a basis such that $\psi^g$ acts as the matrix
    \[ 
    M=
    \begin{pmatrix}
    0 & 1 &        &        &        \\
        &     0 & 1&        &        \\
        &        & \ddots& \ddots &        \\
        &        &       &      0& 1 \\
    1 &    &       &        & 0
    \end{pmatrix}.
    \]
    For $k=8d$, $\psi^g-1$ acts on $RO(K)_2^\wedge\{u^d\}$ via $g^{4d}M-I$ on every orbit, which are equivalent to diagonal matrices 
    \[
    g^{4d}M-I\simeq
    \begin{pmatrix}
    1 &   &   &   &   \\
        & 1 &   &   &   \\
        &   & \ddots &   &   \\
        &   &   & 1 &   \\
        &   &   &   & g^{2^{t-2}\cdot 4d}-1
    \end{pmatrix}
    \] 
    using a combination of row and column operations. Thus $\psi^g-1$ is injective when $d\neq 0$.

    When $d=0$, every orbit contributes a summand of $\mathbb{Z}_2^\wedge $ in cokernel, so   
    \[
    \mathrm{coker}_2\{0\}(N_2/K)\cong \bigoplus_{\text{cyclic }T\subset K} \mathbb{Z}_2^\wedge.
    \] 
    When $d\neq 0$, every orbit $[C_{2^t}\subset K]$ contributes a summand of $\mathbb{Z}_2^\wedge/g^{4d}-1$ in cokernel for $t\leq 1$, and a summand of $\mathbb{Z}_2^\wedge/g^{2^td}-1$ in cokernel for $t\geq 2$. Since $g$ is a generator of $(\mathbb{Z}_2^\wedge)^\times/\{\pm 1\}\cong 1+4\mathbb{Z}_2^\wedge$, then $1-g^{2^td}$ is a generator of $2^{2+t+\nu_2(d)}\mathbb{Z}_2^\wedge$, thus 
    \[
    \mathbb{Z}_2^\wedge/g^{2^t d}-1\cong \mathbb{Z}/2^{2+t+\nu_2(d)}, 
    \]
    and 
    \[\underline{\mathrm{coker}}_2\{8d\}(N_2/K)\cong (RO(K;\mathbb{R})\otimes \mathbb{Z}/2^{4+\nu_2(d)}) \oplus (\bigoplus\limits_{C_{2^t}\subset K, t\geq 2} \mathbb{Z}/2^{2+t+\nu_2(d)}).\]

    (3) For $k=8d+1$, $\pi_{8d+1} KO_{N_2}\cong RO(N_2;\mathbb{R})\{\eta u^d\}/2$, thus $\psi^g$ acts trivially on $\underline{\pi}_{8d+1}KO_{N_2}$. 
    
    Finally, (4)-(6) follow from the same computation as above.  
\end{proof}

\begin{remark}\label{rmk: nilpotent N_2}
    When $N_2$ is nonabelian, $RO(N_2;\mathbb{H})$ is nontrivial. By \cite[Chapter 13]{serre1977linear}, for any $V\in RO(N_2;\mathbb{H})$, the Schur index of the complexification of $V$ over $\mathbb{Q}$ equals $2$. In this case, for any orbit $N_2/K$
    \[
    \underline{\ker}_2\{0\}(N_2/K)\cong R\mathbb{Q}_\chi (K)_2^\wedge\not\cong R\mathbb{Q}(K)_2^\wedge.
    \]
    Therefore,
    $$\underline{\ker}_2\{0\}\cong (\underline{R\mathbb{Q}_{\chi}}_{N_2})_2^\wedge \not\cong (\underline{R\mathbb{Q}}_{N_2})_2^\wedge.$$
    Furthermore, for any  $V\in RO(N_2;\mathbb{R})$, the Schur index of the complexification of $V$ over $\mathbb{Q}$ equals $1$, which implies that 
    \[
    \begin{aligned}
        \underline{\ker}_2\{1\}(N_2/K)&\cong RO(K;\mathbb{R})^{\psi^g}/2 \cong RO(K;\mathbb{R})\cap R\mathbb{Q}(K)/2,\\
        \underline{\mathrm{coker}}_2\{1\}(N_2/K) &\cong \mathbb{Z}/2\otimes_{\mathbb{Z}_2}\underline{\mathrm{coker}}_2\{0\}(N_2/K)/(RO(K;\mathbb{C})\oplus RO(K;\mathbb{H})).
    \end{aligned}
    \] 
    Here $V\in R\mathbb{Q}(K)$ is regarded as a real representation via the canonical inclusion $R\mathbb{Q}(K)\to RO(K)$.

For example, if $N_2=Q_8$, then $N_2$ has four one-dimensional irreducible complex representations $\rho_i$, $1\leq i\leq 4$, and one two-dimensional irreducible complex representation $\theta$. The Schur indices of the $\rho_i$ are $1$, and the Schur index of $\theta$ is $2$. Therefore, after suitably choosing the $\rho_i$, we have 
$$R\mathbb{Q}_\chi (N_2)\cong \mathbb{Z}\{\rho_1,\rho_2,\rho_3+\rho_4,\theta\},
\qquad
R\mathbb{Q}(N_2)\cong \mathbb{Z}\{\rho_1,\rho_2,\rho_3+\rho_4,2\theta\}.$$ 
As a result, $\underline{\ker}_2\{0\}\cong (\underline{R\mathbb{Q}_{\chi}}_{Q_8})_2^\wedge$, and $$\underline{\ker}_2\{1\}(Q_8/Q_8)\cong \underline{\mathrm{coker}}_2\{1\}(Q_8/Q_8) \cong \mathbb{Z}/2\{\rho_1,\rho_2, \rho_3+\rho_4\}.$$
\end{remark}

For an abelian $2$-group $N_2$ and any $k\in \mathbb{Z}$, we can compute $\underline{\pi}_k L_{KU_{N_2}/2}S_{N_2}$ via the short exact sequence:
    \[0\longrightarrow \underline{\mathrm{coker}}_2\{k+1\}\longrightarrow \underline{\pi}_k L_{KU_{N_2}/2}S_{N_2} \longrightarrow \underline{\ker}_2\{k\}\longrightarrow 0.\]
By \Cref{lem:psi-1}, we need to solve the extension problems when $k=0$ and $k=8d+1$.

\subsection{Extension problem for $k=0$} When $k=0$, we need to study the exact sequence
\[
0\longrightarrow \underline{RO(-;\mathbb{R})}_{N_2}\{\eta\}/2\xrightarrow{i} \underline{\pi}_0 L_{KU_{N_2}/2}S_{N_2}\xrightarrow{\pi} (\underline{R\mathbb{Q}}_{N_2})_2^\wedge\longrightarrow 0.
\]
Since $N_2$ is a $2$-group, $\underline{R\mathbb{Q}}_{N_2}\cong \underline{A/J}_{N_2}$, and the Hurewicz map $$(\underline{A}_{N_2})_2^\wedge \to \underline{\pi}_0 L_{KU_{N_2}/2}S_{N_2}\xrightarrow{\pi} (\underline{R\mathbb{Q}}_{N_2})_2^\wedge$$ induces a morphism of Mackey functors
\[
  \theta_{N_2}:\underline{J}_{N_2}\longrightarrow \underline{RO(-;\mathbb{R})}_{N_2}\{\eta\}/2.
\]
For $L\subset K$, there is a natural map of $K$-spectra $\operatorname{Inf}_{K/L}^K KO_{K/L}\to KO_K$, which induces a canonical inflation map $\operatorname{Inf}_{K/L}^K L_{KU_{K/L}/2}S_{K/L} \to L_{KU_K/2}S_K$. Note that the map $\theta$ is defined by the Hurewicz map, since the Hurewicz map is compatible with transfers and inflation maps, so is $\theta$.

\begin{lemma}\label{lem:quotient-description}
With notations as above, there is a natural isomorphism of Mackey functors
\[
\underline{\pi}_0 L_{KU_{N_2}/2}S_{N_2}\cong
  \frac{(\underline{A}_{N_2})_2^\wedge \oplus \underline{RO(-;\mathbb{R})}_{N_2}\{\eta\}/2}
       {\{j-\theta_{N_2}(j):j\in (\underline{J}_{N_2})_2^\wedge\}}.
\]
\end{lemma}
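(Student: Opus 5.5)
We construct the map
\[
\Phi\colon (\underline{A}_{N_2})_2^\wedge \oplus \underline{RO(-;\mathbb{R})}_{N_2}\{\eta\}/2 \longrightarrow \underline{\pi}_0 L_{KU_{N_2}/2}S_{N_2}, \qquad (a,x)\mapsto h(a)+i(x),
\]
where $h$ is the ($2$-completed) Hurewicz map $(\underline{A}_{N_2})_2^\wedge\cong\underline{\pi}_0 (S_{N_2})_2^\wedge\to \underline{\pi}_0 L_{KU_{N_2}/2}S_{N_2}$ and $i$ is the inclusion from the short exact sequence. Both $h$ and $i$ are maps of Mackey functors: $h$ is induced by a map of $N_2$-spectra, and $i$ is the boundary-induced inclusion in the long exact sequence of the fiber sequence, hence natural in restrictions and transfers. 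Therefore $\Phi$ is a morphism of Mackey functors. We then identify its kernel with $\{(j,-\theta_{N_2}(j))\}$; since the target is $2$-torsion, this equals $\{j-\theta_{N_2}(j)\}$ in the notation of the statement, and the quotient is the displayed Mackey functor.

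\textbf{Surjectivity.} It suffices to check surjectivity on each orbit $N_2/K$. Since $N_2$ is a $2$-group, $A(K)\to R\mathbb{Q}(K)$ is surjective, so $\pi\circ h\colon A(K)_2^\wedge\to R\mathbb{Q}(K)_2^\wedge$ is surjective. Here $\pi\circ h$ is the linearization map $\mathcal{R}_K$ completed at $2$: the composite $S\to L_{KU/2}S\to KO_2^\wedge$ is the unit, and on $\pi_0^K$ the unit $A(K)\to RO(K)$ is linearization, landing in the $\psi^g$-fixed part $R\mathbb{Q}(K)$. Given $y\in\underline{\pi}_0(N_2/K)$, choose $a$ with $\pi h(a)=\pi(y)$. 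Then $y-h(a)\in\ker\pi=\operatorname{im} i$, so $\Phi$ is surjective.

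\textbf{Kernel.} Suppose $\Phi(a,x)=0$. Applying $\pi$ gives $\mathcal{R}(a)=0$, so $a\in(\underline{J}_{N_2})_2^\wedge$ (using that $2$-completion is exact on these finitely generated modules). By the definition of $\theta_{N_2}$, $h(a)=i(\theta_{N_2}(a))$, so $i(\theta_{N_2}(a)+x)=0$. Injectivity of $i$ then gives $x=-\theta_{N_2}(a)=\theta_{N_2}(a)$. Conversely, every $(j,\theta_{N_2}(j))$ lies in the kernel. Thus the kernel is exactly the image of $j\mapsto j-\theta_{N_2}(j)$, and this image is a sub-Mackey functor because $\theta_{N_2}$ is a morphism of Mackey functors.

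\textbf{Main obstacle.} The delicate point is making $\theta_{N_2}$ well defined on the completion $(\underline{J}_{N_2})_2^\wedge$ and compatible with transfers. The map is defined by $h(j)\in\ker\pi=\operatorname{im} i$, followed by $i^{-1}$; this composite is compatible with restrictions and transfers because $h$, $\pi$, and $i$ all are. For the completion, $\theta_{N_2}$ extends continuously to $(\underline{J}_{N_2})_2^\wedge$ because its target is $\mathbb{F}_2$-valued, so $\theta_{N_2}$ factors through $\underline{J}_{N_2}/2$. We also need the identification $\pi\circ h=\mathcal{R}$ on $\pi_0^K$, which we verify orbitwise using the unit of $KO_K$.
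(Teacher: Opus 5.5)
Your proposal is correct and follows essentially the same route as the paper: the map $\Phi(a,x)=h(a)+i(x)$, surjectivity from the surjectivity of the linearization $\mathcal{R}_{N_2}$ for a $2$-group (the paper phrases this as a five-lemma argument on the comparison diagram), and the kernel computation via $\pi\circ h=\mathcal{R}_{N_2}$, injectivity of $i$, and the definition of $\theta_{N_2}$. Your extra remarks on exactness of $2$-completion and on $\theta_{N_2}$ factoring through $\underline{J}_{N_2}/2$ only make explicit points the paper leaves implicit.
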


\begin{proof}
    Since $\underline{A}_{N_2}$ is a representable $N_2$-Mackey functor, there is no nontrivial extension of $\underline{A}_{N_2}$ by $\underline{RO(-;\mathbb{R})}_{N_2}\{\eta\}/2$. Consider the following commutative diagram with exact rows:
\[
\begin{tikzcd}[column sep=small]
0 \arrow[r] & \underline{RO(-;\mathbb{R})}_{N_2}\{\eta\}/2 \arrow[r] \arrow[d, "\cong"'] 
&
(\underline{A}_{N_2})_2^\wedge\oplus \underline{RO(-;\mathbb{R})}_{N_2}\{\eta\}/2
\arrow[r,"p_2"] \arrow[d, "\Phi"]
& (\underline{A}_{N_2})_2^\wedge \arrow[r] \arrow[d, "\mathcal{R}_{N_2}"] \arrow[ld,"h"] & 0 
\\
0 \arrow[r] & \underline{RO(-;\mathbb{R})}_{N_2}\{\eta\}/2 \arrow[r,"i"]
&
\underline{\pi}_0 L_{KU_{N_2}/2}S_{N_2}
\arrow[r,"\pi"]
&
(\underline{R\mathbb{Q}}_{N_2})_2^\wedge
\arrow[r]
&
0 .
\end{tikzcd}
\]
Here $h$ is the Hurewicz map, $\mathcal{R}_{N_2}$ is the linearization map, and 
\[
  \Phi(p,c)=h(p)+i(c).
\]
$\Phi$ is a morphism of Mackey functors since both $h$ and $i$ are. It is surjective by the five lemma. If $\Phi(j,c)=0$, then applying $\pi$ gives
\[
  0=\pi\Phi(j,c)=\pi h(j)=\mathcal{R}_{N_2}(j),
\]
so $j\in (\underline{J}_{N_2})_2^\wedge$. By definition of $\theta_{N_2}$,
\[
  0=h(j)+i(c)=i(\theta_{N_2}(j)+c).
\]
Since $i$ is injective, $c=-\theta_{N_2}(j)$.  Thus
\[
  \ker(\Phi)=\{(j,-\theta_{N_2}(j)):j\in J_{N_2}\}.
\]
Since all maps involved are Mackey functor maps, the displayed kernel is a sub-Mackey functor and the quotient is a Mackey functor quotient.
\end{proof}

Let $V:=C_2\times C_2$, and let $A,B,C\subset V$ be the three subgroups of order two. Then $J(V)\cong \mathbb{Z}$ is freely generated by
\[
  X_V=([V/A]-1)([V/B]-1)([V/C]-1)-1.
\]
Following a helpful comment of Balderrama, we have the following lemma. 
\begin{lemma}\label{lem:case N_2=V}
   Let $\rho_V$ be the regular real $V$-representation.  Then
    \[
    \theta_V(X_V)=\eta\cdot \rho_{V} \in RO(V;\mathbb R)/2\cdot \eta.
    \]
\end{lemma}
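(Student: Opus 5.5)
The plan is to pin $\theta_V(X_V)$ down to the two candidates $0$ and $\eta\rho_V$ using naturality of $\theta$ under restriction. Then I show it is nonzero with a multiplicative argument in the unit group of $\pi_0^VL_{KU_V/2}S_V$.

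\emph{Step 1: restriction constraints.} The Hurewicz map $h$, hence $\theta$, commutes with restriction. Moreover $J(A)=0$ for every cyclic subgroup, in particular for $A,B,C$ and $e$. Hence $\operatorname{Res}^V_A\theta_V(X_V)=\theta_A(\operatorname{Res}^V_AX_V)=0$, and likewise for $B$, $C$ and $e$. Write $x=\sum_\chi c_\chi\,\chi\in RO(V;\mathbb R)/2$, where $\chi$ runs over the four characters $1,\sigma_A,\sigma_B,\sigma_C$ and $\sigma_A$ denotes the sign character with kernel $A$. Restricting to $A$ identifies $1$ with $\sigma_A$ and $\sigma_B$ with $\sigma_C$. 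So vanishing of the restriction forces $c_1=c_{\sigma_A}$ and $c_{\sigma_B}=c_{\sigma_C}$. Doing this for all three subgroups forces all $c_\chi$ to be equal. Hence
\[
\bigcap_{K\in\{A,B,C\}}\ker\operatorname{Res}^V_K=\{0,\ \rho_V\}\cdot\eta ,
\]
and it remains to show $\theta_V(X_V)\neq 0$.

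\emph{Step 2: multiplicative reformulation.} Put $a_K=[V/K]-1\in A(V)$. One computes $a_K^2=1$, and the linearization $\mathcal R_V(a_K)=\sigma_K$. Since $\sigma_A\sigma_B\sigma_C=1$, the element $X_V=a_Aa_Ba_C-1$ lies in $J(V)$. Because $h$ is a ring map, $u_K:=h(a_K)$ is a unit of order $2$ in $R:=\pi_0^VL_{KU_V/2}S_V$, lifting the unit $\sigma_K$ of $R\mathbb Q(V)_2^\wedge$. We have $h(X_V)=u_Au_Bu_C-1=i(\theta_V(X_V))$. So the claim is that the three canonical lifts $u_K$ do \emph{not} multiply to $1$, even though their images $\sigma_K$ do. Note that $i(\eta\,\cdot)$ has square zero. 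Hence the lifts of $\sigma_K$ of order $2$ form a torsor under $RO(V;\mathbb R)\eta/2$, and the question is a genuine $\mathbb Z/2$-valued obstruction.

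\emph{Step 3: detecting nonvanishing (main obstacle).} To detect the class I would map to a Borel-type target in which $\eta\rho_V=\eta\operatorname{Tr}_e^V(1)$ survives. The first attempt is the completion map
\[
L_{KU_V/2}S_V\to F(EV_+,L_{KU_V/2}S_V),
\]
whose $V$-fixed points are $(L_{KO/2}S)^{BV_+}$. Through the fiber sequence of \Cref{thm:KUG local spectrum first case}, $\eta\rho_V$ maps to $\eta$ times the transfer class in $KO^0(BV)^\wedge_2$. There $u_K$ becomes the class of the real line bundle $\sigma_K$ together with its canonical $\psi^g$-fixing. The product $u_Au_Bu_C$ is then computed by comparing the trivialization of $\sigma_A\otimes\sigma_B\otimes\sigma_C$ with its $\psi^g$-structure. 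I expect this to reduce to a Stiefel--Whitney/$J$-type invariant, essentially $w_1w_1w_1$-style data detected by $\eta$, as in the classical $J$-homomorphism on $BV$.

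This detection is the step I expect to be hardest. If it becomes unwieldy, the fallback is to import the nonvanishing directly from Szymik's computation \cite[Example 5.1]{szymik2013chromatic}. That computation shows that the Hurewicz image of $J(V)$ in the $KU_V$-local sphere is nonzero. Since it is $2$-torsion, it must be nonzero after $2$-completion, and the vanishing of $\ker\Phi$-type contributions in \Cref{lem:quotient-description} forces it to be a nonzero element of $\underline{RO(-;\mathbb R)}_V\{\eta\}/2(V/V)$. Combined with Step 1, this gives $\theta_V(X_V)=\eta\rho_V$.
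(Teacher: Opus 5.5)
Your Step 1 together with the Szymik fallback is exactly the paper's proof: restriction to the three subgroups of order two forces $\theta_V(X_V)\in\{0,\eta\rho_V\}$, and Szymik's Example 5.1 gives $h(X_V)\neq 0$. Since $h(X_V)=i(\theta_V(X_V))$ with $i$ injective, this gives $\theta_V(X_V)\neq 0$. The passage to the $2$-complete local sphere holds because the rational and odd-primary images of $X_V\in J(V)$ vanish, not merely because the class is $2$-torsion. The multiplicative reformulation and the Borel/Stiefel--Whitney detection in Steps 2--3 are only heuristic and are not needed.
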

\begin{proof}
Note that $\operatorname{Res}^V_H X_V=0$ for every proper subgroup $H \subset V$. Hence
$\theta_V(X_V)$ restricts to zero on all proper subgroups. The common kernel of the restriction maps
\[
  RO(V;\mathbb R)/2\longrightarrow \prod_{|H|=2} RO(H;\mathbb R)/2
\]
is generated by $\rho_V$. Thus $\theta_V(X_V)$ is either $0$ or $\eta\rho_{V}$. Szymik \cite[Example 5.1]{szymik2013chromatic} shows that $X_V$ has nontrivial Hurewicz image, so $\theta_V(X_V)=\eta\rho_V$.
\end{proof}

Consequently, when $N_2\cong V$, at the top orbit one has an isomorphism of $A(V)$-modules
\[
\pi_0^V L_{KU_V/2}S_V
  \cong
  \frac{A(V)^{\wedge}_2\oplus RO(V;\mathbb R)\{\eta\}/2}
       {\langle (X_V-\eta\rho_{V})\rangle}.
\]

For an arbitrary finite abelian $2$-group $N_2$, it follows from \cite[Theorem 5.3]{bartel2015brauer} that all Brauer relations of $N_2$ are $\mathbb{Z}$-linear combinations of relations lifted from subquotients isomorphic to $C_2\times C_2$. More precisely, for all
\[
  L\subset K\subset N_2,
  \qquad
  K/L\cong C_2\times C_2,
\]
the virtual $N_2$-sets
\[
  X_{K,L}=\operatorname{Tr}_K^{N_2}\operatorname{Inf}_{K/L}^{K}(X_{K/L})\in J(N_2)
\]
generate $J(N_2)$. This determines the $N_2$-Mackey functor $\underline{J}_{N_2}$.

\begin{lemma}\label{lem:theta-general}
For every such subquotient $K/L\cong C_2\times C_2$,
\[
\theta_{N_2}(X_{K,L})
  =
  \eta\cdot \operatorname{Tr}_K^{N_2}\operatorname{Inf}_{K/L}^{K}(\rho_{K/L})=
  \eta\sum_{\substack{\chi:N_2\to \{\pm1\}\\ L\subseteq \ker(\chi)}}\chi
  \in RO(N_2;\mathbb R)/2\cdot\eta.
\]
\end{lemma}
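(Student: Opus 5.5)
The plan is to use the naturality of $\theta$ with respect to inflation and transfer, reduce to the case $V=C_2\times C_2$ treated in \Cref{lem:case N_2=V}, and then evaluate the resulting real representation explicitly.

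First, I would record the naturality of $\theta$. The paper observes that the Hurewicz map is compatible with transfers and with the inflation maps $\operatorname{Inf}_{K/L}^K L_{KU_{K/L}/2}S_{K/L}\to L_{KU_K/2}S_K$. The linearization $\mathcal{R}$ is also compatible with these operations, so the maps $\theta_{K/L}$, $\theta_K$ and $\theta_{N_2}$ commute with $\operatorname{Inf}_{K/L}^K$ and $\operatorname{Tr}_K^{N_2}$. To make this precise, I would check that the inflation map induces the natural inflation on both terms of the short exact sequence:
\begin{itemize}
\item on $\underline{R\mathbb{Q}}$ it is the usual inflation of rational representations;
\item on $\underline{\mathrm{coker}}_2\{1\}\cong \underline{RO(-;\mathbb R)}\{\eta\}/2$ it comes from $\operatorname{Inf}_{K/L}^K KO_{K/L}\to KO_K$, so it sends $\eta\tau$ to $\eta\,\operatorname{Inf}(\tau)$.
\end{itemize}
Given this, a diagram chase with the defining property $h(j)=i(\theta(j))$ for $j\in J$ gives
\[
\theta_{N_2}(X_{K,L})=\operatorname{Tr}_K^{N_2}\operatorname{Inf}_{K/L}^K\theta_{K/L}(X_{K/L}).
\]
Here $K/L\cong V$, and \Cref{lem:case N_2=V} then yields the first equality $\eta\,\operatorname{Tr}_K^{N_2}\operatorname{Inf}_{K/L}^K(\rho_{K/L})$.

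Second, I would compute this representation. The regular representation $\rho_{K/L}$ is the sum of the four characters of $K/L$, so its inflation is $\sum\psi$ over the characters $\psi\colon K\to\{\pm1\}$ with $L\subseteq\ker\psi$. The transfer $\operatorname{Tr}_K^{N_2}$ in $\underline{RO}$ is induction. For a sign character $\psi$ of $K$ we have
\[
\operatorname{Ind}_K^{N_2}\psi=\sum_{\chi|_K=\psi}\chi,
\]
where $\chi$ runs over all characters of $N_2$ extending $\psi$ (complex-valued ones included). Since $N_2$ is abelian, Frobenius reciprocity shows this sum is multiplicity free.

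Third, I would account for the passage to $\underline{RO(-;\mathbb R)}/2$, which is obtained by discarding classes in $RO(-;\mathbb C)$. An extension $\chi$ of a real sign character either is itself a sign character $N_2\to\{\pm1\}$, or has non-real values. Non-real extensions come in conjugate pairs, and each pair sums to one irreducible real representation of complex type, which vanishes in the quotient. Summing over all four $\psi$, the surviving terms are exactly the sign characters $\chi$ of $N_2$ whose restriction to $K$ is trivial on $L$, that is, those with $L\subseteq\ker\chi$. This gives the second formula.

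The main obstacle is the first step: verifying that the inflation map on $\underline{\pi}_0$ of the local spheres is compatible with the identification $\underline{\mathrm{coker}}_2\{1\}\cong\underline{RO(-;\mathbb R)}\{\eta\}/2$. This requires the connecting map of the fiber sequence of \Cref{thm:KUG local spectrum first case} to be natural under the inflation map of $KO$, which follows once $\psi^g$ commutes with inflation. That commutation holds because Adams operations are natural in the group.
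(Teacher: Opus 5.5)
Your proposal is correct and follows the same route as the paper. The paper uses the naturality of $\theta$ under $\operatorname{Inf}_{K/L}^K$ and $\operatorname{Tr}_K^{N_2}$ to reduce to the $C_2\times C_2$ case, then takes the real-type part of $\operatorname{Tr}_K^{N_2}\operatorname{Inf}_{K/L}^K(\rho_{K/L})$. It does this in one step by identifying that representation with the permutation representation $\mathbb{R}\{N_2/L\}$, which your character-by-character induction and pairing of complex-type extensions simply unpacks.
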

\begin{proof}
The map $\theta$ is natural for transfer and inflations. Therefore
\[
\begin{aligned}
    \theta_{N_2}\big(\operatorname{Tr}_K^N\operatorname{Inf}_{K/L}^{K}(X_{K/L})\big) & =  \operatorname{Tr}_K^{N_2}\operatorname{Inf}_{K/L}^{K}\big(\theta_{K/L}(X_{K/L})\big)\\
  & = \eta\cdot \operatorname{Tr}_K^{N_2}\operatorname{Inf}_{K/L}^{K}(\rho_{K/L}).
\end{aligned}
  \]
As a real $N_2$-representation, $\operatorname{Tr}_K^{N_2}\operatorname{Inf}_{K/L}^{K}(\rho_{K/L})$ is isomorphic to $\mathbb{R}\{N_2/L\}$.
Its image in $RO(N_2;\mathbb{R})/2$ is the sum of all real one-dimensional characters of $N_2$ that are
trivial on $L$, which gives the result.
\end{proof}

Then we can determine the $N_2$-Mackey functor $\underline{\pi}_0 L_{KU_{N_2}/2}S_{N_2}$.

\begin{proposition}\label{prop:extension of 2 group KU local sphere-degree-zero}
Let $N_2$ be a finite abelian $2$-group.  There is an isomorphism of $N_2$-Mackey functors
\[
  \underline{\pi}_0 L_{KU_{N_2}/2}S_{N_2}
  \cong
  \frac{(\underline{A}_{N_2})_2^\wedge \oplus \underline{RO(-;\mathbb{R})}_{N_2}\{\eta\}/2}
       {\{j-\theta_{N_2}(j):j\in (\underline{J}_{N_2})_2^\wedge\}}.
\]
The map $\theta_{N_2}$ is determined on the generators $X_{K,L}$ by
\[
  \theta_{N_2}(X_{K,L})
  =
  \eta\sum_{\substack{\chi:{N_2}\to\{\pm1\}\\ L\subseteq\ker(\chi)}}\chi.
\]
\end{proposition}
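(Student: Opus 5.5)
This proposition assembles \Cref{lem:quotient-description} and \Cref{lem:theta-general}, so the plan has three steps: take the quotient description from the first lemma, reduce $\theta_{N_2}$ to its values on generators, and read those values off from the second.

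\emph{Step 1: the quotient description.} The displayed isomorphism of Mackey functors is exactly \Cref{lem:quotient-description}, and I will quote it directly. The inputs are the short exact sequence
\[
0\to \underline{RO(-;\mathbb{R})}_{N_2}\{\eta\}/2\to \underline{\pi}_0 L_{KU_{N_2}/2}S_{N_2}\to (\underline{R\mathbb{Q}}_{N_2})_2^\wedge\to 0
\]
obtained from the Adams-operation fiber sequence of \Cref{thm:KUG local spectrum first case} together with \Cref{lem:psi-1}(1),(3). They also use the identification $\underline{R\mathbb{Q}}_{N_2}\cong \underline{A/J}_{N_2}$, valid because $N_2$ is a $2$-group. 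The map $\Phi(p,c)=h(p)+i(c)$ is then surjective, with kernel the graph of $-\theta_{N_2}$ on $(\underline{J}_{N_2})_2^\wedge$.

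\emph{Step 2: reduce to generators.} It remains to pin down $\theta_{N_2}$ as a morphism of Mackey functors. It is additive, and by construction from the Hurewicz map it is compatible with transfers and with the inflation maps induced by $\operatorname{Inf}_{K/L}^K KO_{K/L}\to KO_K$. By \cite[Theorem 5.3]{bartel2015brauer}, the group $J(N_2)$ is generated as an abelian group by the elements $X_{K,L}=\operatorname{Tr}_K^{N_2}\operatorname{Inf}_{K/L}^K X_{K/L}$ with $K/L\cong C_2\times C_2$. Applying this at every orbit $N_2/K'$, where the subgroup $K'$ is again an abelian $2$-group, shows that $\underline{J}_{N_2}$ is generated by such elements. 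Because the target is $2$-torsion, extending to the $2$-completion $(\underline{J}_{N_2})_2^\wedge$ is automatic: $\theta$ factors through $\underline{J}_{N_2}/2$, and $(\underline{J}_{N_2})_2^\wedge/2\cong \underline{J}_{N_2}/2$ since these are finitely generated free abelian groups. Hence $\theta_{N_2}$ is determined by its values on the $X_{K,L}$.

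\emph{Step 3: the values on generators.} These are computed in \Cref{lem:theta-general}. Naturality gives
\[
\theta_{N_2}(X_{K,L})=\operatorname{Tr}_K^{N_2}\operatorname{Inf}_{K/L}^K\theta_{K/L}(X_{K/L}).
\]
\Cref{lem:case N_2=V} identifies $\theta_V(X_V)=\eta\rho_V$; this uses Szymik's nontriviality of the Hurewicz image of $X_V$ and the fact that $\rho_V$ spans the common kernel of restrictions mod $2$. The induced representation $\operatorname{Tr}_K^{N_2}\operatorname{Inf}_{K/L}^K\rho_{K/L}=\mathbb{R}\{N_2/L\}$ decomposes as the sum of all characters $\chi$ of the abelian group $N_2/L$. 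Modulo $2$ and modulo the complex-type summands, only the real characters survive, namely those $\chi\colon N_2\to\{\pm1\}$ with $L\subseteq\ker\chi$.

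\emph{Main obstacle.} The delicate point is the compatibility of $\theta$ with inflation. To make it rigorous I would check that the square formed by the inflation $\operatorname{Inf}_{K/L}^K$ applied to the Adams fiber sequence for $K/L$, mapping to that for $K$, commutes with both the Hurewicz maps and the boundary inclusion $i$. The inclusion $i$ must then respect the induced map $RO(K/L;\mathbb{R})\to RO(K;\mathbb{R})$. I would verify both using that $\operatorname{Inf}KO_{K/L}\to KO_K$ is a ring map commuting with $\psi^g$.
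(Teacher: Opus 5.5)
Your proposal is correct and takes essentially the same route as the paper: you quote \Cref{lem:quotient-description} for the quotient description, generate $\underline{J}_{N_2}$ orbitwise by the Bartel--Dokchitser elements $X_{K,L}$, and read off $\theta_{N_2}(X_{K,L})$ from the naturality of $\theta$ under transfer and inflation together with the $C_2\times C_2$ case, exactly as in \Cref{lem:case N_2=V} and \Cref{lem:theta-general}. Your observations that $\theta$ factors through $\underline{J}_{N_2}/2$ (so the $2$-completion is harmless) and that the inflation compatibility should be checked via the $\psi^g$-equivariant ring map $\operatorname{Inf}_{K/L}^K KO_{K/L}\to KO_K$ only make explicit points that the paper asserts without detail.
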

\begin{proof}
    If a relation $j\in J(N_2)$ is written as an integral combination
\[
  j=\sum_{\substack{K,L\\ K/L\cong C_2\times C_2}} a_{K,L}X_{K,L},
\]
then
\[
\theta_{N_2}(j)  =  \eta\sum_{\substack{K,L\\ K/L\cong C_2\times C_2}} (a_{K,L}\bmod 2) \sum_{\substack{\chi:{N_2}\to\{\pm1\}\\ L\subseteq\ker(\chi)}}\chi .
\]
The expression of $j$ in terms of the generators $X_{K,L}$ is not canonical, but the resulting value
of $\theta_{N_2}(j)$ is canonical because $\theta_{N_2}$ is defined as the Hurewicz image of $j$.

At a general orbit $N_2/H$, the Mackey functor map is obtained by replacing $N_2$ with $H$:
\[
  \theta_{N_2}(N_2/H):J(H)\longrightarrow RO(H;\mathbb R)/2\cdot\eta,
\]
and the same formula applies to subquotients $L\subset K\subset H$ with $K/L\cong C_2\times C_2$. 
\end{proof}

\subsection{Extension problem for $k=8d+1$}
When $k=8d+1$, the short exact sequence has the form
\[
0\longrightarrow \underline{\mathrm{coker}}_2\{8d+2\} \xrightarrow{i} \underline{\pi}_{8d+1} L_{KU_{N_2}/2}S_{N_2}\xrightarrow{\pi} \underline{RO(-;\mathbb{R})}_{N_2}\{\eta u^d\}/2\longrightarrow 0.
\]
As in the non-equivariant case, after evaluating at $N_2/K$, the sequence splits as abelian groups.

\begin{lemma}\label{lem: group structure}
    For any subgroup $K\subset N_2$, there is an isomorphism of abelian groups $\pi_{8d+1}^K L_{KU_{N_2}/2}S_{N_2} \cong \underline{\mathrm{coker}}_2\{8d+2\}(N_2/K) \oplus \underline{\ker}_2\{8d+1\}(N_2/K)$.
\end{lemma}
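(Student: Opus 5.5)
We have to show that the short exact sequence
\[
0\to \underline{\mathrm{coker}}_2\{8d+2\}(N_2/K)\to \pi^K_{8d+1}L_{KU_{N_2}/2}S_{N_2}\to \underline{\ker}_2\{8d+1\}(N_2/K)\to 0
\]
splits as abelian groups. Since the quotient $\underline{\ker}_2\{8d+1\}(N_2/K)\cong RO(K;\mathbb R)/2\{\eta u^d\}$ is an $\mathbb F_2$-vector space, it suffices to lift each basis element $\tau\eta u^d$, with $\tau\in M_K$, to an element of order $2$ in $\pi^K_{8d+1}L_{KU_{N_2}/2}S_{N_2}$. The chosen lifts then span a complement to the image of $\mathrm{coker}_2\{8d+2\}$.

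To build the lifts I would use multiplicativity, imitating the non-equivariant argument in which $\eta u^d$ lifts to an element of order $2$ in $\pi_{8d+1}L_{KU/2}S$. First note that $L_{KU_K/2}S_K$ is a ring spectrum and $\operatorname{Res}^{N_2}_K L_{KU_{N_2}/2}S_{N_2}\simeq L_{KU_K/2}S_K$. So we may take $K=N_2$, work at the top orbit, and regard $\pi_*^K L_{KU_K/2}S_K$ as a module over $\pi_0^K$ and over $\pi_*L_{KU/2}S$ through inflation $\operatorname{Inf}_e^K$.

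The construction goes in two steps.

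\begin{enumerate}
\item Pick a non-equivariant class $y_d\in\pi_{8d+1}L_{KU/2}S$ with $2y_d=0$ that maps to $\eta u^d\in\ker(\psi^g-1)$ on $\pi_{8d+1}KO^\wedge_2$. This class exists by the classical computation.
\item For each $\tau\in M_K$, take a degree-zero class $t_\tau\in\pi_0^K L_{KU_K/2}S_K$ mapping to $\tau$, and set $z_\tau=t_\tau\cdot \operatorname{Inf}(y_d)$.
\end{enumerate}

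Then $2z_\tau=t_\tau\cdot\operatorname{Inf}(2y_d)=0$. The image of $z_\tau$ in $\pi_{8d+1}^K KO_K$ is $\tau\cdot\eta u^d$, because the map $L_{KU_K/2}S_K\to (KO_K)^\wedge_2$ is a ring map and inflation sends $\eta u^d$ to $\eta u^d$ times the trivial representation.

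The main obstacle is producing $t_\tau$. The image of $\pi_0^K L_{KU_K/2}S_K\to \pi_0^K KO_K$ is $\underline{\ker}_2\{0\}(N_2/K)\cong R\mathbb Q(K)_2^\wedge$, and a single sign character $\tau$ is rational, so it lies in this image. I would therefore choose $t_\tau$ as any preimage of $\tau\in R\mathbb Q(K)$ under the surjection onto $\underline{\ker}_2\{0\}$.

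The ambiguity in $t_\tau$ is a torsion class of the form $\eta\sigma$. Multiplying it by $y_d$ changes $z_\tau$ by $\eta\sigma y_d$. This lies in the image of $\mathrm{coker}_2\{8d+2\}$ and is still $2$-torsion, so it does not affect either property of $z_\tau$: order two, and the correct image in the quotient.

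With these lifts, the map sending $\tau\eta u^d\mapsto z_\tau$ defines a section of abelian groups. That section gives the claimed direct sum decomposition. It is not asserted to be compatible with restriction and transfer, which is exactly the extension problem treated afterwards.
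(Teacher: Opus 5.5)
Your argument is correct and is essentially the paper's proof. The paper also lifts each basis element $\tau\eta u^d$ to an order-two class by multiplying a non-equivariant order-two lift $\mu_d$ of $\eta u^d$ by a degree-zero class mapping to $\tau$. The one difference is that the paper takes this degree-zero class to be the Hurewicz image of $\operatorname{Tr}_{\ker\tau}^K(1)-1\in A(K)$, whose linearization is $(1+\tau)-1=\tau$. It therefore needs only the $A(K)$-module structure, not the ring structure of $L_{KU_K/2}S_K$ or the surjectivity onto $\underline{\ker}_2\{0\}$.
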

\begin{proof}
    The exact sequence has the form
    \[0\to  \underline{\mathrm{coker}}_2\{8d+2\}(N_2/K) \to \pi_k^K L_{KU_{N_2}/2}S_{N_2} \to RO(K;\mathbb{R})\{\eta u^d\}/2\to 0,\]
    where all elements in $\underline{\mathrm{coker}}_2\{8d+2\}(N_2/K)$ are torsion. When $d=0$, $\eta\in \pi_1 KO$ is the Hurewicz image of the Hopf element. The composite homomorphism
    \[
        \pi_1^K S  \to \pi_1^K L_{KU_{N_2/2}}S_{N_2} \to \pi_1^K (KO_{N_2})_2^\wedge
    \]
    sends $\eta$ to $ \epsilon\eta\in \pi_1^K KO_{N_2}$ and $\operatorname{Tr}_{\ker \tau}^K(1) \eta$ to $(\epsilon+\tau)\eta \in\pi_1^K KO_{N_2}$.
    Then $\tau\eta\in RO(K;\mathbb{R})\{\eta\}/2$ lifts to an element in $\pi_1^K L_{KU_{N_2}/2}S$ of order $2$. Thus the extension problem when $k=1$ is trivial. For general $d\in \mathbb{Z}$, consider the element $\mu_d\in \pi_{8d+1}L_{K(1)}S$ of order $2$ which maps to $\eta u^d \in \pi_{8d+1} KO$, the same argument shows that all the extension problems are trivial.
\end{proof}

This sequence splits after evaluating at each orbit $N_2/K$, as a sequence
of abelian groups. However, this does not imply that the sequence splits as Mackey functors. It remains to determine the $\underline{A}$-module structure of $\underline{\pi}_k L_{KU_{N_2}/2}S_{N_2}$. Consider the map
\[
\mathcal{I}_{N_2}:\underline A_{N_2} \overset{\mathcal{R}_{N_2}}{\longrightarrow} \underline{RO}_{N_2}\longrightarrow \underline{RO(-;\mathbb R)}_{N_2}/2.
\]
Fix a lift $\eta_{N_2} u^d =\mu_d \in \underline{\pi}_{8d+1} L_{KU_{N_2}/2}S_{N_2}$ of $\eta u^d\in \pi_{8d+1}KO_{N_2}$. The action of the Burnside Mackey functor on the class $\eta_{N_2} u^d$ lifts the map $\mathcal{I}_{N_2}\otimes \mathbb{Z}\{\eta u^d\}$ to a morphism of $N_2$-Mackey functors
\[
\widetilde h:\underline A_{N_2}\longrightarrow \underline{\pi}_{8d+1} L_{KU_{N_2}/2}S_{N_2},
\]
which is compatible with the inflations. On the orbit $N_2/K$, it is given by
\[
\widetilde h_K([K/H]) = \operatorname{Tr}_H^K\operatorname{Res}_H^{N_2}(\eta_{N_2}u^d)
\]
for $H\subset K$. Since $2\eta=0$, this morphism factors through $\underline A_{N_2}/2$. Let
\[
\underline{I}_{8d+1}:=\ker\big(\underline A_{N_2}/2
\to
\underline{RO(-;\mathbb R)}_{N_2}\{\eta u^d\}/2\big)
\]
denote its kernel, then $\widetilde{h}$ induces
\[
\theta_{8d+1}:
\underline{I}_{8d+1}
\longrightarrow
\underline{\mathrm{coker}}_2\{8d+2\}.
\]
Since $\widetilde h$ is compatible with transfers and inflation maps, so is the induced map $\theta_{8d+1}$.
With this notation, similarly to \Cref{lem:quotient-description}, the extension in degree $8d+1$ is given by
\[
\underline{\pi}_{8d+1}L_{KU_{N_2}/2}S_{N_2}
\cong
\frac{
\underline A_{N_2}/2\oplus \underline{\mathrm{coker}}_2\{8d+2\}
}{
\left\{
r-\theta_{8d+1}(r):
r\in \underline{I}_{8d+1}
\right\}
}.
\]
It remains to determine $\theta_{8d+1}$.

\begin{lemma}\label{lem:I-8d-plus-1-generators}
Let $K\subset N_2$.  The group
\[
I_{8d+1}(N_2/K) = \ker\left( A(K)/2 \longrightarrow RO(K;\mathbb R)\{\eta u^d\}/2 \right)
\]
is generated by the following two types of elements.

\begin{enumerate}
\item[(i)] Let $2K=\{2x:x\in K\}\subset K$. If $H,H'\subset K$ satisfy
\[
H+2K=H'+2K,
\]
then $D_{H,H'}=[K/H]+[K/H']$ lies in $I_{8d+1}(N_2/K)$.

\item[(ii)] Let $ 2K\subseteq L\subset T\subset K$ with $T/L\cong C_2\times C_2$.
Let $M_1,M_2,M_3$ be the three intermediate subgroups between $L$ and
$T$.  Then
\[
B_{T,L} = [K/L]+[K/M_1]+[K/M_2]+[K/M_3]
\]
lies in $I_{8d+1}(N_2/K)$.
\end{enumerate}
Moreover, the elements of types $(i)$ and $(ii)$ generate
$I_{8d+1}(N_2/K)$ as an $\mathbb F_2$-vector space.
\end{lemma}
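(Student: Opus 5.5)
First, I make the map explicit. The map $A(K)/2\to RO(K;\mathbb R)/2$ sends $[K/H]$ to the class of the permutation representation $\mathbb R\{K/H\}$ with all complex-type summands discarded. Since $K$ is abelian, $\mathbb R\{K/H\}$ splits into characters of $K$ trivial on $H$. The real-type ones are exactly the $\chi:K\to\{\pm1\}$ with $H\subseteq\ker\chi$. Each such $\chi$ kills $2K$, so these are the characters of the elementary abelian $2$-group $\bar K:=K/2K$ that vanish on the image $\bar H=(H+2K)/2K$. Hence
\[
[K/H]\longmapsto \sum_{\chi\in (\bar K/\bar H)^\vee}\chi \in \mathbb F_2[\bar K^\vee]\cong RO(K;\mathbb R)/2 .
\]
This depends only on $\bar H$, so each $D_{H,H'}$ with $H+2K=H'+2K$ maps to $2(\cdots)=0$. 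This settles type (i).

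For type (ii), $2K\subseteq L$ means we may work in $\bar K$. A character $\chi$ vanishing on $L$ appears in the images of those among $L,M_1,M_2,M_3$ contained in $\ker\chi$. If $T\subseteq\ker\chi$, this is all four. Otherwise $\ker\chi\cap T$ is one of the $M_i$, and $\chi$ appears exactly twice, for $L$ and that $M_i$. In both cases the count is even, so $B_{T,L}\mapsto 0$.

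For generation, I reduce modulo type (i): every $[K/H]$ is congruent to $[K/(H+2K)]$. So it suffices to show that the kernel of
\[
\mathbb F_2\{\text{subgroups }\bar H\subseteq \bar K\}\longrightarrow \mathbb F_2[\bar K^\vee],\qquad \bar H\mapsto \sum_{\chi\perp \bar H}\chi,
\]
is spanned by the $B_{T,L}$. The plan is a dimension count. Write $V=\bar K\cong\mathbb F_2^n$ and consider $f(\bar H)=\mathbf 1_{\bar H^\perp}$ in $\mathbb F_2^{V^\vee}$. Its image contains the indicator of every subspace of $V^\vee$. Möbius inversion in the subspace lattice, or simply the fact that the map from singletons is unitriangular by dimension, shows that $f$ is surjective onto $\mathbb F_2^{V^\vee}$. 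So the kernel has dimension $\#\{\text{subspaces of }V\}-2^n$.

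Next I show that the $B_{T,L}$ together with the $2^n$ chosen elements span everything. Work in the quotient of $\mathbb F_2\{\text{subspaces}\}$ by all $B$'s. For a subspace $U$ of dimension $\ge 2$, pick $L\subset U$ of codimension $2$ with $T=U$; then $B$ expresses $[U]$ as $[L]+[M_1]+[M_2]$, where the $M_i$ have dimension $\dim U-1$. This alone does not give a clean filtration. The sharper step is to show, by induction on $\dim U$, that the quotient is spanned by $[0]$ and the lines. There are exactly $1+(2^n-1)=2^n$ of these, which matches the dimension of the image and finishes the proof.

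To run the induction, I take $U$ with $\dim U=r\ge 2$. Choose a basis and codimension-$2$ subspaces with $T=U$, and use the relation to rewrite $[U]$ in terms of smaller subspaces. Specifically, $[U]=[L]+[M_1]+[M_2]+[M_3]$, where all $M_i\supset L$ have dimension $r-1$. Each term then has dimension less than $r$, and induction reduces everything to the span of $[0]$ and the lines.

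I expect the main obstacle to be confirming that there are no further relations among $[0]$ and the lines modulo the $B$'s. This is automatic once the image has dimension $2^n$: the kernel is spanned by the $B$'s plus whatever relations lie among the $2^n$ spanning elements, and since those $2^n$ elements must map onto a $2^n$-dimensional image, they are independent. Hence the kernel equals the span of the $B$'s and the $D$'s.
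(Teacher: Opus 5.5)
Your checks that the type (i) and type (ii) elements lie in the kernel are correct. So are the reduction modulo type (i) to $\mathbb F_2\{\text{subgroups of }\bar K\}$ and the surjectivity/dimension count. The generation step, however, has a genuine gap.

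First, the relation $B_{T,L}=[L]+[M_1]+[M_2]+[M_3]$ does not contain the term $[T]$. Taking $T=U$ therefore gives $[L]+[M_1]+[M_2]+[M_3]=0$, not $[U]=[L]+[M_1]+[M_2]+[M_3]$. The latter is in fact false: its left side maps to $\mathbf{1}_{U^\perp}\neq 0$, while its right side maps to $0$.

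Second, the target of your induction is wrong. In your coordinates $[0]\mapsto \mathbf{1}_{V^\vee}=1$. A line $\ell=\{0,v\}$ maps to $\mathbf{1}_{\ell^\perp}=1+\mathrm{ev}_v$, where $\mathrm{ev}_v$ is the linear functional on $V^\vee$ given by evaluation at $v$. So the images of $[0]$ and the lines span only the affine functions on $V^\vee$, a space of dimension $n+1<2^n$ once $n\ge 2$. Concretely, for $\bar K=C_2\times C_2$ with lines $A,B,C$, the element $B_{\bar K,0}=[0]+[A]+[B]+[C]$ is itself a linear dependence among your $2^n$ chosen elements. Moreover $[\bar K]$ is not in their span modulo the $B$'s: its image $\delta_0=\mathbf{1}_{\{0\}}$ has support of odd size, whereas every affine function on $V^\vee$ has support of even size when $n\ge 2$. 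So the claim in your last paragraph, that these $2^n$ elements map onto a $2^n$-dimensional image, is unfounded.

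The fix is to run the relation in the other direction, as the paper does. Let $L$ have codimension at least $2$ in $\bar K$, and choose $T\supset L$ with $T/L\cong C_2\times C_2$. Then $B_{T,L}$ rewrites the smallest subgroup as $[L]=[M_1]+[M_2]+[M_3]$, and each $M_i$ has codimension one less than $L$. By induction on codimension, the quotient by the $B$'s is spanned by $[\bar K]$ and the $2^n-1$ hyperplanes of $\bar K$. Their images are $\delta_0$ and $\delta_0+\delta_\chi$ for the nontrivial characters $\chi$, and these form a basis of $\mathbb F_2[\bar K^\vee]$. Combined with your dimension count, this shows the kernel on $\bar K$ is exactly the span of the $B$'s. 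Pulling back along $K\to K/2K$ and adding the type (i) elements then gives the lemma.
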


\begin{proof}
The map
\[
A(K)/2\longrightarrow RO(K;\mathbb R)\{\eta u^d\}/2
\]
only depends on the image of a subgroup in $K/2K$.  Indeed, for
$H\subset K$,
\[
[K/H]\longmapsto
\left(
\sum_{\substack{\chi:K\to\{\pm1\}\\ H\subseteq \ker(\chi)}}\chi
\right)\eta u^d,
\]
and the condition $H\subseteq \ker(\chi)$ is equivalent to
$H+2K\subseteq \ker(\chi)$.  Hence the elements $D_{H,H'}$ are in the
kernel, and after quotienting by these relations we may identify the
source of the map with $A(K/2K)/2$.

Put $\overline{K}=K/2K$.  It remains to determine the kernel of
\[
A(\overline{K})/2 \longrightarrow RO(\overline{K};\mathbb{R})/2.
\]
For a subgroup $U\subset \overline{K}$, the $\overline{K}$-set
$\overline{K}/U$ maps to
\[
\sum_{\substack{\chi:\overline{K}\to\{\pm1\}\\ U\subseteq\ker(\chi)}}\chi.
\]
If $U\subset W\subset \overline{K}$ and $W/U\cong C_2\times C_2$, with
intermediate subgroups $U_1,U_2,U_3$, then
\[
\overline{B}_{W,U}:=[\overline{K}/U]+[\overline{K}/U_1]+[\overline{K}/U_2]+[\overline{K}/U_3]
\]
maps to zero.  Indeed, a character trivial on $U$ is either trivial on
$W$, in which case it is counted four times, or has kernel one of the
three intermediate subgroups, in which case it is counted twice.

Let $R\subset A(\overline K)/2$ be the subgroup generated by the elements $\overline{B}_{W,U}$. Then the quotient $A(\overline K)/(2,R)$ is generated by the class $[\overline K/\overline K]$ and the classes $[\overline K/H]$ with $\overline K/H\cong C_2$. Their images are $1+\chi$ for all characters $\chi$ of $\overline K$, and these elements form a basis of $RO(\overline K;\mathbb R)/2$. Therefore there are no further relations. Pulling this description back along $K\to K/2K$ gives the stated generators.
\end{proof}

\begin{lemma}\label{lem:theta-8d-plus-1-type-one}
    Let $K\subset N_2$, and let $H,H'\subset K$ satisfy $H+2K=H'+2K$. For the first-type generator $D_{H,H'}$ in \Cref{lem:I-8d-plus-1-generators}, consider the projection onto the summand of $\underline{\mathrm{coker}}_2\{2\}$ indexed by a subgroup $M\cong C_{2^t}\subset K$. Let $\rho_M$ be the irreducible rational $K$-representation corresponding to $M$. Then
    \[
    \operatorname{Pr}_M\theta_{8d+1}(D_{H,H'}) = \begin{cases} 
        2^{t-1}, & t\geq 2 \text{ and exactly one of } H, H' \text{ is }\\
        & \text{contained in } \ker \rho_M,\\ 
        0, & \text{otherwise.}
    \end{cases}
    \]
\end{lemma}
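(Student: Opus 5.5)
Since $\theta_{8d+1}$ commutes with transfers and $D_{H,H'}=\operatorname{Tr}_H^K(1)+\operatorname{Tr}_{H'}^K(1)$, I reduce to computing $\widetilde h_K([K/H])=\operatorname{Tr}_H^K\operatorname{Res}_H^{N_2}(\eta_{N_2}u^d)$ and then comparing two transfers whose images in $\underline{RO(-;\mathbb R)}/2$ agree. The plan has four steps.

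\textbf{Step 1: isolate one cyclic summand.} The summand of $\underline{\mathrm{coker}}_2\{8d+2\}(N_2/K)$ indexed by $M\cong C_{2^t}$ with $t\ge 2$ is $\mathbb Z/2^t$. It comes from the $\psi^g$-orbit $[M\subset K]$ of complex-type irreducibles, i.e.\ from the irreducible rational representation $\rho_M$ with kernel $K_M$ and $K/K_M\cong C_{2^t}$. Since the projection $\operatorname{Pr}_M$ only sees $\rho_M$, I would inflate along $K\to K/K_M\cong C_{2^t}$.

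Concretely, let $\bar H$ and $\bar H'$ be the images of $H$ and $H'$ in $C_{2^t}$. The condition $H+2K=H'+2K$ gives $\bar H+2C_{2^t}=\bar H'+2C_{2^t}$. This forces $\bar H$ and $\bar H'$ to be either both contained in the index-$2$ subgroup $C_{2^{t-1}}$, or both equal to $C_{2^t}$.

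\textbf{Step 2: reduce to the cyclic quotient.} By compatibility with inflation and the double coset formula, $\operatorname{Pr}_M\operatorname{Tr}_H^K\operatorname{Res}_H(\mu_d)$ should equal the corresponding component of $\operatorname{Tr}^{C_{2^t}}_{\bar H}\operatorname{Res}_{\bar H}(\mu_d)$ in $\underline{\pi}_{8d+1}L_{KU_{C_{2^t}}/2}S_{C_{2^t}}$, projected to the top summand $\mathbb Z/2^t$. Note that $H\subseteq\ker\rho_M$ means exactly $\bar H=e$.

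\textbf{Step 3: compute in the cyclic case.} This is the main obstacle. I need, for $C_{2^t}$, the classes
\[
x_s:=\operatorname{Tr}_{C_{2^s}}^{C_{2^t}}\operatorname{Res}_{C_{2^s}}(\mu_d)\in\pi_{8d+1}^{C_{2^t}},
\]
modulo lower strata, in the faithful summand $\mathbb Z/2^t$. The claim to verify is that $x_s-x_{s'}$, for $s,s'\le t-1$, projects to $2^{t-1}$ exactly when one of $s,s'$ is $0$, and to $0$ otherwise. When $\bar H=\bar H'=C_{2^t}$ the difference is $0$ trivially.

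By inflation from $C_4$ and naturality, I expect this to reduce to the single input that the paper computes explicitly in \Cref{sec:C4}: there, $\operatorname{Tr}_e^{C_4}(\mu_d)-\operatorname{Tr}_{C_2}^{C_4}\operatorname{Res}(\mu_d)$ is the element of order $2$ in the $\mathbb Z/4$ summand. The plan is as follows.
\begin{itemize}
\item[(a)] Use the $C_4$ computation to pin down the base case $t=2$.
\item[(b)] For general $t$, write $\operatorname{Tr}_e^{C_{2^t}}=\operatorname{Tr}_{C_{2^{t-2}}}^{C_{2^t}}\circ\operatorname{Tr}_e^{C_{2^{t-2}}}$ and track the faithful summand. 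The transfer from $C_{2^{t-1}}$ to $C_{2^t}$ acts on the faithful summand via the transfer on $\underline{RU}$: inducing the faithful characters multiplies by $2$ inside $\mathbb Z/2^t$, compatibly with the cyclotomic orbit structure from the proof of \Cref{lem:psi-1}.
\item[(c)] Since $\eta$-classes are $2$-torsion, only the mod-$2$ image, namely the element $2^{t-1}$, can arise.
\end{itemize}

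\textbf{Step 4: conclude by cases.} Classes $x_s$ with $1\le s\le t-1$ are all restricted from the same $\eta$-type class on $C_{2^{t-1}}$ and agree modulo the kernel. This leaves only the dichotomy ``exactly one of $\bar H,\bar H'$ is trivial'', which by Step 2 is the condition that exactly one of $H,H'$ lies in $\ker\rho_M$. Summands with $t\le 1$ are the $RO(K;\mathbb R)$ part, which is killed by the definition of $I_{8d+1}$; this gives the value $0$ in the ``otherwise'' case.
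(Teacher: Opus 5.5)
Your target formula is right, and the cyclic computation can be made to work inside $\underline{\mathrm{coker}}_2\{8d+2\}$. For $K=C_{2^t}$ one has $x_0-x_1=\operatorname{Tr}_{C_4}^{C_{2^t}}[2\rho\beta u^d]$, and the transfer from the faithful $\mathbb Z/4$ to the faithful $\mathbb Z/2^t$ is multiplication by $2^{t-2}$ up to a unit. For $s,s'\ge 1$, the difference $x_s-x_{s'}$ is \emph{inflated} (not restricted) from $C_{2^t}/C_2$, so it misses the faithful summand.

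The gap is Step 2, the reduction from $K$ to $K/K_M$ with $K_M=\ker\rho_M$. Inflating $[\bar K/\bar H]$ along $K\to K/K_M$ gives $[K/HK_M]$, not $[K/H]$ unless $K_M\subseteq H$. The double coset formula only rewrites restrictions of transfers. Neither gives a map from level $K$ to level $K/K_M$. Moreover, the individual classes $\operatorname{Tr}_H^K\operatorname{Res}_H(\mu_d)$ do not lie in $\underline{\mathrm{coker}}_2\{8d+2\}$. Taking ``$\operatorname{Pr}_M$'' of them therefore needs an isotypic splitting of $\pi_{8d+1}^K L_{KU_K/2}S_K$ and a spectrum-level formula for transfers on it, and you set up neither. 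You also cannot fall back on differences $[K/H]+[K/HK_M]$, since these need not lie in $I_{8d+1}$ (e.g.\ $K=C_2\times C_4$, $K_M=C_2\times 0$, $H=e$).

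Your treatment of the real-type summands ($t\le 1$) is wrong as stated. The definition of $I_{8d+1}$ only says $r\cdot\mu_d$ vanishes in $\underline{\ker}_2\{8d+1\}$, i.e.\ on the $\eta u^d$-classes. It says nothing about the summand $RO(K;\mathbb R)\otimes\mathbb Z/2\{\eta^2u^d\}$ of $\underline{\mathrm{coker}}_2\{8d+2\}$. Indeed \Cref{lem:theta-8d-plus-1-type-two} gives $\theta_{8d+1}(B_{T,L})\neq 0$ there, although $B_{T,L}\in I_{8d+1}$. The vanishing for $D_{H,H'}$ must use $H+2K=H'+2K$.

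The paper avoids both problems by decomposing the Burnside class instead of projecting the answer:
\begin{enumerate}
\item With $J=H+2K$, write $D_{H,H'}=([K/H]+[K/J])+([K/H']+[K/J])$ and refine along steps $L\subset L\langle x^2\rangle$.
\item Use $[K/L]+[K/L\langle x^2\rangle]=\operatorname{Tr}_T^K\operatorname{Inf}_{T/L}^T([C_{2^m}/e]+[C_{2^m}/C_{2^{m-1}}])$ with $T=L\langle x\rangle$.
\item Telescope into transfers of inflations of $D_{C_2,\{e\}}$ from subquotients isomorphic to $C_4$.
\end{enumerate}
Naturality of $\theta_{8d+1}$ then makes $\theta_{8d+1}(D_{H,H'})$ a sum of $\operatorname{Tr}\operatorname{Inf}[2\rho\beta u^d]$. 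This is computed entirely in $\underline{\mathrm{coker}}_2\{8d+2\}$, where transfer and inflation are induction and inflation of representations. These preserve complex type for abelian groups, so real-type summands receive nothing. The $\rho_M$-summand receives $2^{t-1}$ exactly from the step where a chain from $H$ (resp.\ $H'$) to $J$ leaves $\ker\rho_M$.
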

\begin{proof}
    First we consider the case $K\cong C_4$, the only generator is 
    \[
    D_{C_2,\{e\}} = [C_4/C_2] + [C_4/\{e\}].
    \]
    Let $\rho$ be the faithful two-dimensional rotation $C_4$-representation. The computation of $\underline{\pi}_1 L_{KU_{C_4}/2}S_{C_4}$ in \Cref{sec:C4} shows that $$\rho\cdot \eta \neq 0\in \pi_1^{C_4}L_{KU_{C_4}/2}S_{C_4},$$ 
    and 
    \[\theta_{8d+1}(D_{C_2,\{e\}})=D_{C_2,\{e\}} \cdot \eta u^d = (\operatorname{Tr}_{e}^{C_4}(1)+\operatorname{Tr}_{C_2}^{C_4}(1))\eta u^d = [2\rho \beta u^d].\] 

    In general, let $K\subset N_2$ and $H,H'\subset K$ such that $H+2K=H'+2K$, we can reduce the computation to $D_{C_2,\{e\}}$. Let $J = H +2 K $, then in $A(K)/2$ we have
    \[
    [K/H] + [K/H'] = ([K/H] + [K/J]) + ([K/H'] + [K/J]).
    \]
    Hence it suffices to study the generators of the form
    \[
    [K/L] + [K/(L\langle x^2 \rangle)]
    \]
    for some subgroup $L\subset K$ and $x\in K\backslash L$.
    Let $T = L \langle x \rangle$. Then
    \[
    [K/L] + [K/L \langle x^2 \rangle] = \operatorname{Ind}_T^K ([T/L] + [T/L \langle x^2 \rangle]).
    \]
    Now $T/L$ is a cyclic $2$-group. Suppose $T/L \cong C_{2^m}$.
    In $A(C_{2^m})/2$, let $C_{2^i}$ denote the unique subgroup of order $2^i$. Then
    \[
    [C_{2^m}/e] + [C_{2^m}/C_{2^{m-1}}] = \sum_{i=0}^{m-2} ([C_{2^m}/C_{2^i}] + [C_{2^m}/C_{2^{i+1}}]).
    \]
    Each term $[C_{2^m}/C_{2^i}] + [C_{2^m}/C_{2^{i+1}}]$ is obtained from the basic class
    \[
    [C_4/e] + [C_4/C_2]
    \]
    on the subquotient $C_{2^{i+2}}/C_{2^i} \cong C_4$ by inflation followed by transfer. Since $\underline{\theta}_{8d+1}$ is compatible with inflation and transfer maps, we can compute $\theta_{8d+1}$ of $D_{H,H'}$ via the value of $D_{C_2,\{e\}}$. Therefore, for every cyclic $M\cong C_{2^t} \subset K$, we can compute $\theta_{8d+1}$ by tracking the indicated summand under these inflations and transfers. Let \(H\leq G\), and let \(V\in\operatorname{Irr}_{\mathbb Q}(H)\). Then
    \[
    \operatorname{Tr}_H^G V
    =
    \mathbb Q[G]\otimes_{\mathbb Q[H]}V
    \cong
    \bigoplus_{\substack{
    W\in\operatorname{Irr}_{\mathbb Q}(G)\\
    H\cap\ker(W)=\ker(V)
    }}
    W.
    \]
    Since $\theta_{8d+1}(D_{C_2,{e}})$ is nontrivial only in the summand corresponding to $\rho$, the above description of transfer, together with the behavior of kernels under inflation, shows that, for any $K\leq N_2$, the projection of $\theta_{8d+1}(D_{H,H'})$ onto the summand indexed by $M\cong C_{2^t}\subset K$ is nontrivial if and only if $t\geq 2$ and $\ker(M)$ contains exactly one of $H$ and $H'$. The stated formula then follows by comparing the orders of the corresponding elements.
\end{proof}

\begin{lemma}\label{lem:theta-8d-plus-1-type-two}
Let $2K\subset L\subset T\subset K$ with $T/L\cong C_2\times C_2$. Let $M_1,M_2,M_3$ be the three intermediate subgroups between $L$ and $T$.  For the second-type generator in \Cref{lem:I-8d-plus-1-generators}
\[
B_{T,L} = [K/L]+[K/M_1]+[K/M_2]+[K/M_3] \in I_{8d+1}(N_2/K),
\]
we have 
\[
\theta_{8d+1}(B_{T,L}) = \eta^2u^d \sum_{\substack{\chi:K\to\{\pm1\}\\ L\subseteq\ker(\chi)}}\chi \in RO(K;\mathbb R)/2\{\eta^2u^d\}.
\]
In particular, this value lies entirely in the real-type summand of $\underline{\mathrm{coker}}_2\{8d+2\}(N_2/K)$.
\end{lemma}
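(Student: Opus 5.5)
Following the strategy of \Cref{lem:theta-8d-plus-1-type-one}, I will first reduce to the universal case $K=T$, $L=e$, $K\cong C_2\times C_2=V$. Then I will propagate the answer using the compatibility of $\theta_{8d+1}$ with inflation and transfer.

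\emph{Reduction.} In $A(K)/2$ we have
\[
B_{T,L}=\operatorname{Tr}_T^K\operatorname{Inf}_{T/L}^T\bigl([V/e]+[V/A]+[V/B]+[V/C]\bigr),
\]
where $V=T/L$ and $A,B,C$ are its three order-two subgroups. Since $\widetilde h$, and hence $\theta_{8d+1}$, commutes with transfers and inflations, it suffices to compute $\theta_{8d+1}(\overline B_V)$ for $\overline B_V:=[V/e]+[V/A]+[V/B]+[V/C]\in I_{8d+1}(V/V)$. We then push the answer forward.

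\emph{The case $V$.} Since $V$ has exponent $2$, $\underline{\mathrm{coker}}_2\{8d+2\}(V/V)\cong RO(V;\mathbb R)/2\{\eta^2u^d\}$ by \Cref{lem:psi-1}(4); there are no $C_{2^t}$ summands with $t\ge 2$. Thus $\theta_{8d+1}(\overline B_V)=\overline B_V\cdot\eta_{V}u^d$, which is the image of $\overline B_V\cdot\eta\cdot\mu_d$.

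The key observation is that
\[
\overline B_V\equiv X_V \pmod 2.
\]
This can be checked by expanding $([V/A]-1)([V/B]-1)([V/C]-1)-1$ using $[V/A][V/B]=[V/e]$ and $[V/A]^2=2[V/A]$. So, modulo $2$, $\overline B_V\cdot\eta u^d$ equals $X_V\cdot\mu_d$. By \Cref{lem:case N_2=V}, the Hurewicz image of $X_V$ in $\pi_0^V$ is $\eta\rho_V$. Because the Burnside action on $\pi_*$ of the ring spectrum $L_{KU_V/2}S_V$ factors through the Hurewicz map to $\pi_0$, we get
\[
X_V\cdot\mu_d=(\eta\rho_V)\cdot\mu_d=\rho_V\,\eta^2u^d .
\]
Here I use that $\eta\cdot\mu_d$ maps to $\eta^2u^d$ in $\underline{\mathrm{coker}}_2\{8d+2\}$. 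This should follow by mapping to the nonequivariant $L_{K(1)}S$ and using that $\eta\mu_d$ detects $\eta^2u^d$, and then inflating.

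One also has to make sure that the odd part of the congruence is harmless. The difference between $\overline B_V$ and $X_V$ is $2y$ for some $y\in A(V)$, and $2y\cdot\mu_d=0$ since $2\mu_d=0$. So this step is fine.

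\emph{Propagation.} By the proof of \Cref{lem:theta-general}, $\operatorname{Tr}_T^K\operatorname{Inf}_{T/L}^T(\rho_{T/L})\cong\mathbb R\{K/L\}$. In $RO(K;\mathbb R)/2$ this is $\sum_{L\subseteq\ker\chi}\chi$. Transfer and inflation preserve the real-type summand $RO(-;\mathbb R)/2\{\eta^2u^d\}$ inside $\underline{\mathrm{coker}}_2\{8d+2\}$, since they are induced from $\underline{RO}$ and preserve sums of sign characters. This gives both the formula and the final claim that the value lies in the real-type summand.

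\emph{Main obstacle.} I expect the hardest step to be justifying $\eta\cdot\mu_d\mapsto\eta^2u^d$ equivariantly. This amounts to checking that multiplication by the Hurewicz class $\eta\rho_V$ on $\mu_d$ is computed in the quotient $\underline{\mathrm{coker}}_2\{8d+2\}$ exactly as in $KO$. I would argue this via the inflation map $\operatorname{Inf}_e^V L_{KU/2}S\to L_{KU_V/2}S_V$ together with module structures.
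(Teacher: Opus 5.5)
Your proposal is correct and follows essentially the same route as the paper: identify $[V/e]+[V/A]+[V/B]+[V/C]$ with $X_V$ modulo $2$, use \Cref{lem:case N_2=V} to get $X_V\cdot\eta u^d=\eta^2u^d\rho_V$, and then inflate along $T\to T/L$ and transfer to $K$ using $\operatorname{Tr}_T^K\operatorname{Inf}_{T/L}^T(\rho_{T/L})\cong\mathbb{R}\{K/L\}$. Two small comments: first, the step you call the main obstacle is immediate from linearity of the connecting map over $L_{KU_V/2}S_V$, which gives $\partial(\eta\rho_V)\cdot\mu_d=\partial(\eta\rho_V\cdot\eta u^d)$; second, your assertion that transfers preserve sums of sign characters is false in general (e.g.\ $\operatorname{Tr}_{C_2}^{C_4}\epsilon=\lambda$), so the real-type conclusion should instead come either from the hypothesis $2K\subseteq L$ as in the paper (every character of $K$ trivial on $L$ is then real) or from the fact that $\eta^2$ annihilates complex-type classes.
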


\begin{proof}
$\theta_{8d+1}$ is defined by the action of $\underline{A}_{N_2}$ on $\eta u^d$ in $\underline{\pi}_{8d+1} L_{KU_{N_2}/2}S_{N_2}$, so we have 
\[
\theta_{8d+1}(B_{T,L}) = (\operatorname{Tr}_{L}^{K}(1) + \operatorname{Tr}_{M_1}^{K}(1) + \operatorname{Tr}_{M_2}^{K}(1) + \operatorname{Tr}_{M_3}^{K}(1))\cdot \eta u^d.
\]
When $T\cong V=C_2\times C_2$, $B_{T,\{e\}}$ is the image of $X_V\in J(V)$ under the quotient $J(V)\to J(V)/2$, so
\[
\theta_{8d+1}(B_{V,\{e\}}) = X\cdot \eta u^d = \eta^2u^d \rho_{V}.
\]
For a general $L$ and $T$, inflating along $T\to T/L$ and then transferring from $T$ to $K$
gives
\[
\theta_{8d+1}(B_{T,L}) = \eta^2u^d\cdot \operatorname{Tr}_{T}^{K} \operatorname{Inf}_{T/L}^{T} (\rho_{T/L}).
\]
Since $2K\subseteq L$, every character of $K$ trivial on $L$ is a
real one-dimensional character.  Therefore,
\[
\operatorname{Tr}_{T}^{K} \operatorname{Inf}_{T/L}^{T} (\rho_{\mathrm{reg},T/L})= \sum_{\substack{\chi:K\to\{\pm1\}\\ L\subseteq\ker(\chi)}}\chi \in RO(K;\mathbb R)/2.
\]
This proves the claimed formula.
\end{proof}

\begin{proposition}\label{prop:theta-8d-plus-1-determined}
With the notation above, there is an isomorphism of $N_2$-Mackey functors
\[
\underline{\pi}_{8d+1}L_{KU_{N_2}/2}S_{N_2}
\cong
\frac{
\underline A_{N_2}/2
\oplus
\underline{\mathrm{coker}}_2\{8d+2\}
}{
\left\{ r-\theta_{8d+1}(r): r\in I_{8d+1} \right\}
}.
\]
The map $\theta_{8d+1}:I_{8d+1} \to  \underline{\mathrm{coker}}_2\{8d+2\} $ is completely determined by 
\Cref{lem:I-8d-plus-1-generators,lem:theta-8d-plus-1-type-one,lem:theta-8d-plus-1-type-two}.
\end{proposition}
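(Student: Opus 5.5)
The isomorphism is proved by repeating the argument of \Cref{lem:quotient-description} in degree $8d+1$. Then $\theta_{8d+1}$ is shown to be pinned down by its values on the generators of \Cref{lem:I-8d-plus-1-generators}.

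\textbf{Step 1: the comparison map.} Fix the lift $\mu_d=\eta_{N_2}u^d$ and let $\widetilde h\colon \underline A_{N_2}\to\underline{\pi}_{8d+1}L_{KU_{N_2}/2}S_{N_2}$ be the $\underline A$-action on $\mu_d$. This is a map of Mackey functors, because the homotopy Mackey functor of a ring $G$-spectrum is an $\underline A$-module and $\widetilde h$ is the module map out of the representable functor. Since $2\eta=0$ and $\mu_d$ has order $2$ (\Cref{lem: group structure}), $\widetilde h$ factors through $\underline A_{N_2}/2$. Define
\[
\Phi\colon \underline A_{N_2}/2\oplus\underline{\mathrm{coker}}_2\{8d+2\}\to \underline{\pi}_{8d+1}L_{KU_{N_2}/2}S_{N_2},\qquad \Phi(a,c)=\widetilde h(a)+i(c).
\]

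\textbf{Step 2: surjectivity and kernel.} Composing with $\pi$ gives $\pi\widetilde h=\mathcal I_{N_2}\otimes\eta u^d$. This map is surjective orbitwise, since $[K/H]$ with $K/H\cong C_2$ hits $1+\chi$ and these together with $1$ span $RO(K;\mathbb R)/2$. By a five-lemma argument, $\Phi$ is therefore surjective.

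Now suppose $\Phi(a,c)=0$. Applying $\pi$ shows $a\in\underline I_{8d+1}$. Then $\widetilde h(a)=i(\theta_{8d+1}(a))$ by the definition of $\theta_{8d+1}$, and injectivity of $i$ forces $c=-\theta_{8d+1}(a)$. Signs are irrelevant here, since everything in the image is annihilated appropriately and $r$ lives mod $2$. Hence $\ker\Phi=\{r-\theta_{8d+1}(r)\}$. This kernel is a sub-Mackey functor because $\theta_{8d+1}$ is compatible with restriction and transfer, both of which are inherited from $\widetilde h$ and $i$. The displayed isomorphism follows.

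\textbf{Step 3: determining $\theta_{8d+1}$.} By \Cref{lem:I-8d-plus-1-generators}, at each orbit $N_2/K$ the group $I_{8d+1}(N_2/K)$ is spanned over $\mathbb F_2$ by the $D_{H,H'}$ and the $B_{T,L}$. Since $\theta_{8d+1}$ is additive, it is determined by its values on these generators. Those values are computed in \Cref{lem:theta-8d-plus-1-type-one} (using the $C_4$ computation of \Cref{sec:C4} together with inflation and transfer) and \Cref{lem:theta-8d-plus-1-type-two} (using \Cref{lem:case N_2=V}).

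A small independence check is needed. $\theta_{8d+1}$ is defined intrinsically as the Hurewicz-type action on $\mu_d$, so the value on an element does not depend on how that element is written in terms of generators. The only ambiguity is the choice of lift $\mu_d$. Changing $\mu_d$ by $i(c_0)$ alters $\theta_{8d+1}(r)$ by $r\cdot c_0$. For $r\in\underline I_{8d+1}$, this product vanishes because $r$ acts through linearization on the $KO$-module $\underline{\mathrm{coker}}_2\{8d+2\}$, where its image is zero. This is the step I expect to be the main obstacle: one has to verify that the $\underline A$-action on $\underline{\mathrm{coker}}_2\{8d+2\}$ factors through $\underline{RO}/2$ appropriately. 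I would check it orbitwise using the $\underline{RO}$-module structure of $\underline{\pi}_*KO_{N_2}$. Failing that, I would simply fix the lift $\mu_d$ once and for all, as the paper already does.
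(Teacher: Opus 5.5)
Your Steps 1--3 are the paper's argument. The paper proves this proposition by rerunning the proof of \Cref{lem:quotient-description} with $\Phi(a,c)=\widetilde h(a)+i(c)$, then reading off $\theta_{8d+1}$ from its values on the generators of \Cref{lem:I-8d-plus-1-generators}. That part of your proposal is correct. Your sign remark is also fine, since $2\theta_{8d+1}(r)=\theta_{8d+1}(2r)=0$.

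The ``independence check'' in your last paragraph, however, asserts something false, so drop it rather than try to prove it. Saying $r\in\underline I_{8d+1}(N_2/K)$ only means that the \emph{real-type} part of $\mathcal R_K(r)\in RO(K)$ is even. The complex-type part of $\mathcal R_K(r)$ survives, and it can act nontrivially on the $\mathbb Z/2^t$ summands of $\underline{\mathrm{coker}}_2\{8d+2\}$.

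Here is a concrete counterexample. Let $N_2=K=C_4\times C_4=\langle a\rangle\times\langle b\rangle$, and take the type (i) generator $r=[K/\langle b\rangle]+[K/\langle a^2,b\rangle]$. Then $\mathcal R(r)=2+2\sigma_a+r_{\mathbb R}(\chi)$, where $\sigma_a(a)=-1$, $\sigma_a(b)=1$, $\chi(a)=i$, $\chi(b)=1$. Put $\psi(a)=1$, $\psi(b)=i$, and $c_0=2[r_{\mathbb R}(\beta^{4d+1}\psi)]$, which has order $2$. Then $\mu_d+i(c_0)$ is another lift of order $2$. The projection formula $x\cdot r_{\mathbb R}(y)=r_{\mathbb R}(\operatorname{c}(x)\,y)$ gives $\mathcal R(r)\cdot c_0=2[r_{\mathbb R}(\beta^{4d+1}\chi\psi)]+2[r_{\mathbb R}(\beta^{4d+1}\bar\chi\psi)]$. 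Since $\chi\psi$ and $\bar\chi\psi$ are not Galois conjugate, this element is nonzero in each of two distinct $\mathbb Z/4$ summands. So $\theta_{8d+1}(r)$ really does depend on the lift.

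Your fallback is the right fix, but make it precise. Take $\mu_d$ to be the inflation of the nonequivariant order-$2$ class in $\pi_{8d+1}L_{K(1)}S$. This normalization is also what makes $\widetilde h$ and $\theta_{8d+1}$ compatible with the inflations used in \Cref{lem:theta-8d-plus-1-type-one,lem:theta-8d-plus-1-type-two}. Among such lifts the only remaining freedom is $c_0=\eta^2u^d$. For this $c_0$ you do get $\mathcal R(r)\cdot\eta^2u^d=0$ for $r\in\underline I_{8d+1}$, because $\eta^2$ annihilates complex-type classes and the real-type part of $\mathcal R(r)$ is even. So your check is true in that restricted form.

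The displayed isomorphism itself holds for any order-$2$ lift. Only the identification of $\theta_{8d+1}$ with the formulas in the lemmas needs this normalization.
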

\begin{proof}
    The proof is same as that of \Cref{prop:extension of 2 group KU local sphere-degree-zero}.
\end{proof}

\subsection{Summary of $\underline{\pi}_* L_{KU_{N_p}/p}S_{N_p}$} When $p=2$, we summarize the computation of $\underline{\pi}_* L_{KU_{N_p}/p}S_{N_p}$ as follows.

\begin{proposition}\label{lem:extension of 2 group KU local sphere}
    \[
\underline{\pi}_k L_{KU_{N_2}/2}S_{N_2} \cong \begin{cases}
    \frac{(\underline{A}_{N_2})_2^\wedge \oplus \underline{RO(-;\mathbb{R})}_{N_2}\{\eta\}/2}{\{j-\theta_{N_2}(j):j\in (\underline{J}_{N_2})_2^\wedge\}} & k=0\\    
    \frac{ \underline A_{N_2}/2 \oplus \underline{\mathrm{coker}}_2\{8d+2\} }{\left\{ r-\theta_{8d+1}(r): r\in I_{8d+1} \right\}} & k=8d+1\\
    \underline{\mathrm{coker}}_2\{8d+1\} & k=8d, d\neq 0\\
    \underline{\ker}_2\{8d+2\} & k=8d+2\\
    \underline{\mathrm{coker}}_2\{k+1\} & k=8d+3, 8d+5, 8d+7\\
    0 & \text{otherwise}.
\end{cases}
\]

\end{proposition}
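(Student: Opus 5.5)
The plan is to assemble the statement degree by degree from the long exact sequence of homotopy Mackey functors attached to the fiber sequence
\[
L_{KU_{N_2}/2}S_{N_2}\longrightarrow (KO_{N_2})_2^\wedge\xrightarrow{\psi^g-1}(KO_{N_2})_2^\wedge
\]
of \Cref{thm:KUG local spectrum first case} (see also \Cref{rmk:p group}). In every degree $k$ this gives the short exact sequence
\[
0\to\underline{\mathrm{coker}}_2\{k+1\}\to\underline{\pi}_kL_{KU_{N_2}/2}S_{N_2}\to\underline{\ker}_2\{k\}\to 0 .
\]
Each case then comes from reading off the kernel and cokernel values in \Cref{lem:psi-1} for the residues of $k$ and $k+1$ modulo $8$. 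Along the way I will complete the missing residues $8d+3,8d+5,8d+7$ as well. In those degrees $\pi_kKO=0$, so both the kernel and the cokernel vanish.

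The cases go as follows. For $k=0$ and $k=8d+1$ both ends of the sequence can be nonzero, and the statement is exactly \Cref{prop:extension of 2 group KU local sphere-degree-zero} and \Cref{prop:theta-8d-plus-1-determined}. I simply cite them.

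For $k=8d$ with $d\neq 0$, \Cref{lem:psi-1}(2) gives $\underline{\ker}_2\{8d\}=0$, so $\underline{\pi}_{8d}\cong\underline{\mathrm{coker}}_2\{8d+1\}$.

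For $k=8d+2$, the term $\underline{\mathrm{coker}}_2\{8d+3\}$ vanishes because $\pi_{8d+3}KO_K=0$, leaving $\underline{\ker}_2\{8d+2\}$.

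For $k=8d+3,8d+5,8d+7$ the kernel term is zero, since $\pi_kKO_K=0$ for these $k$: there are no complex-type or real-type classes in odd degrees other than $8d+1$. So $\underline{\pi}_k\cong\underline{\mathrm{coker}}_2\{k+1\}$.

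For $k=8d+4$ and $8d+6$, the kernel vanishes by \Cref{lem:psi-1}(5),(6), and the cokernels of degrees $8d+5$ and $8d+7$ vanish. Hence $\underline{\pi}_k=0$.

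The one step needing care is the bookkeeping of $\pi_*KO_K$ in degrees where \Cref{lem:psi-1} is silent. In degrees $3,5,7 \pmod 8$ both real-type and complex-type contributions vanish: $KU$ is even, and $\pi_*KO$ is zero there. This makes the corresponding kernels and cokernels zero.

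For the cases with a single nonzero term no extension issue arises, because an extension with one end zero is an isomorphism of Mackey functors. So apart from the two cited propositions, which are where all the real difficulty lies, the proof is a direct case check.
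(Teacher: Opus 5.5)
Your proposal is correct and matches the paper's own (implicit) argument. The statement is a summary read off from the short exact sequence $0\to\underline{\mathrm{coker}}_2\{k+1\}\to\underline{\pi}_k L_{KU_{N_2}/2}S_{N_2}\to\underline{\ker}_2\{k\}\to 0$ together with \Cref{lem:psi-1}, citing \Cref{prop:extension of 2 group KU local sphere-degree-zero} and \Cref{prop:theta-8d-plus-1-determined} for $k=0$ and $k=8d+1$; your filling-in of the residues $3,5,7 \bmod 8$ (where $\pi_kKO_K=0$, since for abelian $K$ only real- and complex-type summands occur) is exactly the bookkeeping the paper leaves unstated.
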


For the sake of self-containment, we list the computation of $\underline{\pi}_* L_{KU_{N_p}/p}S_{N_p}$ in \cite{carawan2023homotopy} for odd prime $p$ as follows:

\begin{lemma}\label{lem:psi-1 for odd p}
    Define $\underline{\mathrm{coker}}_p\{k\} := \mathrm{coker}\ ( \underline{\pi}_k (KU_{N_p})_p^{\wedge} \overset{\psi^g-1}{\longrightarrow} \underline{\pi}_k (KU_{N_p})_p^{\wedge})$ and $\underline{\ker}_p\{k\} := \ker\ (\underline{\pi}_k(KU_{N_p})_p^{\wedge} \overset{\psi^g-1}{\longrightarrow} \underline{\pi}_k (KU_{N_p})_p^{\wedge})$ for an odd prime $p$. Then there are isomorphisms
    \[
    \underline{\pi}_{2d}L_{KU_{N_p}/p}S_{N_p}\cong \underline{\ker}_p\{2d\},\quad \underline{\pi}_{2d-1} L_{KU_{N_p}/p}S_{N_p}\cong \underline{\mathrm{coker}}_p\{2d\}.
    \]
    Moreover, 
    $$\underline{\ker}_p\{2d\}\cong\begin{cases}
        (\underline{R\mathbb{Q}}_p^\wedge)_{N_p}, & d=0\\
        0, & d\neq 0
    \end{cases},$$ 
    \[ \underline{\mathrm{coker}}_p\{2d\}(N_p/K)\cong\begin{cases}
        \bigoplus_{\text{cyclic }T\subset K} \mathbb{Z}_p^\wedge, & d=0\\
        \mathbb{Z}/p^{\nu_p(g^d-1)}  \oplus \bigoplus_{C_{p^k}\subset K, k>0} \mathbb{Z}/p^{k+\nu_p(d)}, & d\neq 0
    \end{cases},\]
    where $\nu_p$ is the $p$-adic valuation. 
\end{lemma}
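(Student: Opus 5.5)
The plan is to run the long exact sequence in homotopy Mackey functors for the fiber sequence of \Cref{thm:KUG local spectrum first case} in the case $G=N_p$, $p$ odd. By \Cref{rmk:p group} this is the fiber sequence
\[
L_{KU_{N_p}/p}S_{N_p}\to (KU_{N_p})_p^\wedge\xrightarrow{\psi^g-1}(KU_{N_p})_p^\wedge ,
\]
and it gives short exact sequences
\[
0\to \underline{\mathrm{coker}}_p\{k+1\}\to \underline{\pi}_k L_{KU_{N_p}/p}S_{N_p}\to \underline{\ker}_p\{k\}\to 0 .
\]
Since $\underline{\pi}_*KU_{N_p}\cong \underline{RU}[\beta^{\pm}]$ is concentrated in even degrees, both $\underline{\ker}_p$ and $\underline{\mathrm{coker}}_p$ vanish in odd degrees. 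In every degree the sequence therefore has one of its outer terms zero, and no extension problem arises. This already gives $\underline{\pi}_{2d}\cong\underline{\ker}_p\{2d\}$ and $\underline{\pi}_{2d-1}\cong\underline{\mathrm{coker}}_p\{2d\}$ as Mackey functors. The restriction and transfer maps are inherited from $\underline{RU}$, because $\psi^g$ is a map of Green functors by \cite[Lemma 3.2]{carawan2023homotopy}.

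It remains to compute the kernel and cokernel orbitwise, following the method of \Cref{lem:psi-1}. On $RU(K)_p^\wedge\{\beta^d\}$ the operator is $\psi^g(x\beta^d)=g^d\psi^g(x)\beta^d$. By \cite[Lemma 3.1]{carawan2023homotopy}, $\psi^g$ permutes irreducible characters. Its orbits are Galois orbits, and these correspond to cyclic subgroups $C_{p^k}\subset K$, i.e. to the irreducible rational representations. For $k\ge1$ such an orbit has $\varphi(p^k)=p^{k-1}(p-1)$ elements. Since $g$ topologically generates $\mathbb{Z}_p^\times\cong\mathrm{Gal}(\mathbb{Q}_p(\zeta_{p^\infty})/\mathbb{Q}_p)$, $\psi^g$ acts on the orbit as a single cyclic permutation matrix $M$.

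As in the proof of \Cref{lem:psi-1}, row and column operations reduce $g^dM-I$ to $\mathrm{diag}(1,\dots,1,g^{d\varphi(p^k)}-1)$. For the trivial subgroup (the trivial character) the factor is simply $g^d-1$.

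For $d=0$, each orbit contributes one copy of $\mathbb{Z}_p^\wedge$ to the kernel and one to the cokernel. The kernel is spanned by the orbit sums, i.e. it is $R\mathbb{Q}(K)_p^\wedge$; alternatively, use \cite[Proposition 6.7]{bonventre2022ku_g} together with Schur index $1$ for $p$-groups with $p$ odd. Compatibility with restriction and transfer then identifies the kernel with $(\underline{R\mathbb{Q}}_p^\wedge)_{N_p}$. The cokernel is $\bigoplus_{\text{cyclic }T\subset K}\mathbb{Z}_p^\wedge$.

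For $d\neq0$, the map $\psi^g-1$ is injective because $g^{d\varphi(p^k)}\neq1$, so the kernel vanishes. The cokernel is $\mathbb{Z}_p/(g^d-1)$ for the trivial subgroup, plus $\mathbb{Z}_p/(g^{dp^{k-1}(p-1)}-1)$ for each $C_{p^k}$ with $k>0$.

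The only genuinely delicate step is the valuation count in this last cokernel. Here $g^{p-1}\in1+p\mathbb{Z}_p$ is a generator of that group, since $g$ generates $\mathbb{Z}_p^\times$. Hence
\[
\nu_p\bigl(g^{(p-1)p^{k-1}d}-1\bigr)=1+(k-1)+\nu_p(d)=k+\nu_p(d),
\]
using $\nu_p((1+p)^{n}-1)=1+\nu_p(n)$ for $p$ odd. This gives the summands $\mathbb{Z}/p^{k+\nu_p(d)}$. The trivial-character summand stays as $\mathbb{Z}/p^{\nu_p(g^d-1)}$, which is zero unless $(p-1)\mid d$. This matches the statement, which in any case recovers \cite{carawan2023homotopy}.
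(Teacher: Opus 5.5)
Your proposal is correct and is essentially the paper's argument. The paper deduces $\underline{\pi}_{2d}\cong\underline{\ker}_p\{2d\}$ and $\underline{\pi}_{2d-1}\cong\underline{\mathrm{coker}}_p\{2d\}$ from the even-degree concentration of $\underline{\pi}_*KU_{N_p}$ exactly as you do, and cites \cite{bonventre2022ku_g} and \cite[Proposition 3.7]{carawan2023homotopy} for the kernel and cokernel, which you instead recompute with the orbit-wise Smith normal form argument of \Cref{lem:psi-1}. The one implicit input is that $K$ is abelian, which is the setting in which the lemma is used, since only then do the Galois orbits of size $\varphi(p^k)$ correspond to cyclic subgroups of order $p^k$.
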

\begin{proof}
   The computation of $\underline{\ker}_p\{k\}$ is given in \cite[Proposition 6.7]{bonventre2022ku_g}, \cite[Corollary 3.5]{carawan2023homotopy}. The computation of $\underline{\mathrm{coker}}_p\{k\}$ is given in \cite[Proposition 3.7]{carawan2023homotopy}. The assertion about $\underline{\pi}_*L_{KU_{N_p}/p}S_{N_p}$ follows from the fact that $\pi_* KU_{N_p}$ is concentrated in even degrees.
\end{proof}

%%-------------------------------------------------------------------------------------------------------------------------------------------------------------------------------------
\section{The homotopy Mackey functor of $L_{KU_G}S_G$}\label{Sec:KU_G local sphere}
In this section, let $G$ be a finite abelian group with Sylow $p$-subgroup $N_p$, and let $N$ denote the product of the Sylow $q$-subgroups of $G$ for $q\neq p$. We compute $\underline{\pi}_*L_{KU_G}S_G$ in \Cref{thm:integer graded homotopy Mackey functor of ku local sphere}.

In order to compute $\underline{\pi}_0L_{KU_G/p}S_G$, we need the following lemma. 
\begin{lemma}\label{lem:tensor of A/J}
    For any $K\subset N_p$ and $H\subset N$, 
    \[
    A/J(K)\otimes A/J(H)\cong A/J(K\oplus H).
    \]
    Then there is an isomorphism of $G$-Mackey functors $\underline{A/J}_{N_p}\otimes \underline{A/J}_N\cong \underline{A/J}_G$. 
\end{lemma}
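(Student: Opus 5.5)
The plan is to compare both sides through the linearization map and the description of $J$ via cyclic fixed points. By \cite[Proposition 3.8]{szymik2013chromatic}, for any finite group $Q$ the ideal $J(Q)$ consists of those $S\in A(Q)$ with $|S^C|=0$ for every cyclic $C\subset Q$. Hence $A/J(Q)$ is the image of the mark homomorphism restricted to cyclic subgroups:
\[
A/J(Q)\hookrightarrow \prod_{(C)\subset Q\text{ cyclic}}\mathbb{Z},\qquad [S]\mapsto (|S^C|)_C .
\]
It is a free abelian group of rank equal to the number of conjugacy classes of cyclic subgroups of $Q$.

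Take $K\subset N_p$ and $H\subset N$. Since $|K|$ and $|H|$ are coprime, every subgroup of $K\oplus H$ is a product $P\times L$, and it is cyclic if and only if both $P$ and $L$ are cyclic. The external product $A(K)\otimes A(H)\to A(K\oplus H)$, $[K/P]\otimes[H/L]\mapsto[(K\oplus H)/(P\times L)]$, is then an isomorphism, since both sides have bases indexed by pairs of subgroups. Marks are multiplicative: $|(X\times Y)^{P'\times L'}|=|X^{P'}|\cdot|Y^{L'}|$.

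Consequently the isomorphism $A(K)\otimes A(H)\cong A(K\oplus H)$ is compatible with the embeddings into $\prod_{C}\mathbb{Z}\otimes\prod_{D}\mathbb{Z}\cong\prod_{C\times D}\mathbb{Z}$ over cyclic $C\subset K$ and $D\subset H$. The image of the tensor product of the two mark maps is $\operatorname{im}\otimes\operatorname{im}$. This uses that the images are free and that tensoring with free $\mathbb{Z}$-modules preserves injectivity, so the image of $A(K)\otimes A(H)$ is $A/J(K)\otimes A/J(H)$. It is also the image of $A(K\oplus H)$, which is $A/J(K\oplus H)$. This gives the first claim, together with the identity $J(K\oplus H)=J(K)\otimes A(H)+A(K)\otimes J(H)$.

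For the Mackey functor statement, I will use \Cref{lem:tensor product} together with the isomorphism $\underline{A}_{N_p}\otimes\underline{A}_N\cong\underline{A}_G$. That isomorphism holds because the Burnside category of $N_p\times N$ is the tensor product of the two Burnside categories (the proof of \Cref{lem:tensor product}), so the representables tensor correctly. It is compatible with restrictions, transfers and conjugations, since those are given by pulling back and pushing forward product sets factorwise.

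The linearization maps $\mathcal R$ are natural in these operations, so the quotient maps $\underline{A}\to\underline{A/J}$ assemble compatibly. The orbitwise isomorphisms from the first part are therefore induced by a surjective morphism of Mackey functors $\underline{A}_G\cong\underline{A}_{N_p}\otimes\underline{A}_N\to\underline{A/J}_{N_p}\otimes\underline{A/J}_N$. Its kernel at each orbit is $J$, by the identity above. Thus it factors through an isomorphism $\underline{A/J}_G\cong\underline{A/J}_{N_p}\otimes\underline{A/J}_N$.

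The main point requiring care is the identification of kernels: showing that the kernel of $A(K)\otimes A(H)\to A/J(K)\otimes A/J(H)$ equals $J(K\oplus H)$. Passing through the torsion-free mark embeddings handles this cleanly. The rest is bookkeeping with the coprimality that forces all subgroups to split as products.
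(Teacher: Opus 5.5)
Your proof is correct. Its skeleton matches the paper's: identify $A(K)\otimes A(H)\cong A(K\oplus H)$ via products of orbits, embed $A/J$ into a free abelian group compatibly with external products, use freeness to see that the image of a tensor product of maps is the tensor product of the images, and then pass to Mackey functors.

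The genuine difference is the map used to detect $J$.
\begin{itemize}
\item The paper uses the linearization $\mathcal{R}$ into $\underline{RU}$, together with $RU(K)\otimes RU(H)\cong RU(K\oplus H)$ and $\mathbb{C}[X\times Y]\cong\mathbb{C}[X]\otimes\mathbb{C}[Y]$. Since $\mathcal{R}$ is already a morphism of Mackey functors, its kernel and image are automatically sub-Mackey functors. The Mackey statement then falls out of a single commutative diagram.
\item You use marks at cyclic subgroups (Szymik's characterization of $J$), multiplicativity of marks, and the fact that, by coprimality, the cyclic subgroups of $K\oplus H$ are exactly the products $C\times D$. This is the character-level shadow of the paper's argument, since the character of $\mathbb{C}[S]$ at $g$ is $|S^{\langle g\rangle}|$. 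It has the merit of being purely combinatorial and avoiding the representation-ring product formula.
\end{itemize}

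The cyclic mark map is not itself a Mackey functor map into an evident target. You handle this correctly: you route the Mackey part through the external tensor product of the quotient maps $\underline{A}\to\underline{A/J}$. You then use the orbitwise kernel identity $J(K\oplus H)=J(K)\otimes A(H)+A(K)\otimes J(H)$. Your argument makes this identity explicit, whereas the paper leaves it implicit in its dashed arrow.

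One small slip: you say ``the linearization maps $\mathcal R$ are natural.'' What you actually need is only that $\underline{J}=\ker\mathcal{R}$ is a sub-Mackey functor, so the quotient maps are Mackey maps.
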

\begin{proof}
    We first show that the map $A(K)\otimes A(H)\to A(K\oplus H)$ is an isomorphism. For any $X\in A(K)$ and $Y\in A(H)$, there is a natural action of $K\oplus H$ on $X\times Y$; thus, $(X,Y)\mapsto X\times Y$ induces a ring homomorphism 
    \[
    f:A(K)\otimes A(H)\to A(K\oplus H). 
    \]
    Conversely, as observed in the proof of \Cref{lem:tensor product}, any $Z\in A(K\oplus H)$ can be expressed uniquely as
    \[ Z\cong \bigsqcup_{i=1}^r X_i\times Y_i,\]
    where $X_i$, $Y_i$ are $K$-orbits and $H$-orbits, respectively. Therefore, there is a ring homomorphism
    \[
    g:A(K\oplus H)\to A(K)\otimes A(H),\quad Z\mapsto \sum_{i=1}^r (X_i,Y_i),
    \]
    which is the inverse of $f$. Since any subgroup $S \subset K \oplus H$ satisfies $S \cong (S\cap K) \oplus (S\cap H)$, $f$ commutes with $\operatorname{Res}_{S}^{K \oplus H}$ and $\operatorname{Tr}_{S}^{K \oplus H}$, $f$ induces an isomorphism of $G$-Green functors $\underline{A}_{N_p} \otimes \underline{A}_N \to \underline{A}_G$.
    
    Consider the following commutative diagram with exact rows
    \[
    \begin{tikzcd}
    0 \ar[r] & \underline{\ker}(\mathcal{R}_{N_p}\otimes \mathcal{R}_N) \ar[r] \ar[d,dashed] & \underline{A}_{N_p}\otimes\underline{A}_N \ar[r,"\mathcal{R}_{N_p}\otimes \mathcal{R}_N"]\ar[d,"\cong"] &   \underline{RU}_{N_p}\otimes \underline{RU}_N \ar[d,"\cong"]\\
    0 \ar[r] & \underline{J}_G \ar[r]   & \underline{A}_G \ar[r,"\mathcal{R}_G"] &\underline{RU}_G
    \end{tikzcd},
    \]
    there exists an isomorphism $\underline{\ker}(\mathcal{R}_{N_p}\otimes \mathcal{R}_N)\cong \underline{J}_G$ indicated by the dashed arrow. Thus \[\underline{A/J}_{N_p}\otimes \underline{A/J}_N\cong \mathrm{Im}(\mathcal{R}_{N_p}\otimes \mathcal{R}_N)\cong \mathrm{Im}\mathcal{R}_G\cong \underline{A/J}_G.\]
    Here the first isomorphism follows from the fact that $RU(K)$ is a free abelian group for every subgroup $K\subset G$, and the second follows from the Five Lemma.
\end{proof}

Combining the above calculations, we give $\underline{\pi}_* L_{KU_G/p}S_G$ as follows:
\begin{proposition}\label{thm: Z graded homotopy of KU_G/p local sphere}
    Let $G = N_p \oplus N$, where $N_p$ is the Sylow $p$-subgroup. For $p=2$, 
    \[\underline{\pi}_n(L_{KU_G/2}S_G) \cong \begin{cases}
        \frac{(\underline{A}_{N_2})_2^\wedge \oplus \underline{RO(-;\mathbb{R})}_{N_2}\{\eta\}/2}{\{j-\theta_{N_2}(j):j\in (\underline{J}_{N_2})_2^\wedge\}} \otimes \underline{A/J}_N, & n=0\\
        \underline{\mathrm{coker}}_2\{0\}\otimes \underline{A/J}_N  & n=-1\\
        \underline{\mathrm{coker}}_2\{8d+1\}\otimes \underline{A/J}_N  & n=8d, d\neq 0\\
        \underline{\pi}_{8d+1} L_{KU_{N_2}/2}S_{N_2}\otimes \underline{A/J}_N & n=8d+1\\
        \underline{\ker}_2\{8d+2\}\otimes \underline{A/J}_N & n=8d+2\\
        \underline{\mathrm{coker}}_2\{n+1\}\otimes \underline{A/J}_N & n=8d+3, 8d+5,\\&\qquad 8d+7,n\neq -1\\
        0 & \text{otherwise}.
    \end{cases}\]
    For odd prime $p$, 
    \[
    \underline{\pi}_n(L_{KU_G/p}S_G) \cong \begin{cases}
        (\underline{A/J}_G)_p^\wedge, & n=0\\
        \underline{\mathrm{coker}}_p\{0\}\otimes \underline{A/J}_N  & n=-1\\
        \underline{\mathrm{coker}}_p\{2d\}\otimes \underline{A/J}_N  & n=2d-1, d\neq 0\\
        0 & \text{otherwise}
    \end{cases}.
    \]
    Here $\underline{\ker}_p\{k\}$ and $\underline{\mathrm{coker}}_p\{k\}$ are listed in \Cref{lem:psi-1,lem:psi-1 for odd p}, $\theta_{N_2}$ is given in \Cref{prop:extension of 2 group KU local sphere-degree-zero}, and $\underline{\pi}_{8d+1} L_{KU_{N_2}/2}S_{N_2}$ is listed in \Cref{prop:theta-8d-plus-1-determined}. In particular, $\underline{\pi}_n(L_{KU_G/p}S)$ is torsion-free when $n = -1$. For $n \neq 0, -1$, the Mackey functors $\underline{\pi}_n(L_{KU_G/p}S_G)$ are torsion. 
\end{proposition}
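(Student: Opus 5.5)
The plan is to deduce the statement from \Cref{lem:Z graded homotopy Mackey functor of ECyc inf} applied with $E=L_{KU_{N_p}/p}S_{N_p}$. That result gives
\[
\underline{\pi}_n L_{KU_G/p}S_G\cong \underline{\pi}_n L_{KU_{N_p}/p}S_{N_p}\otimes_{\mathbb Z_p}(\underline{A/J}_N)_p^\wedge,
\]
so each case reduces to substituting the $N_p$-computation into the first tensor factor. For $p=2$, we take the values of $\underline{\pi}_n L_{KU_{N_2}/2}S_{N_2}$ from \Cref{lem:extension of 2 group KU local sphere}, with the degree-zero and degree-$(8d+1)$ Mackey functors given by \Cref{prop:extension of 2 group KU local sphere-degree-zero} and \Cref{prop:theta-8d-plus-1-determined}. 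For odd $p$, we take them from \Cref{lem:psi-1 for odd p}. Since $p\nmid |N|$, $(\underline{A/J}_N)_p^\wedge$ is levelwise a free $\mathbb Z_p$-module, so $\otimes_{\mathbb Z_p}$ is exact. We can therefore also tensor the short exact sequences $0\to\underline{\mathrm{coker}}\{n+1\}\to\underline{\pi}_n\to\underline{\ker}\{n\}\to0$ directly.

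Next we check the individual degrees. In degree $-1$, the short exact sequence has $\underline{\ker}\{-1\}=0$ (odd degree for odd $p$; degree $8d+7$ for $p=2$). Hence $\underline{\pi}_{-1}\cong\underline{\mathrm{coker}}\{0\}\otimes\underline{A/J}_N$, which is levelwise a free $\mathbb Z_p$-module by \Cref{lem:psi-1,lem:psi-1 for odd p}, and this gives the torsion-freeness claim.

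For odd $p$ in degree $0$, we have $\underline{\ker}_p\{0\}\cong\underline{R\mathbb Q}_p^\wedge\cong(\underline{A/J}_{N_p})_p^\wedge$, because $N_p$ is a $p$-group. Moreover $\underline{\mathrm{coker}}_p\{1\}=0$. Using \Cref{lem:tensor of A/J} we identify $(\underline{A/J}_{N_p})_p^\wedge\otimes\underline{A/J}_N\cong(\underline{A/J}_G)_p^\wedge$.

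For the remaining degrees $n\neq 0,-1$, the kernel and cokernel functors listed in \Cref{lem:psi-1,lem:psi-1 for odd p} are all finite $p$-groups levelwise. This is where $d\neq0$ is used: the cokernel orders are powers $2^{\cdots+\nu_2(d)}$ and $p^{\cdots+\nu_p(d)}$. The one exception is the $\underline{A}_{N_2}/2$ term in degree $8d+1$, which is $2$-torsion. Torsion is preserved by tensoring with a levelwise finitely generated free module, so these $\underline{\pi}_n$ are torsion.

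The main point requiring care is that the Mackey structure is the external tensor product of \Cref{lem:tensor product}. The quotient descriptions for $N_2$ involve $\theta_{N_2}$ and $\theta_{8d+1}$, and we must check that they commute with $-\otimes\underline{A/J}_N$. This holds because tensoring with $(\underline{A/J}_N)_2^\wedge$ is exact and is defined factorwise on orbits $N_2/P\times N/Q$. Taking the quotient by $\{j-\theta(j)\}$ therefore commutes with the tensor product. Finally, the zero cases follow because both $\underline{\ker}\{n\}$ and $\underline{\mathrm{coker}}\{n+1\}$ vanish there. For $p=2$ this covers $n=8d+4$ and $8d+6$; for odd $p$ it covers even $n\neq0$.
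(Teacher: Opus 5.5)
Your proposal is correct and follows the paper's proof. Both apply \Cref{lem:Z graded homotopy Mackey functor of ECyc inf} with $E=L_{KU_{N_p}/p}S_{N_p}$, substitute \Cref{lem:extension of 2 group KU local sphere} (respectively \Cref{lem:psi-1 for odd p}), and use \Cref{lem:tensor of A/J} to identify the odd-primary degree-zero term with $(\underline{A/J}_G)_p^\wedge$; your explicit check of the torsion and torsion-freeness claims fills in what the paper leaves implicit. One small slip: the levelwise freeness of $\underline{A/J}_N$ comes from $A/J(K)\subset RU(K)$, not from $p\nmid |N|$.
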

\begin{proof}
    The computation follows from \Cref{lem:Z graded homotopy Mackey functor of ECyc inf} and \Cref{lem:extension of 2 group KU local sphere}. Note that by \Cref{lem:tensor of A/J},
    \[
    (\underline{R\mathbb{Q}}_{N_p}\otimes \underline{A/J}_N)_p^\wedge\cong (\underline{A/J}_{N_p}\otimes \underline{A/J}_N)_p^\wedge\cong (\underline{A/J}_G)_p^\wedge.
    \]
    Then $\underline{\pi}_0(L_{KU_G/p}S_G) \cong (\underline{A/J}_G)_p^\wedge$ when $p$ is odd.
\end{proof}

    Finally, we compute $\underline{\pi}_*L_{KU_G}S_G$ via the Arithmetic fracture square
    \[\begin{tikzcd}[ampersand replacement=\&]
        L_{KU_G}S_G  \ar[r] \ar[d] \& \prod_{p} L_{KU_G/p} S_G \ar[d] \\
        L_{KU_G\otimes\mathbb{Q}} S_G \ar[r] \& (\prod_{p} L_{KU_G/p}S_G)_{\mathbb{Q}} .
    \end{tikzcd}\]
    
\begin{theorem}\label{thm:integer graded homotopy Mackey functor of ku local sphere}
	Let $G$ be a finite abelian group. For any prime $p$, let $N_p$ be the Sylow $p$-subgroup of $G$, and let $G/N_p$ denote the product of the Sylow $q$-subgroups of $G$ for $q\neq p$.
    \[
	\underline{\pi}_kL_{KU_G}S_G\cong \begin{cases}
		\underline{A/J}_{G/N_2} \otimes  \frac{\underline{A}_{N_2} \oplus \underline{RO(-;\mathbb{R})}_{N_2}\{\eta\}/2}{\{j-\theta_{N_2}(j):j\in \underline{J}_{N_2}\}} & \quad k=0\\
		0 & \quad k=-1\\
		\mathbb{Q}/\mathbb{Z}\otimes (\prod_p \underline{\operatorname{coker}}_p\{0\}\otimes \underline{A/J}_{G/N_p}) & \quad k=-2\\
		\prod_p \underline{\pi}_kL_{KU_G/p}S_G & \quad \text{otherwise}
	\end{cases}
	\]
    Here $\theta_{N_2}:\underline{J}_{N_2} \to \underline{RO(-;\mathbb{R})}_{N_2}\{\eta\}/2$ is induced by the Hurewicz map $\underline{A}_{N_2}\cong \underline{\pi}_0 S_{N_2}\to \underline{\pi}_0 L_{KU_{N_2}/2}S_{N_2}$. The computation of $\theta_{N_2}$ is carried out in \Cref{prop:extension of 2 group KU local sphere-degree-zero}.
\end{theorem}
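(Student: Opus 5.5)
We will apply $\underline{\pi}_*$ to the arithmetic fracture square displayed just before the statement, which gives a Mayer--Vietoris long exact sequence of Mackey functors
\[
\cdots\to \underline{\pi}_k L_{KU_G}S_G\to \underline{\pi}_k L_{KU_G\otimes\mathbb{Q}}S_G\oplus \prod_p\underline{\pi}_kL_{KU_G/p}S_G\to \underline{\pi}_k\Bigl(\prod_p L_{KU_G/p}S_G\Bigr)_{\mathbb{Q}}\to\cdots .
\]
The first step is to identify the rational corner. Since $L_{KU_G\otimes\mathbb{Q}}S_G\simeq (L_{KU_G}S_G)_{\mathbb{Q}}$ and rational $G$-spectra split by geometric fixed points, \Cref{prop:geometric fixed KU} shows that the rational $KU_G$-local sphere retains exactly the summands indexed by cyclic subgroups. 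Hence $\underline{\pi}_*L_{KU_G\otimes\mathbb{Q}}S_G$ is concentrated in degree $0$, where it equals $\underline{A/J}_G\otimes\mathbb{Q}$. For the $p$-complete corners we use \Cref{thm: Z graded homotopy of KU_G/p local sphere}. In degrees $n\neq 0,-1$ every $\underline{\pi}_nL_{KU_G/p}S_G$ is torsion, and in each degree there is a uniform bound on the exponent (for fixed $n$, $\nu_2(d)$ is fixed and the cyclic subgroup orders are bounded by $|G|$). So the product over $p$ is still torsion and rationalizes to zero. In degree $-1$ the product is $\prod_p\underline{\mathrm{coker}}_p\{0\}\otimes\underline{A/J}_{G/N_p}$, which by \Cref{lem:psi-1,lem:psi-1 for odd p} and \Cref{lem:tensor of A/J} is levelwise $\prod_p(\underline{A/J}_G)_p^\wedge$, i.e.\ $\underline{A/J}_G\otimes\widehat{\mathbb{Z}}$. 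In degree $0$ the product is $\prod_p\underline{\pi}_0L_{KU_G/p}S_G$, whose rationalization is $\underline{A/J}_G\otimes\mathbb{A}_f$: the $\eta$-torsion at $p=2$ dies rationally, and the quotient by $\underline{J}$ becomes the free part.

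With these identifications the long exact sequence breaks into pieces. For $k\notin\{0,-1,-2\}$ both rational terms in degrees $k$ and $k+1$ vanish, so $\underline{\pi}_kL_{KU_G}S_G\cong\prod_p\underline{\pi}_kL_{KU_G/p}S_G$. Around degrees $0,-1,-2$ the relevant portion is
\[
0\to\underline{\pi}_0 L_{KU_G}S_G\to \underline{A/J}_G\otimes\mathbb{Q}\oplus\prod_p\underline{\pi}_0L_{KU_G/p}S_G\to \underline{A/J}_G\otimes\mathbb{A}_f\to\underline{\pi}_{-1}L_{KU_G}S_G\to \underline{A/J}_G\otimes\widehat{\mathbb{Z}}\to \underline{A/J}_G\otimes\mathbb{A}_f\to\underline{\pi}_{-2}L_{KU_G}S_G\to 0 .
\]
The injectivity on the left uses that $\underline{\pi}_1$ of the rational corner is zero. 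In degree $0$ the map $\mathbb{Q}\oplus\widehat{\mathbb{Z}}\to\mathbb{A}_f$ is surjective levelwise, because $\underline{A/J}_G$ is levelwise free of finite rank. Hence the connecting map to degree $-1$ vanishes. Then $\widehat{\mathbb{Z}}\otimes\underline{A/J}_G\to\mathbb{A}_f\otimes\underline{A/J}_G$ is injective, so $\underline{\pi}_{-1}=0$. Its cokernel is $\mathbb{A}_f/\widehat{\mathbb{Z}}\otimes\underline{A/J}_G\cong\mathbb{Q}/\mathbb{Z}\otimes\underline{A/J}_G$, which can be rewritten as $\mathbb{Q}/\mathbb{Z}\otimes\prod_p\underline{\mathrm{coker}}_p\{0\}\otimes\underline{A/J}_{G/N_p}$. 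Throughout, all maps are Mackey functor maps induced by maps of spectra, so these identifications hold as Mackey functors.

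The main obstacle is degree $0$: we must identify the pullback with the stated integral quotient. We will first write $\prod_p\underline{\pi}_0L_{KU_G/p}S_G$ as $\prod_{p\ \mathrm{odd}}(\underline{A/J}_G)_p^\wedge$ times the $p=2$ factor from \Cref{thm: Z graded homotopy of KU_G/p local sphere}. The pullback over $\mathbb{A}_f\otimes\underline{A/J}_G$ then replaces each $p$-completed lattice by the integral lattice $\underline{A/J}_G$ and keeps the $2$-torsion summand $\underline{RO(-;\mathbb{R})}_{N_2}\{\eta\}/2\otimes\underline{A/J}_{G/N_2}$ untouched, since that summand maps to zero rationally. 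To get the stated form, namely $(\underline{A}_{N_2}\oplus\underline{RO(-;\mathbb{R})}\{\eta\}/2)/\{j-\theta_{N_2}(j)\}$ with $\underline{A}_{N_2}$ uncompleted, we will use \Cref{lem:quotient-description} and check that intersecting the $2$-complete quotient with the rational lattice recovers exactly the integral quotient. This reduces to $\underline{J}_{N_2}\otimes\mathbb{Z}_2\cap\underline{A}_{N_2}=\underline{J}_{N_2}$, which holds because $\underline{A/J}_{N_2}$ is torsion-free. Finally, \Cref{lem:tensor of A/J} reassembles the answer as the external tensor product with $\underline{A/J}_{G/N_2}$.
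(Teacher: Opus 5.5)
Your proposal is correct and follows essentially the paper's route: the fracture square, the rational corner $\underline{A/J}_G\otimes\mathbb{Q}$ concentrated in degree $0$, the vanishing of $\operatorname{coker} f$, and degree $0$ as the arithmetic pullback of the integral quotient tensored with $\underline{A/J}_{G/N_2}$; your direct arguments ($\mathbb{Q}+\widehat{\mathbb{Z}}=\mathbb{A}_f$, $(\underline{J}_{N_2})_2^\wedge\cap\underline{A}_{N_2}=\underline{J}_{N_2}$, bounded exponents, where for odd $p\nmid |G|$ you also need $\nu_p(g^d-1)=0$ unless $(p-1)\mid d$) replace steps the paper cites or leaves implicit. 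One caution: identifying $\underline{\pi}_{-1}\prod_p L_{KU_G/p}S_G$ with $\underline{A/J}_G\otimes\widehat{\mathbb{Z}}$ is valid only levelwise, because the coinvariant Mackey functor $\underline{\mathrm{coker}}_p\{0\}$ is generally not isomorphic to $(\underline{A/J}_{N_p})_p^\wedge$ (already for $N_p=C_{p^2}$ with $p$ odd, restriction to $C_p$ is surjective on the former but not on the latter, and the kernels of restriction still differ after $\otimes\,\mathbb{Q}/\mathbb{Z}$), so your closing claim that these identifications hold as Mackey functors overreaches, and the $k=-2$ answer must be kept, as you ultimately write it, in terms of $\underline{\mathrm{coker}}_p\{0\}$.
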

\begin{proof}
    For $KU_G\otimes \mathbb{Q}$-localization, it follows from \cite[Lemma 9.1]{bonventre2022ku_g} that $L_{KU_G \otimes \mathbb{Q}} S_G \simeq H(\mathbb{Q} \otimes \underline{A/J})$, which is a $G$-equivariant Eilenberg-MacLane spectrum. So 
    \[
    \underline{\pi}_* L_{KU_G \otimes \mathbb{Q}} S_G \cong \mathbb{Q} \otimes \underline{A/J}   
    \]
    concentrated in degree $0$. Furthermore, since $\underline{\pi}_k \prod_{p} L_{KU_G/p} S_G$ are torsion groups for $k \neq 0, -1$, $(\prod_{p} L_{KU_G/p} S_G)_{\mathbb{Q}}$ has non-trivial homotopy groups only in degrees $0$ and $-1$. Consequently, the long exact sequence on homotopy groups induced by the arithmetic fracture square takes the following form:
    \[
    \begin{aligned}
        0\to & \underline{\pi}_0 L_{KU_G}S_G \to \underline{\pi}_0(\prod_{p} L_{KU_G/p} S_G)\bigoplus (\mathbb{Q}\otimes \underline{A/J}) \overset{f}{\longrightarrow} \underline{\pi}_0(\prod_{p} L_{KU_G/p} S_G)\otimes \mathbb{Q} \\
        & \to \underline{\pi}_{-1}L_{KU_G}S_G \to \underline{\pi}_{-1}(\prod_{p} L_{KU_G/p} S_G) \overset{g}{\longrightarrow} \underline{\pi}_{-1}(\prod_{p} L_{KU_G/p} S_G)\otimes \mathbb{Q} \\
        & \to \underline{\pi}_{-2}L_{KU_G}S_G \to 0,
    \end{aligned}
    \] 
    and for $k\neq 0,-1,-2$, 
    \[
    0\to \underline{\pi}_{k}L_{KU_G}S_G \to \underline{\pi}_{k}(\prod_{p} L_{KU_G/p} S_G)\to 0.
    \]
    Then for $k\neq 0,-1,-2$, $\underline{\pi}_kL_{KU_G}S_G\cong\prod_p \underline{\pi}_kL_{KU_G/p}S_G$. 

    When $k=0$, $\underline{\pi}_0 L_{KU_G}S_G\cong \ker f$. Let 
    $$M=\frac{\underline{A}_{N_2} \oplus \underline{RO(-;\mathbb{R})}_{N_2}\{\eta\}/2}{\{j-\theta_{N_2}(j):j\in \underline{J}_{N_2}\}},$$
    we have $M_2^\wedge \cong \underline{\pi}_0 L_{KU_{N_2}/2}S_{N_2}$. Note that all elements of $\underline{RO(-;\mathbb{R})}_{N_2}\{\eta\}/2$ are $2$-torsion, after $p$-completion for odd prime $p$ or rationalization, $M_p^\wedge \cong (\underline{A/J}_{N_2})_p^\wedge$, and $M\otimes \mathbb{Q}\cong \underline{A/J}_{N_2}\otimes \mathbb{Q}$. Therefore,
    \[
    \underline{A/J}_p^\wedge \cong \underline{A/J}_{G/N_2} \otimes M_p^\wedge, \quad \underline{A/J}\otimes \mathbb{Q} \cong \underline{A/J}_{G/N_2} \otimes M\otimes \mathbb{Q}.
    \]
    By \Cref{thm: Z graded homotopy of KU_G/p local sphere}, $f$ is given by 
    \[
    f : (\prod_p M_p^\wedge \bigoplus M\otimes \mathbb{Q})\otimes \underline{A/J}_{G/N_2} \to (\prod_p M_p^\wedge )\otimes \mathbb{Q} \otimes \underline{A/J}_{G/N_2}.
    \]
    This is exactly the arithmetic pullback of $ M\otimes \underline{A/J}_{G/N_2}$, then
    \[\underline{\pi}_0 L_{KU_G}S_G\cong \ker f \cong  M\otimes \underline{A/J}_{G/N_2}.\]

    When $k=-1$, it follows from \cite[Lemma 4.14]{carawan2023homotopy} that $\mathrm{coker} \ f=0$, so $\underline{\pi}_{-1} L_{KU_G}S_G\cong \ker g$. Here 
    \[
    g : \underline{\pi}_{-1}(\prod_{p} L_{KU_G/p} S_G) \to \underline{\pi}_{-1}(\prod_{p} L_{KU_G/p} S_G)\otimes \mathbb{Q}
    \]
    is an inclusion induced by $\mathbb{Z}\to \mathbb{Q}$ since $\underline{\pi}_{-1}(\prod_{p} L_{KU_G/p} S_G)$ is torsion free, thus $\underline{\pi}_{-1} L_{KU_G}S_G\cong \ker g=0$. 

    When $k=-2$, $\underline{\pi}_{-2} L_{KU_G}S_G\cong \mathrm{coker}\ g$. Therefore, 
    \[\underline{\pi}_{-2} L_{KU_G}S_G\cong \underline{\pi}_{-1}(\prod_{p} L_{KU_G/p} S_G)\otimes \mathbb{Q}/\mathbb{Z}\cong \mathbb{Q}/\mathbb{Z} \otimes (\prod_p \underline{\operatorname{coker}}_p\{0\}\otimes \underline{A/J}_{G/N_p}).\]
\end{proof}

\begin{remark}[A remark for $\underline{\pi}_0$]
    If $N_2$ is cyclic, then $J(N_2)=\{0\}$, and the extension problem for $k=0$ in \Cref{prop:extension of 2 group KU local sphere-degree-zero} is trivial. In this case, 
    \[\underline{\pi}_0 L_{KU_G}S_G\cong \underline{A/J}\bigoplus \underline{A/J}_{G/N_2}\otimes \underline{RO(-;\mathbb{R})}/2.\] 
\end{remark}

\begin{remark}\label{rmk: KU_G local for nilpotent G}
    It follows from the proof that \Cref{thm:integer graded homotopy Mackey functor of ku local sphere} applies to every finite nilpotent group $G$ whose Sylow $2$-subgroup $N_2$ is abelian. When $N_2$ is non-abelian, we can compute $\mathrm{coker}\{k\}$ and $\ker\{k\}$ similarly, and then the classification of the Brauer relations in \cite[Theorem 5.3]{bartel2015brauer} implies that it remains to resolve the extension problem for $N_2\cong D_{2^n}$, in a manner analogous to \Cref{lem:case N_2=V}.
\end{remark}

%%-------------------------------------------------------------------------------------------------------------------------------------------------------------------------------------

\section{Relation with equivariant Morava K-theory}\label{Sec:equivariant chromatic}

In this section, let $G$ be a finite group. For a subgroup $H\subset G$, let $N_G H$ denote its normalizer and $W_G H=N_G H/H$ its Weyl group. Let $\mathrm{Cyc}$ denote the family of cyclic subgroups of $G$. For a fixed prime $p$, we study the role of $KU_G/p$ in equivariant chromatic homotopy theory, which yields a splitting of the $KU_G/p$-localization functor in \Cref{thm:KUG local spectrum second case}. We then study the $RO(G)$-graded homotopy Mackey functors of $L_{KU_G/p}S_G$ for finite nilpotent groups $G$.

\begin{lemma}\label{lem:TH local}
    For $H\subset G$, let $T(H)=G/H_+\wedge S[\mathcal{F}_{H\not\subset}^{-1}]\in \mathrm{Sp}^G$. Then for any $X\in \mathrm{Sp}$, we have 
    \[
      L_{T(H)} \operatorname{Inf}_e^G X \simeq \operatorname{CoInd}_{N_GH}^G\bigl( \operatorname{Inf}_{W_G H}^{N_G H} F(EW_G H_+, X)[\mathcal{F}_{H\not\subset}^{-1}] \bigr).
    \]
\end{lemma}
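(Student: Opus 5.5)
We write $Y:=\operatorname{CoInd}_{N_GH}^G\bigl(\operatorname{Inf}_{W_GH}^{N_GH}F(EW_GH_+,X)[\mathcal{F}_{H\not\subset}^{-1}]\bigr)$ for the right-hand side. The plan is to produce a natural map $\operatorname{Inf}_e^G X\to Y$ and check two things: that $Y$ is $T(H)$-local, and that the map is a $T(H)$-equivalence.

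\textbf{Step 1: what acyclicity means.} By the induction/restriction adjunction,
\[
T(H)\wedge Z\simeq \operatorname{Ind}_H^G\operatorname{Res}^G_H\bigl(S[\mathcal{F}_{H\not\subset}^{-1}]\wedge Z\bigr),
\]
and $\operatorname{Res}^G_H\widetilde{E\mathcal{F}_{H\not\subset}}$ is the $H$-space $\widetilde{E\mathcal{P}}$ for the family of proper subgroups of $H$. Hence $T(H)\wedge Z\simeq\ast$ if and only if $\Phi^H Z\simeq \ast$ as an underlying spectrum. Thus the $T(H)$-equivalences are exactly the maps inducing equivalences on $\Phi^H$ (non-equivariantly), and $T(H)$-local objects are those receiving no maps from spectra with $\Phi^H\simeq\ast$. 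The acyclics form the localizing subcategory generated by the cells $G/K_+$ with $K$ not conjugate to $H$, together with those $G$-spectra whose $\Phi^H$, viewed as a $W_GH$-spectrum, is underlying trivial.

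\textbf{Step 2: locality of $Y$.} Using the adjunction $\operatorname{Res}\dashv\operatorname{CoInd}$ and the fact that restriction preserves acyclics (the criterion above only involves $H$), it suffices to treat $N_GH$ in place of $G$, so that $H$ is normal. For $H$ normal, Proposition~\ref{prop:pushout type} (or its underlying adjunction $(F(A,B[\mathcal{F}^{-1}]))^H\simeq F(\Phi^H A,\Phi^H B)$) gives
\[
[Z,\operatorname{Inf}_{W}^{N}E[\mathcal{F}_{H\not\subset}^{-1}]]^{N}\cong[\Phi^H Z,E]^{W},\qquad W=W_GH,\ N=N_GH.
\]
Take $E=F(EW_+,X)$. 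This is a Borel-complete $W$-spectrum, so $[\Phi^HZ,E]^W=[EW_+\wedge\Phi^HZ,E]^W$, which vanishes when $\Phi^HZ$ is underlying contractible. Hence $Y$ is $T(H)$-local.

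\textbf{Step 3: the map is a $T(H)$-equivalence.} The map $\operatorname{Inf}_e^GX\to Y$ is adjoint to the composite
\[
\operatorname{Inf}_e^{N}X\to \operatorname{Inf}_e^NX[\mathcal{F}^{-1}]\simeq \operatorname{Inf}_W^N(\operatorname{Inf}_e^WX)[\mathcal{F}^{-1}]\to \operatorname{Inf}_W^NF(EW_+,X)[\mathcal{F}^{-1}].
\]
On underlying $\Phi^H$, using $\Phi^H\operatorname{Inf}\simeq \mathrm{Id}$, this becomes the underlying map $X\to F(EW_+,X)$, which is an equivalence.

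\textbf{Main obstacle.} The hard part is computing $\Phi^H$ of a coinduced spectrum. Non-equivariantly, $\operatorname{CoInd}_N^G\simeq\operatorname{Ind}_N^G$, and the double coset formula gives
\[
\Phi^H\operatorname{Ind}_N^G Z\simeq\bigvee_{gN\in (G/N)^H}\Phi^{g^{-1}Hg}Z .
\]
For each summand, $g^{-1}Hg\subset N$ is a conjugate of $H$ lying in $N_GH$. Only the coset $g\in N$ should contribute; for any other coset, $g^{-1}Hg\ne H$, and that summand should vanish because $\widetilde{E\mathcal{F}_{H\not\subset}}$ kills it. I expect that checking this vanishing carefully, in particular for conjugates $g^{-1}Hg\neq H$ that are contained in $N_GH$, is where the real care is needed.
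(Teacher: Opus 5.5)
Your proposal is correct and is essentially the paper's proof: you show locality of the right-hand side using $\operatorname{Res}\dashv\operatorname{CoInd}$, Proposition~\ref{prop:pushout type} and Borel completeness of $F(EW_GH_+,X)$, and you check the $T(H)$-equivalence on $\Phi^H$; the paper also relies, without comment, on the identification of $\Phi^H$ of the coinduced spectrum with $F(EW_GH_+,X)$ that you flag as the main obstacle. That obstacle is immediate once $[\mathcal{F}_{H\not\subset}^{-1}]$ is read as the $N_GH$-family, as Proposition~\ref{prop:pushout type} requires: a conjugate $g^{-1}Hg\subseteq N_GH$ different from $H$ has the same order as $H$, hence does not contain $H$, so $\Phi^{g^{-1}Hg}$ of that $N_GH$-spectrum vanishes (separately, your Step 1 aside that every $G/K_+$ with $K$ not conjugate to $H$ is acyclic is false, e.g.\ for $K=G$, but it is never used).
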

\begin{proof}
    Let $R=\operatorname{CoInd}_{N_GH}^G\bigl( \operatorname{Inf}_{W_G H}^{N_G H} F(EW_G H_+, X)[\mathcal{F}_{H\not\subset}^{-1}] \bigr)$. For any $T(H)$ acyclic $G$-spectrum $M$, 
    $\Phi^H(M\wedge T(H))\simeq W_G H_+\wedge \Phi^H M\simeq \ast$, thus the underlying spectrum of $\Phi^H M$ is contractible. So $EW_G H_+\wedge \Phi^H M$ is equivariantly contractible as a $W_G H$-spectrum.
    By \Cref{prop:pushout type}, there are isomorphisms
    \[
    \begin{aligned}
        [M,R]^G
        \cong & [\operatorname{Res}_{N_GH}^G M, \operatorname{Inf}_{W_G H}^{N_G H} F(EW_G H_+, X)[\mathcal{F}_{H\not\subset}^{-1}]]^{N_GH}\\
        \cong & [\Phi^H M, F(EW_GH_+, X)]^{W_G H}\cong 0.
    \end{aligned}
    \]
    Therefore, $R$ is $T(H)$-local. 

    The map 
    $i:X\to R$
    induced by $EW_G H_+\to S^0$ is a $T(H)$-equivalence. Indeed, if $N$ is not conjugate to $H$, then $\Phi^N(i \wedge T(H))$ is a map between trivial spectra. For any subgroup $K$ conjugate to $H$, it suffices to compute $\Phi^H(i\wedge T(H))$. This is the map $X\wedge W_GH_+\to F(EW_GH_+, X)\wedge W_G H_+$ induced by $EW_GH_+\to S^0$, and it is an equivalence of underlying non-equivariant spectra.
\end{proof}

\begin{lemma}\label{lem:morava k local}
    Let $G$ be a finite group, and let $H\subset G$ be a subgroup. For any $E\in \mathrm{Sp}$ and $X\in \mathrm{Sp}^G$, we have
    \[
    \begin{aligned}
        L_{T(H)\wedge E}X  & \simeq \operatorname{CoInd}_{N_GH}^G\bigl( \operatorname{Inf}_{W_G H}^{N_G H} F(EW_G H_+, L_{E}\Phi^H X)[\mathcal{F}_{H\not\subset}^{-1}] \bigr)\\
        & \simeq L_{T(H)}L_{E}\Phi^H X.
    \end{aligned}
    \]
\end{lemma}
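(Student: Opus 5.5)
Set
\[
R:=\operatorname{CoInd}_{N_GH}^G\bigl( \operatorname{Inf}_{W_G H}^{N_G H} F(EW_G H_+, L_{E}\Phi^H X)[\mathcal{F}_{H\not\subset}^{-1}] \bigr).
\]
The plan is to repeat the argument of \Cref{lem:TH local}, now with the extra smash factor $E$ and with $\Phi^H X$ in place of the inflated spectrum. Here $L_E\Phi^H X$ is viewed as a $W_GH$-spectrum: $\Phi^H X$ carries a residual $W_GH$-action, and $E$-localization is functorial. The second equivalence, $R\simeq L_{T(H)}L_E\Phi^H X$, is then \Cref{lem:TH local} applied to the spectrum $L_E\Phi^HX$. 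The only point to check is that the proof of \Cref{lem:TH local} also works when the input carries a $W_GH$-action, using $F(EW_GH_+,-)$ of the given $W_GH$-spectrum; the argument is insensitive to that change. So the work is to show $L_{T(H)\wedge E}X\simeq R$, in two steps.

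\textbf{Step 1: $R$ is $T(H)\wedge E$-local.} Let $M$ be a $T(H)\wedge E$-acyclic $G$-spectrum. The coinduction adjunction followed by \Cref{prop:pushout type} gives
\[
[M,R]^G\cong[\Phi^H M, F(EW_GH_+,L_E\Phi^HX)]^{W_GH}.
\]
From $\Phi^H(M\wedge T(H)\wedge E)\simeq W_GH_+\wedge\Phi^HM\wedge E\simeq\ast$ we get that the underlying spectrum of $\Phi^HM$ is $E$-acyclic. The target $F(EW_GH_+,L_E\Phi^HX)$ is Borel (cofree), so maps into it are computed by a homotopy limit over $W_GH$ of maps on underlying spectra, and those underlying mapping spectra $F(\Phi^HM, L_E\Phi^HX)$ vanish because $\Phi^HM$ is $E$-acyclic and $L_E\Phi^HX$ is $E$-local. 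Concretely, I would write $[EW_GH_+\wedge \Phi^HM, L_E\Phi^HX]^{W_GH}$ via the skeletal filtration of $EW_GH$, whose cells are free $W_GH$-cells; each cell contributes a group of the form $[\Sigma^n\Phi^HM, L_E\Phi^HX]$, and these all vanish. Hence $[M,R]^G=0$.

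\textbf{Step 2: the natural map $X\to R$ is a $T(H)\wedge E$-equivalence.} First I construct the map $X\to R$. By the coinduction adjunction and \Cref{prop:pushout type}, giving it amounts to giving a $W_GH$-map $\Phi^HX\to F(EW_GH_+,L_E\Phi^HX)$, and I take the composite of the localization $\Phi^HX\to L_E\Phi^HX$ with the map induced by $EW_GH_+\to S^0$. To test it, it suffices to check geometric fixed points for all subgroups $K$, as in the Bousfield class arguments of \Cref{lem:bousfield class of KU/p 1}. If $K$ is not conjugate to $H$, then $\Phi^K T(H)\simeq\ast$, so both sides vanish after smashing with $T(H)\wedge E$. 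For $K=H$, the map becomes
\[
W_GH_+\wedge\Phi^HX\wedge E\longrightarrow W_GH_+\wedge F(EW_GH_+,L_E\Phi^HX)\wedge E.
\]
Because of the free factor $W_GH_+$, it is enough to check this on underlying spectra. There $F(EW_GH_+,-)$ is the identity, and $\Phi^HX\wedge E\to L_E\Phi^HX\wedge E$ is an equivalence, so the map is an equivalence.

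I expect the main obstacle to be the identification of $\Phi^H R$, used in Step 2, with the coinduced expression. One has to verify that $\Phi^H\operatorname{CoInd}_{N_GH}^G(Y)$ restricts to $W_GH_+\wedge\Phi^HY$ after smashing with $T(H)$, in the same way as in \Cref{lem:TH local}. I would quote that computation rather than redo it, noting that the proof used nothing specific to $X$ beyond its being an ordinary spectrum with $W_GH$-action.
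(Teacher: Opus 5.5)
Your proposal is correct and follows the paper's proof essentially step for step. In both, $R$ is shown to be $T(H)\wedge E$-local via the coinduction adjunction and \Cref{prop:pushout type}, the map $X\to R$ is built from the $E$-localization of $\Phi^HX$ and checked on geometric fixed points, and the second equivalence comes from \Cref{lem:TH local}. Two points you treat more carefully than the paper are small refinements rather than a different route: your cell-by-cell vanishing of $[EW_GH_+\wedge\Phi^HM, L_E\Phi^HX]^{W_GH}$, and your observation that \Cref{lem:TH local} is being applied to a spectrum carrying a $W_GH$-action.
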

\begin{proof}
    Let $R=\operatorname{CoInd}_{N_GH}^G\bigl( \operatorname{Inf}_{W_G H}^{N_G H} F(EW_G H_+, L_{E}\Phi^H X)[\mathcal{F}_{H\not\subset}^{-1}] \bigr)$, we need to show that $R$ is $T(H)\wedge E$-local, and there exists a $T(H)\wedge E$-equivalence $X\to R$. 

    By \Cref{lem:TH local}, $R\simeq L_{T(H)}L_{E}\Phi^H X$ is $T(H)$-local. For any $T(H)\wedge E$-acyclic $G$-spectrum $M$, 
    \[
    \begin{aligned}
        [M, R]^G &\cong [\operatorname{Res}_{N_G H}^G M, \operatorname{Inf}_{W_G H}^{N_G H} F(EW_G H_+, L_{E}\Phi^H X)[\mathcal{F}_{H\not\subset}^{-1}]]^{N_GH}\\
        & \cong [\Phi^H M, F(EW_G H_+, L_{E}\Phi^H X)]^{W_G H}\\
        & \cong [EW_G H_+ \wedge \Phi^H M, L_{E}\Phi^H X]^{W_G H}.
    \end{aligned}
    \]
    The second isomorphism follows from \Cref{prop:pushout type}. Since $M\wedge T(H)\wedge E\simeq \ast$, the spectrum $\Phi^H M$ is $W_G H_+\wedge E$-acyclic. Hence $EW_G H+\wedge \Phi^H M$ is $E$-acyclic. It follows that $[M,R]^G\cong 0$, so $R$ is $E$-local.

    There is a map $i: X\to R$ induced by the $E$-localization $\Phi^H X\to L_E \Phi^H X$ via the isomorphism 
    \[
    [X,R]^G\cong [\Phi^H X, F(EW_G H_+, L_{E}\Phi^H X)]^{W_G H}. 
    \]
    For any $N\subset G$, $\Phi^N (T(H)\wedge E) \simeq \ast$ if $N$ is not conjugate to $H$. When $N\sim^G H$, $\Phi^N(i\wedge T(H)\wedge E)$ is equivalent to the map
    \[
    \Phi^H X\wedge W_GH_+ \wedge E \longrightarrow L_{E}\Phi^H X \wedge W_G H_+ \wedge E
    \]
    induced by the $E$-localization $\Phi^H X\to L_{E} \Phi^H X$, which is an equivalence. Therefore, $\Phi^N(i\wedge T(H)\wedge E)$ is an equivalence for all $N\subset G$. Hence $i$ is a $T(H)\wedge E$-equivalence.
\end{proof}

Given a subgroup $H\subset G$, a positive integer $n$ and a prime $p$, let 
    \[
    K(H,n):=G/H_+\wedge K(n) [\mathcal{F}_{H\not\subset}^{-1}]\in \mathrm{Sp}_{(p)}^G.
    \]
Then \Cref{lem:morava k local} implies that for every $X\in \mathrm{Sp}^G$, $L_{K(H,n)} X \simeq L_{T(H)}L_{K(n)}\Phi^HX$. When $X=S_G$, we have $\Phi^H S_G\simeq S$ for all $H\subset G$. For any odd prime $p$, let $g=(\zeta_{p-1},p+1)$ be a topological generator of $\mathbb{Z}_p^{\times}$ and $B=KU_p^\wedge$. Applying the functor $L_{T(H)}(\operatorname{Inf}_e^G-)$ to the sequence $L_{K(1)}S\to B\overset{\psi^g-1}{\longrightarrow} B$, we have a fiber sequence
    \[
    L_{K(H,1)}S_G\rightarrow L_{T(H)}B\overset{\psi^g-1}{\longrightarrow}L_{T(H)}B.
    \]
When $p=2$, the same statement holds with $g$ a generator of $(\mathbb{Z}_2^\wedge)^{\times}/{\pm 1}$ and $B=KO_2^\wedge$.

\begin{lemma}\label{lem: double coset formula}
    Let $H,K\subset G$ be subgroups, and let $X$ be a $K$-spectrum. Then there is an equivalence of $H$-spectra
    \[
    \operatorname{Res}_H^G \operatorname{CoInd}_K^G X \simeq \prod_{[g]\in H\backslash G/K} \operatorname{CoInd}_{H\cap {}^g K}^H \operatorname{Res}_{H\cap {}^g K}^{{}^g K} {}^g X.
    \]
    Here ${}^g K=gKg^{-1}$ and ${}^g X=c_g^* X$, where $c_g: {}^g K\to K$ is the conjugation map.
\end{lemma}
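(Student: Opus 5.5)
The plan is to deduce this from the classical double coset formula for induction, using that $\operatorname{CoInd}$ is right adjoint to $\operatorname{Res}$. For finite groups the Wirthmüller isomorphism identifies $\operatorname{CoInd}_K^G\simeq\operatorname{Ind}_K^G$, and in $\mathrm{Sp}^H$ finite products and coproducts agree. It therefore suffices to prove the induced version
\[
\operatorname{Res}_H^G \operatorname{Ind}_K^G X \simeq \bigvee_{[g]\in H\backslash G/K} \operatorname{Ind}_{H\cap {}^g K}^H \operatorname{Res}_{H\cap {}^g K}^{{}^g K} {}^g X .
\]
I would nevertheless prefer to argue directly with right adjoints, which avoids invoking Wirthmüller.

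For a $K$-spectrum $X$, model $\operatorname{CoInd}_K^G X = F_K(G_+, X)$ with its residual $G$-action. As a left $H$, right $K$-biset, $G$ decomposes as the disjoint union of the double cosets $HgK$, and each of these is isomorphic to $H\times_{H\cap {}^gK} {}^gK$ after twisting by conjugation by $g$. This decomposition turns $F_K(G_+,X)$ into
\[
\prod_{[g]} F_K\bigl((HgK)_+,X\bigr)\simeq \prod_{[g]} F_{H\cap{}^gK}\bigl(H_+,\operatorname{Res}^{{}^gK}_{H\cap{}^gK}{}^gX\bigr),
\]
and the right-hand side is the asserted product.

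A cleaner way to run the same argument is through the adjunctions. For any $H$-spectrum $Y$, compute
\[
[Y,\operatorname{Res}^G_H\operatorname{CoInd}^G_K X]^H\cong[\operatorname{Ind}^G_H Y,\operatorname{CoInd}_K^GX]^G\cong[\operatorname{Res}^G_K\operatorname{Ind}^G_HY,X]^K.
\]
Then apply the double coset formula for $\operatorname{Res}^G_K\operatorname{Ind}^G_H$, which is valid at the level of $G$-spaces via $G/H$ decomposed into $K$-orbits and hence for suspension spectra and all spectra. This gives a wedge over $[g]\in K\backslash G/H$ of $\operatorname{Ind}^K_{K\cap {}^{g}H}\operatorname{Res}$ of a conjugate of $Y$. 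Applying the adjunctions in reverse converts the wedge into a product of coinductions, and relabeling $g\mapsto g^{-1}$ turns $K\backslash G/H$ into $H\backslash G/K$. Since the identification is natural in $Y$, the Yoneda lemma gives the equivalence, and because it is realized by a map of $H$-spectra it is an equivalence in $\mathrm{Sp}^H$ rather than only on homotopy.

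The main obstacle is bookkeeping, not anything conceptual: one has to check that the conjugations match, namely that ${}^gX=c_g^*X$ is restricted along $H\cap{}^gK\subset{}^gK$, and that the result is independent of the choice of double coset representative up to canonical equivalence. For that check I would verify that replacing $g$ by $hgk$ changes each factor by the isomorphisms $c_h$ and $c_k$, which act compatibly on both sides.
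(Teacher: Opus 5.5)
Your proposal is correct, and your main argument, which decomposes $G$ into double cosets $HgK\cong H\times_{H\cap{}^gK}{}^gK$ inside the model $F_K(G_+,X)$ and rewrites each factor as $F_{H\cap{}^gK}\bigl(H_+,\operatorname{Res}^{{}^gK}_{H\cap{}^gK}{}^gX\bigr)$, is exactly the paper's proof. Your adjunction/Yoneda variant (dual to the double coset formula for $\operatorname{Res}^G_K\operatorname{Ind}^G_H$) and your Wirthm\"uller remark are also valid, but neither is needed.
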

\begin{proof}
    We can decompose the $(K,H)$-biset $G$ as a disjoint union 
    \[
    G= \bigsqcup_{[g]\in H\backslash G/K} H\cdot g \cdot K.
    \]
    Then 
    \[
    \begin{aligned}
        \operatorname{Res}_H^G \operatorname{CoInd}_K^G X &\simeq \operatorname{Res}_H^G F_K(G_+,X)\\
        &\simeq \prod_{[g]\in H\backslash G/K} F_K((H\cdot g \cdot K)_+,X)\\
        &\simeq \prod_{[g]\in H\backslash G/K} F_{H\cap {}^g K}(H_+, \operatorname{Res}_{H\cap {}^g K}^{{}^g K} {}^g X)\\
        &\simeq \prod_{[g]\in H\backslash G/K} \operatorname{CoInd}_{H\cap {}^g K}^H \operatorname{Res}_{H\cap {}^g K}^{{}^g K} {}^g X.
    \end{aligned}
    \]  
\end{proof}

\begin{lemma}\label{lem:bousfield class via morava k}
    $KU_G/p$ is Bousfield equivalent to $\bigvee_{(H) \in Cyc/G, p\nmid |H|} K(H,1)$, where $H$ ranges over the conjugacy classes of cyclic subgroups of $G$ such that $p\nmid |H|$.
\end{lemma}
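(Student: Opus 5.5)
We will detect Bousfield classes on geometric fixed points, as in the proof of \Cref{lem:bousfield class of KU/p 1}. By \cite[Proposition 3.2]{hill2019equivariant}, two $G$-spectra $E$ and $F$ are Bousfield equivalent as soon as $\langle \Phi^N E\rangle=\langle \Phi^N F\rangle$ for every subgroup $N\subset G$. We will therefore compute $\Phi^N$ of both sides for each $N\subset G$, and compare the results up to non-equivariant Bousfield class.

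\textbf{Left-hand side.} By \Cref{prop:geometric fixed KU}, $\Phi^N KU_G\simeq \Phi^N\operatorname{Res}^G_N KU_G$. This is contractible unless $N$ is cyclic, say $N\cong C_n$, in which case it is $KU\otimes\mathbb{Z}[\frac1n,\zeta_n]$. Smashing with the Moore spectrum gives:
\begin{itemize}
\item if $p\mid n$, then $p$ is invertible, so $\Phi^N KU_G/p\simeq \ast$;
\item if $p\nmid n$, then $\mathbb{Z}[\frac1n,\zeta_n]/p$ is a nonzero finite étale $\mathbb{F}_p$-algebra, so $\Phi^N KU_G/p$ is a nonzero finite wedge of copies of $KU/p$.
\end{itemize}
In the second case $\langle\Phi^N KU_G/p\rangle=\langle KU/p\rangle=\langle K(1)\rangle$, using the classical fact that $KU/p$ splits as a wedge of suspensions of $K(1)$ at $p$.

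\textbf{Right-hand side.} Since $\Phi^N$ commutes with wedges, it suffices to compute $\Phi^N K(H,1)$ for $K(H,1)=G/H_+\wedge K(1)[\mathcal{F}_{H\not\subset}^{-1}]$.
\begin{itemize}
\item The factor $\widetilde{E\mathcal{F}_{H\not\subset}}$ kills $\Phi^N$ unless $N$ contains a conjugate of $H$.
\item The factor $G/H_+$ kills $\Phi^N$ unless $N$ is subconjugate to $H$, since $\Phi^N(G/H_+)=\Sigma^\infty_+(G/H)^N$.
\end{itemize}
Hence $\Phi^N K(H,1)\neq \ast$ only if $N$ is conjugate to $H$. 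In that case $\Phi^H K(H,1)\simeq (G/H)^H_+\wedge K(1)=W_GH_+\wedge K(1)$, a nonzero finite wedge of copies of $K(1)$. This is the same computation already used in the proof of \Cref{lem:TH local}, so I will invoke it rather than redo it.

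Summing over the conjugacy classes $(H)$ of cyclic subgroups with $p\nmid|H|$, for each $N\subset G$ exactly one summand can contribute. We get $\langle K(1)\rangle$ if $N$ is cyclic of order prime to $p$, and $\langle\ast\rangle$ otherwise. This matches the left-hand side.

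\textbf{Expected obstacle.} The main point of care is the identification $\langle \Phi^N KU_G/p\rangle=\langle K(1)\rangle$ when $p\nmid |N|$. We must check that $\mathbb{Z}[\frac1n,\zeta_n]/p\neq 0$ and that it is free over $\mathbb{F}_p$, so that $\Phi^NKU_G/p$ is a finite sum of copies of $KU/p$. This holds because $\mathbb{Z}[\zeta_n]$ is free of finite rank over $\mathbb{Z}$ and $p$ is prime to $n$. The remaining input is that Bousfield classes are insensitive to finite multiplicity and suspension.

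A secondary point is that $K(H,1)$ lives in $p$-local spectra while $KU_G/p$ is already $p$-local, so comparing Bousfield classes in $\mathrm{Sp}^G$ is legitimate. The Hill criterion applies verbatim in this setting.
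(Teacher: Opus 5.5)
Your proof is correct and follows essentially the same route as the paper. The paper's one-line proof records only the computation $\Phi^N K(H,n)\simeq (G/H_+)^N\wedge K(n)$ for $N\sim^G H$ (and $\ast$ otherwise), leaving implicit the geometric-fixed-point Bousfield criterion and the computation of $\langle\Phi^N KU_G/p\rangle$ from the proof of \Cref{lem:bousfield class of KU/p 1}; you have made those steps explicit, including the identification $\langle KU/p\rangle=\langle K(1)\rangle$.
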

\begin{proof}
    This follows from the fact that 
    \[ 
    \Phi^N K(H,n)\simeq\begin{cases}
        (G/H_+)^N\wedge K(n) & N\sim^G H,\\
        \ast & \text{otherwise.}
    \end{cases}
    \]
\end{proof}

    Note that $K(H,n)$-local objects are automatically $p$-complete, which implies that $L_{T(H)}L_{K(n)}\Phi^H X$ is $p$-complete.

\begin{lemma}\label{lem:step by step}
    Let $I$ be a set of subgroups of $G$, and let $H$ be a subgroup of $G$ such that $H$ is not subconjugate to $T$ for any $T\in I$. Then for any $X\in \mathrm{Sp}^G$, there is a pullback square
    \[
    \begin{tikzcd}
        L_{\bigvee_{T\in I\cup\{H\}}K(T,n)}X \arrow[r]\arrow[d] & L_{K(H,n)} X\arrow[d]\\
        L_{\bigvee_{T\in I}K(T,n)}X \arrow[r] & L_{K(H,n)}L_{\bigvee_{T\in I}K(T,n)}X.
    \end{tikzcd}
    \]
\end{lemma}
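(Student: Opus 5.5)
The plan is to apply \Cref{prop:arithmetic square} with $E=K(H,n)$ and $F=\bigvee_{T\in I}K(T,n)$. Since $\langle E\vee F\rangle=\langle \bigvee_{T\in I\cup\{H\}}K(T,n)\rangle$, the pullback square of \Cref{prop:arithmetic square} is exactly the displayed square, with the roles of the two corners matching: $L_EX=L_{K(H,n)}X$ on the top right, $L_FX$ on the bottom left, and $L_FL_EX$ in the corner. To get the orientation as written in the statement, with $L_{K(H,n)}L_{\bigvee_I}X$ in the corner, I will instead take $E=\bigvee_{T\in I}K(T,n)$ and $F=K(H,n)$. The hypothesis to verify is then
\[
\Bigl(\bigvee_{T\in I}K(T,n)\Bigr)\wedge L_{K(H,n)}X\simeq \ast,
\]
that is, $K(T,n)\wedge L_{K(H,n)}X\simeq\ast$ for each $T\in I$.

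To check this, I will use the geometric fixed point criterion \cite[Proposition 3.2]{hill2019equivariant}: a $G$-spectrum $Y$ is contractible if and only if $\Phi^NY\simeq\ast$ for all $N\subset G$. By the computation in the proof of \Cref{lem:bousfield class via morava k}, $\Phi^N K(T,n)$ vanishes unless $N$ is conjugate to $T$. It therefore suffices to show $\Phi^T L_{K(H,n)}X\simeq\ast$ for every $T\in I$.

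By \Cref{lem:morava k local},
\[
L_{K(H,n)}X\simeq \operatorname{CoInd}_{N_GH}^G\bigl(\operatorname{Inf}_{W_GH}^{N_GH}F(EW_GH_+,L_{K(n)}\Phi^HX)[\mathcal F_{H\not\subset}^{-1}]\bigr).
\]
Restricting to $T$ and applying \Cref{lem: double coset formula}, I get a finite product over double cosets $[g]\in T\backslash G/N_GH$ of terms $\operatorname{CoInd}_{T\cap {}^gN_GH}^T\operatorname{Res}(\cdots)$. Each inner term is a restriction of a spectrum of the form $Y[\mathcal F_{{}^gH\not\subset}^{-1}]$. Restricted to a subgroup $S=T\cap{}^gN_GH$, this is still smashed with $\widetilde{E\mathcal F}$ for the family of subgroups of $S$ not containing ${}^gH$. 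Since $H$ is not subconjugate to $T$, the subgroup ${}^gH$ is not contained in $S\subset T$. Hence every subgroup of $S$ lies in this family, so $\widetilde{E\mathcal F}$ restricts to a contractible spectrum and each factor is $\ast$. Thus $\operatorname{Res}^G_T L_{K(H,n)}X\simeq\ast$, and in particular $\Phi^T L_{K(H,n)}X\simeq\Phi^T\operatorname{Res}^G_T L_{K(H,n)}X\simeq\ast$. Geometric fixed points commute with restriction and finite products, so this gives the required vanishing.

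The main obstacle is making the double coset step precise. The key point is that restriction commutes with the $\widetilde{E\mathcal F_{H\not\subset}}$-localization, in the sense that $\operatorname{Res}^G_S\widetilde{E\mathcal F_{H\not\subset}}\simeq\widetilde{E\mathcal F'}$, where $\mathcal F'$ consists of the subgroups of $S$ containing no conjugate of $H$. When no conjugate of $H$ lies in $S$, $\mathcal F'$ is all subgroups of $S$, so $\widetilde{E\mathcal F'}\simeq\ast$. With this checked, the hypothesis of \Cref{prop:arithmetic square} holds and the pullback square follows.
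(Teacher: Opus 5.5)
Your proof is correct and is essentially the paper's argument: with $E=\bigvee_{T\in I}K(T,n)$ and $F=K(H,n)$ in \Cref{prop:arithmetic square}, geometric fixed points reduce the hypothesis to $\Phi^T L_{K(H,n)}X\simeq\ast$ for $T\in I$. Both you and the paper get this from the $\operatorname{CoInd}$ model of \Cref{lem:morava k local} and the double coset formula; you do it more cleanly, showing $\operatorname{Res}^G_T L_{K(H,n)}X\simeq\ast$ because $\widetilde{E\mathcal F_{H\not\subset}}$ restricts to something contractible, whereas the paper splits into cases according to whether $L$ is subconjugate to $N_GH$. Delete the opening false start with $E=K(H,n)$, which gives the corner $L_{\bigvee_{T\in I}K(T,n)}L_{K(H,n)}X$ and a different hypothesis, and note that the displayed square is the (harmless) transpose of the one in \Cref{prop:arithmetic square}.
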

\begin{proof}
    By \Cref{prop:arithmetic square}, it suffices to show that 
    \[
    (\bigvee_{T\in I}K(T,n)) \wedge L_{K(H,n)} X\simeq \ast. 
    \]
    Indeed, $\Phi^L K(T,n)\not\simeq \ast$ if and only if $L$ is conjugate to $T$. 
    
    For $L\subset G$, consider the $L$-geometric fixed point of $L_{K(H,n)}X$. If $L$ is not subconjugate to $N_G H$, then for every $g\in N_G L\backslash G/N_G H$, $\operatorname{Res}_{N_GH\cap^g N_G L}^{N_G L}\widetilde{E\mathcal F_{L\not\subset}}$ is trivial, and
    \[
        (\operatorname{Res}_{N_G L}^{G} L_{K(H,n)}X)\wedge \widetilde{E\mathcal F_{L\not\subset}}  \simeq \ast.
    \]
    If $L\subset N_G H$, then $\Phi^L L_{K(H,n)}X$ is nontrivial only if $H$ is subconjugate to $L$. Since $H$ is not subconjugate to $T$ for all $T\in I$, we have 
    \[
    \Phi^L((\bigvee_{T\in I}K(T,n)) \wedge L_{K(H,n)} X)\simeq \ast
    \]
    for all $L\subset G$, i.e., $(\bigvee_{T\in I}K(T,n)) \wedge L_{K(H,n)} X\simeq \ast$.
\end{proof}

\begin{theorem}\label{thm:KUG local spectrum second case}
    Let $G$ be a finite group, and let $Cyc$ be the family of cyclic subgroups of $G$. For any prime $p$ and any $G$-spectrum $X$, there is an equivalence of $G$-spectra
    \[
    L_{KU_G/p}X \simeq L_{\bigvee_{(H)\in Cyc/G, \, p\nmid |H|}K(H,1)}X \simeq \bigvee_{(H)\in Cyc/G, \, p\nmid |H|} L_{K(H,1)}X.
    \]
    Here $H$ ranges over the conjugacy classes of cyclic subgroups of $G$ such that $p\nmid |H|$.
\end{theorem}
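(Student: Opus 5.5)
The first equivalence is a direct consequence of \Cref{lem:bousfield class via morava k}. Bousfield equivalent spectra have the same localization functor, so $L_{KU_G/p}X\simeq L_{\bigvee K(H,1)}X$, where the wedge runs over conjugacy classes of cyclic $H$ with $p\nmid |H|$. The real content is therefore the second equivalence: the localization at the wedge splits as a wedge of the individual localizations $L_{K(H,1)}X$.

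To prove this, I will order the relevant conjugacy classes $(H_1),\dots,(H_m)$ so that $|H_1|\ge |H_2|\ge\cdots$. Then $H_{k+1}$ is never subconjugate to any $H_i$ with $i\le k$: such a containment would force $|H_{k+1}|\le|H_i|$, with equality only if the classes coincide.

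I then induct on $k$, writing $I_k=\{H_1,\dots,H_k\}$ and applying \Cref{lem:step by step} with $I=I_k$ and $H=H_{k+1}$. This gives a pullback square with corners $L_{\bigvee_{I_{k+1}}}X$, $L_{K(H_{k+1},1)}X$, $L_{\bigvee_{I_k}}X$ and $L_{K(H_{k+1},1)}L_{\bigvee_{I_k}}X$. By induction, $L_{\bigvee_{I_k}}X\simeq\bigvee_{i\le k}L_{K(H_i,1)}X$.

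The key claim is that the corner $L_{K(H_{k+1},1)}L_{K(H_i,1)}X$ vanishes for each $i\le k$. Since $L_{K(H,1)}$ is exact, this makes the bottom-right corner of the square contractible. A pullback over a contractible corner is a product, and a finite product is the wedge in $\mathrm{Sp}^G$, which completes the induction.

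For the vanishing, \Cref{lem:morava k local} gives $L_{K(H,1)}Y\simeq L_{T(H)}L_{K(1)}\Phi^HY$, which depends on $Y$ only through $\Phi^H Y$. So it suffices to show $\Phi^{H_{k+1}}L_{K(H_i,1)}X\simeq\ast$ when $H_{k+1}\ne H_i$ up to conjugacy. The proof of \Cref{lem:step by step} shows that $\Phi^L L_{K(H,1)}X$ can be nonzero only if $H$ is subconjugate to $L$. Hence the vanishing holds whenever $H_i$ is not subconjugate to $H_{k+1}$.

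The main obstacle is exactly the remaining case, where $H_i$ is a proper subgroup of $H_{k+1}$. The ordering guarantees only one direction of non-containment, and here $|H_i|<|H_{k+1}|$, which contradicts the chosen order. So with the decreasing-order convention and the induction adding smaller groups last, I expect this case never to arise: every $H_i$ with $i\le k$ has order at least $|H_{k+1}|$, hence cannot be a proper subgroup of it.

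I must still make sure that the geometric-fixed-point vanishing statement I extract from \Cref{lem:step by step}'s proof covers all $L$, including those not subconjugate to $N_GH$. I will redo that check carefully using the coinduced description from \Cref{lem:TH local} together with \Cref{lem: double coset formula}. Finally, I will remark that the equivalence obtained is the one induced by the canonical maps $L_{\bigvee}X\to L_{K(H,1)}X$, so the splitting is natural in $X$.
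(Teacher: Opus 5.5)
\textbf{There is a genuine gap in the ordering step.} With $|H_1|\ge|H_2|\ge\cdots$, the newly added group $H_{k+1}$ is the smallest so far. Nothing prevents it from being subconjugate to an earlier $H_i$. Such a containment would force $|H_{k+1}|\le|H_i|$, but your ordering already says exactly that, so there is no contradiction. For instance, the trivial subgroup comes last and is subconjugate to everything.

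Hence the hypothesis of \Cref{lem:step by step} with $I=I_k$ and $H=H_{k+1}$ fails. The pullback square needs $\bigl(\bigvee_{i\le k}K(H_i,1)\bigr)\wedge L_{K(H_{k+1},1)}X\simeq\ast$, and this is now unjustified. By the very criterion you extract ($\Phi^L L_{K(H,1)}X\neq\ast$ only if $H$ is subconjugate to $L$), $\Phi^{H_i}L_{K(H_{k+1},1)}X$ may be nonzero when $H_{k+1}$ is properly subconjugate to $H_i$.

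Your order only guarantees the opposite non-containment, namely that $H_i$ is not subconjugate to $H_{k+1}$. That is what your corner-vanishing argument uses, and that part is fine. But with this coarse criterion alone no ordering works: the pullback hypothesis needs increasing order, while the corner vanishing needs decreasing order.

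\textbf{The missing ingredient is the hypothesis $p\nmid|H|$.} Your argument never uses it after the Bousfield class computation. For $p\nmid|H'|$, geometric fixed points commute with localization of $p$-local $G$-spectra: $\Phi^{H'}L_E Z\simeq L_{\Phi^{H'}E}\Phi^{H'}Z$, as recalled before \Cref{prop:q local geometric fixed points}. Since $L_{K(H,1)}X$ is $p$-complete, this gives
\[
\Phi^{H'}L_{K(H,1)}X\simeq \Phi^{H'}L_{T(H)/p}L_{K(1)}\Phi^H X\simeq L_{\Phi^{H'}T(H)/p}\bigl(\Phi^{H'}L_{K(1)}\Phi^H X\bigr)\simeq \ast
\]
whenever $H'$ is not conjugate to $H$, because $\Phi^{H'}T(H)\simeq\ast$. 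This holds even when $H$ is properly subconjugate to $H'$, which the coarse criterion cannot handle.

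The paper orders by increasing order, so that \Cref{lem:step by step} applies as stated. It then uses this sharper vanishing to kill the corner $L_{K(H_n,1)}L_{K(H,1)}X$ for the smaller $H\in I_{n-1}$, which may be subgroups of $H_n$.

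Alternatively, you could keep your decreasing order and apply the same vanishing with $H'=H_i$. Then $\Phi^{H_i}\bigl(K(H_i,1)\wedge L_{K(H_{k+1},1)}X\bigr)\simeq\ast$, and $\Phi^{L}K(H_i,1)\simeq\ast$ for $L$ not conjugate to $H_i$, which verifies the pullback hypothesis directly. Either way, this $p\nmid|H|$ input is the step your proposal lacks.
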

\begin{proof}
    Let $I$ be a set of representatives for the conjugacy classes of cyclic subgroups $H \subset G$ such that $p\nmid |H|$. We have $L_{KU_G/p}X\simeq L_{\bigvee_{H\in I}K(H,1)}X$ since $\langle KU_G/p\rangle=\langle \bigvee_{H\in I}K(H,1)\rangle$. 

    It follows from \Cref{lem:morava k local} that for any subgroups $H_1, H_2\in I$,
    \[
        L_{K(H_1,1)}L_{K(H_2,1)}X = L_{T(H_1)}L_{K(1)}\Phi^{H_1}(L_{T(H_2)}L_{K(1)}\Phi^{H_2}X).
    \]
    Here $p\nmid |H_1|$, by \Cref{prop:q local geometric fixed points} we have 
    \[
    \begin{aligned}
        \Phi^{H_1}(L_{T(H_2)}L_{K(1)}\Phi^{H_2}X) & \simeq \Phi^{H_1}(L_{T(H_2)/p}L_{K(1)}\Phi^{H_2}X)\\
        &\simeq L_{\Phi^{H_1}T(H_2)/p}(\Phi^{H_1}L_{K(1)}\Phi^{H_2}X) \\
        &\simeq \begin{cases}
            L_\ast(\Phi^{H_1}L_{K(1)}\Phi^{H_2}X)\simeq \ast & H_1\not\sim^G H_2\\
            L_{K(H_1,1)}X & H_1\sim^G H_2.
        \end{cases}
    \end{aligned}
    \]
    Therefore, $L_{K(H_1,1)}L_{K(H_2,1)}X\simeq \ast$ whenever $H_1$ is not conjugate to $H_2$ in G and $p\nmid |H_1|$, and the pullback square in \Cref{lem:step by step} implies that  
    \[
    L_{K(H_1,1)\vee K(H_2,1)}X\simeq L_{K(H_1,1)}X \vee L_{K(H_2,1)}X.
    \]

    We can order the elements of $I$ by the order of the corresponding subgroups. For subgroups of the same order, we assign an arbitrary order, since they cannot subconjugate to one another. Based on this order, $I$ is totally ordered. We write
    \[I=\{H_1,H_2,\ldots,H_n\},\]
    where $|H_i|\leq |H_{i+1}|$. Let $I_1=\{H_1\}$ and $I_k=I_{k-1} \cup \{H_k\}$. 
    
    We study $L_{\bigvee_{H\in I}K(H,1)}S_G$ by induction on $I_k$, using the pullback diagrams in \Cref{lem:step by step}. Assume that for any $k<n$, $L_{\bigvee_{H\in I_k}K(H,1)}X \simeq \bigvee_{H\in I_k} L_{K(H,1)}X$. Then
    \[ 
    L_{K(H_n,1)}L_{\bigvee_{H\in I_{n-1}}K(H,1)}X\simeq \bigvee_{H\in I_{n-1}}L_{K(H_n,1)}L_{K(H,1)}X\simeq \ast,
    \]
    thus $L_{\bigvee_{H\in I_n}K(H,1)}X \simeq \bigvee_{H\in I_n} L_{K(H,1)}X$.
\end{proof}

When $G$ is nilpotent, this theorem reduces the computation of $\underline{\pi}_V L_{KU_G/p}S_G$ for $V\in RO(G)$ to the case of $p$-groups. Let $G=N_p\times N$ be a finite nilpotent group with Sylow $p$-subgroup $N_p$. For $V\in RO(G)$, choose real $G$-representations $V_+$ and $V_-$ such that $V=V_+-V_-$, and define 
\[
\epsilon_V=\det(V_+)\det(V_-)^{-1} \colon G\to \{\pm 1\}.
\]
For each subgroup $H\subset G$, let $\epsilon_{V,H}=\epsilon_V|_H$ denote the restriction of $\epsilon_V$ to $H$. If $H$ is conjugate to $Q$, then $\epsilon_{V,H}\equiv 1$ if and only if $\epsilon_{V,Q}\equiv 1$. With this notation, we have the following lemma.

\begin{lemma}\label{lem: stable splitting of Thom speces}
For every $V\in RO(G)$, there is a weak equivalence of $p$-local spectra
\[
    (EG_+\wedge_G S^V)_{(p)} \simeq \begin{cases}
        ((EN_p)_+\wedge_{N_p} S^{\operatorname{Res}_{N_p}^G V})_{(p)} & \text{if } \epsilon_{V,N}\equiv 1,\\
        \ast & \text{otherwise.} 
    \end{cases}
\]
\end{lemma}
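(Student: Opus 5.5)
We view $EG_+\wedge_G S^V$ as a homotopy orbit spectrum, $(S^V)_{hG}$, where $S^V$ carries the induced action of $G=N_p\times N$ on the underlying (non-equivariant) sphere $S^{|V|}$. We then compute in stages: first take orbits by $N_p$, then by $N$. Concretely,
\[
(S^V)_{hG}\simeq \bigl((S^V)_{hN_p}\bigr)_{hN}, \qquad (S^V)_{hN_p}\simeq EN_{p+}\wedge_{N_p} S^{\operatorname{Res}^G_{N_p}V}.
\]
The residual action of $N$ on $(S^V)_{hN_p}$ comes from the commuting action of $N$ on $S^V$ and on $EG\simeq EN_p\times EN$.

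\textbf{Step 1: reduce to a trivial-coefficient computation.} Because $p\nmid |N|$, after $p$-localization the homotopy orbits by $N$ of any $p$-local spectrum $Y$ with $N$-action are computed by a degenerate homotopy orbit spectral sequence:
\[
H_s\bigl(N;\pi_t(Y)_{(p)}\bigr)=0 \quad\text{for } s>0.
\]
Hence
\[
\pi_*\bigl(Y_{hN}\bigr)_{(p)}\cong \bigl(\pi_*Y_{(p)}\bigr)_N .
\]
Equivalently, $(Y_{hN})_{(p)}$ is the image of the idempotent $e_N=\frac{1}{|N|}\sum_{n\in N} n$ acting on $Y_{(p)}$. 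So it suffices to determine how $N$ acts on $Y=(EN_{p+}\wedge_{N_p}S^{\operatorname{Res}V})_{(p)}$ up to homotopy.

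\textbf{Step 2: identify the $N$-action.} We expect $N$ to act through the degree (sign) of each element on $S^{|V|}$, that is, through the character $\epsilon_{V,N}$.

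Each $n\in N$ acts on $S^V$ by a map commuting with $N_p$. The space of $N_p$-equivariant self-maps of $S^V$ is connected in the relevant sense after $p$-localization, since $n$ lies in a connected component of the centralizer of $N_p$ in $GL(V)$ up to sign. More concretely, $V$ decomposes into $N$-isotypic pieces, each of which is an $N_p$-representation. On each piece, $n$ acts by an $N_p$-linear automorphism lying in a product of groups $GL_k(\mathbb{F})$ for $\mathbb{F}=\mathbb{R},\mathbb{C},\mathbb{H}$, namely the commutant.

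For $\mathbb{F}=\mathbb{C}$ or $\mathbb{H}$ these groups are connected. For $\mathbb{F}=\mathbb{R}$, the two path components are distinguished by the sign of the determinant. Therefore $n$ is $N_p$-equivariantly homotopic to either the identity or a fixed equivariant reflection, according to $\det(n|_V)=\epsilon_V(n)$. A reflection on an $N_p$-trivial real line acts by $-1$ on $Y$.

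The case where no trivial summand is available needs more care. There we would instead use the reflection on an isotypic summand $W$ of real type and compare the Thom spectra via the degree. This step is the main obstacle, and I would first try to handle it by stabilizing with trivial lines, since $V\oplus 2\mathbb{R}$ does not change the answer up to suspension.

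\textbf{Step 3: conclude.} Since $N$ acts on $Y$ by the character $\epsilon_{V,N}$, we get
\[
e_N Y=\begin{cases} Y & \text{if } \epsilon_{V,N}\equiv 1,\\ 0 & \text{otherwise.}\end{cases}
\]
In the nontrivial-character case, $e_N$ annihilates everything, because $\sum_n\epsilon(n)=0$ for a nontrivial character $\epsilon$. The $p$-local equivalence is then realized by the transfer and projection maps between $Y_{(p)}$ and $(Y_{hN})_{(p)}$, which gives exactly the two cases of the statement.
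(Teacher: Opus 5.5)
Your Steps 1 and 3 are sound. Step 2, however, carries the whole content of the lemma, and it is not established; the reasoning you give for it is incorrect.

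\textbf{The gap in Step 2.} The commutant $\operatorname{Aut}_{N_p}(V)\cong\prod_i GL_{k_i}(\mathbb{F}_i)$ has a separate $\mathbb{Z}/2$ in $\pi_0$ for \emph{each} isotypic summand of real type. So the single sign $\det(n|_V)$ does not determine the $N_p$-equivariant isotopy class of $n$, and your ``therefore'' does not follow.

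Your proposed repair also fails, because the degree of a reflection on a nontrivial real-type summand does not determine how it acts on $Y$. Take $N_p=C_2$ and $V=\sigma$. The map $-1$ on $S^{\sigma}$ coincides with the action of the generator $t$. Hence on $(EC_2)_+\wedge_{C_2}S^{\sigma}\simeq\Sigma^\infty\mathbb{R}P^\infty$ it is induced by $t$ on $EC_2$ alone, which is $C_2$-homotopic to the identity. So it acts by $+1$ despite having degree $-1$; and $\mathrm{id}\not\simeq-\mathrm{id}$ here, since $\pi_3$ surjects onto $\mathbb{Z}/8$ by Kahn--Priddy. Stabilizing by trivial lines does not change which component of the commutant $n$ lies in, so it does not help either.

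\textbf{The missing idea: a parity split.} After splitting by the parity of $p$, your obstacle never arises.

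If $p$ is odd, $N_p$ has odd order, so by Burnside's theorem it has no nontrivial real-valued irreducible characters. The trivial representation is then the only one of real type, and $\operatorname{Aut}_{N_p}(V_\pm)\cong GL(V_\pm^{N_p})\times\prod_i GL_{k_i}(\mathbb{C})$ has at most two components. These are detected by the sign of $\det(n|_{V_\pm^{N_p}})$, which equals the sign of $\det(n|_{V_\pm})$. When this sign is negative, a trivial line is automatically present. Hence $n$ acts on $Y$ by $\det(n|_{V_+})\det(n|_{V_-})=\epsilon_V(n)$.

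If $p=2$, then $|N|$ is odd, so $\epsilon_{V,N}\equiv 1$ automatically. The homomorphism $N\to\pi_0\operatorname{Aut}_{N_2}(V_\pm)\cong(\mathbb{Z}/2)^r$, with $r$ the number of real-type isotypic summands, is therefore trivial. So every $n$ is $N_2$-equivariantly isotopic to the identity and acts trivially on $Y$.

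You must also treat $V_+$ and $V_-$ separately, since $V$ is virtual. With these inputs your route works.

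\textbf{An alternative fix, closer to the paper.} Run Step 1 on $\mathbb{Z}_{(p)}$-homology rather than homotopy. The homotopy-orbit spectral sequence for $N_p$ is concentrated in the single row $t=d=\dim V$. So $n$ acts on $H_*(Y;\mathbb{Z}_{(p)})\cong H_{*-d}(N_p;\mathbb{Z}_{(p)}\{\epsilon_V\})$ through its action $\epsilon_V(n)$ on coefficients. A $\mathbb{Z}_{(p)}$-homology equivalence of bounded-below spectra is a $p$-local equivalence, which finishes the argument.

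This is essentially the paper's proof. It computes $H_*(G;\mathbb{Z}_{(p)}\{\epsilon_V\})\cong H_*(N_p;\mathbb{Z}_{(p)}\{\epsilon_V\}/N)$ via the homotopy-orbit and Lyndon--Hochschild--Serre spectral sequences, and never has to control the $N$-action on homotopy.
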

\begin{proof}
    Let $d=\dim V=\dim V_+-\dim V_-$. There is a homological homotopy-orbit spectral sequence \cite[Proposition B.7]{campbell2024hilbert}
    \[
    E_{s,t}^2=H_s(G;\tilde{H}_t(S^V;\mathbb Z_{(p)}))\Longrightarrow \tilde{H}_{s+t}(EG_+\wedge_G S^V;\mathbb Z_{(p)}).
    \]
    As a $G$-module,
    \[
    \tilde{H}_{t}(S^V;\mathbb Z_{(p)})\cong \begin{cases}
        \mathbb Z_{(p)}\{\epsilon_V\} & t=d,\\
        0 & t\neq d,
    \end{cases}
    \]
    where $\mathbb Z_{(p)}\{\epsilon_V\}$ is the $G$-module $\mathbb Z_{(p)}$ with the $G$-action defined by
    $$g\cdot 1 = \epsilon_V(g) \quad \forall g\in G.$$
    The spectral sequence therefore collapses at the $E^2$-page, yielding the isomorphisms
    \[
    \tilde{H}_{s+d}(EG_+\wedge_G S^V;\mathbb Z_{(p)})\cong H_s(G;\mathbb Z_{(p)}\{\epsilon_V\}).
    \]
    Since $G=N_p\times N$, the extension 
    \[
    \{1\}\to N \to G \to N_p \to \{1\}
    \] 
    gives a Lyndon-Hochschild-Serre spectral sequence
    \[
    E^2_{s,t}=H_s(N_p;H_t(N;\mathbb Z_{(p)}\{\epsilon_V\}))\Longrightarrow H_{s+t}(G;\mathbb Z_{(p)}\{\epsilon_V\}).
    \]
    Since $p\nmid |N|$,the augmentation $\mathbb Z_{(p)}[N]\to \mathbb Z_{(p)}$ admits a splitting as a homomorphism of $\mathbb Z_{(p)}[N]$-modules, and the trivial $\mathbb Z_{(p)}[N]$-module $\mathbb{Z}_{(p)}$ is a projective $\mathbb Z_{(p)}[N]$-module. It follows that , 
    \[
    H_t(N;\mathbb Z_{(p)}\{\epsilon_V\})\cong \operatorname{Tor}_t^{\mathbb Z_{(p)}[N]}(Z_{(p)},\mathbb Z_{(p)}\{\epsilon_V\})\cong\begin{cases}
        \mathbb Z_{(p)}\{\epsilon_{V}\}/N & t=0\\
        0 & t>0
    \end{cases},
    \]
    where $\mathbb Z_{(p)}\{\epsilon_V\}/N$ is the coinvariant of $\mathbb Z_{(p)}\{\epsilon_V\}$ under the $N$-action. 
    Thus the spectral sequence is concentrated in the zero-th row, and collapses at $E^2$-page. Therefore,
    \[
    H_s(G;\mathbb Z_{(p)}\{\epsilon_V\})\cong H_s(N_p;\mathbb Z_{(p)}\{\epsilon_V\}/N).
    \]

    If $\epsilon_{V,N}\equiv 1$, then $N$ acts trivially on $\mathbb Z_{(p)}\{\epsilon_V\}$, and 
    \[
    H_s(N_p;\mathbb Z_{(p)}\{\epsilon_V\})\cong H_s(G;\mathbb Z_{(p)}\{\epsilon_V\}),
    \]
    It follows from the homotopy-orbit spectral sequence that the inclusion $N_p\to G$ induces a map $$\alpha: (EN_p)_+\wedge_{N_p} S^{\operatorname{Res}_{N_p}^G V}\to EG_+\wedge_G S^V,$$ which is a $\mathbb Z_{(p)}$-homology equivalence. After $p$-localization, it induces a weak equivalence $\alpha_{(p)}$ of $p$-local spectra.
    
    If $\epsilon_{V,N}\not\equiv 1$, then there exists $n\in N$ such that $\epsilon_V(n)=-1$. Note that a nontrivial homomorphism $\epsilon_V: N \to \{\pm 1\}\cong C_2$ forces $p\neq 2$. Then $2$ is invertible in $\mathbb Z_{(p)}$, and $\mathbb Z_{(p)}\{\epsilon_V\}/N=0$. Therefore, $$H_\ast(G;\mathbb Z_{(p)}\{\epsilon_V\})=0,$$
    which implies that $(EG_+\wedge_G S^V)_{(p)}$ is contractible.
\end{proof}
    
    Since $p\nmid |N|$, there is an idempotent decomposition of the Burnside ring $A(N)_{(p)}$ \cite[Appendix A]{greenlees1995generalized} 
    \[
    \phi_N\colon A(N)_{(p)}\cong \prod_{(K)\in \operatorname{Sub}(N)/N} \mathbb Z_{(p)},
    \]
    Let $e_{(K)}^N\in A(N)_{(p)}$ denote the idempotent corresponding to the factor $\mathbb Z_{(p)}$ indexed by the conjugacy class $(K)$ of $K\subset N$. Then $e_{(K)}^N$ is characterized by 
    \[
    \phi_N^H(e_{(K)}^N)=\begin{cases}
        1 & \text{if } K\sim^N H,\\
        0 & \text{otherwise.}
    \end{cases}
    \]
    For $H\subset N$ and $V\in RO(N)$, let 
    \[
    e_{(H)}^N S_N=\mathrm{hocolim}(S_{N,(p)}\overset{e_{(H)}^N}{\longrightarrow} S_{N,(p)} \overset{e_{(H)}^N}{\longrightarrow} S_{N,(p)} \longrightarrow\cdots)
    \] 
    be the corresponding idempotent summand of the $p$-local $N$-equivariant sphere, and define an $N$-Mackey functor 
    \[ \underline M_{(H),V}:=\underline{\pi}_{V-\dim V^H} e_{(H)}^N S_{N,(p)}.\]

    \begin{lemma}
        For $L\subset N$, let 
        \[
        \mathcal{C}_H (L)\cong \{Q\subseteq L:Q\sim_N H\}/L
        \]
        be the set of $L$-conjugacy classes of subgroups of $L$ that are conjugate to $H$ in $N$. Then
        \[\underline M_{H,V}(N/L)=\begin{cases}\bigoplus_{(Q)\in \mathcal{C}_H(L)}\mathbb Z_{(p)} & H \text{ is subconjugate to } L\text{ and }\epsilon_{V^Q,W_L Q}\equiv 1,\\0 &\text{otherwise}.\end{cases}\]
    \end{lemma}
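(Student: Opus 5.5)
We compute $\underline M_{(H),V}(N/L)=\pi^L_{V-\dim V^H}(e^N_{(H)}S_{N,(p)})$ by restricting to $L$ and using the splitting of $p$-local $L$-spectra available because $p\nmid |L|$.

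\textbf{Step 1: restrict the idempotent.} Restricting from $N$ to $L$ sends $e^N_{(H)}$ to $\sum_{(Q)\in\mathcal C_H(L)} e^L_{(Q)}$. This follows from the characterization via marks: $\phi_L^{Q'}(\operatorname{Res}^N_L e)=\phi_N^{Q'}(e)$, which is $1$ exactly when $Q'\sim_N H$. In particular the restriction is zero unless $H$ is subconjugate to $L$, which gives the vanishing case. We therefore get
\[
\underline M_{(H),V}(N/L)\cong\bigoplus_{(Q)\in\mathcal C_H(L)}\pi^L_{V-\dim V^H}\bigl(e^L_{(Q)}S_{L,(p)}\bigr).
\]

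\textbf{Step 2: the standard idempotent-splitting identification.} Since $p\nmid|L|$, for any $p$-local $L$-spectrum $X$ we have (cf.\ \cite[Appendix A]{greenlees1995generalized})
\[
(e^L_{(Q)}X)^L\simeq \bigl(EW_LQ_+\wedge \Phi^Q X\bigr)^{W_LQ}\simeq \bigl(EW_LQ_+\wedge_{W_LQ}\Phi^QX\bigr).
\]
The second equivalence is the Adams isomorphism. Apply this to $X=S^{-(V-\dim V^H)}\wedge S_L$; here we work in $RO$-graded homotopy, so $\pi^L_{V-d}$ means maps from $S^{V-d}$. Since $Q\sim_N H$ and $V\in RO(N)$, we have $\dim V^Q=\dim V^H$, so $\Phi^Q S^{-(V-\dim V^H)}=S^{-(V^Q-\dim V^Q)}$ as a $W_LQ$-spectrum. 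We therefore obtain
\[
\pi^L_{V-\dim V^H}\bigl(e^L_{(Q)}S\bigr)\cong\pi_0\bigl(EW_LQ_+\wedge_{W_LQ}S^{-(V^Q-\dim V^Q)}\bigr)_{(p)}.
\]
We need to check signs and duals here: the virtual representation $-(V^Q-\dim V^Q)$ has dimension $0$, and its determinant character is still $\epsilon_{V^Q}$, since $\det$ of a dual of a real representation equals itself.

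\textbf{Step 3: evaluate the Thom spectrum.} Apply \Cref{lem: stable splitting of Thom speces}'s argument with the group $W_LQ$, which has order prime to $p$ (so its Sylow $p$-subgroup is trivial). The homotopy-orbit spectral sequence gives $H_*(W_LQ;\mathbb Z_{(p)}\{\epsilon\})$, which is concentrated in degree $0$. It equals $\mathbb Z_{(p)}$ if $\epsilon_{V^Q,W_LQ}\equiv1$ and $0$ otherwise. This is because a nontrivial sign forces $p$ odd, so the coinvariants vanish. Hence the $p$-local Thom spectrum is $S_{(p)}$ or contractible, and its $\pi_0$ is $\mathbb Z_{(p)}$ or $0$. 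Summing over $(Q)$ gives the formula.

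The main obstacle is Step 2. One has to justify cleanly the identification of the $L$-fixed points of the idempotent piece with homotopy orbits of $\Phi^Q$, including the $RO$-grading shift. One also has to check that the condition in the statement, phrased via a single $\epsilon_{V^Q,W_LQ}$, really is independent of the representative $Q$ and of the choice of $V_\pm$. That independence follows because $V^Q$ is a well-defined $W_LQ$-representation up to conjugation.
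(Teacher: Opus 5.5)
Your argument is correct. It reaches the same intermediate formula as the paper: a sum over $(Q)\in\mathcal C_H(L)$ of the $p$-local homotopy of a Thom spectrum over $BW_LQ$, evaluated by \Cref{lem: stable splitting of Thom speces} for a group of order prime to $p$. The route is different, though.

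The paper first writes down an explicit model
$e^N_{(H)}S_{N,(p)}\simeq\operatorname{CoInd}_{N_NH}^N\bigl(\operatorname{Inf}_{W_NH}^{N_NH}F(EW_NH_+,S_{(p)})[\mathcal F_{H\not\subset}^{-1}]\bigr)$,
which is the model of \Cref{lem:TH local}. It then restricts to $L$ using the double coset formula \Cref{lem: double coset formula}. This lands on the cofree groups $\pi^{W_LQ}_{V^Q-\dim V^Q}F(EW_LQ_+,S_{(p)})$, i.e.\ on maps out of the Thom spectrum $EW_LQ_+\wedge_{W_LQ}S^{V^Q-\dim V^Q}$ into $S_{(p)}$.

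You instead restrict the idempotent itself via marks. This makes the index set $\mathcal C_H(L)$ and the vanishing case immediate, with no double-coset bookkeeping. You then use isotropy separation and the Adams isomorphism; your Step 2 is the standard statement and is fine as cited. This lands on homotopy orbits, i.e.\ on $\pi_0$ of the Thom spectrum of $-(V^Q-\dim V^Q)$. The two descriptions agree because $p\nmid|W_LQ|$ makes the norm map from homotopy orbits to homotopy fixed points a $p$-local equivalence.

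Your route is more transparent for the values alone, but it imports the general idempotent-piece identification. The paper's explicit coinduced model is what it reuses, in the same proof and in \Cref{cor:ROG graded KU_G/p local sphere}, to compute restriction and transfer maps.

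One correction to your last paragraph. Your reason for the condition being independent of $Q$ only covers $L$-conjugates of $Q$. Two different classes $(Q),(Q')\in\mathcal C_H(L)$ with $Q'={}^nQ$ impose conditions on $\det V^Q$ restricted to two generally different subgroups of $W_NQ$, namely $W_LQ$ and $W_{n^{-1}Ln}Q$. These conditions can disagree.

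Here is an example. Let $U=C_2^2\times C_2^2$ with $C_2^2=\langle a,b\rangle$, and let $N=U\rtimes C_2$, where $C_2$ swaps the factors. Take $Q=\langle(a,0)\rangle$, $Q'=\langle(0,a)\rangle$ and $L=\langle(a,0),(0,a),(b,0)\rangle$. Let $V=\operatorname{Ind}_U^N\lambda_1\oplus\operatorname{Ind}_U^N\lambda_2$ with $\lambda_i(x,y)=\mu_i(y)$, where $\mu_1(a)=\mu_1(b)=\mu_2(a)=-1$ and $\mu_2(b)=1$. Then $\det V^{Q}(x,y)=\mu_1\mu_2(y)$ is trivial on $L$. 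But $\det V^{Q'}(x,y)=\mu_1\mu_2(x)$ equals $-1$ at $(b,0)$. So for odd $p$ one gets $\underline M_{(Q),V}(N/L)\cong\mathbb Z_{(p)}$ even though $|\mathcal C_Q(L)|=2$.

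What your Steps 1--3 actually prove, and what is true, is the summand-wise statement: the $(Q)$-summand is $\mathbb Z_{(p)}$ if $\epsilon_{V^Q,W_LQ}\equiv 1$ and $0$ otherwise. The paper's proof makes the same uniformity claim, phrased via $\epsilon_{V^H,W_LH}$, without justification.
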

    \begin{proof}
        Note that $e_{(H)}^N S_{N,(p)}$ is the idemoptent summand of $S_{N,(p)}$ such that 
        \[
        \Phi^Q e_{(H)}^N S_{N,(p)}\simeq \begin{cases}
            S_{(p)} & Q\sim^N H,\\
            \ast & \text{otherwise.}
        \end{cases}
        \]
        Thus $\operatorname{CoInd}_{N_N H}^N (\operatorname{Inf}_{W_N H}^{N_N H} F(EW_N H_+, S_{(p)})[\mathcal{F}_{H\not\subset}^{-1}])$ provides a model for $e_{(H)}^N S_{N,(p)}$. For any $L\subset N$, by \Cref{lem: double coset formula}, we have 
        \[
        \underline{M}_{H,V}(N/L)\cong \bigoplus_{(Q)\in \mathcal{C}_H(L)}\pi_{V^Q-\dim V^Q}^{W_L Q} F(EW_L Q_+, S_{(p)}).
        \] 
        It follows from \Cref{lem: stable splitting of Thom speces} that
        \[
        \pi_{V^Q-\dim V^Q}^{W_L Q} F(EW_L Q_+, S_{(p)})\cong \begin{cases}
            \pi_0 S_{(p)}\cong \mathbb Z_{(p)} & \epsilon_{V^Q,W_L Q}\equiv 1,\\
            0 & \text{otherwise.}
        \end{cases}
        \]
        This leads to the stated formula for $\underline M_{H,V}(N/L)$, since $\epsilon_{V^Q,W_L Q}\equiv 1$ if and only if $\epsilon_{V^H,W_L H}\equiv 1$.

        For any $L_1\subset L_2 \subset N$, the restriction of the conjugacy class $(Q)\in \mathcal{C}_H(L_2)$ to $L_1$ can be decomposed into a disjoint union of conjugacy classes
        \[
        \operatorname{Res}_{L_1}^{L_2}(Q)_{L_2}=\bigsqcup_{[a]\in L_1\backslash\{a\in L_2:{}^a Q\subset L_1\}/N_{L_2}Q }({}^a Q)_{L_1}.
        \]
        The restriction map on the $Q$ summand of $\underline M_{H,V}(N/L_2)$ 
        \[\pi_{V^Q-\dim V^Q}^{W_{L_2} Q} F(EW_{L_2} Q_+, S_{(p)})\to \bigoplus_{[a]}\pi_{V^Q-\dim V^Q}^{W_{{}^{a^{-1}} L_1} Q} F(EW_{{}^{a^{-1}}L_1} Q_+, S_{(p)}),\]
        is identified with the diagonal map $\mathbb Z_{(p)}\to \bigoplus_{[a]}\mathbb Z_{(p)}$.
        Stably, there is a stable transfer map between classifying spaces, which induces a transfer map between the corresponding Thom spaces. Thus we have stable maps
        \[
        EW_{L_1} H_+\wedge_{W_{L_1} H} S^{V^H}\overset{\operatorname{Res}_{W_{L_1} H}^{W_{L_2} H}}{\longrightarrow} EW_{L_2} H_+\wedge_{W_{L_2} H} S^{V^H}\overset{\operatorname{Tr}_{W_{L_1} H}^{W_{L_2} H}}{\longrightarrow} EW_{L_1} H_+\wedge_{W_{L_1} H} S^{V^H}
        \]
        such that $\operatorname{Tr}_{W_{L_1} H}^{W_{L_2} H}\circ \operatorname{Res}_{W_{L_1} H}^{W_{L_2} H}=|W_{L_2} H:W_{L_1} H|\cdot \operatorname{id}$ homologically. This induces the transfer map on every $(Q)$ summand of $\underline M_{H,V}(N/L_1)$
        \[
        \operatorname{Tr}_{L_1}^{L_2}
                =
            \begin{cases}
            |W_{L_2}Q:W_{L_1}Q|,&
            \epsilon_{V^Q,W_{L_2}Q}\equiv 1,\\
            0,&
            \epsilon_{V^Q,W_{L_2}Q}\not\equiv 1.
            \end{cases}
        \]
        Consequently, after collecting all $L_1$-conjugacy classes $(Q_i)$ contained in the $L_2$-conjugacy class (Q), the transfer to the $(Q)$-summand is
        \[\bigoplus_{Q_i} \mathbb{Z}_{(p)}\to \mathbb{Z}_{(p)},\quad (x_i)\mapsto \sum_i |W_{L_2}Q_i:W_{L_1}Q_i|x_i.\]
    \end{proof}

\begin{corollary}\label{cor:ROG graded KU_G/p local sphere}
    For any $V\in RO(G)$, 
    \[\underline{\pi}_VL_{KU_G/p}S_G\cong \bigoplus_{(H)\in Cyc/G, p\nmid |H|}\underline{\pi}_{V^H} L_{KU_{N_p}/p}S_{N_p}\otimes_{\mathbb Z_{(p)}}\underline{M}_{(H),V}.\] 
\end{corollary}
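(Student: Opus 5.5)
We start from the splitting of \Cref{thm:KUG local spectrum second case}: $L_{KU_G/p}S_G\simeq\bigvee_{(H)}L_{K(H,1)}S_G$, where $(H)$ runs over conjugacy classes of cyclic subgroups with $p\nmid|H|$. Since $G=N_p\times N$ and $p\nmid|H|$, every such $H$ lies in $N$, and $G$-conjugacy coincides with $N$-conjugacy. Hence it suffices to identify $\underline{\pi}_V L_{K(H,1)}S_G$ with $\underline{\pi}_{V^H}L_{KU_{N_p}/p}S_{N_p}\otimes_{\mathbb Z_{(p)}}\underline M_{(H),V}$ for each such $H$.

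By \Cref{lem:morava k local}, $L_{K(H,1)}S_G\simeq \operatorname{CoInd}_{N_GH}^G\operatorname{Inf}_{W_GH}^{N_GH}F(EW_GH_+,L_{K(1)}S)[\mathcal F_{H\not\subset}^{-1}]$. Here $N_GH=N_p\times N_NH$ and $W_GH=N_p\times W_NH$. The key step is to compare this with the $N_p$-equivariant $K(e,1)$-local sphere $L_{KU_{N_p}/p}S_{N_p}$ smashed with the idempotent piece $e^N_{(H)}S_{N,(p)}$.

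The precise claim is an equivalence of $G$-spectra, after $p$-completion,
\[
L_{K(H,1)}S_G\simeq \bigl(\operatorname{Inf}_{N_p}^G L_{KU_{N_p}/p}S_{N_p}\wedge \operatorname{Inf}_N^G e^N_{(H)}S_{N,(p)}\bigr)_p^\wedge .
\]
I would prove it by checking that the right-hand side is $K(H,1)$-local and receives a $K(H,1)$-equivalence from $S_G$. The only nonzero geometric fixed points of the right-hand side occur at subgroups $P\times Q$ with $Q\sim H$. For locality, I would reuse the argument of \Cref{cor:from Np equivariant to G equivariant}: the $p$-group factor $N_p$ is handled via the fact that $L_{KU_{N_p}/p}S_{N_p}$ is $(ECyc)$-concentrated, so $\Phi^P$ vanishes for $P\neq e$. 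Note also that $e^N_{(H)}S_{N,(p)}$ is modeled by $\operatorname{CoInd}\operatorname{Inf}F(EW_NH_+,S_{(p)})[\mathcal F_{H\not\subset}^{-1}]$, as observed in the proof of the preceding lemma.

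Granting the equivalence, I evaluate $\pi_V$ at $G/(P\times L)$. Using \Cref{lem: double coset formula} and \Cref{prop:pushout type}, this reduces to a sum over $(Q)\in\mathcal C_H(L)$ of $W_LQ$-homotopy-fixed-point groups. The relevant groups are $[S^{V^Q},\,F(E(W_LQ)_+,L_{KU_P/p}S_P)]^{P\times W_LQ}$, which are $\pi^P$ of $F((EW_LQ_+\wedge_{W_LQ}S^{V^Q}),\,\cdot\,)$ built from the $N_p$-spectrum.

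Since $p\nmid|W_LQ|$, \Cref{lem: stable splitting of Thom speces} (applied with trivial $p$-part) shows that $p$-locally $EW_LQ_+\wedge_{W_LQ}S^{V^Q}$ is either $S^{V^Q}$ restricted to $P$, if $\epsilon_{V^Q,W_LQ}\equiv1$, or contractible. This yields $\pi^P_{V^H}L_{KU_{N_p}/p}S_{N_p}\otimes \underline M_{(H),V}(N/L)$, with the $\dim V^H$ shift matching the definition of $\underline M$.

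Finally, the Mackey structure must be matched. The $N_p$-direction restrictions and transfers come from $L_{KU_{N_p}/p}S_{N_p}$. The $N$-direction ones are the diagonal and index-weighted sum maps computed in the preceding lemma. The two commute because the orders are coprime, exactly as in \Cref{lem:tensor product}, giving the external tensor product.

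The main obstacle is the identification of $L_{K(H,1)}S_G$ as the external product above, in particular showing that inflating from $N_p$ and applying $F(EW_NH_+,-)$ commute with $p$-completion. For this I would first reduce to the $p$-complete setting using \Cref{prop:q local geometric fixed points}, and then use finiteness of $BW_NH$ $p$-locally (it is $p$-locally trivial).
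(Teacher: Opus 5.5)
Your final answer and your closing computation agree with the paper. However, the step you call the key one is not established, and the route you sketch for it would not work.

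First, $\Phi^P L_{KU_{N_p}/p}S_{N_p}$ does \emph{not} vanish for $P\neq e$. By \Cref{lem:bousfield class via morava k} and \Cref{lem:morava k local} with $H=e$, $L_{KU_{N_p}/p}S_{N_p}\simeq F((EN_p)_+,L_{K(1)}S)$ is Borel complete. Its geometric fixed points at nontrivial $P$ are therefore Tate constructions, which are rational but nonzero: for odd $p$ one gets $\pi_0\Phi^{C_p}L_{KU_{C_p}/p}S_{C_p}\cong\mathbb Q_p$, the $\psi^g$-invariants of $\pi_0(KU_p^\wedge)^{tC_p}\cong\mathbb Q_p(\zeta_p)$. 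What does vanish for $P\neq e$ is $\Phi^P(KO_{N_p}/p)$ (\Cref{lem:geometric fixed KO}), which is a mod-$p$ statement. Being concentrated on $Cyc$ would in any case say nothing at nontrivial cyclic $P$.

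Second, and more basically, locality is a condition on maps out of acyclic objects. It cannot be read off from where geometric fixed points are supported. For example, $(EP)_+\wedge L_{K(1)}S$ has geometric fixed points only at $e$, yet it is not $K(e,1)$-local, because its $P$-fixed points $(L_{K(1)}S)_{hP}$ are not $p$-complete. So the equivalence $L_{K(H,1)}S_G\simeq(\operatorname{Inf}_{N_p}^G L_{KU_{N_p}/p}S_{N_p}\wedge\operatorname{Inf}_N^G e^N_{(H)}S_{N,(p)})_p^\wedge$ is left unproved.

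The equivalence is true, and one way to prove it is to multiply \Cref{cor:from Np equivariant to G equivariant} by the inflated idempotent $e=\operatorname{Inf}_N^G e^N_{(H)}$. Since $H$ is cyclic, $ECyc^N_+\wedge e^N_{(H)}S_{N,(p)}\simeq e^N_{(H)}S_{N,(p)}$, and also $(ECyc^{N_p}_+\wedge L_{KU_{N_p}/p}S_{N_p})_p^\wedge\simeq L_{KU_{N_p}/p}S_{N_p}$; together these give your right-hand side. On $\bigvee_{H'}L_{K(H',1)}S_G$ the idempotent acts on $\Phi^{H'}$ through the mark $\delta_{H'\sim H}$. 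Each $e\,L_{K(H',1)}S_G$ is $K(H',1)$-local, and $K(H',1)$-acyclicity is detected on $\Phi^{H'}$, so $e$ picks out $L_{K(H,1)}S_G$.

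More importantly, you do not need this identification, and the paper never uses it. To evaluate $\pi_V$ of your product you would have to unfold $e^N_{(H)}S_{N,(p)}$ into its coinduced model again, which returns you to the formula of \Cref{lem:morava k local}. The paper instead applies \Cref{lem: double coset formula} and \Cref{prop:pushout type} directly to that coinduced model. For $T=P\times L$ this gives $\pi^T_V L_{K(H,1)}S_G\cong\bigoplus_{(Q)\in\mathcal C_H(L)}[(EW_TQ)_+\wedge_{W_TQ}S^{V^Q},L_{K(1)}S]$ with $W_TQ=P\times W_LQ$. This is exactly your group $[S^{V^Q},F((EW_LQ)_+,L_{KU_P/p}S_P)]^{P\times W_LQ}$. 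The paper then applies \Cref{lem: stable splitting of Thom speces} to the nilpotent group $W_TQ$, whose Sylow $p$-subgroup is $P$, and finishes with $L_{KU_P/p}S_P\simeq F(EP_+,L_{K(1)}S)$.

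This also settles a point you pass over. That lemma is non-equivariant, so applying it ``with trivial $p$-part'' to $(EW_LQ)_+\wedge_{W_LQ}S^{V^Q}$ viewed as a $P$-space needs justification. It is legitimate only because the target is Borel complete, so a $P$-map that is an underlying $p$-local equivalence suffices. Your treatment of the Mackey structure then matches the paper's.
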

\begin{proof}
     For any cyclic subgroup $H\subset G$ such that $p\nmid |H|$, $H\subset N$ and $W_G H\cong N_p\times W_N H$. It follows from \Cref{lem:morava k local,lem: double coset formula} that for any $T\subset G$ and $V\in RO(T)$,
        \[
        \begin{aligned}
            \pi_V^T L_{K(H,1)}S_G & \cong [S^V, \operatorname{Res}_T^G L_{K(H,1)}S_G]^T\\
            &\cong \bigoplus_{[g]\in T\backslash G/N_G H} [S^V, \operatorname{CoInd}_{T\cap {}^g N_G H}^T \operatorname{Res}_{T\cap {}^g N_G H}^{{}^g N_G H} L_{K({}^gH,1)}S_{{}^gN_G H}]^T.
        \end{aligned}
        \]
        Here, in the final term, $K({}^gH,1)$ denotes the $N_G({}^gH)={}^gN_GH$-equivariant Morava K-theory spectrum
        \[ N_G({}^gH)/{}^gH_+\wedge K(1)[\mathcal{F}_{{}^gH\not\subset}^{-1}]. \]
        Consequently,
        $\operatorname{Res}_{T\cap {}^gN_GH}^{{}^gN_GH}K({}^gH,1)$ is nontrivial only if ${}^gH$ is subconjugate to $T\cap{}^gN_GH$. Since ${}^gH$ is normal in $N_G{}^gH$, this condition is equivalent to ${}^gH\subset T$. Let $T\cong P\times L \subset N_p\times N$ be the decomposition of $T$ into its Sylow $p$-subgroup and its $p$-complement.
        It follows by \Cref{prop:pushout type} that        
        \[
        \begin{aligned}
        \pi_V^T L_{K(H,1)}S_G & \bigoplus_{(Q)\in \mathcal{C}_H(L)} [S^V, \operatorname{CoInd}_{N_T Q}^T L_{K({}^gH,1)}S_{N_T Q}]^T\\
        &\cong \bigoplus_{(Q)\in \mathcal{C}_H(L)} \pi^{W_T Q}_{V^H} F(EW_T Q_+, L_{K(1)}S)\\
            & \cong \bigoplus_{(Q)\in \mathcal{C}_H(L)} [EW_T Q_+\wedge_{W_T Q} S^{V^Q}, L_{K(1)}S].
        \end{aligned}
        \]
        By \Cref{lem: stable splitting of Thom speces}, if $\epsilon_{V^H,W_L H}\equiv 1$, then
        \[
        \begin{aligned}
            \pi_V^T L_{K(H,1)}S_G &\cong 
            \bigoplus_{(Q)\in \mathcal{C}_H(L)}[(EP)_+\wedge_{P} S^{\operatorname{Res}_{P}^T V^H}, L_{K(1)}S] \\
            & \cong \bigoplus_{(Q)\in \mathcal{C}_H(L)}\pi_{V^H}^{P} L_{KU_{P}/p}S_{P}.
        \end{aligned}
        \]
        Here the final isomorphism follows from the equivalence $$L_{KU_{P}/p}S_{P}\simeq F((EP)_+, L_{K(1)}S)$$ which holds for every $p$-group $P$. If $\epsilon_{V^H,W_L H}\not\equiv 1$, then $\pi_V^T L_{K(H,1)}S_G\cong 0$.

        For any $T\subset G$, the restriction and transfer maps in $\underline{\pi}_V L_{K(H,1)}S_G$ are given by
        \[
        \begin{aligned}
            \operatorname{Res}_T^G(H): &\pi_V  L_{K(H,1)}S_G \to \pi_{\operatorname{Res}_T^G V}L_{\operatorname{Res}_T^G K(H,1)} S_T,\\
             \operatorname{Tr}_T^G(H): &\pi_{\operatorname{Res}_T^G V}L_{\operatorname{Res}_T^G K(H,1)} S_T\to \pi_V  L_{K(H,1)}S_G.
        \end{aligned}
        \]
        By \Cref{lem:tensor product}, it suffices to compute the restriction and transfer maps separately for $P\subset N_p$ and $L\subset N$.         For $P_1\subset P_2\subset N_p$, the restriction and transfer maps in $\underline{\pi}_V L_{K(H,1)}S_G$ are exactly those in $\underline{\pi}_{V^H} L_{KU_{N_p}/p}S_{N_p}$. 
        For $L \subset N$, if $H$ is not subconjugate to $L$, $\operatorname{Res}_L^N K(H,1)$ is a trivial $L$-spectrum, which implies that  
        \[
        \operatorname{Res}_L^N(H):\pi_V^N  L_{K(H,1)}S_N\to 0, \quad \operatorname{Tr}_L^N(H):0\to \pi_V  L_{K(H,1)}S_G
        \]
        are the natural projection and inclusion, respectively.
        If $H$ is subconjugate to $L$ and $\epsilon_{V^H,W_NH}\equiv 1$, then $\langle \operatorname{Res}_L^N K(H,1)\rangle = \langle K(H,1) \rangle$. 
        Therefore, the restriction map $\operatorname{Res}_L^N(H)$ is determined by the restriction of $\mathcal{C}_H(N)$ to $\mathcal{C}_L(N)$, while the transfer map $\operatorname{Tr}_L^N(H)$ is given by multiplication by $|W_N H:W_L H|$. These maps are compatible with the restriction and transfer maps in $\underline{M}_{(H),V}$, which implies that $\underline{\pi}_V L_{K(H,1)}S_G\cong \underline{\pi}_{V^H} L_{KU_{N_p}/p}S_{N_p}\otimes_{\mathbb Z_{(p)}}\underline{M}_{(H),V}$. 

        Since $L_{KU_G/p}S_G \simeq \bigvee_{\substack{(H)\in \mathrm{Cyc}/G\\ p\nmid |H|}} L_{K(H,1)}S_G$, 
        \[\underline{\pi}_VL_{KU_G/p}S_G\cong \bigoplus_{(H)\in Cyc/G, p\nmid |H|}\underline{\pi}_{V} L_{K(H,1)}S_G,\]
        which gives the stated isomorphism.
\end{proof}

    In particular, when $V=\mathbb R^n$ is the trivial $G$-representation, then $V^H=n$ is constant for all $H\in \mathrm{Cyc}$, and the computation above shows that
    \[
    \underline{\pi}_V L_{KU_G/p}S_G \cong \underline{A/J}_N \otimes \underline{\pi}_n L_{KU_{N_p}/p}S_{N_p}.
    \]
    On the other hand, we expect that \Cref{thm:KUG local spectrum second case} provides a useful framework for studying localized sphere spectra for more general finite groups. It would also be interesting to investigate analogous constructions for compact abelian Lie groups. 

\appendix
\section{The $C_4$-Mackey functor $\underline{\pi}_1 L_{KU_{C_4}/2}S_{C_4}$}\label{sec:C4}
Let $G=C_4=\langle \gamma\rangle$, $C_2=\langle \gamma^2\rangle$,
and set
\[
J:=L_{KU_{C_4}/2}S_{C_4}.
\]
For $H\leq C_4$, we write
\[
M:=\underline{\pi}_1J, \qquad M_H:=M(G/H)=\pi_1^H J.
\]
We can compute $M$ via the fiber sequence
\[
J\longrightarrow (KO_{C_4})_2^{\wedge} \xrightarrow{\psi^5-1} (KO_{C_4})_2^{\wedge}.
\]
For every subgroup $H\leq C_4$, the associated long exact sequence gives a short exact sequence
\[
\begin{aligned}
   0\longrightarrow
    \operatorname{coker}&\left(\psi^5-1:\pi_2^H (KO_{C_4})_2^{\wedge} \to \pi_2^H (KO_{C_4})_2^{\wedge}     \right)  \longrightarrow M_H \\
    &\longrightarrow \ker\left(\psi^5-1:\pi_1^H (KO_{C_4})_2^{\wedge} \to \pi_1^H (KO_{C_4})_2^{\wedge} \right) \longrightarrow 0.
\end{aligned}
\]

Let $\epsilon$ denote the non-trivial real one-dimensional representation of $C_2$, and let
\[
\sigma:C_4\longrightarrow \{\pm 1\}, \qquad \sigma(\gamma)=-1,
\]
be the sign representation of $C_4$. Thus $\sigma|_{C_2}=1$. Let $L$ denote the faithful complex one-dimensional representation of $C_4$, that is, $L(\gamma)=i$. Its underlying real representation is the faithful two-dimensional rotation representation, which we denote by $\lambda$.

\Cref{lem: group structure} implies that this short exact sequence is pointwise split as a sequence of abelian groups. Therefore, the values of $M$ are
\[
M_e\cong \mathbb F_2\{a,b\}, \quad M_{C_2}\cong \mathbb F_2\{a_1,a_\epsilon,b_1,b_\epsilon\},
\]
and
\[
M_{C_4}\cong \mathbb F_2\{A_1,A_\sigma,B_1,B_\sigma\} \oplus \mathbb Z/4\{c\}.
\]
Here $a, a_1, A_1$ are the Hurewicz image of the Hopf element $\eta$, and 
\[
a_\epsilon=h_{C_2}(\operatorname{Tr}_e^{C_2}(1)\eta-\eta),\quad A_\sigma=h_{C_4}(\operatorname{Tr}_{C_2}^{C_4}(1)\eta-\eta).
\]
The classes $b$, $b_1$, $b_\epsilon$, $B_1$, and $B_\sigma$ come from the classes associated to $\eta^2$ in $\mathrm{coker}(\psi^5-1)$. The class
\[
c\in M_{C_4}
\]
is the class of the cokernel associated to $\beta L$; more explicitly, if
\[
r_{\mathbb R}:KU\longrightarrow KO
\]
denotes realification, then $c$ is represented by $r_{\mathbb R}(\beta L)$, where $\beta\in \pi_2KU$ is the complex Bott class.

The restriction maps are given by 
\[
\operatorname{Res}_e^{C_2}(a_1)=a, \qquad \operatorname{Res}_e^{C_2}(a_\epsilon)=a,
\]
\[
\operatorname{Res}_e^{C_2}(b_1)=b, \qquad \operatorname{Res}_e^{C_2}(b_\epsilon)=b,
\]
and
\[
\operatorname{Res}_{C_2}^{C_4}(A_1)=a_1, \qquad \operatorname{Res}_{C_2}^{C_4}(A_\sigma)=a_1,
\]
\[
\operatorname{Res}_{C_2}^{C_4}(B_1)=b_1, \qquad \operatorname{Res}_{C_2}^{C_4}(B_\sigma)=b_1,
\]
\[
\operatorname{Res}_{C_2}^{C_4}(c)=b_\epsilon.
\]
The transfer maps are given by 
\[
\operatorname{Tr}_e^{C_2}(a)=a_1+a_\epsilon, \qquad \operatorname{Tr}_e^{C_2}(b)=b_1+b_\epsilon,
\]
\[
\operatorname{Tr}_{C_2}^{C_4}(a_1)=A_1+A_\sigma, \quad \operatorname{Tr}_{C_2}^{C_4}(b_1)=B_1+B_\sigma, \qquad \operatorname{Tr}_{C_2}^{C_4}(b_\epsilon)=0.
\]
All these results are determined by the restriction and transfer maps in $RO(C_4)$, and it remains to determine $\operatorname{Tr}_{C_2}^{C_4}(a_\epsilon)$. Let
\[
J_e:=\operatorname{hofib}\left(
KO_2^\wedge\xrightarrow{\psi^5-1}KO_2^\wedge
\right),
\qquad
J_{e,\mathbb C}:=\operatorname{hofib}\left(
KU_2^\wedge\xrightarrow{\psi^5-1}KU_2^\wedge
\right).
\]
There are splittings
\[
\bigl((KO_{C_2})_2^\wedge\bigr)^{C_2}
\simeq
(KO_2^\wedge)^{(1)}\vee (KO_2^\wedge)^{(\epsilon)}
\]
and
\[
\bigl((KO_{C_4})_2^\wedge\bigr)^{C_4}
\simeq
(KO_2^\wedge)^{(1)}
\vee
(KO_2^\wedge)^{(\sigma)}
\vee
(KU_2^\wedge)^{(\lambda)},
\]
where the superscripts indicate the irreducible representations indexing
the corresponding summands. Since the Adams operation \(\psi^5\)
preserves each of these summands, these splittings induce equivalences
\[
(J_{C_2})^{C_2}
\simeq
J_e^{(1)}\vee J_e^{(\epsilon)}, \qquad 
(J_{C_4})^{C_4}
\simeq
J_e^{(1)}\vee J_e^{(\sigma)}\vee J_{e,\mathbb C}^{(\lambda)}.
\]
The induction homomorphism on real representation rings satisfies
\[
\operatorname{Tr}_{C_2}^{C_4}(1)=1+\sigma,
\qquad
\operatorname{Tr}_{C_2}^{C_4}(\epsilon)=\lambda.
\]
Consequently, under the above splittings, the transfer map on fixed
points is given by
\[
\operatorname{Tr}_{C_2}^{C_4}
\bigl(x\otimes 1,y\otimes\epsilon\bigr)
=
\bigl(
x\otimes 1,\,
x\otimes\sigma,\,
\operatorname{c}(y)\otimes\lambda
\bigr),
\]
where
\[
\operatorname{c}\colon KO_2^\wedge\longrightarrow KU_2^\wedge
\]
is the complexification map. Since complexification commutes with
\(\psi^5\), it induces a map
\[
\operatorname{c}_j\colon J_e\longrightarrow J_{e,\mathbb C}.
\]
It follows that the restriction of
\(\operatorname{Tr}_{C_2}^{C_4}\) to the \(J_e^{(\epsilon)}\)-summand is
identified with \(\operatorname{c}_j\), with image in the
\(J_{e,\mathbb C}^{(\lambda)}\)-summand. Since \(a_\epsilon\) has order
\(2\), while $\pi_1J_{e,\mathbb C}^{(\lambda)} \cong \mathbb Z/4\{c\}$,
the only possible values of $\operatorname{Tr}_{C_2}^{C_4}(a_\epsilon)$ are $0$ and $2c$.

\begin{proposition}
    $\operatorname{Tr}_{C_2}^{C_4}(a_\epsilon)=2c$.
\end{proposition}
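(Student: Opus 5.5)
The plan is to reduce the statement to a non-equivariant fact about the unit map of the complex image-of-$J$ spectrum $J_{e,\mathbb C}$. By the discussion just before the statement, the restriction of $\operatorname{Tr}_{C_2}^{C_4}$ to the summand $J_e^{(\epsilon)}\subset (J_{C_2})^{C_2}$ is the map $\operatorname{c}_j\colon J_e\to J_{e,\mathbb C}^{(\lambda)}$ induced by complexification. So the first step is to locate $a_\epsilon$ in the splitting $(J_{C_2})^{C_2}\simeq J_e^{(1)}\vee J_e^{(\epsilon)}$.

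\textbf{Step 1: locating $a_\epsilon$.} We have $a_\epsilon=h_{C_2}(\operatorname{Tr}_e^{C_2}(1)\eta-\eta)$. In $RO(C_2)$ one has $\operatorname{Tr}_e^{C_2}(1)=1+\epsilon$, and the Hurewicz map is compatible with the $\underline{A}$-module structure. Hence $a_\epsilon$ maps to $\epsilon\cdot\eta$ under $J\to KO_{C_2}$. Moreover, its component in the $J_e^{(1)}$-summand vanishes, while its component in $J_e^{(\epsilon)}$ is the class $\bar\eta\in\pi_1J_e$. Here $\bar\eta$ is the image of the Hopf element under the unit $S\to L_{KU/2}S\simeq J_e$.

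I would justify the vanishing in $J_e^{(1)}$ by noting that the $(1)$-component of $\operatorname{Tr}_e^{C_2}(1)\eta$ equals $\eta$. This is the same Mackey-functor bookkeeping used in \Cref{lem: group structure}. It follows that
\[
\operatorname{Tr}_{C_2}^{C_4}(a_\epsilon)=\operatorname{c}_j(\bar\eta)\in\pi_1J_{e,\mathbb C}\cong\mathbb Z/4\{c\}.
\]
Since $\operatorname{c}_j$ commutes with the unit maps from $S$, $\operatorname{c}_j(\bar\eta)$ is simply the image of $\eta\in\pi_1S$ under the unit $S\to J_{e,\mathbb C}$.

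\textbf{Step 2: computing the image of $\eta$.} It remains to show that the unit $S\to J_{e,\mathbb C}$ sends $\eta$ to a nonzero element, necessarily $2c$.

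Since $\pi_1KU=0$, the image of $\eta$ lies in $\operatorname{coker}(\psi^5-1\colon\pi_2KU_2^\wedge\to\pi_2KU_2^\wedge)\cong\mathbb Z_2/4$. The resulting element is by definition Adams' complex $e$-invariant $e_{\mathbb C}(\eta)$. Concretely, this class is the extension class in $\operatorname{Ext}^1$ of $\psi$-modules given by $KU^*(C\eta)$, where $C\eta\simeq\Sigma^{-2}\mathbb{CP}^2$.

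The key computation is the action of $\psi^5$ on $\widetilde{KU}^0(\mathbb{CP}^2)=\mathbb Z\{x,x^2\}$, with $x=L-1$. One has
\[
\psi^5(x)=(1+x)^5-1=5x+10x^2,
\]
while $\psi^5(x^2)=25x^2$. Splitting the extension would require a $y=x+tx^2$ with $\psi^5(y)=5y$. This gives $10+25t=5t$, i.e. $20t=-10$, so $t=-1/2$, which is impossible $2$-adically.

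This shows the extension is nonsplit, so $e_{\mathbb C}(\eta)\neq0$. Equivalently, $e_{\mathbb C}(\eta)=1/2\in\mathbb Z/4\cdot\tfrac14$. Being of order $2$ in $\mathbb Z/4\{c\}$, it equals $2c$.

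\textbf{Main obstacle.} I expect the main difficulty to be Step 2's identification of the connecting map in the long exact sequence of $J_{e,\mathbb C}$ with the $\operatorname{Ext}^1$ extension class of $KU^*(C\eta)$. This needs care with signs and with the Bott class normalization, which determines that $c$ corresponds to $r_{\mathbb R}(\beta L)$. Since only nonvanishing is needed, and both $0$ and $2c$ are already the sole candidates, the sign ambiguity is harmless. I would cite Adams' J(X) IV for $e_{\mathbb C}(\eta)=1/2$ rather than rederive it.
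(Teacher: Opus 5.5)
Your proposal is correct and follows essentially the paper's argument. You apply the pre-statement identification of $\operatorname{Tr}_{C_2}^{C_4}$ on the $J_e^{(\epsilon)}$-summand with $\operatorname{c}_j$ directly, whereas the paper passes through the equivariant complexification $f\colon J\to J_{\mathbb C}$ and the computation $\operatorname{Tr}_{C_2}^{C_4}(2\epsilon\beta)=2\beta(L+L^3)\neq 0$. Both reduce to $\eta\mapsto 2\beta\neq 0$ in $\pi_1 J_{e,\mathbb C}$, proved by the same non-splitting computation $\psi^5(u)=5u+10u^2$ on $\widetilde{KU}^0(\mathbb{CP}^2)$.

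As in the paper, only the elementary direction is needed: a null-homotopy of the image of $\eta$ would produce a $\psi^5$-invariant extension of the unit over $C\eta$, which your calculation excludes. So the $e$-invariant/$\operatorname{Ext}^1$ identification you flag as the main obstacle can be bypassed.

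One caveat, shared with the paper: your $KO$-level bookkeeping only determines the $J_e^{(1)}$-component of $a_\epsilon$ modulo the $\eta^2$-class $b_1$, which is invisible after complexification. The paper likewise takes the vanishing of this component for granted when it asserts $\operatorname{Tr}_{C_2}^{C_4}(a_\epsilon)\in\{0,2c\}$.
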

\begin{proof}
    Note that $a_\epsilon$ is a lifting of $\eta\epsilon\in \pi_1^{C_4}KO_{C_4}$ and as the construction in the proof of \Cref{lem: group structure}, $a_\epsilon$ is the Hurewicz image of $\operatorname{Tr}_e^{C_2}(\eta)-\eta$ for the Hopf element $\eta\in \pi_1 S$. 
    Let
    \[
    J_{\mathbb C} := \operatorname{hofib} \left( (KU_{C_4})^\wedge_2\xrightarrow{\psi^5-1}(KU_{C_4})^\wedge_2 \right), 
    \]
    the complexification map $KO_{C_4}\to KU_{C_4}$ induces a $C_4$-map $f:J\to J_{\mathbb{C}}$. Here 
    \[
    \pi_1^{C_2} J_{\mathbb{C}}\cong \mathbb{Z}/4\{\beta, \epsilon\beta\}, \qquad \pi_1^{C_4} J_{\mathbb{C}}\cong \mathbb{Z}/4\{L^i\beta:0\leq i\leq 3\}.
    \]
    The homomorphism $f_*: \underline{\pi}_1 J \to \underline{\pi}_1 J_{\mathbb{C}}$ satisfies that 
    \[
    f_*(a_\epsilon)= (\operatorname{Tr}_e^{C_2}(1)-1) f_*(\eta) = \epsilon f_*(\eta) =\epsilon \cdot 2\beta, 
    \]
    and 
    \[ 
    f_*(\operatorname{Tr}_{C_2}^{C_4}(a_\epsilon))=\operatorname{Tr}_{C_2}^{C_4} 2\epsilon \beta = 2 \beta (L+L^3) \neq 0.
    \]
    Therefore $\operatorname{Tr}_{C_2}^{C_4}(a_\epsilon) = 2c \neq 0$.
\end{proof}

Finally, we record the nonequivariant $f_*(\eta)=2\beta$ used above. Restrict to the trivial group $\{e\}$,
\[
\pi_1J_{e,\mathbb C} \cong \mathbb Z/4\{\beta\}.
\]
Since $\eta\in \pi_1 L_{KU/2}S$ is the Hurewicz image of the Hopf element $\eta\in \pi_1S$, $f_*\eta$ is the element represented by 
\[
 S^1\xrightarrow{\eta} S^0 \to J_e \xrightarrow{f} J_{e,\mathbb{C}}.
\]
Consider the cofiber sequence $S^1\xrightarrow{\eta} S^0 \to C\eta\simeq \Sigma^{-2}\mathbb CP^2$. 
Let $u=[\mathcal O(1)]-1$ be the generator of $\widetilde{KU}^*(\mathbb{C}P^2)$ such that $\widetilde{KU}(\mathbb{C}P^2)\cong \mathbb{Z}[u]/u^3$, we have $\psi^5(u)=5u+10u^2$.
Stably $f_*\eta$ is the obstruction of the extension of $S^0\to J_e$ via $S^0\to C\eta$. Consider the diagram 
\[
\begin{tikzcd}
    S^1 \arrow[r,"\eta"] & S^0 \arrow[r]\arrow[d]  & C\eta\arrow[ldd,"\beta^{-1}u"]\\
    & J_{e,\mathbb{C}} \arrow[d] &\\
    & KU_2^\wedge &
\end{tikzcd},
\]
$\beta^{-1}u$ gives an extension of $S^0\to KU_2^\wedge$. Since $(\psi^5-1)(\beta^{-1}u)=2\beta^{-1}u^2$, $\beta^{-1}u$ cannot lifts to $J_{e,\mathbb{C}}$, which implies that $f_*\eta\neq 0\in \pi_1 J_{e,\mathbb{C}}$, thus $f_*\eta=2\beta$.

\bibliographystyle{alpha}
\bibliography{homotopy.bib}

\end{document}